\let\@wraptoccontribs\wraptoccontribs
\newtheorem{theorem}{Theorem}[section]
\newtheorem{lemma}[theorem]{Lemma}
\newtheorem{prop}[theorem]{Proposition}
\newtheorem{cor}[theorem]{Corollary}
\newtheorem{thm}[theorem]{Theorem}
\newtheorem{lem}[theorem]{Lemma}
\newtheorem*{cor*}{Corollary}
\newtheorem*{thm*}{Theorem}
\newtheorem*{lem*}{Lemma}
\newtheorem*{prop*}{Proposition}
\theoremstyle{definition}
\newtheorem{definition}[theorem]{Definition}
\newtheorem{defn}[theorem]{Definition}
\newtheorem{example}[theorem]{Example}
\newtheorem*{defn*}{Definition}
\newcommand{\define}[1]{{\it #1}}
\theoremstyle{remark}
\newtheorem{remark}[theorem]{Remark}
\newcommand{\pr}{\operatorname{Prob}}
\newcommand{\acts}{\curvearrowright}
\newcommand{\Ad}{\operatorname{Ad}}
\newcommand{\cA}{\mathcal{A}}
\newcommand{\cB}{\mathcal{B}}
\newcommand{\cC}{\mathcal{C}}
\newcommand{\cD}{\mathcal{D}}
\newcommand{\cE}{\mathcal{E}}
\newcommand{\cF}{\mathcal{F}}
\newcommand{\cH}{\mathcal{H}}
\newcommand{\cL}{\mathcal{L}}
\newcommand{\cP}{\mathcal{P}}
\newcommand{\cS}{\mathcal{S}}
\newcommand{\cU}{\mathcal{U}}
\newcommand{\bC}{{\mathbb{C}}}
\newcommand{\bE}{{\mathbb{E}}}
\newcommand{\bF}{{\mathbb{F}}}
\newcommand{\bN}{{\mathbb{N}}}
\newcommand{\bZ}{{\mathbb{Z}}}
\newcommand{\bP}{{\mathbb{P}}}
\newcommand{\sH}{{\bold{H}}}
\newcommand{\sG}{{\bold{G}}}
\newcommand{\sV}{{\bold{V}}}
\newcommand{\VN}{\mathrm{VN}}
\newcommand{\csr}{C^{*}_{\lambda}(\Gamma)}
\newcommand{\malg}{MALG(\nu)}
\newcommand{\nor}{\triangleleft}
\newcommand{\ep}{\varepsilon}
\newcommand{\om}{\omega}
\renewcommand{\ll}{\ell^\infty}
\newcommand{\bnd}{\operatorname{{\bf bnd}}}
\newcommand{\fix}{\mathrm{Fix}}
\newcommand{\supp}{\operatorname{Supp}}
\newcommand{\Rad}{\operatorname{Rad}}
\newcommand{\stab}{\operatorname{Stab}}
\newcommand{\Sub}{\operatorname{Sub}}
\title{Stationary $C^*$-dynamical systems}
\author[Y. Hartman]{Yair Hartman}
\address{Yair Hartman\\Department of Mathematics\\ Ben-Gurion University of the Negev}
\email{hartmany@bgu.ac.il}
\author[M. Kalantar]{Mehrdad Kalantar}
\address{Mehrdad Kalantar\\ University of Houston\\USA}
\email{mkalantar@uh.edu}
\address{Uri Bader\\Weizmann Institute, Rehovot}
\email{uri.bader@gmail.com}
\date{}
\begin{document}

\begin{abstract}
We introduce the notion of stationary actions in the context of $C^*$-algebras. 
We develop the basics of the theory, 
and provide applications to several ergodic theoretical and 
operator algebraic rigidity problems.
\end{abstract}

\thanks{Most of this work was carried out while YH was at Northwestern University. YH was partially supported by the ISF (grant No. 1175/18).}
\thanks{MK was partially supported by the NSF Grant DMS-1700259. }

\maketitle

%\tableofcontents

%\clearpage

\section{Introduction}\label{Introduction}
Stationary actions provide a framework that includes all measure preserving 
actions, as well as their opposite systems, boundary actions. 
This framework is general enough to not suffer from an existential 
problem as in the case of invariant measures for non-amenable 
groups, and yet enjoy having enough 
meaningful structural properties.

In the setup of unique stationary dynamical systems, 
random walk theory forms connection between 
topological and measurable dynamics. 
Study of these systems, specially in the noncommutative setting,
is one of the main objectives of this work.
In the case of non-amenable group actions, 
unique stationarity is sometimes more suitable replacement 
for the notion of unique ergodicity, even in the presence of 
invariant measures, as our results in this paper show.

We introduce the notion of stationary $C^*$-dynamical systems, 
in order to develop new tools in the study of operator algebras 
associated to non-amenable groups.
This includes traceless $C^*$ and von Neumann algebras, for which 
many of powerful techniques from the finite-type theories are 
not applicable.

Let $\mu \in \pr(\Gamma)$ be a probability measure on  
a countable discrete group $\Gamma$, $\cA$ be a unital $C^*$-algebra, 
and let $\Gamma\acts\cA$ by $*$-automorphisms.
\begin{defn*}
A state $\tau$ on $\cA$ is 
said to be \define{$\mu$-stationary} if 
$\sum_{g\in \Gamma} \mu(g) g\tau = \tau$. 
\end{defn*}

We are particularly interested in inner actions.
The underlying philosophy here is to view a $C^*$-algebra 
not only as a single structure, but rather 
as a noncommutative dynamical system via the action of its unitary group 
by inner automorphisms. 
Non-triviality of this action is an exclusive feature of noncommutative 
$C^*$-algebras.
From this point of view, a trace is a noncommutative invariant probability measure, 
and admitting a unique trace is the noncommutative counterpart of 
unique ergodicity.
Thus, stationary states are generalizations of traces, 
which in contrast, always do exist. 
We will see that despite this level of generality, they still 
reveal many meaningful structural properties, and in fact,
provide a context in which 
techniques from measurable ergodic theory,
in particular random walks, can be applied to 
study $C^*$-algebras associated to discrete groups. 
This is not entirely in line with the conventional expectation that 
topological dynamics interact with $C^*$-algebra theory, and 
measurable actions with von Neumann algebra theory.
Indeed, we introduce new techniques to use measurable boundaries 
in certain $C^*$-algebraic rigidity problems, and 
we also obtain von Neumann algebraic relative superrigidity results by 
using topological boundaries. 

Another new perspective that this theory provides is an appropriate notion of minimality for noncommutative actions. The current conventional notion of minimality of an action $\Gamma\acts \cA$ of a group $\Gamma$ on a $C^*$-algebra $\cA$ is that $\cA$ does not contain any non-trivial closed proper $\Gamma$-invariant ideals. Although this definition allows generalizations to the noncommutative setting of some results on commutative minimal actions, but it fails to be an appropriate counterpart of minimality in many cases. For instance, with this definition, the trivial action of any group on any simple $C^*$-algebra is minimal, and consequently, a ``factor'' of a minimal action may not be minimal. Alternatively, for an action $\Gamma\acts X$ on a compact space $X$, one observes that the action is minimal if and only if every $\mu$-stationary probability $\nu$ on $X$ has full support, for any generating probability $\mu$ on $\Gamma$. The similar property for an action $\Gamma\acts \cA$ appears to be a more natural notion of minimality. For instance, the trivial action on a non-trivial $C^*$-algebra is never minimal in this sense, and ``factors'' of minimal actions are minimal. In a recent work \cite{AmruKal} which was completed after the first version of this work was published, Amrutam and the second-named author used this perspective and the results of this paper to prove simplicity of all intermediate $C^*$-subalgebras of crossed products of minimal $C^*$-simple actions. 

Topological and measurable boundary actions 
were introduced by Furstenberg in his seminal work 
\cite{Furstenberg-63, Furstenberg-73}
in the context of rigidity of Lie groups.
These notions have recently turned out to be particularly relevant in 
questions of uniqueness of the canonical trace.
The latter is closely related to 
several rigidity problems in ergodic theory and operator algebras 
\cite{Creutz-Peterson-14, Peterson-character-rigidity-15, Kalantar-Kennedy-17, Breuillard-Kalantar-Kennedy-Ozawa-17}.

For instance, the problem of classifying the groups 
with the unique trace property, which had been open for 
almost 40 years, was finally settled 
by Breuillard, Kennedy, Ozawa, and the second named author in
\cite{Breuillard-Kalantar-Kennedy-Ozawa-17}, 
where a characterization of this property was proven in 
terms of existence of faithful topological boundary actions. 
The original proof in \cite{Breuillard-Kalantar-Kennedy-Ozawa-17} 
used the notion of injective envelopes, but 
a simpler proof was provided soon after by 
Haagerup  in \cite[Theorem 3.3]{Haagerup-C*-simple-15}.
In fact, Haagerup's proof has been very inspiring for our work, 
as it clearly shows why boundary actions 
are effective in this type of problems. 
A very closely related problem to the above is the 
classification of $C^*$-simple groups.
A group $\Gamma$ is called \emph{$C^*$-simple} if $\csr$, 
the reduced $C^*$-algebra of $\Gamma$, is simple, meaning that 
it has no non-trivial proper closed ideals.
Similarly, after numerous partial results from many works 
over the span of four decades 
the first characterization of $C^*$-simplicity was proven 
by Kennedy and the second named author 
\cite{Kalantar-Kennedy-17} in terms of existence of 
free topological boundary actions.
Therefore, in particular, the above results combined imply 
$C^*$-simplicity is stronger than the unique trace property. 
Finally, Le Boudec \cite{LeBoudec-17} proved the existence of groups with 
faithful topological boundary actions, but no free such actions, 
hence completely settled the question of whether 
$C^*$-simplicity and the unique trace property are equivalent. 

As an application of our theory, we prove a new characterization 
of $C^*$-simplicity in terms of unique stationarity of the canonical trace. 
\begin{thm*}[Theorem~\ref{new-characterization-C*-simplicity}] 
A countable discrete group $\Gamma$ is $C^*$-simple 
if and only if 
there is $\mu\in\pr(\Gamma)$
such that the canonical trace
is the unique $\mu$-stationary state on $\csr$.
\end{thm*}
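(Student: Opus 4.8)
The plan is to move the problem onto the reduced crossed product $\cA:=C(\fb)\rtimes_{r}\Gamma$ by the Furstenberg boundary: there the conjugation ($=\Ad$) action of $\Gamma$ restricts, on the canonical copy of $\csr$, to the inner action appearing in the statement, and on $C(\fb)$ to the translation action, and there is a canonical conditional expectation $E\colon\cA\to C(\fb)$ with $E(u_{g})=0$ for $g\neq e$. By the theorem of Kalantar and Kennedy~\cite{Kalantar-Kennedy-17}, $\Gamma$ is $C^{*}$-simple exactly when $\Gamma\acts\fb$ is free, so it suffices to relate unique $\mu$-stationarity of $\tr$ to freeness of this action. I would treat the two implications separately, and I expect the real obstacle to be the forward one: showing that a free boundary action makes $\cA$ uniquely $\mu$-stationary. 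The other ingredients are comparatively soft.

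For the forward direction, fix an admissible $\mu$ (say with $\supp\mu$ generating $\Gamma$). Given a $\mu$-stationary state $\vp$ on $\csr$, I would first extend it to a $\mu$-stationary state $\psi$ on $\cA$: the states on $\cA$ restricting to $\vp$ form a nonempty weak$^{*}$-compact convex set which — because the $\Gamma$-action preserves $\csr\subseteq\cA$ and $\vp$ is $\mu$-stationary — is carried into itself by the affine weak$^{*}$-continuous map $\eta\mapsto\sum_{g}\mu(g)\,g\eta$, so Markov--Kakutani produces a fixed point. The crux is then to show $\psi=\nu\circ E$ for the $\mu$-stationary measure $\nu:=\psi|_{C(\fb)}$; granting this, $\vp=\psi|_{\csr}=(\nu\circ E)|_{\csr}=\tr$, since $E$ kills the off-diagonal part and $\nu$ is a probability measure. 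To obtain $\psi=\nu\circ E$, disintegrate $\psi$ over the Poisson boundary $(B,\nu_{B})$ of $(\Gamma,\mu)$, writing $\psi=\int_{B}\psi_{b}\,d\nu_{B}(b)$ with $\psi_{b}=\lim_{n}(g_{1}\cdots g_{n})\psi$ along the random walk. Restricting to $C(\fb)$ and using that $\fb$ is minimal and strongly proximal — so $(\fb,\nu)$ is a $\mu$-boundary and $(g_{1}\cdots g_{n})_{*}\nu$ contracts to a point mass — one gets $\psi_{b}|_{C(\fb)}=\delta_{x_{b}}$ for the associated boundary map $b\mapsto x_{b}$. A Cauchy--Schwarz estimate gives $\psi_{b}(f u_{g})=f(x_{b})\psi_{b}(u_{g})$, and this is where freeness is used: for $g\neq e$ one has $g x_{b}\neq x_{b}$, so a short positivity computation — testing $\psi_{b}$ on $(f_{1}+f_{2}u_{g})^{*}(f_{1}+f_{2}u_{g})$ with $f_{1}(x_{b})=1$ and $f_{2}(g x_{b})=0$ — forces $\psi_{b}(u_{g})=0$. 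Hence $\psi_{b}=\delta_{x_{b}}\circ E$ a.e.\ and, integrating, $\psi=\nu\circ E$.

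For the converse I would argue by contraposition. If $\Gamma$ is not $C^{*}$-simple then $\Gamma\acts\fb$ is not free, so the stabilizer uniformly recurrent subgroup of $\fb$ is a nontrivial minimal closed $\Gamma$-invariant set $\cU\subseteq\Sub(\Gamma)$, all of whose members are nontrivial amenable subgroups~\cite{Breuillard-Kalantar-Kennedy-Ozawa-17}. For an arbitrary $\mu\in\pr(\Gamma)$, pick a $\mu$-stationary measure $\eta$ on the compact $\Gamma$-space $\cU$ (which exists by Furstenberg's averaging argument). For $H\in\cU$, amenability gives $\ell^{2}(\Gamma/H)\prec\ell^{2}(\Gamma)$, so the vector state at $\delta_{eH}$ in the quasi-regular representation $\ell^{2}(\Gamma/H)$ defines a state $\vp_{H}$ on $\csr$ with $\vp_{H}(\lambda_{g})$ equal to $1$ if $g\in H$ and $0$ otherwise. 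Put $\vp:=\int_{\cU}\vp_{H}\,d\eta(H)$. Using that $\Gamma$ acts on $\Sub(\Gamma)$ by conjugation and that $\eta$ is $\mu$-stationary, a direct computation shows $\vp$ is $\mu$-stationary, while $\vp(\lambda_{g})=\eta(\{H\in\cU:g\in H\})$ is strictly positive for some $g\neq e$, because $\cU=\bigcup_{g\neq e}\{H:g\in H\}$ is a countable cover of the probability space $(\cU,\eta)$ (every member of a nontrivial uniformly recurrent subgroup is itself nontrivial). Thus $\vp\neq\tr$, so no $\mu$ makes $\tr$ the unique $\mu$-stationary state on $\csr$, which completes the contrapositive.
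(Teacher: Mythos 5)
Your converse direction is sound. It is essentially the paper's argument for the converse of Corollary~\ref{cor:sub_a-unq-stationary} (a non-$C^*$-simple group has a nontrivial amenable URS by Kennedy's theorem, any stationary measure on it induces, via the quasi-regular representations, a $\mu$-stationary state on $\csr$ different from $\tau_0$), whereas the paper's own converse for Theorem~\ref{new-characterization-C*-simplicity} is the more elementary Corollary~\ref{unique-trace->simple}: by Lemma~\ref{kernel-of-stationary-is-ideal} the left kernel of a stationary state is a two-sided ideal, so a proper ideal yields a non-faithful (hence non-canonical) stationary state. Either route is acceptable.

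The forward direction, however, has a genuine gap, and it sits exactly where you flagged the crux. You claim that for an \emph{arbitrary} $\mu$-stationary measure $\nu=\psi|_{C(\fb)}$, minimality and strong proximality of $\fb$ force $(\fb,\nu)$ to be a $\mu$-boundary, i.e.\ $\omega_n\nu\to\delta_{x_\omega}$ a.s. This is not a consequence of strong proximality. Indeed, if a compact $\Gamma$-space carries two distinct $\mu$-stationary measures $\nu_1\neq\nu_2$, then $\tfrac12(\nu_1+\nu_2)$ is $\mu$-stationary and its conditional measures are $\tfrac12(\nu_1)_\omega+\tfrac12(\nu_2)_\omega$, which cannot be a.s.\ Dirac without forcing $\nu_1=\nu_2$; so your step tacitly requires $\fb$ to be \emph{uniquely} $\mu$-stationary, which is precisely the USB hypothesis of Theorem~\ref{free-USB->C*-simple} and is not available for an arbitrary generating $\mu$ on an arbitrary $C^*$-simple group. (A secondary obstruction: $C(\fb)$ is non-separable for non-amenable $\Gamma$, so the disintegration $\psi=\int_B\psi_b\,d\nu_B$ of Theorem~\ref{martingales} does not apply to $C(\fb)\rtimes_r\Gamma$.) Note that if your argument were correct it would prove that \emph{every} generating measure on a $C^*$-simple group is $C^*$-simple, which would render Section~\ref{faithfulness-and-freeness-of-USB} of the paper superfluous; the authors treat that as a hard open-ended problem. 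The paper's actual forward direction avoids boundaries entirely: Haagerup's averaging theorem produces finitely supported measures $\mu_l$ with $\|\mu_l*a-\tau_0(a)\mathds{1}\|$ small on prescribed finite sets, these are spliced into $\mu=\sum_l 2^{-l}\mu_l$ so that $\|\mu^n*a-\tau_0(a)\mathds{1}\|\to0$ for all $a\in\csr$, and Proposition~\ref{glasner-weiss} converts this norm convergence into unique $\mu$-stationarity of $\tau_0$.
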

In particular, this result shows that $C^*$-simplicity is also a uniqueness 
property of the canonical trace.
This is indeed quite natural with our point of view that
$\csr$ is rather a $\Gamma$-$C^*$-algebra via the inner action:
every ideal is invariant, and 
therefore simplicity is a noncommutative minimality problem. 
Now considering the commutative picture, 
since stationary measures always exist, 
existence of a unique stationary probability with full support 
implies minimality.

Also, our above characterization of $C^*$-simplicity 
provides an intrinsic dynamical explanation
for the difference between the unique trace property and $C^*$-simplicity: while
the former corresponds to unique ergodicity, the latter corresponds to unique
stationarity. 
We may even give a manifestation of this in the commutative 
setting: every group $\Gamma$ 
admits an action on a compact metric space such that the 
difference between unique ergodicity and unique stationarity of the action 
translates into the difference between the unique trace property and 
$C^*$-simplicity, as follows.

Let $\Sub_a(\Gamma)$ denote the set of all amenable subgroups 
of $\Gamma$, which is a compact space on which $\Gamma$ 
acts by conjugations. 
Bader, Duchesne and Lecureux~\cite{Bader-Duchesne-Lecureux-16} proved that 
$\Gamma$ has the unique trace property if and only if $\Gamma\acts \Sub_a(\Gamma)$ 
is uniquely ergodic. Here we prove:

\begin{thm*}[Corollary~\ref{cor:sub_a-unq-stationary}]
A countable discrete group $\Gamma$ is $C^*$-simple if and only if there is 
$\mu\in\pr(\Gamma)$ such that the action 
$\Gamma\acts \Sub_a(\Gamma)$ 
is uniquely $\mu$-stationary.
\end{thm*}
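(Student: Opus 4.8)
The plan is to move uniqueness statements back and forth between $\csr$ and $\Sub_a(\Gamma)$ and then invoke Theorem~\ref{new-characterization-C*-simplicity}. The bridge is the map $H\mapsto\varphi_H$ sending $H\in\Sub_a(\Gamma)$ to the functional on $\csr$ determined by $\varphi_H(\lambda_g)=\mathbf{1}_H(g)$. This is a state exactly because $H$ is amenable: the quasi-regular representation on $\ell^2(\Gamma/H)$ is then weakly contained in $\lambda_\Gamma$, and $\varphi_H$ is the vector state of the $H$-invariant $\delta_{eH}$. A short computation shows $g\cdot\varphi_H=\varphi_{gHg^{-1}}$ (so $H\mapsto\varphi_H$ is $\Gamma$-equivariant), that it is continuous from the Chabauty topology to the weak-$*$ topology (Chabauty convergence controls each $\mathbf{1}_H(g)$, and $\mathrm{span}\{\lambda_g\}$ is dense), and that $\varphi_{\{e\}}=\tau_0$. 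Hence every $\nu\in\pr(\Sub_a(\Gamma))$ yields a state $\varphi_\nu:=\int_{\Sub_a(\Gamma)}\varphi_H\,d\nu(H)$ on $\csr$, and $\nu\mapsto\varphi_\nu$ is affine, weak-$*$ continuous and $\Gamma$-equivariant; in particular it carries $\mu$-stationary measures to $\mu$-stationary states.

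For the forward implication, assume $\Gamma$ is $C^*$-simple and fix $\mu$ as in Theorem~\ref{new-characterization-C*-simplicity}, so that $\tau_0$ is the only $\mu$-stationary state on $\csr$. If $\nu$ is any $\mu$-stationary measure on $\Sub_a(\Gamma)$, then $\varphi_\nu$ is a $\mu$-stationary state, hence $\varphi_\nu=\tau_0$; evaluating at $\lambda_g$ gives $\nu(\{H:g\in H\})=\tau_0(\lambda_g)=0$ for all $g\ne e$, and countability of $\Gamma$ then forces $\nu=\delta_{\{e\}}$. Since $\delta_{\{e\}}$ is itself $\mu$-stationary ($\{e\}$ being a conjugation fixed point), $\Gamma\acts\Sub_a(\Gamma)$ is uniquely $\mu$-stationary.

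For the converse I would argue contrapositively. If $\Gamma$ is not $C^*$-simple, then by Kennedy's intrinsic characterization of $C^*$-simplicity it admits a non-trivial amenable uniformly recurrent subgroup, i.e.\ a non-empty minimal closed $\Gamma$-invariant set $\mathcal{Y}\subseteq\Sub_a(\Gamma)$; by minimality, and since $\{e\}$ is a fixed point, $\{e\}\notin\mathcal{Y}$. For any $\mu\in\prg$ the compact $\Gamma$-space $\mathcal{Y}$ carries a $\mu$-stationary probability measure $\nu$, which is a $\mu$-stationary measure on $\Sub_a(\Gamma)$ supported off $\{e\}$, hence $\nu\ne\delta_{\{e\}}$; so $\Gamma\acts\Sub_a(\Gamma)$ is not uniquely $\mu$-stationary for any $\mu$. (Equivalently, pushing $\nu$ through the first paragraph produces a $\mu$-stationary state $\varphi_\nu$ on $\csr$ with $\varphi_\nu(\lambda_{g_0})=\nu(\{H:g_0\in H\})>0$ for some $g_0\ne e$ — such a $g_0$ exists since $\nu$-a.e.\ $H$ is non-trivial — so $\varphi_\nu\ne\tau_0$, and Theorem~\ref{new-characterization-C*-simplicity} again yields the conclusion.)

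The bookkeeping in the first two paragraphs (that $\varphi_H$ is a state, equivariance, continuity, and the elementary measure estimate) is routine. The real obstacle is the input to the converse: when $C^*$-simplicity fails one must produce a genuinely non-trivial amenable uniformly recurrent subgroup. Knowing only that $\Gamma\acts\partial_F\Gamma$ is not free is insufficient, since the non-trivial stabilizers may be invisible to the stationary measure on $\partial_F\Gamma$ — this is precisely what happens for Le Boudec's non-$C^*$-simple groups with the unique trace property, where $\partial_F\Gamma$ is essentially free. One must instead pass to the stabilizer uniformly recurrent subgroup attached to $\partial_F\Gamma$ (in the sense of Glasner--Weiss) and use its non-triviality, which is exactly the content of Kennedy's result; this is the single ingredient I would cite rather than reprove.
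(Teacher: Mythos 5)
Your proposal is correct and follows essentially the same route as the paper: the bridge $H\mapsto\varphi_H$ and its integration over $\nu$ is exactly the paper's Lemma~\ref{random-subgrp-->pos-def} (amenability of $H$ giving weak containment of the quasi-regular representation, hence a state on $\csr$), the forward direction is the same union-bound argument forcing $\nu=\delta_{\{e\}}$, and the converse invokes Kennedy's non-trivial amenable URS in the same way. Your closing remark correctly identifies Kennedy's theorem as the one genuinely non-routine input, which matches the paper's own attribution.
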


We would like to highlight the interesting, and somehow curious fact 
that $C^*$-simplicity, a purely $C^*$-algebraic property, 
would single out certain random walks (or measures) on $\Gamma$ that reveal
its $C^*$-simplicity.
Moreover, it turns out that these measures posses significant 
ergodic theoretical properties, connecting $C^*$-simplicity
to random walks on groups.
Thus, it suggests to consider 
$C^*$-simplicity of $\Gamma$ as rather a property of the measure(s) 
$\mu\in \pr(\Gamma)$ in the above theorems. 
\begin{defn*}
We say that $\mu\in\pr(\Gamma)$
is a \emph{$C^*$-simple measure} if the canonical trace $\tau_0$ is the unique $\mu$-stationary state on $\csr$.
\end{defn*}

For instance, we prove:

\begin{thm*}[Theorem~\ref{Zimmer-amenable-action-C*-Simple-group-is-free}]
Suppose $\mu\in\pr(\Gamma)$ is $C^*$-simple. 
Then any measurable $\mu$-stationary action 
with almost surely amenable stabilizers, is essentially free.

In particular, the action of $\Gamma$ on the Poisson boundary of $\mu$ is essentially free.
\end{thm*}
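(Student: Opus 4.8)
The plan is to reduce the statement to a uniqueness assertion for $\mu$-stationary measures on the space $\Sub_a(\Gamma)$ of amenable subgroups of $\Gamma$, and then to encode any such measure as a $\mu$-stationary state on $\csr$, to which the hypothesis on $\mu$ applies. So let $\Gamma \acts (X,\nu)$ be a measurable $\mu$-stationary action with $\stab(x)$ amenable for $\nu$-almost every $x$. The stabilizer map $\stab\colon X\to\Sub(\Gamma)$, $x\mapsto\{g:gx=x\}$, is measurable (for each $g$, $\stab^{-1}(\{H:g\in H\})=\{x:gx=x\}$ is measurable, and such sets generate the Borel structure of $\Sub(\Gamma)\subseteq\{0,1\}^{\Gamma}$) and $\Gamma$-equivariant, $\stab(gx)=g\,\stab(x)\,g^{-1}$. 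Hence $\eta:=\stab_{*}\nu$ is a $\mu$-stationary probability measure on $\Sub(\Gamma)$, concentrated on $\Sub_a(\Gamma)$. It suffices to show $\eta=\delta_{\{e\}}$, as this forces $\stab(x)=\{e\}$ for $\nu$-a.e.\ $x$.

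To build the state, recall that for an amenable subgroup $H\le\Gamma$ the quasi-regular representation $\lambda_{\Gamma/H}$ on $\ell^{2}(\Gamma/H)$ is weakly contained in the left regular representation (this is the only place amenability of $H$ enters), so it factors through $\csr$ and the vector state at $\delta_{eH}$ yields a state $\varphi_{H}$ on $\csr$ with $\varphi_{H}(\lambda_{g})=\mathds{1}_{H}(g)$. The map $H\mapsto\varphi_{H}$ is weak-$*$ measurable and satisfies $g.\varphi_{H}=\varphi_{gHg^{-1}}$ for the inner $\Gamma$-action on $\csr$. Set $\tau:=\int_{\Sub_a(\Gamma)}\varphi_{H}\,d\eta(H)$, a state on $\csr$. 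Using the equivariance of $H\mapsto\varphi_H$ together with $\mu$-stationarity of $\eta$ (that is, $\sum_{g}\mu(g)\,g_{*}\eta=\eta$), one computes $\sum_{g}\mu(g)\,g.\tau=\int\varphi_{H}\,d\big(\textstyle\sum_{g}\mu(g)\,g_{*}\eta\big)(H)=\int\varphi_{H}\,d\eta(H)=\tau$, so $\tau$ is $\mu$-stationary. Since $\mu$ is $C^{*}$-simple, $\tau$ equals the canonical trace $\tau_{0}$; evaluating at $\lambda_{g}$ for $g\neq e$ gives $\eta(\{H:g\in H\})=\tau(\lambda_{g})=\tau_{0}(\lambda_{g})=0$. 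As $\Gamma$ is countable, $\eta(\{H:H\neq\{e\}\})\le\sum_{g\neq e}\eta(\{H:g\in H\})=0$, hence $\eta=\delta_{\{e\}}$ and the action is essentially free. For the last sentence, the $\Gamma$-action on the Poisson boundary $(B,\nu_{B})$ of $(\Gamma,\mu)$ is $\mu$-stationary and amenable in the sense of Zimmer, hence has almost surely amenable stabilizers, so the first part applies.

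The step requiring the most care is the construction of $\tau$: verifying that amenability of $H$ is exactly what makes $\varphi_{H}$ a continuous state on the \emph{reduced} (not full) $C^{*}$-algebra, that $H\mapsto\varphi_{H}$ is measurable so the vector integral defining $\tau$ is a genuine state, and that this state is manifestly $\mu$-stationary. The reduction to $\Sub_a(\Gamma)$ and the final deduction are routine, the latter only using countability of $\Gamma$.
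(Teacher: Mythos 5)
Your proof is correct and follows essentially the same route as the paper: push $\nu$ forward under the stabilizer map to an amenable $\mu$-stationary random subgroup, encode it as a $\mu$-stationary state on $\csr$ via the quasi-regular representations $\ell^2(\Gamma/H)$ (you reconstruct inline what the paper isolates as Lemma~\ref{random-subgrp-->pos-def}), and invoke $C^*$-simplicity of $\mu$ to force this state to be $\tau_0$. The only cosmetic difference is that you verify stationarity of $\tau$ through the equivariance of $H\mapsto\varphi_H$ and stationarity of $\eta$, whereas the paper computes directly with $\nu(\fix(g^{-1}hg))=g\nu(\fix(h))$; these are the same calculation.
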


Hence, we are naturally led to the problem of finding $C^*$-simple measures.
Our proof of the existence of $C^*$-simple measures on $C^*$-simple groups 
does not reveal concrete measures.  
In fact, part of our construction of the $C^*$-simple measure 
follows a similar construction as in Kaimanovich--Vershik's proof of 
Furstenberg's conjecture 
on the existence of measures on amenable groups with trivial Poisson boundary.
The following result, on the other hand, allows verifying 
$C^*$-simplicity of many concrete measures.

\begin{thm*}[Theorem~\ref{free-USB->C*-simple}]
Let $\mu \in \pr(\Gamma)$ and suppose $\Gamma$ admits an essentially free $\mu$-boundary which 
has a compact model that is uniquely $\mu$-stationary.
Then $\mu$ is a $C^*$-simple measure.
\end{thm*}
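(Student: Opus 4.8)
The plan is to show that if $\tau$ is any $\mu$-stationary state on $\csr$, then $\tau = \tau_0$. Let $(B,\nu)$ be the given essentially free $\mu$-boundary with compact model $X$ that is uniquely $\mu$-stationary, and write $\nu$ also for the unique stationary measure on $X$. The first step is to use unique stationarity of the compact model $X$ to build a $\Gamma$-equivariant, u.c.p. map from $\csr$ into $L^\infty(B,\nu)\rtimes_r\Gamma$ (or rather to exploit a Poisson-type boundary map). Concretely: since $\tau$ is $\mu$-stationary, the sequence of averages of $g\tau$ along the random walk converges, and by the usual martingale/boundary convergence argument there is a Furstenberg-type "Poisson transform" landing in $L^\infty(B,\nu;\mathcal S(\csr))$, the bundle of states; using that $(B,\nu)$ is a $\mu$-boundary one upgrades this to a $\Gamma$-equivariant map $\csr \to L^\infty(B,\nu) \bar\otimes \csr$ that recovers $\tau$ after integrating against $\nu$. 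The key point of the hypothesis is that the compact model being \emph{uniquely} $\mu$-stationary forces the boundary map on the commutative part to be essentially determined, so that elements of $C(X)\subseteq \csr$-bimodule structure are pinned down.

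Next, I would exploit essential freeness of $\Gamma\acts(B,\nu)$ exactly as in Haagerup's proof of one direction of the Kalantar--Kennedy / Breuillard--Kalantar--Kennedy--Ozawa characterization. For $a = \sum_{g} a_g \lambda_g \in \bC[\Gamma]\subseteq\csr$, one wants to show $\tau(a) = a_e = \tau_0(a)$. Apply the boundary map, evaluate at the boundary, and use that for each fixed $g\neq e$ the fixed-point set $\{b \in B : gb = b\}$ has $\nu$-measure zero (essential freeness). This lets one run a "push to the boundary and average" estimate: conjugating $a$ by unitaries of the form $m_f \in C(X)\subseteq\csr$ (multiplication operators coming from the compact model) and passing to the limit along a sequence that spreads mass to the boundary, the off-diagonal terms $a_g\lambda_g$ ($g\neq e$) are annihilated in the limit while the diagonal term $a_e$ survives. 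Since $\tau$ is a state, hence continuous, and is invariant under such conjugations only in the limiting stationary sense, the $\mu$-stationarity of $\tau$ is what allows us to replace "invariant under conjugation" with "averaged value equals the value", completing the identification $\tau(a)=a_e$ on the dense subalgebra $\bC[\Gamma]$, and hence $\tau = \tau_0$ on $\csr$.

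The main obstacle I expect is the construction and control of the boundary map in the noncommutative setting and, relatedly, extracting the precise consequence of \emph{unique} stationarity of the compact model $X$ (as opposed to merely having \emph{some} stationary measure): one needs that unique stationarity transfers to rigidity of the Poisson transform, so that the commutative multipliers $C(X)$ act on the boundary picture in a way compatible with the $\Gamma$-action and with $\nu$ being the one stationary measure. A secondary technical point is ensuring the limiting/averaging argument respects positivity and normalization so that the limit is genuinely a state and genuinely equals $\tau_0$; this is where one uses that the $\mu$-boundary convergence is a bounded-martingale convergence and that $\tau_0$ is the barycenter. I would organize the write-up as: (i) recall/construct the noncommutative Poisson transform associated to $\tau$ and $(B,\nu)$; (ii) use the compact model and its unique stationarity to realize $C(X)$ as multipliers acting compatibly; (iii) invoke essential freeness to kill off-diagonal terms in a limit, á la Haagerup; (iv) conclude $\tau=\tau_0$ and hence $\mu$ is $C^*$-simple by the definition recalled above (equivalently by Theorem~\ref{new-characterization-C*-simplicity}).
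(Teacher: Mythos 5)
Your overall strategy is the one the paper uses: disintegrate a $\mu$-stationary state $\tau$ into conditional states $\tau_\om$ along random walk paths, use unique stationarity of the compact model to force the commutative part of each $\tau_\om$ to be a Dirac measure at the boundary point $\bnd(\om)$, invoke essential freeness to kill $\tau_\om(\lambda_g)$ for $g\neq e$, and integrate back to get $\tau=\tau_0$. However, there is a concrete defect in how you bring $C(X)$ into the picture. You write $m_f\in C(X)\subseteq\csr$ and speak of conjugating by such ``unitaries,'' but $C(X)$ is \emph{not} a subalgebra of $\csr$, and the elements used are positive contractions, not unitaries. The paper's (much simpler) fix is to extend $\tau$ to a $\mu$-stationary state $\tilde\tau$ on the reduced crossed product $\Gamma\ltimes_r C(X)$ --- this extension exists by a Kakutani fixed-point argument (Proposition~\ref{lem:extension-of-statioanry}) --- and only then does $C(X)$ sit alongside $\lambda(\Gamma)$ in a single $C^*$-algebra. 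Unique stationarity then forces $\tilde\tau|_{C(X)}=\nu$, hence $\tilde\tau_\om|_{C(X)}=\delta_{\bnd(\om)}$ a.e., and for $g\neq e$ essential freeness supplies $f$ with $0\le f\le 1$, $\tilde\tau_\om(f)=1$, $\tilde\tau_\om(\lambda_{g^{-1}}f\lambda_g)=0$, so that the Cauchy--Schwarz lemma (Lemma~\ref{lemma0}) gives $\tau_\om(\lambda_g)=0$; no Powers-type unitary averaging is needed. Your proposed ``noncommutative Poisson transform'' into $L^\infty(B,\nu)\bar\otimes\csr$ is both harder to construct and unnecessary.

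The remaining ingredients you cite are correct and match the paper: the bounded-martingale convergence giving the conditional states and the barycenter formula $\tau=\int_\Omega\tau_\om\,d\bP_\mu(\om)$ is exactly Theorem~\ref{martingales}, and one should also record (as the paper does via Lemma~\ref{lem:metrizable-model}) that the compact model may be taken metrizable so that conditional measures exist. With the crossed-product extension substituted for your transform, your outline becomes the paper's proof.
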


This result also highlights another advantage of our approach in 
using measurable boundaries in the above problems. 
In contrast to the topological case, measurable boundaries have been 
studied extensively, and there are several powerful methods 
due to the fundamental work of Kaimanovich~\cite{Kaimanovich-00}, 
for realizing these boundaries.
These methods have been resulted 
in many deep realization results 
(e.g. \cite{Kaimanovich-00, Kaimanovich-Masur-96, Brofferio-Schapira-11, Ledrappier-85, Horbez-16}).
In fact, in many examples, a concrete unique stationary model 
of a boundary is provided, 
and under some regularity assumptions on the measure 
the corresponding stationary measure is the Poisson measure.

To further highlight the contrast to the topological case, 
we remark that in the case of non-amenable discrete groups, the Furstenberg boundary is always a 
non-metrizable extremally disconnected space, not concretely identifiable 
in any known example.
We take advantage of the concreteness of the 
topological models of measurable boundaries
in order to verify their essential freeness. 
For example, we prove a \emph{0-1 Law} (Theorem~\ref{zer-one-law}) for a class of stationary actions that 
include algebraic actions, which provides
an essential freeness/triviality dichotomy for such actions. Using that
we prove:

\begin{thm*}[Theorem~\ref{thm:linear}]
Let $\Gamma$ be a finitely generated linear group with 
trivial amenable radical. Then every generating
measure on $\Gamma$ is $C^*$-simple.
\end{thm*}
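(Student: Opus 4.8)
The plan is to deduce Theorem~\ref{thm:linear} by combining the criterion of Theorem~\ref{free-USB->C*-simple} with the 0-1 Law (Theorem~\ref{zer-one-law}) applied to a suitable concrete boundary realization. Given a finitely generated linear group $\Gamma$ with trivial amenable radical and a generating measure $\mu \in \pr(\Gamma)$, the goal is to produce an essentially free $\mu$-boundary admitting a uniquely $\mu$-stationary compact model. First I would use linearity: by the Tits alternative and standard structure theory, after passing to a finite-index subgroup (which one must check is harmless for the conclusion, since $C^*$-simplicity of a measure should be stable under such operations, using that $C^*$-simplicity of the ambient group passes to and from finite-index subgroups, and trivial amenable radical is likewise inherited) one embeds $\Gamma$ into $\mathrm{GL}_n(k)$ for a suitable field $k$, and then into a product of algebraic groups over local fields via the various absolute values. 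The Furstenberg-type boundary here is a product of flag varieties $\sG/\sP$ over the relevant local fields, on which $\Gamma$ acts; this is the natural candidate for the compact model, and the $\mu$-stationary measure on it is the harmonic/hitting measure coming from the random walk.

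The two things that then have to be verified are: (i) that this compact $\Gamma$-space, equipped with its $\mu$-stationary measure, is \emph{uniquely} $\mu$-stationary, and (ii) that the corresponding $\mu$-boundary is \emph{essentially free}. For (i), the key input is that on algebraic homogeneous spaces $\sG/\sP$ over a local field, any stationary measure for a Zariski-dense subgroup is unique and equivalent to the smooth measure class — this is the classical stationarity theory of Furstenberg and its completions; so after discarding the directions in which the projection of $\Gamma$ fails to be Zariski-dense or unbounded (replacing $\sG$ by the Zariski closure and contracting the parabolic accordingly) one arrives at a genuinely uniquely stationary model. For (ii), this is where the 0-1 Law does the work: the action on the flag variety is an algebraic action, so Theorem~\ref{zer-one-law} applies and gives the dichotomy that each coordinate of the action is either essentially free or essentially trivial; the essentially trivial directions are precisely those where $\Gamma$ acts through a quotient with large kernel, and one removes them, leaving a boundary on which the residual action is essentially free. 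The hypothesis of trivial amenable radical is exactly what guarantees that after this pruning one is not left with the trivial boundary — equivalently, that the pointwise stabilizers, which are algebraic and normalized appropriately, must be trivial rather than merely small, since any nontrivial normal amenable-by-algebraic obstruction would contradict triviality of $\Rad(\Gamma)$.

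The main obstacle I expect is the bookkeeping in step (ii): one must pass from the per-coordinate dichotomy supplied by the 0-1 Law to honest essential freeness of the whole product action, which requires controlling the stabilizer of a generic point in the product as the intersection of the per-coordinate stabilizers and arguing that this intersection is trivial. The algebraic structure helps here — stabilizers of generic points in $\sG/\sP$ are conjugates of $\sP$, intersections of such are handled by Zariski-density, and the amenable radical being trivial rules out the remaining possibility that the common stabilizer is a nontrivial normal amenable subgroup. A secondary technical point is ensuring that the ``compact model'' is genuinely a model of an honest $\mu$-boundary (a $\Gamma$-equivariant quotient of the Poisson boundary), which follows from the stationarity of the hitting measure together with the fact that $\mu$-boundaries are exactly the stationary quotients; and one should double-check that the reduction to a finite-index subgroup at the start is compatible with the statement ``every generating measure'', i.e.\ that a generating measure on $\Gamma$ induces, via an appropriate construction, a generating measure on the finite-index subgroup with a controlled relationship between their stationary theories.
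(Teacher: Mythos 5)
Your outline matches the paper's strategy at the top level (produce an essentially free mean-proximal space, apply Theorem~\ref{free-USB->C*-simple}, and use the 0--1 Law to upgrade non-trivial algebraic actions to essentially free ones), but two of the steps you defer are exactly where the real work lies, and the justifications you sketch for them would not go through. First, the finite-index reduction: you propose to pass to a finite-index subgroup on the grounds that ``$C^*$-simplicity of a measure should be stable under such operations.'' That is not established anywhere and is not the right mechanism --- the theorem quantifies over all generating measures on $\Gamma$, and a generating measure on $\Gamma$ does not restrict to one on $\Gamma^0=\Gamma\cap\sH^0$; the correct bridge is the hitting measure of the recurrent subgroup $\Gamma^0$ (Lemma~\ref{lem:recurrent}), and even then one needs to extend a $\Gamma^0$-mean-proximal space to a $\Gamma$-space. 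The paper does this via a genuine extension theorem (Theorem~\ref{cor:finite-index}, resting on Theorem~\ref{prop:extension}, which builds an $\operatorname{Aut}(\Gamma^0)$-equivariant mean-proximal extension, and on the Bader--Duchesne--L\'ecureux theorem to kill the finite stabilizers that appear). Nothing in your proposal substitutes for this.

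Second, essential freeness of your candidate boundary. Your fallback is that a non-trivial common kernel of the flag-variety actions ``would contradict triviality of $\Rad(\Gamma)$,'' but the kernel of a boundary action that is not the full Poisson boundary need not be amenable, so trivial amenable radical gives you nothing here; and identifying the product of flag varieties with the Poisson boundary requires moment conditions you cannot assume for an arbitrary generating measure. The paper instead arranges faithfulness element by element: for each $g\neq e$ in $\Gamma^0$ it finds $h$ in the normal closure of $g$ with an eigenvalue $\alpha$ that is not a root of unity (possible because $\Rad(\Gamma^0)=\{e\}$), uses Breuillard--Gelander to embed the relevant finitely generated ring in a local field $k$ where $\{\alpha^z\}$ is unbounded, and then invokes Margulis to get an irreducible representation with a proximal element, hence a mean-proximal projective space $\bP(k^m)$ on which $g$ acts non-trivially --- and only then does the 0--1 Law (via Example~\ref{example-algebraic}) convert non-triviality into essential freeness. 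This per-element construction, together with Lemma~\ref{prod-of-usb-is-usb} to assemble the countably many spaces into one mean-proximal product, is the substantive content missing from your argument; relatedly, your appeal to uniqueness of stationary measures on $\sG/\sP$ for Zariski-dense subgroups silently requires the unboundedness that the Breuillard--Gelander step is there to provide.
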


We obtain this result 
by proving the existence of essentially free mean-proximal actions for linear 
groups. 
A crucial step in the proof is an extension result 
for mean-proximal actions that we prove jointly with Uri Bader (see Appendix~\ref{sec:appendix}). \\

Furthermore, we conclude $C^*$-simplicity of the measures in the 
following contexts.

\begin{thm*}[Example~\ref{ex:hyperbolic}, Theorems~\ref{mapping-class-groups}
and \ref{out(Fn)}]
Every generating measure on a mapping class group or 
a hyperbolic group $\Gamma$ with trivial amenable radical 
is $C^*$-simple. 
The same conclusion holds for finitely supported measures on $\rm{Out}(\bF_n)$.
\end{thm*}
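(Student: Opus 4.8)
The plan is to treat each of the three classes—mapping class groups, non-elementary hyperbolic groups with trivial amenable radical, and $\mathrm{Out}(\bF_n)$—by verifying the hypotheses of Theorem~\ref{free-USB->C*-simple}: namely, that for the given measure $\mu$, the group $\Gamma$ admits an essentially free $\mu$-boundary possessing a compact model that is uniquely $\mu$-stationary. So the whole argument reduces to exhibiting, in each context, a concrete compact $\Gamma$-space $X$ together with a $\mu$-stationary measure $\nu$ such that (i) $(X,\nu)$ is a $\mu$-boundary, (ii) $\nu$ is the \emph{unique} $\mu$-stationary measure on $X$, and (iii) the action $\Gamma\acts(X,\nu)$ is essentially free.

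For hyperbolic groups, the natural candidate is the Gromov boundary $\partial\Gamma$ with its (unique) $\mu$-stationary measure; by Kaimanovich's work this is the Poisson boundary for generating $\mu$ (under mild moment assumptions, which one handles by passing to a measure in the same class if needed, as permitted since $C^*$-simplicity of a measure in the sense used here is what Theorem~\ref{free-USB->C*-simple} delivers—though here I would want a generating measure directly, so I would invoke the 0-1 Law of Theorem~\ref{zer-one-law} to bypass moment conditions). Essential freeness of $\Gamma\acts\partial\Gamma$ for $\Gamma$ with trivial amenable radical follows because the stabilizer of a boundary point is virtually cyclic (hence amenable), so a positive-measure fixed-point set would force a nontrivial amenable normal-like obstruction; more cleanly, one applies the freeness/triviality dichotomy of the 0-1 Law together with the fact that the action is far from trivial, ruling out the trivial alternative. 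For mapping class groups the replacement for $\partial\Gamma$ is the Thurston boundary $\mathcal{PMF}$ (or the boundary of the curve complex), realized as the Poisson boundary by Kaimanovich--Masur \cite{Kaimanovich-Masur-96}, with unique stationarity and essential freeness handled analogously—the point-stabilizers for the $\mathcal{PMF}$-action are small (virtually cyclic or reducible in a controlled way) so the 0-1 Law applies. For $\mathrm{Out}(\bF_n)$ with finitely supported $\mu$, one uses Horbez's realization \cite{Horbez-16} of the Poisson boundary via a boundary of Outer space / the free factor complex, again with a uniquely stationary compact model, and again invokes the 0-1 Law for algebraic-type actions to get the freeness/triviality dichotomy, then excludes triviality using trivial amenable radical (equivalently, using that $\mathrm{Out}(\bF_n)$ acts nontrivially and without a global amenable obstruction).

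The main obstacle I expect is item (iii), essential freeness of the boundary action, uniformly across these three families. In each case the relevant boundaries do have nontrivial point stabilizers (a hyperbolic element fixes its two endpoints; a pseudo-Anosov fixes its pair of foliations; an analogous statement holds in Outer space), so one cannot argue freeness naively. The strategy to overcome this is exactly the 0-1 Law (Theorem~\ref{zer-one-law}): for the relevant ``algebraic'' class of stationary actions it yields a dichotomy—either the action is essentially free or it factors through a (measurably) trivial action—and then one rules out triviality by hypothesis (trivial amenable radical, so $\Gamma$ has no normal mechanism forcing the stabilizer subgroups to be large/normal; equivalently the pointwise stabilizers cannot be globally nontrivial). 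A secondary technical point is making sure the cited Poisson-boundary realizations give a genuinely \emph{compact} model on which the stationary measure is \emph{unique}—this is where I would lean on the structure of these boundaries as unique minimal strongly proximal pieces and on the uniqueness statements already present in \cite{Kaimanovich-Masur-96, Horbez-16, Kaimanovich-00}; for hyperbolic groups uniqueness of the stationary measure on $\partial\Gamma$ is classical via the contraction/proximality of the $\Gamma$-action.
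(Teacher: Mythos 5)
Your overall skeleton is the paper's: reduce everything to Theorem~\ref{free-USB->C*-simple} by exhibiting, for each family, a concrete uniquely stationary compact model of a $\mu$-boundary (Gromov boundary, Thurston boundary, boundary of Outer space) and then verifying essential freeness. The realization and uniqueness inputs you cite (\cite{Kaimanovich-00}, \cite{Kaimanovich-Masur-96}, \cite{Horbez-16}) are also the ones the paper uses, and your restriction to finitely supported measures for $\mathrm{Out}(\bF_n)$ is correctly placed.

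The gap is in step (iii). You propose to get essential freeness from the 0--1 Law (Theorem~\ref{zer-one-law} via Corollary~\ref{triv-free-dichotamy}), but that result is not applicable as stated: it requires a $\Gamma$-Noetherian lower semilattice $\cF\subset\malg$ containing all the sets $\fix(g)$, and the paper only ever produces such an $\cF$ for \emph{algebraic} actions (Example~\ref{example-algebraic}, where $\cF$ is the family of subvarieties), which is why the 0--1 Law is used only in the linear-group case (Theorem~\ref{thm:linear}). For the Gromov boundary, $\mathcal{PMF}$, or the boundary of Outer space you exhibit no such semilattice, and none is obviously available (fixed-point sets of torsion or reducible elements need not sit inside any invariant family with the descending chain condition). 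So the dichotomy you want to invoke has not been established for these actions, and the ``rule out triviality'' step has nothing to apply to.

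What the paper actually does (Section~\ref{tits-alternative}) is different and is worth internalizing: these groups satisfy a finitely generated Tits alternative and hence have only \emph{countably many} amenable subgroups. One first shows the stabilizers are a.e.\ amenable --- automatically when the model is a Poisson-USB (Zimmer-amenability), and via Kida's theorem \cite{Kida-08} for arbitrary generating measures on mapping class groups, where the $\mu$-boundary need not be the Poisson boundary. Then the stabilizer map pushes $\nu$ to an amenable $\mu$-SRS supported on a countable set, which is therefore invariant, i.e.\ an amenable IRS; by \cite{Bader-Duchesne-Lecureux-16} it is supported on $\Rad(\Gamma)=\{e\}$, giving essential freeness. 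Your parenthetical remarks about stabilizers being ``virtually cyclic'' or about ``passing to a measure in the same class'' to dodge moment conditions do not substitute for this argument; in particular the moment issue for hyperbolic and mapping class groups is resolved precisely by not insisting on the Poisson boundary and instead using unique stationarity of the mean-proximal model together with the amenable-stabilizer/countability mechanism above.
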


In the last section, we study unique stationarity and unique trace property 
relative to subgroups, and prove several ergodic theoretical relative rigidity results.

\begin{thm*}[Theorem~\ref{charmonic-rigidity-2}]
Let $\mu \in \pr(\Gamma)$ and suppose $\Gamma$ admits an essentially free $\mu$-boundary which 
has a compact model that is uniquely $\mu$-stationary.
Then a $\mu$-stationary action 
is essentially free 
if its restriction to some co-amenable subgroup $\Lambda\leq \Gamma$ is essentially free. 
\end{thm*}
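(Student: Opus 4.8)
The plan is to reduce the relative statement (Theorem~\ref{charmonic-rigidity-2}) to the absolute statement (Theorem~\ref{free-USB->C*-simple}, quoted above as the ``free USB $\Rightarrow$ $C^*$-simple'' theorem) by showing that under the hypotheses the measure $\mu$ is already a $C^*$-simple measure, and then feeding this into the freeness criterion of Theorem~\ref{Zimmer-amenable-action-C*-Simple-group-is-free}. Concretely, the hypothesis is exactly that of Theorem~\ref{free-USB->C*-simple}, so $\mu$ is $C^*$-simple. Now suppose $\Gamma \acts (X,\nu)$ is a $\mu$-stationary action whose restriction to a co-amenable subgroup $\Lambda \leq \Gamma$ is essentially free. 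The goal is to deduce that the $\Gamma$-action is essentially free, and I would do this by examining the stabilizer map $x \mapsto \stab_\Gamma(x) \in \Sub(\Gamma)$.

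\medskip

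First I would set up the stabilizer IRS / stabilizer map. The pushforward of $\nu$ under $x \mapsto \stab_\Gamma(x)$ is a $\mu$-stationary measure on $\Sub(\Gamma)$; I want to show it is supported on $\{e\}$. The key observation is that essential freeness of the $\Lambda$-restriction means $\stab_\Lambda(x) = \stab_\Gamma(x)\cap\Lambda = \{e\}$ for $\nu$-a.e.\ $x$. So $\nu$-a.e.\ stabilizer is a subgroup of $\Gamma$ intersecting $\Lambda$ trivially. Since $\Lambda$ is co-amenable in $\Gamma$, any subgroup $N \leq \Gamma$ with $N \cap \Lambda = \{e\}$ must itself be amenable — this is the crucial group-theoretic input, and it follows because co-amenability of $\Lambda$ passes to the quotient-like object $N\backslash\Gamma$ carrying a $\Gamma$-invariant mean, which when restricted/transported gives an invariant mean on $N$ (more carefully: $N$ embeds into $\Lambda\backslash\Gamma$ as a $\Lambda$-invariant... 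I should instead argue that $N$ acting on $\Lambda \backslash \Gamma$ has a free orbit, hence $N$ is amenable since $\Lambda\backslash\Gamma$ carries a $\Gamma$-invariant mean). Thus $\nu$-a.e.\ stabilizer is amenable.

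\medskip

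Second, I invoke Theorem~\ref{Zimmer-amenable-action-C*-Simple-group-is-free}: since $\mu$ is $C^*$-simple and $\Gamma \acts (X,\nu)$ is a $\mu$-stationary action with almost surely amenable stabilizers, the action is essentially free. This completes the argument. The main obstacle — and the step I would spend the most care on — is the group-theoretic lemma that a subgroup of $\Gamma$ meeting a co-amenable subgroup $\Lambda$ trivially is amenable. The clean way is: $\Lambda$ co-amenable in $\Gamma$ means $\ell^\infty(\Lambda\backslash\Gamma)$ admits a $\Gamma$-invariant mean; if $N\leq\Gamma$ has $N\cap\Lambda=\{e\}$ then the right $N$-action on $\Lambda\backslash\Gamma$ is free, so restricting the $\Gamma$-invariant (hence $N$-invariant) mean to functions along one $N$-orbit $\Lambda g N \cong N$ gives an $N$-invariant mean on $N$, i.e.\ $N$ is amenable. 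One must check that the restriction to an orbit is well-defined and nonzero, which can be arranged by a maximality/averaging argument over the (countably many) $N$-orbits in $\Lambda\backslash\Gamma$; this is routine but worth stating precisely. Everything else is assembling the two cited theorems.
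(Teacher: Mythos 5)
Your overall strategy --- deduce from the hypothesis that $\mu$ is a $C^*$-simple measure via Theorem~\ref{free-USB->C*-simple}, show the stabilizers are a.e.\ amenable, and invoke Theorem~\ref{Zimmer-amenable-action-C*-Simple-group-is-free} --- is sound in outline, but the group-theoretic lemma you single out as the crucial input is false as stated. It is not true that a subgroup $N\leq\Gamma$ with $N\cap\Lambda=\{e\}$, $\Lambda$ co-amenable, must be amenable. Take $\Gamma=\bF_2\wr\bZ=\big(\bigoplus_{n\in\bZ}\bF_2\big)\rtimes\bZ$ and $\Lambda=\bigoplus_{n\geq 0}\bF_2$. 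This $\Lambda$ is co-amenable in $\Gamma$ (the Ces\`aro averages of the point masses at the cosets $t^{-k}\Lambda$, where $t$ generates $\bZ$, are asymptotically invariant and converge weak* to a $\Gamma$-invariant mean on $\Gamma/\Lambda$; this is the standard Monod--Popa example showing co-amenability is not inherited by intermediate subgroups), yet $N=\bigoplus_{n<0}\bF_2$ is non-amenable and meets $\Lambda$ trivially. Your proof sketch breaks exactly where you hedge: $N\cap\Lambda=\{e\}$ only makes the orbit of the base coset free, since the stabilizer of $g\Lambda$ in $N$ is $N\cap g\Lambda g^{-1}$, and the invariant mean may assign zero weight to every free orbit --- in the example above it concentrates on cosets $t^{-k}\Lambda$ whose $N$-stabilizers $\bigoplus_{-k\leq n<0}\bF_2$ grow without bound. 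No averaging over orbits can rescue this, because the conclusion itself fails.

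The gap is repairable in your application, but it requires a step you did not take. Because the $\mu$-stationary measure $\nu$ is non-singular (a standing assumption in the paper, and automatic for generating $\mu$) and $\Gamma$ is countable, essential freeness of $\Lambda\acts(X,\nu)$ upgrades to: for $\nu$-a.e.\ $x$ one has $\stab_\Gamma(x)\cap g\Lambda g^{-1}=\{e\}$ for \emph{all} $g\in\Gamma$ simultaneously. For such $x$ the action of $N=\stab_\Gamma(x)$ on $\Gamma/\Lambda$ is genuinely free, and then the standard argument (choose a transversal $T$ of the $N$-orbits, identify $\Gamma/\Lambda$ with $N\times T$ as $N$-sets, and pull the $\Gamma$-invariant mean back to $\ell^\infty(N)$) shows $N$ is amenable; Theorem~\ref{Zimmer-amenable-action-C*-Simple-group-is-free} then concludes. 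With this repair your route is correct and genuinely different from the paper's: the paper proves the stronger assertion that $(\Gamma,\Lambda)$ is $\mu$-stationary rigid on $C^*(\Gamma)$, via Arveson's extension theorem and the fixed-point characterization of co-amenability applied to the compact convex set of unital positive maps $C(X)\to\cB(L^2(C^*(\Gamma),\tau))$, and only then derives freeness through positive-definite functions. Your argument yields the freeness corollary but not the stationary rigidity of the pair, which the paper uses elsewhere.
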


All our results mentioned up to this point, are obtained by applying 
the techniques developed here to use 
measurable boundaries to deduce $C^*$-algebraic rigidity properties.
In contrast, a von Neumann algebraic relative superrigidity 
result below is proven in the last section 
by using topological boundary actions.

\begin{thm*}[Theorems~\ref{relative-op-alg-superrigid} and~\ref{thm:co-amenalbe-are-character-rigid}]
Let $\Gamma$ be a countable discrete group 
that admits a faithful topological boundary,
and let $\Lambda\leq \Gamma$ be an icc co-amenable subgroup. 
Suppose $\pi:\Gamma\to\cU(\cH)$ is a unitary representation such that 
$\pi(\Gamma)''$ is a finite von Neumann algebra. 
If the restriction $\pi|_\Lambda$ extends to a von Neumann algebra isomorphism 
$\pi(\Lambda)'' \cong \cL(\Lambda)$, then $\pi$ extends to a 
von Neumann algebra isomorphism $\pi(\Gamma)'' \cong \cL(\Gamma)$.
\end{thm*}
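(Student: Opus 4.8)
The plan is to establish the equivalent ``relative character rigidity'' statement (Theorem~\ref{thm:co-amenalbe-are-character-rigid}): that \emph{every} trace $\tau$ on $M:=\pi(\Gamma)''$ satisfies $\tau(\pi(g))=\delta_{g,e}$ for all $g\in\Gamma$. This is equivalent to the theorem. On the one hand, since $\Lambda$ is icc, $\cL(\Lambda)$ is a $\mathrm{II}_1$ factor and carries a unique trace; so $\tau|_{\pi(\Lambda)''}$ is automatically the canonical trace of $\pi(\Lambda)''\cong\cL(\Lambda)$, which is exactly the hypothesis $\tau(\pi(\lambda))=\delta_{\lambda,e}$ for $\lambda\in\Lambda$. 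On the other hand, once $\tau(\pi(g))=\delta_{g,e}$ is known for one faithful normal trace on $M$ (which exists, $M$ being finite with separable predual), the GNS space of $(\bC[\Gamma],\tau\circ\pi)$ is canonically $\ell^2(\Gamma)$, so $g\mapsto\pi(g)$ implements a trace-preserving isomorphism $\cL(\Gamma)\cong M$; in particular $M$ has a unique trace, hence is a factor, and since $\Gamma$ admits a faithful boundary it has trivial amenable radical and is therefore icc, so $\cL(\Gamma)$ is the asserted $\mathrm{II}_1$ factor. Throughout I use that a faithful boundary forces $\Rad(\Gamma)=1$ and that point-stabilizers of a boundary are amenable; combined with $\Lambda$ being icc and co-amenable, this also gives $\ker\pi=\{e\}$ (a normal subgroup meeting $\Lambda$ trivially is amenable by co-amenability, hence trivial) and that $C_\Gamma(\Lambda)$ is amenable.

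The first reduction is elementary. If $g\ne e$ is $\Gamma$-conjugate into $\Lambda$, say $x^{-1}gx\in\Lambda$, then $\tau(\pi(g))=\tau(\pi(x^{-1}gx))=0$, because $\tau$ is conjugation-invariant ($\Gamma$ acting on $M$ by inner automorphisms $\Ad\pi(x)$) and $x^{-1}gx\in\Lambda\setminus\{e\}$. Since the case $g\in\Lambda$ is clear, it remains to prove $\tau(\pi(g))=0$ for $g$ that is \emph{not} conjugate into $\Lambda$, i.e.\ for $g$ acting on the coset space $\Gamma/\Lambda$ without fixed points.

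For such $g$ I would argue as follows. Put $M$ in standard form on $\cH:=L^2(M,\tau)$ with cyclic trace vector $\xi$ and modular conjugation $J$, and let $\Gamma$ act on $\cC:=C^*\!\big(\pi(\Gamma)\cup J\pi(\Gamma)J\big)\subseteq B(\cH)$ by the inner automorphisms $\gamma_g:=\Ad\!\big(\pi(g)J\pi(g)J\big)$, which restrict to the conjugation action on $M$ and fix $\xi$, so $\omega:=\omega_\xi$ is a $\Gamma$-invariant state on $\cC$ with $\omega(\pi(g))=\tau(\pi(g))$. Now combine two inputs. The first is a Powers--Haagerup-type averaging over a faithful $\Gamma$-boundary $B$: using positive contractions drawn from a $\Gamma$-equivariant u.c.p.\ copy of $C(B)$ (inside $B(\cH)$, or after passing to a reduced crossed product of $\cC$ by $\Gamma$), whose supports are spread to be disjoint under any prescribed finite family of translates by minimality and strong proximality of $B$, one averages $\pi(g)$ over its $\pi(\Gamma)$-conjugation orbit; the only obstruction to reaching a scalar comes from the finitely many translates with nonempty fixed-point set in $B$, whose stabilizers are amenable and are absorbed using $\Rad(\Gamma)=1$. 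This shows $\tau(\pi(g))\cdot 1\in\overline{\operatorname{conv}}^{\,\mathrm w^*}\{\pi(h)\pi(g)\pi(h)^*\colon h\in\Gamma\}$, so all traces of $M$ agree ($M$ is a factor) and $\tau(\pi(g))$ is the only value a conjugation-invariant functional can take on $\pi(g)$. The second input is co-amenability of $\Lambda$ in $\Gamma$, used as the relative fixed-point property (every affine $\Gamma$-action on a compact convex set with a $\Lambda$-fixed point has a $\Gamma$-fixed point, equivalently $\Gamma/\Lambda$ carries a $\Gamma$-invariant mean): since $\pi(\Lambda)''\cong\cL(\Lambda)$ is a factor on which $\Lambda$ acts by inner automorphisms, the restriction of $\omega$ to $C^*(\pi(\Lambda)\cup J\pi(\Lambda)J)$ is the unique $\Lambda$-invariant state there, and pushing this $\Lambda$-data up to $\Gamma$ through the relative fixed-point property --- while the boundary averaging rigidifies the resulting $\Gamma$-invariant state --- forces $\tau(\pi(g))=0$ for $g$ with no fixed point on $\Gamma/\Lambda$.

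The step I expect to be the main obstacle is dovetailing these two mechanisms: carrying out the BKKO--Haagerup boundary averaging --- which is at home over $\csr$ or $C(B)\rtimes_r\Gamma$ --- inside the von Neumann / bimodule setting $M=\pi(\Gamma)''$ acting on $L^2(M,\tau)$, while simultaneously exploiting co-amenability of $\Lambda$ so that (i) triviality of the amenable radical really does dispose of the conjugation-translates of $\pi(g)$ with amenable boundary stabilizer, and (ii) the known regular behaviour of $\tau$ on the factor $\pi(\Lambda)''\cong\cL(\Lambda)$ propagates to all of $\Gamma$ --- which requires controlling the relative commutant $(\pi(\Lambda)'')'\cap M$ and the possibility that $\Ad\pi(g)$ restricts to an inner automorphism of $\cL(\Lambda)$, where $\Lambda$ being icc (so $\cL(\Lambda)$ has trivial center) and the amenability of $C_\Gamma(\Lambda)$ enter. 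Producing the right $\Gamma$-equivariant u.c.p.\ map from $C(B)$ into the relevant operator system, compatibly with the $\Lambda$-structure, and making the averaging and the co-amenability induction interlock, is where the real work lies.
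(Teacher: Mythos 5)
Your opening reduction is correct and is exactly the paper's: since $\Lambda$ is icc, $\pi(\Lambda)''\cong\cL(\Lambda)$ has a unique trace, so any normal trace $\tau$ on $M=\pi(\Gamma)''$ restricts to $\tau_0^\Lambda$ on $C^*(\Lambda)$, and once one knows $\tau(\pi(g))=\delta_{g,e}$ the GNS unitary $\ell^2(\Gamma)\to L^2(M,\tau)$ implements the isomorphism. Your side remarks ($\ker\pi$ trivial, $g$ conjugate into $\Lambda$ handled by conjugation-invariance) are also fine. The problem is that the heart of the matter --- showing $\tau(\pi(g))=0$ for every $g\neq e$ --- is not actually proved: you explicitly defer ``producing the right $\Gamma$-equivariant u.c.p.\ map from $C(B)$ into the relevant operator system'' to ``where the real work lies,'' and that construction is precisely the content of the paper's Theorem~\ref{thm:co-amenalbe-are-character-rigid}. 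The missing mechanism is: because $\tau|_{C^*(\Lambda)}=\tau_0^\Lambda$ is faithful on $C^*_\lambda(\Lambda)$, the GNS representation restricted to $\Lambda$ gives an isomorphism $C^*_{\pi_\tau}(\Lambda)\cong C^*_\lambda(\Lambda)\subset\cB(\ell^2(\Lambda))$; Arveson's extension theorem then yields a $\Lambda$-equivariant u.c.p.\ map $\cB(\ell^2(\Lambda))\to\cB(L^2(C^*(\Gamma),\tau))$, which composed with the Poisson map $C(X)\to\ell^\infty(\Lambda)$ gives a $\Lambda$-equivariant unital positive map $C(X)\to\cB(L^2(C^*(\Gamma),\tau))$; finally co-amenability, applied as a fixed-point property to the compact convex set of all unital positive maps $C(X)\to\cB(L^2)$ with the conjugation $\Gamma$-action, upgrades this to a $\Gamma$-equivariant map $\iota$. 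None of these three steps (faithfulness of $\tau_0^\Lambda$ giving the embedding, Arveson extension, the fixed-point formulation of co-amenability on the space of maps) appears in your sketch, and without them there is no route from $C(B)$ into $\cB(L^2(M,\tau))$.

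Two further points. First, the ``Powers--Haagerup averaging with disjointly supported translates'' you invoke is the $C^*$-simplicity mechanism and needs a \emph{free} boundary; with only a \emph{faithful} boundary the correct endgame is the push-to-a-Dirac-measure argument: extend $\tau$ to a state $\rho$ on $\cB(L^2)$, use the boundary property to find $g_i$ with $g_i\rho|_{\iota(C(X))}\to\delta_x$ for some $x$ with $gx\neq x$, pass to a weak* limit $\eta$ of $g_i\rho$, and apply Lemma~\ref{lemma0}; the value $\eta(\pi_\tau(g))$ equals $\tau(\pi_\tau(g))$ precisely because $\tau$ is a trace, so conjugating by $g_i$ does not change it. Your claim that the amenable stabilizers are ``absorbed using $\Rad(\Gamma)=1$'' has no content in this setting, and your assertion that $\omega$ is the \emph{unique} $\Lambda$-invariant state on $C^*(\pi(\Lambda)\cup J\pi(\Lambda)J)$ is unjustified (and not needed). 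Second, the case split on whether $g$ is conjugate into $\Lambda$ is unnecessary: once the $\Gamma$-equivariant map $\iota$ exists, the boundary argument kills $\tau(\pi(g))$ uniformly for all $g\notin\Rad(\Gamma)=\{e\}$.
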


Recall that by Furman~\cite{Furman-03} 
a group $\Gamma$ admits a faithful 
topological boundary
if and only if it has a trivial amenable radical 
(that is, it has no non-trivial amenable normal subgroups).
As a corollary we obtain the following 
relative version of the result of Stuck-Zimmer.

\begin{thm*}[Theorems~\ref{pmp-free-if-rest-free} and \ref{thm:co-amenalbe-are-character-rigid}]
Let $\Gamma$ be a countable discrete group 
with trivial amenable radical, and let $\Lambda\leq \Gamma$ be a co-amenable subgroup. Then a probability measure preserving action $\Gamma\acts(X, m)$ 
is essentially free if its restriction $\Lambda\acts(X, m)$ is essentially free.
\end{thm*}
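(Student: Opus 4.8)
The plan is to attach to the pmp action $\Gamma\acts(X,m)$ a character of $\Gamma$, to observe that essential freeness of the restricted action $\Lambda\acts(X,m)$ forces this character to agree with the canonical trace on $\Lambda$, and then to feed this into the relative operator-algebraic superrigidity of Theorems~\ref{relative-op-alg-superrigid} and~\ref{thm:co-amenalbe-are-character-rigid}. First I would invoke Furman~\cite{Furman-03}: since $\Gamma$ has trivial amenable radical it admits a faithful topological boundary, which is the standing hypothesis of those theorems. I would also record that $\Gamma$ is icc (its FC-center is an amenable normal subgroup, hence trivial), so $\cL(\Gamma)$ has a unique normal trace; this will be used at the very end.

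Next, given $\Gamma\acts(X,m)$ I would form the Borel $\Gamma$-equivariant stabilizer map $x\mapsto\stab(x)\in\Sub(\Gamma)$ (for the conjugation action on $\Sub(\Gamma)$), push $m$ forward to an invariant random subgroup $\nu$, and set
\[
\chi(g)\;=\;\nu\bigl(\{H\in\Sub(\Gamma):g\in H\}\bigr)\;=\;m\bigl(\fix(g)\bigr),\qquad g\in\Gamma,
\]
where $\fix(g)=\{x:gx=x\}$. Since $\mathbf 1_{\{g\in H\}}=\langle\lambda_{\Gamma/H}(g)\delta_{eH},\delta_{eH}\rangle$ is positive definite for each $H$, the average $\chi=\int\mathbf 1_{\{g\in H\}}\,d\nu(H)$ is a conjugation-invariant positive-definite function with $\chi(e)=1$, i.e.\ a character of $\Gamma$. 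Its GNS triple $(\pi_\chi,\cH_\chi,\xi_\chi)$ has $\xi_\chi$ a cyclic (hence also separating) trace vector, so $M:=\pi_\chi(\Gamma)''$ is a finite von Neumann algebra with faithful trace $\tau=\langle\,\cdot\,\xi_\chi,\xi_\chi\rangle$. The point of this construction is that, because $\Gamma$ is countable, $m\bigl(\{x:\stab(x)\neq\{e\}\}\bigr)=m\bigl(\bigcup_{g\neq e}\fix(g)\bigr)$ vanishes precisely when $\chi=\delta_e$; so $\Gamma\acts(X,m)$ is essentially free if and only if $\chi=\delta_e$.

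The key step is to exploit essential freeness of $\Lambda\acts(X,m)$, which says $m(\fix(g))=0$ for every $e\neq g\in\Lambda$, i.e.\ $\chi|_\Lambda=\delta_e$. I would then check that $\pi_\chi|_\Lambda$ extends to an isomorphism $\pi_\chi(\Lambda)''\cong\cL(\Lambda)$: on the $\Lambda$-invariant subspace $\cH_\Lambda:=\overline{\pi_\chi(\Lambda)\xi_\chi}$ the matrix coefficients of $\xi_\chi$ are those of $\delta_e$, so $g\mapsto\pi_\chi(g)|_{\cH_\Lambda}$ is unitarily equivalent to the left regular representation of $\Lambda$; and since $\xi_\chi\in\cH_\Lambda$ is separating for $\pi_\chi(\Lambda)''\subseteq M$, compression to $\cH_\Lambda$ is injective, so $\pi_\chi(\Lambda)''\cong\cL(\Lambda)$ via $\pi_\chi(g)\mapsto\lambda_\Lambda(g)$. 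Now all hypotheses of Theorems~\ref{relative-op-alg-superrigid} and~\ref{thm:co-amenalbe-are-character-rigid} are in place (the latter removing any icc restriction on $\Lambda$): $\pi_\chi$ extends to a von Neumann algebra isomorphism $\pi_\chi(\Gamma)''\cong\cL(\Gamma)$ sending $\pi_\chi(g)$ to $\lambda_\Gamma(g)$. Transporting $\tau$ through this isomorphism gives a normal trace on $\cL(\Gamma)$ taking the value $\chi(g)$ at $\lambda_\Gamma(g)$; uniqueness of the trace on $\cL(\Gamma)$ (as $\Gamma$ is icc) forces $\chi=\delta_e$, and hence $\Gamma\acts(X,m)$ is essentially free.

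The hard part will be the passage from ``essential freeness of the restriction'' to the operator-algebraic hypothesis $\pi_\chi(\Lambda)''\cong\cL(\Lambda)$ of the superrigidity theorem — identifying correctly the von Neumann algebra generated by $\pi_\chi|_\Lambda$, and handling a possibly non-icc $\Lambda$, which is exactly what the companion Theorem~\ref{thm:co-amenalbe-are-character-rigid} is designed to do; the genuinely deep ingredient, Theorem~\ref{relative-op-alg-superrigid} itself, is taken as given here.
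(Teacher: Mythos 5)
Your construction of the character $\chi(g)=m(\fix(g))$ from the stabilizer map, and the observation that essential freeness of $\Lambda\acts(X,m)$ amounts to $\chi|_{C^*(\Lambda)}=\tau_0^\Lambda$, is exactly the paper's proof of Theorem~\ref{pmp-free-if-rest-free}. Where you diverge is the endgame: the paper simply notes that $\chi$ is a trace on $C^*(\Gamma)$ extending $\tau_0^\Lambda$, so tracial rigidity of the pair $(\Gamma,\Lambda)$ --- which is precisely what Theorem~\ref{thm:co-amenalbe-are-character-rigid} delivers from co-amenability of $\Lambda$ and triviality of $\Rad(\Gamma)$ --- forces $\chi=\tau_0^\Gamma=\delta_e$ outright, and essential freeness follows from countability of $\Gamma$. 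No GNS construction, no von Neumann algebras, and no icc considerations are needed.

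Your detour through Theorem~\ref{relative-op-alg-superrigid} has a genuine problem: that theorem assumes $\Lambda$ is icc, and the statement you are proving makes no such assumption, so you are invoking a result whose hypotheses you have not verified. The parenthetical ``the latter removing any icc restriction on $\Lambda$'' is not accurate: Theorem~\ref{thm:co-amenalbe-are-character-rigid} supplies the tracial-rigidity hypothesis of Theorem~\ref{relative-op-alg-superrigid}; it does not remove the icc hypothesis, which is used there to identify the restriction of an \emph{arbitrary} normal trace on $\pi(\Gamma)''$ to the factor $\pi(\Lambda)''$ with $\tau_0^\Lambda$. The repair is immediate and shows the detour is unnecessary: in your situation the trace in question is $\chi$ itself and its restriction to $C^*(\Lambda)$ is $\delta_e$ by hypothesis, so you can feed $\chi$ directly into the definition of tracial rigidity and conclude $\chi=\delta_e$. (Your auxiliary verifications --- that $\Gamma$ is icc because its FC-center is amenable and normal, that $\xi_\chi$ is separating, and that $\pi_\chi(\Lambda)''\cong\cL(\Lambda)$ --- are all correct, just not needed.)
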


In terms of \emph{Invariant Random Subgroup} (IRS), the above is equivalent
to that every IRS of $\Gamma$
intersects $\Lambda$ non-trivially with positive probability. 
In particular, every non-trivial normal subgroup $\mathsf{N}\triangleleft \Gamma$
intersects $\Lambda$ non-trivially.\\

These above results are relative versions of the well studied  operator algebraic superrigidity which originated by Connes' conjecture \cite{Jones-00}. 
The first major result in this direction was obtained by 
Bekka \cite{Bekka-07} and then was generalized by Peterson 
in~\cite{Peterson-character-rigidity-15}
(see also \cite{Peterson-Thom-16, Creutz-Peterson-14}).

Very recently Boutonnet and Houdayer~\cite{BoutHoud} have strengthen Peterson's 
work based on the concept of stationary states developed
here, in this paper.  
This breakthrough result is another manifestation of the importance of stationarity in the noncommutative setup. Indeed, it provides the 
strongest known result in this context (an SRS rigidity
for higher rank lattices) and answers a question of Glasner and Weiss from \cite{Glasner-Weiss-14}.\\

In addition to this introduction, this paper has six other sections. 
In Section~2 we briefly review the requisite background material. 
In Section~3 we recall the definitions of measurable and topological 
boundaries, and prove that a unique stationary measurable boundary is 
a topological boundary.
In Section 4 we introduce stationary $C^*$-dynamical systems, 
prove basic properties, and provide a number of examples. In particular, 
we show how unique stationarity implies $C^*$-simplicity. 
In Section 5 we prove our new characterization of $C^*$-simplicity in 
terms of unique stationarity of the canonical trace. We then prove 
various properties of $C^*$-simple measures, and obtain another characterization
of $C^*$-simplicity in terms of unique stationarity of $\Sub_a$.
Section 6 is concerned with the question of freeness of 
unique stationary actions, and verifying that certain measures are $C^*$-simple.
In Section 7, we apply our techniques to prove several superrigidity results
relative to co-amenable subgroups.\\
The paper also contains an appendix, 
which includes our joint result with Uri Bader, an extension theorem for mean-proximal 
actions that we need in the proof of $C^*$-simplicity of generating measures on finitely generated linear groups.\\

\noindent
\textbf{Acknowledgements.}
We are grateful to Nir Avni and Uri Bader for many fruitful discussions and helpful suggestions during the course of completion of this project. We would also like to thank Pawe\l{} Kasprzak, Matthew Kennedy, Hanfeng Li, Howard Masur, Dan Ursu, and Phillip Wesolek for helpful comments.
We thank the anonymous referee for the careful reading of our paper, for suggesting a more general statement for Theorem~\ref{new-characterization-C*-simplicity-general} compared to the previous version, and for their other helpful comments and suggestions.

\section{Preliminaries}\label{Preliminaries}

Throughout the paper $\Gamma$ is a countable discrete group, 
and $\Gamma\acts X$ denotes an action of $\Gamma$ by homeomorphisms 
on a compact (Hausdorff) space $X$. 
The action $\Gamma\acts X$ is \define{minimal} if $X$ 
has no non-empty proper closed $\Gamma$-invariant subset.
We denote by $\pr(X)$ the set of all Borel probability measures on $X$.
For $\nu \in \pr(X)$ we denote by $\cP_\nu$ its corresponding \define{Poisson map}, i.e.
the unital positive $\Gamma$-equivariant map $\cP_\nu : C(X) \to \ell^\infty(\Gamma)$ defined by
\begin{equation}\label{def:Poisson-map}
\cP_\nu(f)(g) = \int_{X} f(gx) \, d\nu(x), \quad g \in \Gamma,\ f \in C(X).
\end{equation}

We also consider measurable actions, i.e. actions 
$\Gamma\acts(Y, \eta)$ of $\Gamma$ on probability spaces $(Y, \eta)$
by measurable automorphisms. 
A measurable action $\Gamma\acts(Y, \eta)$ is a \emph{non-singular action} 
(or $\eta$ is a non-singular measure) if 
$g\eta$ and $\eta$ are in the same measure class for every $g\in \Gamma$.  
The Poisson map associated to a non-singular measure is defined 
similarly to~\eqref{def:Poisson-map}. 
Throughout the paper, unless otherwise stated, all measurable actions are 
assumed to be non-singular.

A compact $\Gamma$-space $X$ is said to be a compact 
model of a measurable $\Gamma$-space $(Y,\eta)$ if there 
exists a Borel measure $\nu\in \pr(X)$ 
such that $(Y, \eta)$ and $(X,\nu)$ are measurably 
isomorphic as $\Gamma$-spaces. It is a well-known 
fact that every measurable action on a standard Borel space
has a compact model which is metrizable.\\

For a Hilbert space $\cH$ we denote by $\cB(\cH)$ the set of all 
bounded operators on $\cH$. 
A subalgebra $\cA \le \cB(\cH)$ 
is a {$C^{*}$-algebra} if it 
is closed in the operator norm and under taking adjoint. 
In this case, $\cA$ is \define{unital} if it contains the identity operator on $\cH$.

If $X$ is a compact space, then $C(X)$ with the $\sup$-norm and the complex conjugate as the involution is a $C^*$-algebra, and 
conversely, by Gelfand's representation theorem, any unital commutative $C^*$-algebra is of this form. 
Hence unital $C^*$-algebras are viewed as algebras of continuous functions on 
``noncommutative compact spaces''.

An element $a$ in a $C^*$-algebra $\cA$ is said to be \emph{positive}, written $a\geq 0$, 
if $a=b^*b$ for some $b\in \cA$. We denote by $1_{\cA}$ the unit element in $\cA$.

A linear map $\phi:\cA\to\cB$ between $C^*$-algebras is 
\define{positive} if it sends positive elements to positive elements, 
and it is \define{unital} if $\phi(1_\cA)=1_\cB$.
A linear $\phi$ is a \emph{$*$-homomorphism} if it is  
multiplicative and $\phi(a^*) = \phi(a)^*$ for all $a\in\cA$, 
and it is a \emph{$*$-isomorphism} if it is moreover bijective.
We denote by $\mathrm{Aut}(\cA)$ the group of all $*$-automorphisms on $\cA$.

The noncommutative counterpart of probability measures 
are \emph{states} on $C^*$-algebras.
A {state} on $\cA$ is a positive linear functional $\rho:\cA \to \bC$ 
with $\rho(1_\cA)=1$. 
We denote by $\cS (\cA)$ the (convex, weak*-compact) 
space of all states on $\cA$.
A state $\rho$ is \define{faithful} if
$\rho(a)>0$ for any non-zero positive element $a$. 
A state $\tau\in\cS(\cA)$ is a \define{trace} if $\tau(ab) = \tau(ba)$
for all $a, b \in \cA$.
Obviously every state on a commutative $C^*$-algebra is a trace,
but on the other hand there are $C^*$-algebras that do not
admit any trace.

Let us recall the GNS construction associated to a state 
$\rho\in\cS(\cA)$. Define the sesquilinear form 
$\langle a, b \rangle_\rho := \rho (b^*a)$ on $\cA$, let 
$L^2(\cA, \rho)$ be the induced Hilbert space, and for every $b\in\cA$ denote by $\widehat b$ its corresponding element in $L^2(\cA, \rho)$.
Then the GNS representation $\pi_\rho :\cA\to\cB(L^2(\cA, \rho))$ 
is defined by $\pi_\rho(a) (\widehat b) = \widehat{ab}$, for $a, b \in \cA$.

A $C^*$-algebra is said to be \emph{simple} if it does not contain 
any non-trivial proper closed two-sided ideal.\\

A {von Neumann algebra} is a $C^*$-algebra $M$ 
that is also a dual Banach space. In this case the predual $M_*$ is
unique. A bounded linear functional on $M$ is called 
\define{normal} if it belongs to $M_*$.
Since $\cB(\cH)$ itself is a dual Banach space (the predual
being the space of trace-class operators), it follows that 
a unital $C^*$-subalgebra $\cA\subset \cB(\cH)$ is a von Neumann
algebra if and only if it is closed in the weak* topology of $\cB(\cH)$,
or equivalently closed in the weak or strong operator topologies.
By von Neumann's bicommutant theorem 
a self-adjoint unital subalgebra $M\subset \cB(\cH)$ is a von Neumann
algebra if and only if $M'' = M$, where $M'=\{x\in \cB(\cH) : xy=yx \text{ for all } y\in M\}$
is the commutant of $M$ in $\cB(\cH)$, and $M'' = (M')'$.

If $(X, \nu)$ is a probability space, then $L^\infty(X, \nu)$
is a von Neumann algebra, and every commutative von Neumann algebra
is of this form. Hence,
von Neumann algebras are viewed as algebras
of essentially bounded measurable functions on ``noncommutative
probability spaces''.

The GNS representation associated to normal states on a 
von Neumann algebra is defined similarly as in the $C^*$-algebra 
case.

\subsection{Random walks and stationary dynamical systems} \label{Random walks and stationary dynamical systems}
The theory of random walks on groups and their associated boundaries
was introduced by Furstenberg~\cite{Furstenberg-63, Furstenberg-73}, and later studied extensively by various people. This theory 
provides a framework to apply probabilistic ideas and methods in
the study of analytic properties of groups.
Let us briefly recall the notion of random walks on discrete groups. We
refer the reader to~\cite{Furstenberg-73, Furman-book-02, Bader-Shalom-06} for more details.
 
Let $\mu\in \pr(\Gamma)$ be \define{generating}, i.e.
$\Gamma$ is the semigroup generated by $\supp(\mu)=\{g : \mu(g)>0\}$.
The \define{random walk on $\Gamma$ with law $\mu$} 
(or just the $(\Gamma, \mu)$-random walk) is 
the time-independent Markov chain with state space $\Gamma$, 
initial distribution $\delta_e$ (the Dirac probability measure supported at 
the neutral element $e\in \Gamma$), 
and transition probabilities $p(g, h) = \mu(g^{-1} h)$ for $g, h\in \Gamma$.
Thus, the probability of walking on the path $e, g_1, g_1g_2, \dots, g_1g_2\cdots g_k$
on the first $k+1$ steps is $\mu(g_1)\mu(g_2)\cdots\mu(g_k)$.

The \define{space of paths} of the random walk is the probability space
$(\Omega, \mathbb{P}_\mu)$, where
$\Omega =\Gamma^\mathbb{N}$ and
$\mathbb{P}_\mu$ is the Markovian measure, that is, the unique probability on $\Omega$ defined by
\[
\mathbb{P}_\mu\left(\{\om\in\Omega : \om_1 = g_1, \om_2=\om_1 g_2, \dots, \om_k= \om_{k-1} g_k\}\right)
= \mu(g_1)\mu(g_2)\cdots\mu(g_k)
\]
for $g_1, g_2, \dots, g_k \in \Gamma$ and $k\in \bN$. 

For a fixed $g\in \Gamma$ it follows that
$\bP_\mu(\{\om\in\Omega : \om_n = g\})$, the probability of the random
walk being in position $g$ at the $n$-th step, is equal to $\mu^n(g)$,
where $\mu^n$ is the $n$-th convolution power of $\mu$.

Alternatively, $\mathbb{P}_\mu$ can be described as the push-forward of the Bernoulli measure 
$\mu \times \mu \times \cdots$
under the transformation 
$\Gamma^\mathbb{N} \to \Gamma^{\mathbb{N}\cup\{0\}}$, $(g_k) \mapsto (\om_k)$,
where $\om_0 = e$ and $\om_k = g_1 g_2\cdots g_k$
for $k\in\bN$. In probabilistic terms, $g_k$ is the increment
and $\omega_k$ is
the position of the random walk at time $k$.

Suppose $\Gamma\acts X$ is a continuous action on a compact space,
and let $\mu\in\pr(\Gamma)$.
A Borel probability measure $\nu$ on $X$ is called \define{$\mu$-stationary} if 
\[
\nu = \sum_{g\in \Gamma} \mu(g)\, g\nu .
\]
In this case we say $(X, \nu)$ is a $(\Gamma, \mu)$-space, and we write $(\Gamma, \mu)\acts (X, \nu)$.

The basic feature of stationary measures is their existence.
Unlike invariant measures, on any compact $\Gamma$-space $X$ there exists at least one $\mu$-stationary measure. 
While the existence holds for arbitrary compact space, significant
part of the theory is developed in the context of metrizable
compact spaces. The following is the fundamental result 
in this context that builds the connection between stationary systems and the theory of random walks. 

\begin{thm}[Furstenberg]\label{cond-meas}
Let $\nu$ be a $\mu$-stationary measure on a metrizable compact $\Gamma$-space $X$. 
Then for $\bP_\mu$-almost every path
$\omega\in\Omega$ the limit $\nu_\omega:=\mathrm{weak^*-}\lim_n \omega_n \nu$ exists. 
Moreover, we have 
\[
\nu = \int_\Omega \nu_\omega\, d\bP_\mu (\omega) .
\]
\end{thm}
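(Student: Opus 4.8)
The plan is to prove this via the martingale convergence theorem applied to a suitable sequence of measures on $X$, together with a reversed-martingale / tail-field argument to identify the integral decomposition. Fix a countable dense self-adjoint subalgebra, or more simply, work directly with a countable dense subset $\{f_j\}$ of the unit ball of $C(X)$ (which exists since $X$ is metrizable, so $C(X)$ is separable).

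\medskip

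\noindent\textbf{Step 1: the martingale.} For each path $\omega \in \Omega$ and each $n$, consider the measure $\omega_n \nu \in \pr(X)$, i.e. $(\omega_n\nu)(f) = \int_X f(\omega_n x)\,d\nu(x)$. The key observation is that for a fixed $f \in C(X)$, the function $\omega \mapsto \langle \omega_n\nu, f\rangle = \cP_\nu(f)(\omega_n)$ is measurable with respect to the $\sigma$-algebra $\cF_n$ generated by $\om_1,\dots,\om_n$, and using the stationarity of $\nu$ together with the fact that the increment $g_{n+1} = \om_n^{-1}\om_{n+1}$ is independent of $\cF_n$ with law $\mu$, one computes
\[
\bE\big[\langle \om_{n+1}\nu, f\rangle \,\big|\, \cF_n\big]
= \sum_{g\in\Gamma}\mu(g)\,\langle \om_n g\,\nu, f\rangle
= \langle \om_n\,(\textstyle\sum_g \mu(g) g\nu), f\rangle
= \langle \om_n\nu, f\rangle .
\]
Thus $(\langle\om_n\nu,f\rangle)_n$ is a bounded martingale (bounded by $\|f\|_\infty$), so it converges $\bP_\mu$-a.s.

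\medskip

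\noindent\textbf{Step 2: from scalar convergence to weak\textsuperscript{*} convergence.} Apply Step 1 simultaneously to all $f_j$ in the countable dense set; intersecting countably many conull sets, we get a conull set on which $\lim_n \langle\om_n\nu, f_j\rangle$ exists for every $j$. Since $\{f_j\}$ is dense in the unit ball of $C(X)$ and each $\om_n\nu$ is a state of norm one, an $\ep/3$-argument shows that $\lim_n \langle\om_n\nu, f\rangle$ exists for \emph{every} $f\in C(X)$; the limit functional is positive and unital, hence by Riesz representation equals $\langle\nu_\om, f\rangle$ for a unique $\nu_\om\in\pr(X)$. This gives the weak\textsuperscript{*} limit $\nu_\om = \lim_n \om_n\nu$. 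Measurability of $\om\mapsto\nu_\om$ follows since each $\om\mapsto\langle\nu_\om,f_j\rangle$ is a pointwise limit of measurable functions.

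\medskip

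\noindent\textbf{Step 3: the barycenter identity.} For each fixed $f\in C(X)$, taking expectations in the martingale gives $\bE[\langle\om_n\nu,f\rangle] = \langle\om_0\nu,f\rangle = \langle\nu,f\rangle$ for all $n$ (here $\om_0 = e$). By bounded convergence (the martingale is uniformly bounded by $\|f\|_\infty$), $\bE[\langle\nu_\om,f\rangle] = \lim_n \bE[\langle\om_n\nu,f\rangle] = \langle\nu,f\rangle$, i.e.
\[
\int_\Omega \langle\nu_\om, f\rangle \, d\bP_\mu(\om) = \langle\nu, f\rangle
\qquad\text{for all } f\in C(X),
\]
which is precisely the asserted identity $\nu = \int_\Omega \nu_\om\,d\bP_\mu(\om)$ read weakly. (Equivalently, one can observe that $\bE[\langle\om_1\nu,f\rangle] = \sum_g\mu(g)\langle g\nu,f\rangle = \langle\nu,f\rangle$ directly from stationarity and then use the martingale property.)

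\medskip

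\noindent The main obstacle is Step 2, the passage from almost-sure convergence of countably many scalar martingales to genuine weak\textsuperscript{*} convergence of the measures: one must be careful that the exceptional null set can be chosen uniformly in $f$, which is exactly where separability of $C(X)$ (metrizability of $X$) is used in an essential way, and where the uniform norm bound $\|\om_n\nu\| = 1$ is needed to run the three-$\ep$ estimate. Everything else is a routine application of Doob's martingale convergence theorem and the Riesz representation theorem.
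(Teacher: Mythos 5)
Your proof is correct and follows essentially the same route as the paper, which proves the noncommutative analogue (Theorem~\ref{martingales}) by exactly this argument: exhibit $\langle\om_n\nu,f\rangle$ as a bounded martingale for the cylinder filtration, pass to a countable dense set of functions to get almost-sure weak\textsuperscript{*} convergence, and obtain the barycenter identity from $\mu^n*\nu=\nu$ together with bounded convergence. The only cosmetic difference is that the paper verifies the martingale property by integrating over cylinder sets while you condition on the increments, which amounts to the same computation.
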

The measures $\nu_\omega$ are called the conditional measures. 

We will recall and prove other basic facts about stationary dynamical systems
in the more general setting of actions on $C^*$-algebras in later sections.\\

Stationary actions are also defined in measurable setting. 
A non-singular action $\Gamma \acts (Y, \eta)$ is $\mu$-stationary if 
$\sum_{g\in \Gamma} \mu(g) \, g\eta = \eta$.
Note that if $X$ is a compact $\Gamma$-space, and 
$\mu\in\pr(\Gamma)$ is generating, then any $\mu$-stationary 
$\nu\in \pr(X)$ is non-singular.

\subsection{$C^*$-dynamical systems}
We briefly recall the notion of group actions on $C^*$-algebras 
and establish the notation and terminology that we will be using 
in the sequel. We refer the reader to \cite{Brown-Ozawa-08} 
for more details.

A unital $C^{*}$-algebra $\cA$ is called a \define{$\Gamma$-$C^{*}$-algebra} 
if there is an action $\alpha:\Gamma \acts \cA$ 
of $\Gamma$ on $\cA$ by $*$-automorphisms,
that is, $\alpha$ is a group homomorphism $\Gamma \to \mathrm{Aut}(\cA)$.
A class of examples of $\Gamma$-$C^{*}$-algebras that are
of main interest to our work in this paper are obtained as follows.
Let $\pi:\Gamma \to \cU(\cH_\pi)$ be a unitary representation on the Hilbert space
$\cH_\pi$ (where $\cU(\cH_\pi)$ is the group of unitary operators on $\cH_\pi$). Then $\Gamma$ acts on $\cB(\cH_\pi)$ by inner automorphism
$\Ad_g(x) := \pi(g)x\pi(g^{-1})$, $g\in \Gamma$, $x\in \cB(\cH_\pi)$,
as well as on any $C^*$-algebra $\cA\leq \cB(\cH_\pi)$ that is invariant under
this action. In fact, every $\Gamma$-$C^{*}$-algebra is formed
in the above fashion for some unitary representation of $\Gamma$.

In particular, for any unitary representation $\pi:\Gamma \to \cU(\cH_\pi)$, 
the group $\Gamma$ acts on the $C^*$-algebra 
$C^{*}_{\pi}(\Gamma) := \overline{\operatorname{span}\{\pi(g) : g\in\Gamma\}}^{\|\cdot\|}\subset \cB (\cH_\pi)$ 
by inner automorphisms. Throughout the paper 
$\Gamma\acts C^{*}_{\pi}(\Gamma)$ denotes this action 
unless otherwise stated. 

An important example is the left regular representation $\lambda : \Gamma \to \cU(\ell^2(\Gamma))$
defined by $(\lambda_g\xi)(h) = \xi(g^{-1}h)$, $h\in\Gamma$ and 
$\xi\in \ell^2(\Gamma)$. In this case $C^{*}_{\lambda}(\Gamma)$ 
is called the \define{reduced $C^*$-algebra of $\Gamma$}.

The \define{full $C^*$-algebra} $C^*(\Gamma)$ 
of $\Gamma$ is the universal $C^*$-algebra generated 
by $\Gamma$ in the sense that for any unitary 
representation $\pi:\Gamma \to \cU(\cH_\pi)$ 
there is a canonical surjective $*$-homomorphism 
$C^*(\Gamma)\to C^{*}_{\pi}(\Gamma)$. 
We consider $\Gamma$ as a subset of $C^*(\Gamma)$ 
in the natural way.

Similarly to compact spaces and probability measures (``the commutative case''), any action $\Gamma\acts \cA$ 
induces an adjoint action $\Gamma \acts \cS (\cA)$. 
We denote by $\cS_\Gamma(\cA)$ the simplex of all $\Gamma$-invariant states,
that is, states $\nu$ such that $g\nu=\nu$ for all $g\in\Gamma$. It is 
obvious that $\cS_\Gamma(\cA)$ is compact in the weak* topology. 

In the case of $\Gamma\acts C^*_\pi(\Gamma)$ by inner automorphisms, 
$\cS_\Gamma(C^*_\pi(\Gamma))$ coincides with the set of all traces on $C^*_\pi(\Gamma)$. 
In particular, in the case of the reduced $C^*$-algebra, $\cS_\Gamma(\csr)$
is never empty as it contains \define{the canonical trace} $\tau_0$ 
(or $\tau_0^\Gamma$ if we need to clarify its association to $\Gamma$), 
namely the extension of the linear functional $\sum c_g \lambda_g \mapsto c_e$.

Similarly to the $C^*$-algebra case, a source of
examples of $\Gamma$-von Neumann algebras for us
is by means of unitary representation theory of groups.
Let $\pi:\Gamma \to \cU(\cH_\pi)$ be a unitary representation on the Hilbert space
$\cH_\pi$. Then we have
$\Gamma\acts VN_{\pi}(\Gamma)$ by inner automorphism, where 
$VN_{\pi}(\Gamma):= \{\pi(g) : g\in\Gamma\}'' = \overline{\operatorname{span}\{\pi(g) : g\in\Gamma\}}^{\text{weak*}}\subset \cB (\cH_\pi)$, is {the von Neumann algebra generated by 
the representation $\pi$}.
In the case of the left regular representation, 
$\VN_{\lambda}(\Gamma) = \overline{\csr}^{\text{weak*}}$ 
is called the \define{group von Neumann algebra} of $\Gamma$
and is denoted by $\cL(\Gamma)$.
The canonical trace $\tau_0$ extends to a normal trace on $\cL(\Gamma)$.

\subsection{Crossed product $C^*$-algebras}
The bridge between the theories of operator algebras and
(topological or measure-theoretical) dynamics is made by the \emph{crossed product} construction. 
Loosely speaking, the crossed product $C^*$-algebra associated to an
action $\Gamma\acts\cA$ is a $C^*$-algebra that contains,
and is generated by, copies of $\cA$ and $\Gamma$ such that
the action $\Gamma\acts\cA$ in this bigger algebra is by inner automorphisms.

We recall the more precise definition below and refer the 
reader to~\cite{Brown-Ozawa-08} for more details.

Let $\Gamma\acts\cA$, and
consider the Hilbert space
$\ell^2(\Gamma, \cH) = \{\xi: \Gamma\to \cH \,|\, \sum_{g\in\Gamma} \|\xi(g)\|^2<\infty\}$. 
For $g\in \Gamma$ and $a\in \cA$ define the operators 
$\tilde\lambda(g), \iota(a) \in \cB(\ell^2(\Gamma, \cH))$
by $(\tilde\lambda(g)\xi) (h) = \xi(g^{-1}h)$ 
and $(\iota(a)\xi) (h) = (h^{-1}a)\xi(h)$. Then 
the $C^*$-subalgebra of $\cB(\ell^2(\Gamma, \cH))$ generated by
$\tilde\lambda(\Gamma)$ and $\iota(\cA)$ is called
the \define{reduced crossed product}
of the action $\Gamma\acts \cA$,
and is denoted by $\Gamma\ltimes_r \cA$.
It can also be seen that $C^*(\{\tilde\lambda(g) : g\in\Gamma\})$
is canonically isomorphic to $\csr$.

The following simple lemma, which generalizes~\cite[Lemma 2]{Archbold-Spielberg-94}, 
and in fact follows from its proof, 
is key in allowing passage between 
classical and noncommutative settings.
We present a more elementary proof which was provided to us by Hanfeng Li. We thank him for this, as well as for pointing out to us 
the crucial fact that 
\cite[Lemma 2]{Archbold-Spielberg-94} 
is also valid for actions on operator systems.

\begin{lemma}\label{lemma0}
Let $\pi:\Gamma\to \cU(\cH_\pi)$ be a unitary representation.
Suppose $\rho$ is a state on $\cB(\cH_\pi)$ and $g\in \Gamma$.
If there is $a\in \cB(\cH_\pi)$ with $0\leq a \leq 1$ such that 
$\rho(a)= 1$ and $\rho(\pi(g^{-1})a\pi(g)) = 0$, then $\rho(\pi(g)) = 0$.
\end{lemma}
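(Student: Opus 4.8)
The plan is to use the Cauchy--Schwarz inequality for the positive semidefinite sesquilinear form $(x,y)\mapsto \rho(y^*x)$ on $\cB(\cH_\pi)$, applied to a carefully chosen pair of elements. First I would set $u = \pi(g)$ (a unitary in $\cB(\cH_\pi)$) and $b = \pi(g^{-1})a\pi(g) = u^*au$, so that $0\le b\le 1$ and $\rho(b)=0$ by hypothesis. Since $\rho$ is a state and $0\le b\le 1$, the vanishing $\rho(b)=0$ should force $\rho$ to ``kill'' $b$ from both sides: concretely, I would first show $\rho(b^{1/2}x) = 0 = \rho(xb^{1/2})$ for every $x\in\cB(\cH_\pi)$, via Cauchy--Schwarz, $|\rho(b^{1/2}x)|^2 \le \rho(b)\,\rho(x^*x) = 0$. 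In particular $\rho(bx)=\rho(xb)=0$ for all $x$, and more usefully $\rho(b^{1/2}x b^{1/2})=0$ for all $x$.

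Next I would exploit $\rho(a)=1$ together with $0\le a\le 1$ in the same way but in the ``complementary'' direction: since $\rho(1-a)=0$ and $0\le 1-a\le 1$, the same Cauchy--Schwarz argument gives $\rho\big((1-a)^{1/2}x\big)=0=\rho\big(x(1-a)^{1/2}\big)$ for all $x$, hence $\rho(x) = \rho(a^{1/2}xa^{1/2})$ for all $x$ after writing $x = (1-a)^{1/2}x + a^{1/2}x\cdot(\text{correction})$; more cleanly, one shows $\rho(xa) = \rho(x) = \rho(ax)$ for all $x$, and then $\rho(a^{1/2}xa^{1/2}) = \rho(x)$ can be deduced (the standard computation: $\rho(x) - \rho(a^{1/2}xa^{1/2})$ splits into terms each containing a factor $(1-a)^{1/2}$ on one side, which vanish). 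So $\rho(u) = \rho(a^{1/2} u a^{1/2})$.

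Now I would combine the two observations. We have $a^{1/2} u a^{1/2} = a^{1/2}\,u\,a^{1/2}$, and I want to relate this to $b^{1/2} = (u^*au)^{1/2} = u^* a^{1/2} u$. The key algebraic identity is $a^{1/2} u a^{1/2} = u \cdot (u^* a^{1/2} u) \cdot (u^* a^{1/2} u)\cdot \text{?}$ --- more simply, $a^{1/2}ua^{1/2} = (u u^* a^{1/2} u)(u^* a^{1/2}) = u\,(u^*a^{1/2}u)\,(u^*a^{1/2}u)^{?}$; the cleanest route is to note $a^{1/2} u a^{1/2} = u\,(u^{*}a^{1/2}u)\,(u^{*}a^{1/2}u)^{-?}$. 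Since that is getting awkward, the cleaner bookkeeping is: write $a^{1/2}ua^{1/2} = (a^{1/2}u)(ua^{1/2}u^{*})^{?}$. Let me instead use $b^{1/2}=u^*a^{1/2}u$ directly, so $a^{1/2} = u b^{1/2} u^*$, whence
\[
a^{1/2} u a^{1/2} = u b^{1/2} u^* \cdot u \cdot u b^{1/2} u^* = u\, b^{1/2}\, (u\, b^{1/2}\, u^*) \cdot u^?
\]
which still has a stray $u$; the correct grouping is $a^{1/2}ua^{1/2} = (ub^{1/2}u^*)\,u\,(ub^{1/2}u^*) = u\,b^{1/2}\,(u\,b^{1/2}u^*)$, i.e.\ $= u\, b^{1/2}\, a^{1/2}$. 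Thus $\rho(u) = \rho(a^{1/2}ua^{1/2}) = \rho(u\,b^{1/2}\,a^{1/2})$, and now $|\rho(u b^{1/2} a^{1/2})|^2 \le \rho(u b u^*)\,\rho(a) $ by Cauchy--Schwarz (with $x = (ub^{1/2})^*{}^{?}$) --- actually applying Cauchy--Schwarz to $\rho\big((a^{1/2})(a^{1/2}u^*b^{1/2})^*{}^{*}\big)$ gives $|\rho(ub^{1/2}a^{1/2})|^2 \le \rho(ubu^*)\cdot\rho(a)$; and $\rho(ubu^*) = \rho(u\,u^*au\,u^*) = \rho(a)=1$?? That is wrong, so the decisive step is instead to observe $\rho(u b^{1/2} a^{1/2}) = 0$ because $b^{1/2}$ kills $\rho$ when sandwiched: by the first paragraph $\rho(yb^{1/2}z)=0$ for all $y,z$ is false in general, but $\rho(x b^{1/2})=0$ for all $x$ \emph{is} what Cauchy--Schwarz gave, so taking $x = u b^{1/2}$... no, we need $b^{1/2}$ on the \emph{right} of the whole expression: $\rho(ub^{1/2}\cdot a^{1/2})$ does not have that form.

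**The main obstacle.** The genuine content of this lemma is precisely the algebraic juggling above: one must find the right way to write $\pi(g)$ modulo the null space of the GNS form so that \emph{both} the condition $\rho(a)=1$ (letting you insert $a^{1/2}$ on both sides) and the condition $\rho(u^*au)=0$ (letting you delete anything with that factor) apply simultaneously. The clean execution is: from $\rho(a)=1$ get $\rho(\pi(g)) = \rho(a^{1/2}\pi(g)a^{1/2})$; rewrite $a^{1/2}\pi(g)a^{1/2} = \big(a^{1/2}\pi(g)\big)\big(\pi(g)^* a^{1/2}\pi(g)\big) = \big(a^{1/2}\pi(g)\big) b^{1/2}$ where $b = \pi(g^{-1})a\pi(g)$; then Cauchy--Schwarz gives $|\rho(\pi(g))|^2 = |\rho\big((a^{1/2}\pi(g))b^{1/2}\big)|^2 \le \rho\big(a^{1/2}\pi(g)\pi(g)^*a^{1/2}\big)\cdot\rho(b) = \rho(a)\cdot 0 = 0$, so $\rho(\pi(g))=0$. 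That last chain --- identity $a^{1/2}ua^{1/2}=(a^{1/2}u)b^{1/2}$ with $b^{1/2}=u^{*}a^{1/2}u$, then one application of Cauchy--Schwarz --- is the whole proof, and getting the identity and the Cauchy--Schwarz bracketing exactly right is the only subtlety; everything else (that $\rho(a)=1,\ 0\le a\le1 \Rightarrow \rho(xa)=\rho(x)$, the existence of positive square roots, $\pi(g)$ unitary) is standard.
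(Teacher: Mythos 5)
Your final ``clean execution'' is essentially the paper's proof: two applications of the Cauchy--Schwarz inequality against the two null positive elements $1-a$ and $b=\pi(g^{-1})a\pi(g)$ (the paper works with $\rho((1-a)^2)=0$ and $\rho(b^2)=0$ where you work with square roots, which is only cosmetic). One algebraic slip to flag: the identity $a^{1/2}\pi(g)a^{1/2}=(a^{1/2}\pi(g))\bigl(\pi(g)^{*}a^{1/2}\pi(g)\bigr)$ is false --- the right-hand side is $a^{1/2}\pi(g)\pi(g)^{*}a^{1/2}\pi(g)=a\,\pi(g)$, not $a^{1/2}\pi(g)a^{1/2}$. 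This does not damage the argument, since $\rho(\pi(g))=\rho(a\pi(g))$ holds anyway (from $\rho((1-a)x)=0$ for all $x$, which you established), so the chain $|\rho(\pi(g))|^{2}=|\rho((a^{1/2}\pi(g))b^{1/2})|^{2}\le\rho(a^{1/2}\pi(g)\pi(g)^{*}a^{1/2})\,\rho(b)=\rho(a)\cdot 0=0$ is valid; just route it through $a\pi(g)$ rather than through $a^{1/2}\pi(g)a^{1/2}$.
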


\begin{proof}
Since 
$0\leq \pi(g^{-1})a\pi(g)\leq1$ and $\rho(\pi(g^{-1})a\pi(g)) = 0$ we get
$\rho(\pi(g^{-1})a^2\pi(g)) = 0$.
Similarly, 
$\rho((1-a)^2) = 0$,
and the Cauchy-Schwartz inequality implies 
$|\rho((1-a)\pi(g))| \leq \rho((1-a)^2) \rho(1) = 0$.
So we have
\begin{align*}
|\rho(a\pi(g)) |&= |\rho(\pi(g) \pi(g^{-1})a\pi(g)) |\\&= |\rho(\pi(g^{-1})a\pi(g)\pi(g^{-1}))| \\
&\leq \rho(\pi(g^{-1})a^2\pi(g)) = 0 .
\end{align*}
Hence $\rho(\pi(g)) = \rho(a\pi(g)) +  \rho((1-a)\pi(g)) = 0$.
\end{proof}

\subsection{Positive definite functions, invariant and stationary random subgroups}
A function $\phi:\Gamma\to \bC$ is called a \define{positive definite function (pdf)} 
if for any $n\in \bN$ and $g_1, g_2, \dots, g_n \in \Gamma$ the matrix 
$[\phi(g_ig_j^{-1})]_{i,j = 1, \dots, n}$ is positive.

If $\pi$ is a unitary representation of $\Gamma$ on a Hilbert space $\cH_\pi$, 
then for any state $\rho$ on $C^*_\pi(\Gamma)$ the function 
$\phi(g) = \rho(\pi(g))$ is a pdf on $\Gamma$.
Conversely, if $\phi$ is a pdf on $\Gamma$ then 
there is a unitary representation $\pi$  
(e.g. the GNS representation associated to $\phi$ \cite{Brown-Ozawa-08}),
and a vector $\xi\in\cH_\pi$ such that 
$\phi(g) = \langle \pi(g)\xi, \xi\rangle_{\cH_\pi}$.
This yields a canonical identification between the 
weak* compact convex space $\cS(C^*(\Gamma))$ of all states on the full $C^*$-algebra of $\Gamma$, and the 
space $P_\Gamma$ of all pdf $\phi:\Gamma\to\bC$ normalized by $\phi(e)=1$, 
endowed with the pointwise convergence topology. 
In this correspondence, $\rho$ is a {trace} if and only if $\phi$ is 
a \emph{character}, i.e.\ a normalized conjugation invariant pdf 
(i.e.\ $\phi(h^{-1}gh) = \phi(g)$ for all $g, h\in \Gamma$).\\

Let $\Sub(\Gamma)$ be the space of all subgroups of $\Gamma$ with the topology inherited from $2^\Gamma$ (known also as the \emph{Chabauty topology}). It is a compact space, on which $\Gamma$ 
acts by conjugation $g.\Lambda = g^{-1}\Lambda g$.
A $\Gamma$-invariant Borel probability measure $\eta$ is 
called an \define{invariant random subgroup (IRS)~\cite{Abert-Glasner-Virag-14, 7Samurai}}.
If $\eta$ is only $\mu$-stationary for some $\mu\in\pr(\Gamma)$, 
we say that $\eta$ is a \define{$\mu$-stationary random subgroup ($\mu$-SRS)}.

We denote by $\Sub_a(\Gamma)$ the closed, $\Gamma$-invariant
subset of $\Sub(\Gamma)$ of all {amenable} subgroups.

\begin{lem}\label{random-subgrp-->pos-def}
Let $\eta$ be Borel probability measure on $\Sub(\Gamma)$. Then the function $\phi_{\eta}(g) = \eta(\{\Lambda : g\in \Lambda\})$ is positive definite. 
If moreover, $\eta$ is supported on $\Sub_a(\Gamma)$, then there is a state $\rho$ 
on the reduced $C^*$-algebra 
$\csr$ such that $\phi_{\eta}(g) = \rho(\lambda_g)$ for every $g\in \Gamma$.
\end{lem}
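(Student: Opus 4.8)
The plan is to prove the two assertions separately, starting with positive-definiteness, which is elementary, and then upgrading to the statement about the reduced $C^*$-algebra when $\eta$ is supported on $\Sub_a(\Gamma)$.

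\textbf{Step 1: $\phi_\eta$ is positive definite.} First I would write $\phi_\eta(g) = \int_{\Sub(\Gamma)} \mathbf{1}_{\Lambda}(g)\, d\eta(\Lambda)$, where $\mathbf{1}_\Lambda$ denotes the indicator function of the subgroup $\Lambda \le \Gamma$ viewed as a function on $\Gamma$. The key pointwise observation is that for a fixed subgroup $\Lambda$, the function $g \mapsto \mathbf{1}_\Lambda(g)$ is itself positive definite on $\Gamma$: indeed, $\mathbf{1}_\Lambda$ is the pullback under the quotient map $\Gamma \to \Gamma/\Lambda$ (when $\Lambda$ is normal this is literally a homomorphism composed with the trivial-representation coefficient, but in general one observes directly that $\mathbf{1}_\Lambda(g_i g_j^{-1}) = \langle \delta_{g_j \Lambda}, \delta_{g_i \Lambda}\rangle_{\ell^2(\Gamma/\Lambda)}$ since $g_i g_j^{-1} \in \Lambda \iff g_i \Lambda = g_j \Lambda$). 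Hence $[\mathbf{1}_\Lambda(g_i g_j^{-1})]_{i,j}$ is a Gram matrix, so positive semidefinite. Since $\eta$ is a positive measure and the cone of positive semidefinite matrices is closed under integration, $[\phi_\eta(g_i g_j^{-1})]_{i,j} = \int [\mathbf{1}_\Lambda(g_ig_j^{-1})]_{i,j}\, d\eta(\Lambda) \ge 0$, giving positive-definiteness of $\phi_\eta$. Note also $\phi_\eta(e) = \eta(\Sub(\Gamma)) = 1$, so $\phi_\eta \in P_\Gamma$.

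\textbf{Step 2: realizing $\phi_\eta$ on $\csr$ when $\eta$ is supported on $\Sub_a(\Gamma)$.} The point is that for an amenable subgroup $\Lambda \le \Gamma$, the quasi-regular representation $\Gamma \acts \ell^2(\Gamma/\Lambda)$ is weakly contained in the left regular representation $\lambda$; this is a standard consequence of amenability of $\Lambda$ (the trivial representation of $\Lambda$ is weakly contained in the regular representation of $\Lambda$, and inducing preserves weak containment, and $\mathrm{Ind}_\Lambda^\Gamma \lambda_\Lambda \cong \lambda_\Gamma$). Consequently the coefficient function $\mathbf{1}_\Lambda(g) = \langle \lambda_{\Gamma/\Lambda}(g)\delta_{e\Lambda}, \delta_{e\Lambda}\rangle$ extends to a state on $\csr$; equivalently, $\mathbf{1}_\Lambda$ lies in the weak-$*$ closed convex hull of normalized positive-definite functions associated to $\lambda$, i.e.\ it defines a (positive) linear functional on $\csr$ of norm one, evaluated on $\lambda_g$ by $\mathbf{1}_\Lambda(g)$. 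Then I would integrate: define $\rho(a) := \int_{\Sub_a(\Gamma)} \rho_\Lambda(a)\, d\eta(\Lambda)$ for $a \in \csr$, where $\rho_\Lambda$ is the state on $\csr$ with $\rho_\Lambda(\lambda_g) = \mathbf{1}_\Lambda(g)$. One checks $\rho$ is a state (positivity and unitality pass through the integral, and measurability of $\Lambda \mapsto \rho_\Lambda(a)$ must be verified — for $a$ a finite linear combination of the $\lambda_g$ this is clear since $\Lambda \mapsto \mathbf{1}_\Lambda(g)$ is Borel on $\Sub(\Gamma)$, and the general case follows by density and a uniform bound), and then $\rho(\lambda_g) = \int \mathbf{1}_\Lambda(g)\, d\eta(\Lambda) = \phi_\eta(g)$ as required.

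\textbf{Main obstacle.} The genuinely substantive point is the weak-containment fact $\lambda_{\Gamma/\Lambda} \prec \lambda_\Gamma$ for $\Lambda$ amenable, which is what forces $\mathbf{1}_\Lambda$ to be a $\csr$-state rather than merely a $C^*(\Gamma)$-state; everything else (Gram-matrix positivity, integrating states, measurability of the integrand) is routine. I would either cite this directly or give the two-line argument: amenability of $\Lambda$ gives $1_\Lambda \prec \lambda_\Lambda$ as representations of $\Lambda$, continuity of induction for weak containment gives $\mathrm{Ind}_\Lambda^\Gamma 1_\Lambda \prec \mathrm{Ind}_\Lambda^\Gamma \lambda_\Lambda$, and the right-hand side is $\lambda_\Gamma$ while the left-hand side is the quasi-regular representation $\lambda_{\Gamma/\Lambda}$ whose cyclic vector $\delta_{e\Lambda}$ has coefficient $\mathbf{1}_\Lambda$. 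A minor technical caveat worth handling explicitly is measurability of $\Lambda \mapsto \rho_\Lambda(a)$ for general $a \in \csr$, which I would dispatch by noting it is a pointwise limit of the manifestly Borel functions coming from finitely-supported $a$, uniformly bounded by $\|a\|$, so dominated convergence applies.
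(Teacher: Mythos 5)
Your argument is correct and essentially the paper's: both rest on positive-definiteness of each indicator $\mathds{1}_\Lambda$ and on weak containment of the quasi-regular representation in the regular representation for amenable $\Lambda$, and then integrate over $\eta$ (the paper phrases the integration as taking the barycenter of the push-forward of $\eta$ in the compact convex set $P_\Gamma$, resp.\ in the state space of $\csr$, which absorbs your measurability check). One small slip worth fixing: $g_ig_j^{-1}\in\Lambda$ is equivalent to $\Lambda g_i=\Lambda g_j$ (equality of \emph{right} cosets), not to $g_i\Lambda=g_j\Lambda$, so the Gram-matrix computation should use the vectors $\delta_{\Lambda g_i}\in\ell^2(\Lambda\backslash\Gamma)$ (equivalently $\delta_{g_i^{-1}\Lambda}$); since $\mathds{1}_\Lambda(g^{-1})=\mathds{1}_\Lambda(g)$ this does not affect any conclusion.
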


\begin{proof}
Given a subgroup $\Lambda\in \Sub(\Gamma)$, let $\mathds{1}_{\Lambda}$
denote its characteristic function. It is not hard to see that
$\mathds{1}_{\Lambda} \in P_\Gamma$. The map 
$\Sub(\Gamma)\ni\Lambda \mapsto \mathds{1}_{\Lambda}\in P_\Gamma$
is clearly continuous. Let
$\bar{\eta}\in \pr(P_\Gamma)$ be the push-forward of $\eta$, 
then $\phi_{\eta}$ is the barycenter of $\bar{\eta}$.

If $\Lambda\leq \Gamma$ is an amenable subgroup, then 
the quasi-regular representation $\Gamma$ on 
$\ell^2(\Gamma\backslash\Lambda)$ is weakly contained in 
the regular representation of $\Gamma$, which implies 
$\mathds{1}_{\Lambda}$ corresponds to a state on $\csr$. Hence if $\eta$
is supported on amenable subgroups then the barycenter
of $\bar{\eta}$ corresponds to a state $\rho$ on $\csr$.
\end{proof}

\section{Topological, measurable, and uniquely stationary boundaries}\label{Topological vs measuableboundaries}

In this section we recall the notions of topological and measurable boundary actions of discrete groups. 
We comment on advantages of each setting over the other, and 
prove that a uniquely stationary measurable boundary (USB) is  
also a topological boundary. Thus, in the framework of 
such systems we may apply both topological and 
measure-theoretical techniques.

\subsection{Topological vs. measurable boundaries}\label{sec:top-and-measurable-bnds}

For more details on theory of boundary actions we 
refer the reader to~\cite{Furstenberg-73, Glasner-76, Furman-book-02}.

\begin{defn}[Topological boundary actions]
A continuous action $\Gamma\acts X$ 
on a compact space $X$ is 
a \define{topological boundary action} if 
for every $\nu\in \pr(X)$ and $x\in X$
there is a net $g_i$ of elements of $\Gamma$ 
such that $g_i \nu \to \delta_x$ in the weak* topology,
where $\delta_x$ is the Dirac measure at $x$.
\end{defn}

It can be shown that an action $\Gamma\acts X$ 
is a topological boundary if and only if 
for every $\eta\in\pr(X)$ the Poisson map 
$\cP_\eta$ is isometric (\cite{Azencott-70}).

\begin{prop}\cite{Furstenberg-73}
There is a unique (up to $\Gamma$-equivariant homeomorphism) 
maximal $\Gamma$-boundary $\partial_F \Gamma$
in the sense that every $\Gamma$-boundary $X$ is a continuous 
$\Gamma$-equivariant image of $\partial_F \Gamma$.
\end{prop}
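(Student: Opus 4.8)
The plan is to construct $\partial_F\Gamma$ as a universal object via the Gelfand dual of a suitable commutative $C^*$-algebra, and to obtain both existence and the universal mapping property simultaneously. First I would fix a generating measure $\mu\in\pr(\Gamma)$ (the maximal boundary does not depend on the choice, but choosing one makes the construction concrete) and consider the poset of $\Gamma$-boundaries with $\Gamma$-equivariant continuous surjections as morphisms. The key observation is that boundaries are closed under two operations: taking (closed $\Gamma$-invariant subsets of) products, and passing to the image of a boundary under a $\Gamma$-map with boundary target. Using the characterization recalled just above the statement --- namely, $\Gamma\acts X$ is a boundary iff the Poisson map $\cP_\eta$ is isometric for every $\eta\in\pr(X)$ --- one checks that if $(X_i)_{i\in I}$ are boundaries then the closure of the diagonal $\Gamma$-orbit inside $\prod_i X_i$ (or more precisely, the joint image under the $\cP$-maps) is again a boundary, since isometry of each $\cP_{\eta_i}$ forces isometry on the product.

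Next I would form the directed system of all (isomorphism classes of) \emph{separable} $\Gamma$-boundaries; separability suffices because any continuous function on a product is determined by countably many coordinates, so every boundary is an inverse limit of separable ones and it is enough to dominate the separable ones. The inverse limit $X_\infty := \varprojlim X_i$ of this directed system is then a compact $\Gamma$-space, and I would verify it is again a boundary: given $\eta\in\pr(X_\infty)$ and $x\in X_\infty$, push $\eta$ forward to each $X_i$, use that $X_i$ is a boundary to find nets $g^{(i)}_j$ with $g^{(i)}_j\eta_i\to\delta_{x_i}$, and then a diagonal/compactness argument across the directed system produces a single net $g_j$ with $g_j\eta\to\delta_x$ in $X_\infty$. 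By construction $X_\infty$ surjects $\Gamma$-equivariantly onto every separable boundary, hence (by the inverse-limit remark) onto every boundary. Set $\partial_F\Gamma := X_\infty$.

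For uniqueness up to $\Gamma$-equivariant homeomorphism: if $Y$ is another maximal boundary, then by maximality of $\partial_F\Gamma$ there is a $\Gamma$-surjection $p:\partial_F\Gamma\to Y$, and by maximality of $Y$ there is $q:Y\to\partial_F\Gamma$; the composites $q\circ p$ and $p\circ q$ are $\Gamma$-endomorphisms of maximal boundaries, and the essential rigidity point --- that a boundary admits no proper $\Gamma$-quotient endomorphism, equivalently that $\Gamma$-endomorphisms of a boundary are injective --- forces them to be homeomorphisms. This last rigidity statement is the one genuinely substantive input: it follows from the fact that a boundary is \emph{minimal} and \emph{strongly proximal}, so any $\Gamma$-factor map that is not injective would, after pushing forward a generic measure, contradict the strong proximality (the $\cP_\eta$-isometry criterion again).

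The main obstacle I expect is the set-theoretic size issue in forming ``the system of all boundaries'': the class of all $\Gamma$-boundaries is a proper class, so the reduction to separable boundaries (or equivalently to boundaries of bounded cardinality, using that $|C(X)|$ is controlled once $X$ embeds in a power of $[0,1]$ indexed by a fixed set) must be done carefully to get an honest set-indexed directed system whose inverse limit exists. Everything else --- the two closure properties of the class of boundaries, the verification that the inverse limit is a boundary, and the rigidity of $\Gamma$-endomorphisms --- is a routine consequence of the Poisson-map isometry criterion stated immediately above.
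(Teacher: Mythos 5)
The paper does not prove this proposition; it is quoted from Furstenberg's work (the standard reference for the argument is Glasner's \emph{Proximal Flows}, Chapter III). Your overall architecture --- bound the cardinality of boundaries, combine them all inside a product, extract a single boundary dominating everything, and then use a rigidity statement for self-maps to get uniqueness --- is indeed the classical route. But two steps, one in each half, are genuinely wrong as written.

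First, the closure of a diagonal $\Gamma$-orbit in a product of boundaries is \emph{not} in general a boundary. It is strongly proximal (closed invariant subsets and products of strongly proximal flows are strongly proximal, via an iterated contraction-and-pass-to-a-subnet argument, not merely because ``isometry of each $\cP_{\eta_i}$ forces isometry on the product''), but it need not be minimal: already for $\bF_2\acts\partial\bF_2\times\partial\bF_2$, the orbit closure of an off-diagonal pair properly contains the diagonal. Since the definition of boundary used here ($g_i\nu\to\delta_x$ for \emph{every} $x$) forces minimality, a non-minimal orbit closure is not a boundary. The fix is standard --- take, by Zorn's lemma, a minimal closed invariant subset of the product of representatives of all isomorphism classes of boundaries; its projection to each factor is onto by minimality of the factors --- and this also lets you dispense with the inverse-limit detour, which has its own hidden problem: to form $\varprojlim X_i$ you need a coherent choice of connecting maps, and that choice is only canonical once you know morphisms between boundaries are unique.

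Second, and more seriously, your rigidity lemma is misstated and the sketch of its proof fails. It is false that ``any $\Gamma$-factor map \big[between boundaries\big] that is not injective contradicts strong proximality'': non-injective factor maps between boundaries are ubiquitous --- the quotient $\partial_F\Gamma\to X$ onto a proper boundary is one --- and no pushforward-of-a-generic-measure argument will rule them out. The correct substantive input is that \emph{between two boundaries there is at most one $\Gamma$-map}. Given $p,q\colon X\to Y$, consider $x\mapsto\tfrac12(\delta_{p(x)}+\delta_{q(x)})\in\pr(Y)$; the closed convex hull of its image is a closed convex invariant subset of $\pr(Y)$, hence equals $\pr(Y)$ by the irreducibility of $\pr(Y)$ as an affine flow (this is exactly Proposition~\ref{affine-act->top-bnd}); Milman's theorem then puts every $\delta_y$ in the closed image, forcing $p(x)=q(x)$ at some point, hence everywhere since $\{p=q\}$ is closed and invariant and $X$ is minimal. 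With this lemma, $q\circ p$ and $\id_X$ are both morphisms $X\to X$, so $q\circ p=\id_X$ and likewise $p\circ q=\id_Y$, giving uniqueness. (Note that the shortcut ``endomorphisms of a minimal proximal flow are the identity'' is not available: boundaries are strongly proximal but need not be proximal, e.g.\ lattices acting on full flag varieties.) As it stands, the uniqueness half of your proof rests on an unproved and incorrectly justified claim.
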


The maximal boundary $\partial_F \Gamma$ is called the \define{Furstenberg boundary} of $\Gamma$.

\begin{defn}[Measurable boundary actions]\label{defn:measurable-boundaries}
Let $\mu\in\pr(\Gamma)$, and suppose $\nu$ is 
a $\mu$-stationary measure on a metrizable $\Gamma$-space $X$. The action 
$(\Gamma, \mu)\acts (X,\nu)$
is a \define{$\mu$-boundary action} if
for almost every path 
$\omega = (\omega_k)\in \Omega$ of the $(\Gamma, \mu)$-random walk,
the sequence $\omega_k\nu$ converges to a Dirac measure $\delta_{x_\om}$.

In this case the map $\bnd : (\Omega, \bP_\mu) \to (X, \nu)$ 
defined by $\bnd(\om) = x_\om$ is called a \emph{boundary map}.

A measurable non-singular action $\Gamma\acts (Y, \eta)$ is called a 
$\mu$-boundary action if it is $\mu$-stationary and admits a compact metrizable model 
$(X,\nu)$ which is a $\mu$-boundary in the above sense.
\end{defn}

\begin{prop}\cite{Furstenberg-73}
There is a unique (up to $\Gamma$-equivariant measurable isomorphism) 
maximal $\mu$-boundary $(\Pi_\mu , \nu_\infty)$
in the sense that every $\mu$-boundary $(X,\nu)$ is a measurable 
$\Gamma$-equivariant image of $(\Pi_\mu , \nu_\infty)$.
\end{prop}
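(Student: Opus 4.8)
The plan is to identify the maximal $\mu$-boundary with the \emph{Poisson boundary}: the quotient of the path space $(\Omega,\bP_\mu)$ by its tail $\sigma$-algebra $\mathcal{T}=\bigcap_{N}\sigma(\omega_N,\omega_{N+1},\dots)$. Since $\mathcal{T}$ is countably generated modulo $\bP_\mu$-null sets, let $(\Pi_\mu,\nu_\infty)$ be its point realization, with quotient map $\bnd\colon(\Omega,\bP_\mu)\to(\Pi_\mu,\nu_\infty)$ and $\nu_\infty=\bnd_*\bP_\mu$. The $\Gamma$-action $(g\omega)_k=g\omega_k$ on $\Omega$ preserves $\mathcal{T}$, and because $\mu$ is generating the measures $g_*\bP_\mu$ and $\bP_\mu$ have equivalent restrictions to $\mathcal{T}$; hence $\Gamma$ acts non-singularly on $(\Pi_\mu,\nu_\infty)$, and since deleting the initial position does not change the tail, one gets $\sum_g\mu(g)\,g\nu_\infty=\nu_\infty$, i.e.\ $\nu_\infty$ is $\mu$-stationary. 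As $(\Pi_\mu,\nu_\infty)$ is standard it admits a metrizable compact model (Section~\ref{Preliminaries}), which we fix from now on.

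First I would check that $(\Gamma,\mu)\acts(\Pi_\mu,\nu_\infty)$ is a $\mu$-boundary in the sense of Definition~\ref{defn:measurable-boundaries}. For $f\in C(\Pi_\mu)$ the function $\hat f(g):=\langle g\nu_\infty,f\rangle$ is $\mu$-harmonic by stationarity, so $n\mapsto\hat f(\omega_n)=\langle\omega_n\nu_\infty,f\rangle$ is a bounded martingale with respect to $\sigma(\omega_1,\dots,\omega_n)$ under $\bP_\mu$, and therefore converges $\bP_\mu$-a.s. Running this over a countable point-separating family of functions $f$, the measures $\omega_n\nu_\infty$ converge weak$^*$ a.s.\ to some $\nu_\omega$; the substantive step is to show that $\omega\mapsto\nu_\omega$ is $\mathcal{T}$-measurable and coincides $\bP_\mu$-a.e.\ with $\delta_{\bnd(\omega)}$, so that each $\nu_\omega$ is a point mass and $\bnd$ is the associated boundary map.

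For maximality, let $(X,\nu)$ be any $\mu$-boundary with boundary map $\bnd_X$, characterized by $\omega_n\nu\to\delta_{\bnd_X(\omega)}$. This limit is unchanged if a finite initial segment of $\omega$ is deleted, so $\bnd_X$ is $\mathcal{T}$-measurable modulo null sets and hence factors as $\bnd_X=\psi\circ\bnd$ for a measurable map $\psi\colon(\Pi_\mu,\nu_\infty)\to(X,\nu)$. Both boundary maps intertwine the $\Gamma$-action with multiplication on positions, which forces $\psi$ to be $\Gamma$-equivariant a.e.; and Furstenberg's Theorem~\ref{cond-meas} gives $\psi_*\nu_\infty=(\bnd_X)_*\bP_\mu=\int_\Omega\delta_{\bnd_X(\omega)}\,d\bP_\mu(\omega)=\int_\Omega\nu_\omega\,d\bP_\mu(\omega)=\nu$. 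Hence $(X,\nu)$ is a $\Gamma$-equivariant measurable image of $(\Pi_\mu,\nu_\infty)$.

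Uniqueness of the maximal object is then formal: the previous paragraph realizes every $\mu$-boundary, via its essentially unique boundary map, as the quotient of $(\Omega,\bP_\mu)$ by a $\Gamma$-invariant sub-$\sigma$-algebra of $\mathcal{T}$, with $\Gamma$-equivariant factor maps corresponding to inclusions of these sub-$\sigma$-algebras; two maximal boundaries therefore have mutually included, hence equal, defining $\sigma$-algebras, and so are $\Gamma$-equivariantly isomorphic. The main obstacle is the identification in the second paragraph --- proving that the abstractly defined tail quotient genuinely has the Dirac-convergence property, together with the routine but genuinely needed bookkeeping that promotes the various ``$\bP_\mu$-a.e.'' statements and the passage between the measurable and compact-model pictures into honest $\Gamma$-equivariant isomorphisms. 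This is precisely the content of Furstenberg's construction of the Poisson boundary in~\cite{Furstenberg-73}, and everything else is soft.
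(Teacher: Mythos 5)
The paper offers no proof of this proposition---it is quoted directly from Furstenberg---and your outline is precisely Furstenberg's construction of the Poisson boundary as the point realization of the tail quotient of the path space, with the one genuinely substantive step (that the conditional measures of that quotient are Dirac) correctly isolated and deferred to the same reference. The only cosmetic remark is that the classical construction uses the shift-invariant $\sigma$-algebra rather than the tail $\sigma$-algebra; for random walks on groups these coincide mod $0$, and since boundary maps are in any case shift-invariant, your maximality and uniqueness arguments go through unchanged.
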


The maximal $\mu$-boundary $(\Pi_\mu , \nu_\infty)$ 
is called the \define{Poisson boundary} of the pair $(\Gamma,\mu)$ (also known sometimes as the \emph{Furstenberg-Poisson boundary}).

One should note that since the Poisson boundary is 
defined up to measurable isomorphism, it should be considered 
as a measurable $\Gamma$-space.\\

Alternatively, boundaries can be characterized in terms 
of their function algebras.
This is key in allowing the use of algebraic tools in the study of boundary actions.

The operator algebraic description of topological boundaries 
require some notions from the theory of injective envelopes 
as developed by Hamana~\cite{Hamana-85}. 
Since we will not use this in our work here, 
we only recall the main result regarding this 
characterization, and refer the reader to 
\cite{Kalantar-Kennedy-17, Breuillard-Kalantar-Kennedy-Ozawa-17} for more details.

\begin{thm}\cite[Theorem 3.11]{Kalantar-Kennedy-17}
Let $\Gamma$ be a discrete group. Then 
$C(\partial_F\Gamma)$ is the smallest 
injective object in the category of unital $\Gamma$-$C^*$-algebras.
\end{thm}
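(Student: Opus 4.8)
The plan is to realize $C(\partial_F\Gamma)$ as the \emph{$\Gamma$-injective envelope} $I_\Gamma(\mathbb{C})$ of the trivial one-dimensional $\Gamma$-operator system $\mathbb{C}$, in the sense of Hamana's equivariant theory: $I_\Gamma(\mathbb{C})$ is the (unique up to $\Gamma$-isomorphism) minimal $\Gamma$-injective operator system containing $\mathbb{C}1$, and this extension is \emph{$\Gamma$-rigid}, i.e.\ every $\Gamma$-equivariant unital completely positive (u.c.p.) self-map of $I_\Gamma(\mathbb{C})$ that fixes $1$ is the identity. Once this identification is in place, ``smallest injective'' is almost a formal consequence of the universal property. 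So the real work splits into two steps: (a) show $I_\Gamma(\mathbb{C})$ is a commutative $C^*$-algebra $C(X)$ for a compact $\Gamma$-space $X$; and (b) show $X$ is $\Gamma$-equivariantly homeomorphic to $\partial_F\Gamma$.

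For (a), the unit embeds $\mathbb{C}$ $\Gamma$-equivariantly into $\ell^\infty(\Gamma)$ (with the translation action), which is $\Gamma$-injective; minimality of the envelope lets us view $I_\Gamma(\mathbb{C})$ as a $\Gamma$-operator subsystem of $\ell^\infty(\Gamma)$ which is the range of a $\Gamma$-equivariant u.c.p.\ idempotent $\phi\colon\ell^\infty(\Gamma)\to I_\Gamma(\mathbb{C})$. The Choi--Effros product $a\cdot b:=\phi(ab)$ makes $I_\Gamma(\mathbb{C})$ a $C^*$-algebra, and since $\ell^\infty(\Gamma)$ is commutative this product is commutative; hence $I_\Gamma(\mathbb{C})=C(X)$ for a compact $\Gamma$-space $X$, and $\Gamma$-rigidity forces the $\Gamma$-action on $X$ to be minimal. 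For (b) I would first show $X$ is a topological boundary using the Azencott criterion quoted above: given $\nu\in\pr(X)$, the Poisson map $\cP_\nu\colon C(X)\to\ell^\infty(\Gamma)$ is $\Gamma$-equivariant, unital and positive, so $\phi\circ\cP_\nu$ is a $\Gamma$-equivariant u.c.p.\ self-map of $C(X)$ fixing $1$; by $\Gamma$-rigidity $\phi\circ\cP_\nu=\mathrm{id}_{C(X)}$, whence $\cP_\nu$ is isometric. Since this holds for all $\nu$, Azencott's theorem gives that $X$ is a $\Gamma$-boundary, so by maximality of $\partial_F\Gamma$ there is a continuous $\Gamma$-surjection $\partial_F\Gamma\to X$. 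Conversely, $\Gamma$-injectivity of $C(X)=I_\Gamma(\mathbb{C})$ extends the inclusion $\mathbb{C}\hookrightarrow C(X)$ along $\mathbb{C}\hookrightarrow C(\partial_F\Gamma)$ to a $\Gamma$-equivariant u.c.p.\ map $\psi\colon C(\partial_F\Gamma)\to C(X)$, and one checks $\psi$ is a $*$-homomorphism; this is where the boundary structure of $\partial_F\Gamma$ enters, through a multiplicative-domain argument exploiting that the $\Gamma$-orbit closure of every state on $C(\partial_F\Gamma)$ contains point masses, which forces the compositions $\cP_\nu\circ\psi$ to be multiplicative and hence $\psi$ itself to be multiplicative. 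Dually $\psi$ is a continuous $\Gamma$-surjection $X\to\partial_F\Gamma$. As there is at most one $\Gamma$-equivariant map between any two $\Gamma$-boundaries, both composites are identities and $X\cong\partial_F\Gamma$.

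For the ``smallest'' claim, let $\cA$ be any injective object in the category of unital $\Gamma$-$C^*$-algebras (with $\Gamma$-equivariant u.c.p.\ morphisms). Using injectivity of $\cA$, extend $\mathbb{C}\hookrightarrow\cA$ along $\mathbb{C}\hookrightarrow C(\partial_F\Gamma)=I_\Gamma(\mathbb{C})$ to a $\Gamma$-equivariant u.c.p.\ map $\Theta\colon C(\partial_F\Gamma)\to\cA$; using injectivity of $C(\partial_F\Gamma)$, extend $\mathrm{id}\colon\mathbb{C}\to\mathbb{C}\subseteq C(\partial_F\Gamma)$ along $\mathbb{C}\hookrightarrow\cA$ to a $\Gamma$-equivariant u.c.p.\ map $E\colon\cA\to C(\partial_F\Gamma)$. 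Then $E\circ\Theta$ is a $\Gamma$-equivariant u.c.p.\ self-map of $C(\partial_F\Gamma)$ fixing $1$, so $E\circ\Theta=\mathrm{id}$ by $\Gamma$-rigidity; in particular $\Theta$ is completely isometric. A Kadison--Schwarz and multiplicative-domain computation — using that $C(\partial_F\Gamma)$ is commutative so $f^{*}f=ff^{*}$, and that $E\circ\Theta=\mathrm{id}$ — shows the range of $\Theta$ lies in the multiplicative domain of $E$ and that $\Theta$ is in fact a unital $*$-homomorphism. Hence $\cA$ contains a copy of $C(\partial_F\Gamma)$ as a $\Gamma$-$C^*$-subalgebra, which is the asserted minimality.

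I expect the main obstacle to be step (b), and within it the promotion of the u.c.p.\ maps supplied by injectivity to genuine $*$-homomorphisms: the commutativity in (a) and the Azencott half of (b) are clean, but converting ``complete order embedding'' into ``$C^*$-subalgebra'' requires carefully combining the abundance of point masses in $\Gamma$-orbit closures on a boundary with the rigidity of the envelope, and keeping the argument uniform over all $\nu\in\pr(X)$; getting that bookkeeping right is the delicate part.
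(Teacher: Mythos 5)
This theorem is quoted in the paper from \cite{Kalantar-Kennedy-17} without proof, so there is no in-paper argument to compare against; measured against the proof in that reference, your architecture is the same one: realize $C(\partial_F\Gamma)$ as Hamana's $\Gamma$-injective envelope $I_\Gamma(\mathbb{C})$ sitting inside the $\Gamma$-injective algebra $\ell^\infty(\Gamma)$, get commutativity from the Choi--Effros product, get the boundary property from rigidity plus the Azencott criterion, and then match the spectrum with $\partial_F\Gamma$ by playing the universal property of the Furstenberg boundary against injectivity. Steps (a) and the Azencott half of (b) are correct as written.

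The two places where you upgrade a u.c.p.\ map to a $*$-homomorphism are where the sketch has real gaps. First, for $\psi\colon C(\partial_F\Gamma)\to C(X)$: the mechanism you name does not work as stated, because $\cP_\nu\circ\psi$ is the Poisson map of the measure $\psi^*\nu$, and such a map is multiplicative only if that measure is already a point mass --- which is what you are trying to prove. The correct argument is pointwise: each $\delta_x\circ\psi$ is a probability measure $\nu_x$ on $\partial_F\Gamma$; the closed convex hull of $\{\nu_x\}$ is a closed convex $\Gamma$-invariant subset of $\pr(\partial_F\Gamma)$, hence all of $\pr(\partial_F\Gamma)$ by strong proximality and minimality, so by Milman's theorem every point mass lies in the compact set $\{\nu_x : x\in X\}$; the locus $\{x : \nu_x \text{ is a point mass}\}$ is then a nonempty closed invariant subset of the minimal space $X$, hence everything. (A cleaner route, and the one closer to \cite{Kalantar-Kennedy-17}, avoids multiplicativity of $\psi$ altogether: one shows that for any boundary $Y$ every unital positive $\Gamma$-map out of $C(Y)$ is isometric, so the $\Gamma$-u.c.p.\ idempotent $q^*\circ\psi$ on $C(\partial_F\Gamma)$ is isometric, and an isometric idempotent is the identity.) Second, in the ``smallest'' step, your Kadison--Schwarz computation only yields $E\bigl(\Theta(f^*f)-\Theta(f)^*\Theta(f)\bigr)=0$ with $\Theta(f^*f)-\Theta(f)^*\Theta(f)\ge 0$; concluding that this positive element vanishes requires $E$ to be faithful, which you have not established and which is not automatic. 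What the argument honestly delivers is that $\Theta$ is a $\Gamma$-equivariant complete order embedding whose image is the range of the $\Gamma$-u.c.p.\ idempotent $\Theta\circ E$ --- i.e.\ a copy of $C(\partial_F\Gamma)$ with its Choi--Effros product --- which is exactly the sense in which $I_\Gamma(\mathbb{C})$ is the smallest injective object in Hamana's category (morphisms being u.c.p.\ maps), and is how the cited theorem should be read; if you want $\Theta$ to be an honest $*$-monomorphism you need a separate argument, not the one sketched.
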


The $L^\infty$-algebras of measurable boundaries 
are precisely invariant von Neumann subalgebras of 
the algebra of bounded harmonic functions.

Recall for $\mu\in\pr(\Gamma)$ a 
function $f \in \ell^\infty(\Gamma)$ is said to be 
\define{$\mu$-harmonic} if
\begin{equation}\label{harm-def}
f(g) = \sum_{h\in\Gamma} \mu(h) f(g h) ~~\text{ for all } g \in \Gamma .
\end{equation}
We denote by $H^{\infty}(\Gamma,\mu) \subset \ell^\infty(\Gamma)$ the space of all 
bounded $\mu$-harmonic functions.
Observe that $H^{\infty}(\Gamma,\mu)$ is invariant under the action of $\Gamma$ by left translations. 

The space $H^{\infty}(\Gamma,\mu)$ is not a subalgebra of 
$\ell^\infty(\Gamma)$ in general, 
but the formula
\begin{equation}\label{harm-mult}
f_1\cdot f_2 (g) := \lim_{n\to\infty}\sum_{h\in \Gamma}f_1(h) f_2(h^{-1}g)\mu^n(h)
\end{equation}
defines a multiplication on $H^{\infty}(\Gamma,\mu)$ and turns it to a commutative von Neumann algebra.

\begin{prop}[Furstenberg]\label{prop:abstract-boundaries}
The Poisson map $\cP_{\nu_\infty}$ defines a von Neumann 
algebra isomorphism $L^\infty(\Pi_\mu, \nu_\infty)\cong H^\infty(\Gamma,\mu)$.

In particular, 
a measurable non-singular action $\Gamma\acts (Y, \eta)$ is a 
$\mu$-boundary
if and only if the Poisson map $\cP_\nu: L^\infty(X,\nu) \to H^\infty(\Gamma,\mu)$ 
is a von Neumann algebra embedding.
\end{prop}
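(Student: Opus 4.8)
The plan is to deduce both assertions from the single case of the Poisson boundary itself. First I would record the elementary properties of the Poisson map that hold for \emph{every} $\mu$-stationary measure $\nu$ on a metrizable compact $\Gamma$-space $X$: from \eqref{def:Poisson-map} the map $\cP_\nu$ is unital, positive, contractive and $\Gamma$-equivariant, and inserting the stationarity identity $\nu=\sum_h\mu(h)h\nu$ under the integral sign shows that $\cP_\nu(f)$ satisfies \eqref{harm-def}; thus $\cP_\nu$ is a unital positive $\Gamma$-map $L^\infty(X,\nu)\to H^\infty(\Gamma,\mu)$. It is also normal: since a generating $\mu$-stationary $\nu$ is non-singular (and for non-generating $\mu$ one replaces $\Gamma$ by the semigroup generated by $\supp(\mu)$, which does not affect harmonic functions), the assignment $a\mapsto\sum_g a_g\,\tfrac{d(g\nu)}{d\nu}$ is a norm-contraction $\ell^1(\Gamma)\to L^1(X,\nu)$ whose adjoint is exactly $\cP_\nu$, so $\cP_\nu$ is weak*--weak* continuous.

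The heart of the matter is that for the Poisson boundary $(\Pi_\mu,\nu_\infty)$ the map $\cP_{\nu_\infty}$ is a \emph{bijection} onto $H^\infty(\Gamma,\mu)$. For injectivity, note that $n\mapsto\cP_{\nu_\infty}(f)(\om_n)$ is, by \eqref{harm-def}, a bounded $\bP_\mu$-martingale, hence converges a.s.; by Theorem~\ref{cond-meas} and the fact that a $\mu$-boundary has Dirac conditional measures $\nu_\om=\lim_n\om_n\nu_\infty=\delta_{\bnd(\om)}$, the limit is $f(\bnd(\om))$, and since $\bnd_*\bP_\mu=\nu_\infty$ (again Theorem~\ref{cond-meas}) the vanishing of $\cP_{\nu_\infty}(f)$ forces $f=0$. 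For surjectivity — the Poisson representation formula — I would take $h\in H^\infty(\Gamma,\mu)$, let $\hat h\in L^\infty(\Omega,\bP_\mu)$ be the a.s. limit of the bounded martingale $h(\om_n)$, argue that $\hat h$ is measurable with respect to the exit ($=$ tail) $\sigma$-algebra of the walk, and invoke the universal property defining $\Pi_\mu$ (every $\mu$-boundary is an equivariant measurable image of it, applied to the exit-boundary model) to write $\hat h=f\circ\bnd$ with $f\in L^\infty(\Pi_\mu,\nu_\infty)$; then $h(g)=\bE_{\bP_\mu}[\hat h\mid\om_0=g]=\int f(gx)\,d\nu_\infty(x)=\cP_{\nu_\infty}(f)(g)$. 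Finally, multiplicativity: the product \eqref{harm-mult} on $H^\infty(\Gamma,\mu)$ is precisely the transport of the pointwise product of $L^\infty(\Pi_\mu,\nu_\infty)$ along $\cP_{\nu_\infty}$, which one verifies by expanding the partial sums in \eqref{harm-mult} for $\phi_i=\cP_{\nu_\infty}(f_i)$ as path-space expectations and passing to the limit using the Dirac conditional measures. Together with normality, $\cP_{\nu_\infty}$ is a normal, unital, bijective $*$-homomorphism, i.e.\ a von Neumann algebra isomorphism $L^\infty(\Pi_\mu,\nu_\infty)\cong H^\infty(\Gamma,\mu)$.

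For the ``in particular'' it suffices to treat a compact metrizable model $(X,\nu)$ of $(Y,\eta)$. If $(X,\nu)$ is a $\mu$-boundary, the universal property of $\Pi_\mu$ provides a $\Gamma$-equivariant measurable factor $p\colon(\Pi_\mu,\nu_\infty)\to(X,\nu)$; since $\cP_\nu(f)(g)=\int f(gx)\,d\nu(x)=\int (f\circ p)(gy)\,d\nu_\infty(y)=\cP_{\nu_\infty}(f\circ p)(g)$, the map $\cP_\nu$ is the restriction of the isomorphism $\cP_{\nu_\infty}$ to the $\Gamma$-invariant von Neumann subalgebra $\{f\circ p:f\in L^\infty(X,\nu)\}$, hence a von Neumann algebra embedding into $H^\infty(\Gamma,\mu)$. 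Conversely, if $\cP_\nu\colon L^\infty(X,\nu)\hookrightarrow H^\infty(\Gamma,\mu)$ is a von Neumann embedding, then transporting its image along the isomorphism of the previous paragraph exhibits $L^\infty(X,\nu)$ as a $\Gamma$-invariant von Neumann subalgebra of $L^\infty(\Pi_\mu,\nu_\infty)$, which corresponds to a $\Gamma$-equivariant measurable factor $p\colon\Pi_\mu\to X$ with $\nu=p_*\nu_\infty$; for $f\in C(X)$ one then has $\cP_\nu(f)(\om_n)=\cP_{\nu_\infty}(f\circ p)(\om_n)\to f(p(\bnd(\om)))$ a.s., so $\om_n\nu\to\delta_{p(\bnd(\om))}$, and $(X,\nu)$ — hence $(Y,\eta)$ — is a $\mu$-boundary.

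I expect the only genuinely substantial point to be the surjectivity in the second paragraph, i.e.\ the Poisson representation theorem, whose proof rests on identifying the exit/tail $\sigma$-algebra of the $(\Gamma,\mu)$-random walk with the boundary $\sigma$-algebra of $\Pi_\mu$ — a fact not set up in the Preliminaries. The cleanest way to supply it is to build $\Pi_\mu$ as the exit boundary, prove the bijection $\cP\colon L^\infty\to H^\infty$ there directly from bounded martingale convergence, and then check that this model enjoys the universal property characterizing the Poisson boundary. By contrast, the multiplicativity computation and the measurable factor-theory bookkeeping in the last paragraph are routine once the Dirac conditional measures of Theorem~\ref{cond-meas} are in hand.
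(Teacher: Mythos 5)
The paper does not actually prove this proposition: it is recalled as a classical result of Furstenberg (cf.\ \cite{Furstenberg-73}), so there is no in-paper argument to compare yours against. On its own terms your proof is the standard one and is essentially sound: normality of $\cP_\nu$ via the predual contraction $\ell^1(\Gamma)\to L^1(X,\nu)$, injectivity and multiplicativity from bounded martingale convergence combined with the Dirac conditional measures of Theorem~\ref{cond-meas}, surjectivity from the Poisson representation formula, and the ``in particular'' clause from the correspondence between $\Gamma$-invariant von Neumann subalgebras of $L^\infty(\Pi_\mu,\nu_\infty)$ and measurable $\Gamma$-factors. You also correctly isolate the one genuinely substantial input --- the identification of $H^\infty(\Gamma,\mu)$ with the $L^\infty$-algebra of the exit (tail) boundary --- and, since the Preliminaries never construct $\Pi_\mu$, taking the exit boundary as the working model and verifying its universal property afterwards is the right way to close that circle. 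Two small points deserve care. First, the identification $\lim_n\cP_{\nu_\infty}(f)(\om_n)=f(\bnd(\om))$ follows from Theorem~\ref{cond-meas} only for $f\in C(X)$ on a compact metrizable model; for general $f\in L^\infty$ you should pass through normality of $f\mapsto\lim_n\cP_{\nu_\infty}(f)(\om_n)$ (or a monotone-class argument) before invoking it for injectivity and multiplicativity. Second, the parenthetical about non-generating $\mu$ is unnecessary and not quite right as stated: stationarity gives $g\nu\ll\nu$ only for $g$ in the semigroup generated by $\supp(\mu)$, which does not by itself yield mutual absolute continuity under all of $\Gamma$; but the proposition already assumes the action is non-singular, so the Radon--Nikodym derivatives you use exist by hypothesis and this aside can simply be dropped.
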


Abstractly, we know where to find examples of 
boundary actions.
Measurable boundaries appear whenever one has a stationary action, 
and topological boundaries arise whenever one has an affine action on a compact convex space.

\begin{prop}[Furstenberg]
Suppose $(X, \nu)$ is a $(\Gamma, \mu)$-space. 
Then the weak* closure of the set of conditional measure 
$\{\nu_\om : \om\in\Omega\}$, with the push-forward of 
$\bP_\mu$ under the map $\om\mapsto\nu_\om$, 
is a $\mu$-boundary. Moreover every $\mu$-boundary 
arises in this way.
\end{prop}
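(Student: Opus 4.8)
The plan is to put $Z:=\pr(X)$, a metrizable compact $\Gamma$-space in the weak* topology, and to consider the measurable map $\beta\colon(\Omega,\bP_\mu)\to Z$, $\beta(\om):=\nu_\om$, which is defined a.e.\ by Theorem~\ref{cond-meas}. Set $\bar\nu:=\beta_*\bP_\mu\in\pr(Z)$ and let $B\subseteq Z$ be the weak* closure of $\{\nu_\om:\om\in\Omega\}$ (equivalently, the support of $\bar\nu$). I want to show that $(B,\bar\nu)$ is a $(\Gamma,\mu)$-space which is moreover a $\mu$-boundary, with boundary map $\om\mapsto\nu_\om$. The two things to establish are (i) $\mu$-stationarity of $\bar\nu$ and $\Gamma$-invariance of $B$, and (ii) an explicit identification of the conditional measures of $(B,\bar\nu)$.

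For (i) I would use the ``prepending an increment'' identity. Describing $\Omega$ through i.i.d.\ increments $(g_1,g_2,\dots)$ with $\om_k=g_1\cdots g_k$, if a letter $g\in\Gamma$ is prepended to the increment sequence of $\om$ to form a new path $\om'$, then $\beta(\om')=g\,\beta(\om)$, since $\lim_k(g\om_{k-1})\nu=g\lim_k\om_{k-1}\nu$. As the map $(g,\om)\mapsto\om'$ pushes $\mu\times\bP_\mu$ forward to $\bP_\mu$ (shift-invariance of the Bernoulli increment process), integrating any $F\in C(Z)$ gives $\bar\nu(F)=\sum_g\mu(g)\int_\Omega F(g\,\beta(\om))\,d\bP_\mu(\om)=\sum_g\mu(g)(g\bar\nu)(F)$, i.e.\ $\bar\nu=\sum_g\mu(g)\,g\bar\nu$; since $\mu$ is generating this also forces $B=\supp\bar\nu$ to be $\Gamma$-invariant, so $(B,\bar\nu)$ is a $(\Gamma,\mu)$-space.

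For (ii), which I expect to be the main obstacle, Theorem~\ref{cond-meas} applied to $(Z,\bar\nu)$ already produces conditional measures $\bar\nu_\om=\lim_n\om_n\bar\nu$ for a.e.\ $\om$; the real work is to show $\bar\nu_\om=\delta_{\nu_\om}$. Iterating the prepending identity, conditioning a $\bP_\mu$-path on its first $n$ increments makes the conditional law of $\beta$ equal to the law of $\om_n\,\beta(\om')$ for an independent fresh path $\om'$, so $\mathbb{E}[F(\beta)\mid\mathcal F_n]=(\om_n\bar\nu)(F)$, where $\mathcal F_n=\sigma(g_1,\dots,g_n)$. Since $\beta$ is $\bigvee_n\mathcal F_n$-measurable, L\'evy's upward martingale convergence theorem gives $(\om_n\bar\nu)(F)\to F(\nu_\om)$ a.s.\ for each fixed $F\in C(Z)$; ranging over a countable dense subset of $C(Z)$ (using metrizability of $Z$) upgrades this to $\om_n\bar\nu\to\delta_{\nu_\om}$ in the weak* topology for a.e.\ $\om$, with $\nu_\om\in B$. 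By Definition~\ref{defn:measurable-boundaries} this says $(B,\bar\nu)$ is a $\mu$-boundary. For the ``moreover'' part, given any $\mu$-boundary $(Y,\eta)$, pass to a compact metrizable model $(X_0,\nu_0)$ as in Definition~\ref{defn:measurable-boundaries}, whose conditional measures are Dirac, $\nu_{0,\om}=\delta_{x_\om}$; then $x\mapsto\delta_x$ is a $\Gamma$-equivariant homeomorphism of $\supp\nu_0$ onto the weak* closure of $\{\nu_{0,\om}\}$ carrying $\nu_0$ to $\bar\nu_0$, which exhibits $(Y,\eta)$ as $\Gamma$-isomorphic to the system constructed above from the $(\Gamma,\mu)$-space $(X_0,\nu_0)$.
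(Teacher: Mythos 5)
The paper states this proposition as a classical result of Furstenberg and gives no proof of its own, so there is nothing internal to compare against; your argument is the standard one from the literature and it is correct. The two pillars — the prepending identity $\nu_{\om'}=g\,\nu_\om$ giving stationarity of $\bar\nu:=\beta_*\bP_\mu$, and the computation $\mathbb{E}[F(\beta)\mid\mathcal F_n]=(\om_n\bar\nu)(F)$ combined with L\'evy's upward theorem and a countable dense subset of $C(\pr(X))$ to get $\om_n\bar\nu\to\delta_{\nu_\om}$ — are exactly the right mechanism, and the converse direction via $x\mapsto\delta_x$ on a metrizable model with Dirac conditionals is likewise the intended one. The only caveats are cosmetic: the identification of the closure of $\{\nu_\om\}$ with $\supp\bar\nu$ (and, in the converse, of $\overline{\{\delta_{x_\om}\}}$ with $\delta(\supp\nu_0)$) holds only after discarding an appropriate null set of paths, a looseness already present in the statement itself.
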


\begin{prop}[\cite{Glasner-76}*{Theorem III.2.3}]\label{affine-act->top-bnd}
Suppose $\Gamma\acts K$ is an affine action, and suppose $K$ 
has no proper $\Gamma$-invariant compact convex subspace.
Then the closure of the extreme points of $K$ is a 
topological boundary.
Moreover every topological boundary of $\Gamma$ 
arises in this way.
\end{prop}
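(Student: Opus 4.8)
The statement to prove is Glasner's characterization: if $\Gamma \acts K$ is an affine action on a compact convex space with no proper $\Gamma$-invariant compact convex subset, then $\overline{\partial_e K}$ (the closure of the extreme points) is a topological boundary, and conversely every topological boundary arises this way. I would treat the two directions separately.

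**First direction (affine action $\Rightarrow$ boundary).** The plan is as follows. Set $X = \overline{\partial_e K}$; this is a closed $\Gamma$-invariant subset of $K$ (the extreme point set is $\Gamma$-invariant since $\Gamma$ acts by affine homeomorphisms, and closure preserves invariance), hence a compact $\Gamma$-space. By the Krein--Milman theorem every point of $K$ is the barycenter of some $\nu \in \pr(X)$; conversely barycenters of measures on $X$ stay in $K$ by compactness and convexity. So there is a natural affine continuous surjection $\mathrm{bar} : \pr(X) \to K$. First I would show that the set $C$ of points $x \in K$ such that $\delta_x \in \overline{\Gamma \cdot \pr(X)}^{w^*}$-orbit closures behave well — more precisely, I would fix a point $k_0 \in K$ and consider the orbit closure $\overline{\Gamma k_0}$; its closed convex hull is $\Gamma$-invariant, hence equals $K$ by minimality. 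Then for any $x \in \partial_e K$ and any $\nu \in \pr(X)$ with barycenter $b(\nu)$, I want a net $g_i$ with $g_i \nu \to \delta_x$. The key point is an extreme-point/barycenter argument: the set of $\nu' \in \pr(X)$ whose barycenter lies in the orbit-closure-generated structure is large, and using that $x$ is extreme, any weak$^*$ limit point of $g_i \nu$ that has barycenter $x$ must actually be $\delta_x$ (a measure on $X$ with extreme barycenter $x$ is forced to be supported near $x$, and by a standard argument is exactly $\delta_x$ when $x \in \partial_e K$ and one is careful — this uses that $x$ extreme means $x$ is not a nontrivial average). To produce the net I would use minimality of $K$: the closed convex hull of $\{g \cdot b(\nu) : g \in \Gamma\}$ is $\Gamma$-invariant and closed and convex, hence all of $K$, so $x$ is a limit of convex combinations of translates of $b(\nu)$; then a barycenter/extreme-point compactness argument upgrades "$x$ is a limit of averages of translates" to "$\delta_x$ is a weak$^*$ limit of translates $g_i \nu$". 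This last upgrade is the technical heart.

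**Second direction (every boundary arises this way).** Given a topological boundary $\Gamma \acts Y$, I would take $K = \pr(Y)$ with its weak$^*$ topology and the induced affine $\Gamma$-action. This $K$ is compact convex and the action is affine and continuous. I must check $K$ has no proper $\Gamma$-invariant compact convex subset: if $L \subseteq \pr(Y)$ were such a subset, pick any $\nu \in L$; since $Y$ is a boundary, for every $y \in Y$ there is a net $g_i$ with $g_i \nu \to \delta_y$, so $\delta_y \in L$ for all $y$, hence $L \supseteq \overline{\mathrm{conv}}\{\delta_y : y \in Y\} = \pr(Y) = K$. So the no-proper-invariant-subset hypothesis holds. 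Finally $\partial_e \pr(Y) = \{\delta_y : y \in Y\}$, which is closed (homeomorphic to $Y$), so $\overline{\partial_e K} \cong Y$ as $\Gamma$-spaces, giving the "arises in this way" claim.

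**Main obstacle.** The genuinely delicate step is the upgrade in the first direction: passing from "$x \in \partial_e K$ is a weak$^*$-limit of convex combinations $\sum_j t_j^{(i)} g_j^{(i)} b(\nu)$ of $\Gamma$-translates of a barycenter" to "there is a single net $g_i \in \Gamma$ with $g_i \nu \to \delta_x$ weak$^*$ in $\pr(X)$". This requires exploiting extremality of $x$ to rule out that the mass of $g_i \nu$ spreads out: one argues that any weak$^*$ cluster point $\sigma$ of a suitable subnet has barycenter $x$, and a measure on $X \subseteq K$ with extreme-point barycenter $x$ must be $\delta_x$ — but care is needed because $x \in \overline{\partial_e K}$ need not itself be extreme in $K$, so one first reduces to $x \in \partial_e K$ (dense in $X$) and then passes to the closure by continuity of $x \mapsto \delta_x$ and a diagonal/net argument. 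I expect to invoke a Choquet-type lemma: on a compact convex set, the only probability measure with a given extreme barycenter, among those supported on the closure of extreme points, is the point mass — together with minimality to manufacture the translates. I would likely cite Glasner's book directly for the precise form of this lemma rather than reprove it.
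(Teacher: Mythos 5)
The paper does not prove this proposition at all---it is quoted verbatim from Glasner's book (Theorem III.2.3)---so there is no in-paper argument to compare against; judged on its own, your reconstruction is correct and is essentially the standard proof. The two ingredients you isolate as the ``technical heart'' are exactly the right ones and are both classical: (i) Milman's partial converse to Krein--Milman (extreme points of $\overline{\mathrm{conv}}(S)$ lie in $\overline{S}$), applied with $S=\Gamma\cdot b(\nu)$ and $\overline{\mathrm{conv}}(S)=K$ by irreducibility, which produces the net $g_i$ with $g_i b(\nu)\to x$ for $x\in\partial_e K$; and (ii) Bauer's lemma (a probability measure on $K$ whose barycenter is an extreme point must be the Dirac mass there), which forces any weak$^*$ cluster point of $g_i\nu$ in $\pr(\overline{\partial_e K})$ to be $\delta_x$. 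Passing from $\partial_e K$ to its closure via $\delta_{x_n}\to\delta_x$ and closedness of the orbit closure, as you indicate, finishes the forward direction, and your converse direction ($K=\pr(Y)$, extreme points are exactly the Dirac masses, irreducibility from the boundary property) is complete as written. The only blemish is the garbled sentence about ``the set $C$ of points $x$ such that $\delta_x\in\overline{\Gamma\cdot\pr(X)}$-orbit closures behave well,'' which plays no role in the actual argument and should simply be deleted.
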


The main advantage of measurable boundaries 
over their topological counterparts is that they are much 
easier to concretely identify.
In fact there have been extensive work 
in the past few decades 
which have led to concrete realization of the Poisson boundary for 
many of groups that arise naturally as symmetries of 
geometric objects. We discuss some examples below.
In contrast, the Furstenberg boundary 
$\partial_F\Gamma$ of a non-amenable countable group $\Gamma$ 
is an extremally disconnected non-metrizable space, and 
not concretely realizable in any known case.

\subsection{USB systems}\label{sec:boudnaries}
The study of these systems was initiated by Furstenberg in~\cite{Furstenberg-73}, 
where they were called \emph{$\mu$-proximal actions}. They were further studied 
in~\cite{Margulis-book-91, Glasner-Weiss-16}.

\begin{defn}\label{defn:USB}
Let $\mu\in \pr(\Gamma)$. 
We say that a $\mu$-boundary $(X, \nu)$ is a 
($\mu$-)\define{USB} if it has 
a compact model $(K,\bar \nu)$ such that 
$\bar\nu$ is the unique $\mu$-stationary Borel probability measure 
on $K$. 
If $(X, \nu)$ is the ($\mu$-)Poisson boundary, we say that $(X,\nu)$ is a ($\mu$-)\define{Poisson-USB}.
\end{defn}

The following is a standard technique that allows us
to assume that the topological model of a USB is metrizable.
This will be important for us as we will make a heavy use of
the existence of conditional measures.
\begin{lemma}\label{lem:metrizable-model}
Every USB $(X, \nu)$ has 
a compact metrizable model $(K,\bar \nu)$ such that 
$\bar\nu$ is the unique $\mu$-stationary Borel probability measure 
on $K$.
\end{lemma}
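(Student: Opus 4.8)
The plan is to start from the uniquely stationary compact model $(K_0,\bar\nu_0)$ supplied by the hypothesis that $(X,\nu)$ is a USB (Definition~\ref{defn:USB}), which need not be metrizable, and to build a $\Gamma$-equivariant continuous factor map $q\colon K_0 \to K$ onto a metrizable compact $\Gamma$-space such that $q$ is an \emph{isomorphism of measure $\Gamma$-spaces} from $(K_0,\bar\nu_0)$ onto $(K,\bar\nu)$, where $\bar\nu := q_{*}\bar\nu_0$. Granting this, $(K,\bar\nu)$ is again a compact model of the measurable $\mu$-boundary underlying $(X,\nu)$ --- now metrizable --- and the only remaining point is that $\bar\nu$ is still the unique $\mu$-stationary measure on $K$.

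For the construction of $K$ I would use the standard separability reduction. Since $(K_0,\bar\nu_0)$ is measurably isomorphic to a $\mu$-boundary it is a standard probability space, so $L^2(K_0,\bar\nu_0)$ is separable; fix a countable family $\{f_n\} \subset C(K_0)$ that is $L^2$-dense in $L^2(K_0,\bar\nu_0)$, and let $\cC \subseteq C(K_0)$ be the smallest norm-closed, unital, $\Gamma$-invariant $C^*$-subalgebra containing $\{f_n\}$. As $\Gamma$ and $\{f_n\}$ are countable, $\cC$ is separable, so its Gelfand spectrum $K$ is metrizable and carries a continuous $\Gamma$-action for which the map $q\colon K_0 \to K$ dual to the inclusion $\cC \hookrightarrow C(K_0)$ is $\Gamma$-equivariant and surjective. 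Because $\cC$ contains an $L^2$-dense subset of $C(K_0)$, the sub-$\sigma$-algebra $q^{-1}(\mathrm{Borel}(K))$ equals $\mathrm{Borel}(K_0)$ modulo $\bar\nu_0$-null sets, so $q$ is an isomorphism of measure $\Gamma$-spaces onto $(K,\bar\nu)$ with $\bar\nu = q_{*}\bar\nu_0$. In particular $(K,\bar\nu)$ is a compact metrizable model of the same measurable $\mu$-boundary, hence of $(X,\nu)$.

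It remains to transport unique stationarity through $q$, which is really the heart of the matter. Let $\zeta \in \pr(K)$ be any $\mu$-stationary measure and set $P := \{\, m \in \pr(K_0) : q_{*}m = \zeta \,\}$, a nonempty (as $q$ is surjective), weak*-compact, convex subset of $\pr(K_0)$. The Markov operator $N\colon \pr(K_0) \to \pr(K_0)$, $N(m) = \sum_{g \in \Gamma} \mu(g)\, g m$, is affine and weak*-continuous, and using $\Gamma$-equivariance of $q$ together with stationarity of $\zeta$ one gets $q_{*}(N(m)) = \sum_{g} \mu(g)\, g\zeta = \zeta$, so $N(P) \subseteq P$. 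By the Markov--Kakutani fixed point theorem $N$ has a fixed point $\tilde\zeta \in P$, i.e.\ a $\mu$-stationary measure on $K_0$; since $\bar\nu_0$ is the unique $\mu$-stationary measure on $K_0$, we conclude $\tilde\zeta = \bar\nu_0$, and therefore $\zeta = q_{*}\tilde\zeta = \bar\nu$. Hence $\bar\nu$ is the unique $\mu$-stationary measure on $K$, as required.

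I expect the one genuinely delicate step to be verifying that $q$ is an honest isomorphism of measure spaces and not merely a topological factor --- that is, that the separable algebra $\cC$ has been chosen large enough to recover the entire measure algebra of $(K_0,\bar\nu_0)$; this is precisely where standardness of the $\mu$-boundary enters. The remaining ingredients --- metrizability and equivariance of $K$, and the descent of unique stationarity via the Markov--Kakutani argument --- are soft.
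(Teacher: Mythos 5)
Your proof is correct and follows essentially the same route as the paper's: the paper also passes to the spectrum of a separable, $\Gamma$-invariant $C^*$-subalgebra of $C(K_0)$ generated by a countable family of continuous functions dense in the measure algebra (the paper uses weak*-density in $L^\infty$ where you use $L^2$-density, which is equivalent for this purpose). Your Markov--Kakutani lifting of stationary measures, which the paper leaves implicit, is exactly the content of its Proposition~\ref{lem:extension-of-statioanry} and Corollary~\ref{unique-stationary->subalg}, so you have simply spelled out a step the paper takes for granted.
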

\begin{proof}
Let $(K',\eta)$ be a compact model of $(X,\nu)$ as in the Definition~\ref{defn:USB}. 
As abstract probability spaces, $\mu$-boundaries are always standard. 
Therefore, using the fact that $\Gamma$ is countable we can choose
a countable, $\Gamma$-invariant set of elements
in $C(K')$ that includes the constant function, and is weak* dense in $L^\infty(K',\eta)$.
Take $K$ to be the spectrum of the ($\sup$-)norm closure of this set, which is compact and metrizable. Then $K$ is a continuous $\Gamma$-equivariant image of $K'$. Let $\bar\nu$ be the push-forward of $\eta$ to $K$. It follows (see Corollary~\ref{unique-stationary->subalg}) that $\bar\nu$ is the unique $\mu$-stationary probability on $K$, and so $(K,\bar \nu)$ is the desired compact metrizable model.
\end{proof}

Many natural examples of Poisson boundaries are in fact USB, 
namely, the Poisson boundary is being realized as a unique
stationary measure on a compact space.
The main tool for realizing the Poisson boundary
on a compact space is the strip criterion of Kaimanovich~\cite{Kaimanovich-00}, 
which proves, in many cases that the Poisson measure 
is actually unique. To name some examples (by no mean a complete list!) are linear groups acting
on flag varieties~\cite{Ledrappier-85, Kaimanovich-00, Brofferio-Schapira-11}, 
hyperbolic groups acting on the Gromov boundary~\cite{Kaimanovich-00}, non-elementary subgroups of mapping class groups acting on the
Thurston boundary~\cite{Kaimanovich-Masur-96}, and non-elementary 
subgroups of $\rm{Out}(\bF_n)$ acting on the boundary of the outer space~\cite{Horbez-16}.
We discuss properties of these actions further in Section~\ref{faithfulness-and-freeness-of-USB}.

\begin{theorem}\label{USB->top-bnd}
Let $\mu\in\pr(\Gamma)$, and suppose $(X, \nu)$ is a $\mu$-USB.
Then for any compact model of $(X, \nu)$, 
the restriction of the action to the support of the unique $\mu$-stationary 
is a topological boundary action.
\end{theorem}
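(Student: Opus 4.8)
The plan is to reduce the statement to the equivalent description of topological boundaries in terms of Poisson maps: by Azencott's criterion recalled after the definition of topological boundary actions, it suffices to show that for \emph{every} $\eta\in\pr(K_0)$ the Poisson map $\cP_\eta:C(K_0)\to\ell^\infty(\Gamma)$ is isometric, where $K_0=\supp(\bar\nu)\subset K$ is the (closed, $\Gamma$-invariant) support of the unique $\mu$-stationary measure $\bar\nu$ on a given compact model $(K,\bar\nu)$ of $(X,\nu)$. Equivalently, one shows directly that for every $\eta\in\pr(K_0)$ and every $x\in K_0$ there is a net $g_i\in\Gamma$ with $g_i\eta\to\delta_x$ weak\textsuperscript{*}.

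First I would invoke Lemma~\ref{lem:metrizable-model} to replace $(K,\bar\nu)$ by a metrizable compact model; since the support and the topological-boundary property only depend on the restricted action $\Gamma\acts K_0$, and metrizability is what lets us use conditional measures, this is harmless. On the metrizable model, Theorem~\ref{cond-meas} gives, for $\bP_\mu$-a.e.\ path $\omega$, a conditional measure $\bar\nu_\omega=\mathrm{weak^*}\lim_n\omega_n\bar\nu$, and because $(X,\nu)$ is a $\mu$-\emph{boundary} these conditional measures are Dirac: $\bar\nu_\omega=\delta_{x_\omega}$ for a.e.\ $\omega$, with $\bnd(\omega)=x_\omega$. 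Since $\bar\nu=\int_\Omega\delta_{x_\omega}\,d\bP_\mu(\omega)$, the distribution of $x_\omega$ is exactly $\bar\nu$, so the set $\{x_\omega:\omega\in\Omega\}$ is (up to a null set) dense in $K_0=\supp\bar\nu$. This already produces, for $\bar\nu$-a.e.\ $x$, a \emph{sequence} $\omega_n$ with $\omega_n\bar\nu\to\delta_x$; the two remaining tasks are to upgrade ``$\bar\nu$-a.e.\ $x$'' to ``every $x\in K_0$'' and to replace the source measure $\bar\nu$ by an arbitrary $\eta\in\pr(K_0)$.

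For the first upgrade, fix an arbitrary $x\in K_0$ and a neighbourhood basis; by density of $\{x_\omega\}$ pick $x_{\omega^{(k)}}\to x$, and for each $k$ choose $n_k$ with $\omega^{(k)}_{n_k}\bar\nu$ within $\tfrac1k$ of $\delta_{x_{\omega^{(k)}}}$ in a fixed metric on $\pr(K_0)$; the diagonal net/sequence $g_k:=\omega^{(k)}_{n_k}$ then satisfies $g_k\bar\nu\to\delta_x$. For the second upgrade, let $\eta\in\pr(K_0)$ be arbitrary. The key estimate is that $g\bar\nu$ and $g\eta$ are ``comparable'' in the following sense: any $\mu$-stationary measure on $K$ is $\bar\nu$, hence has full support $K_0$; one then uses that weak\textsuperscript{*} convergence $g_k\bar\nu\to\delta_x$ forces the $g_k$-translate of mass off any neighbourhood of $x$ to vanish, and since $\eta$ is a probability measure on $K_0$ the same contraction applies to $\eta$. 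More cleanly: I expect to run the identical conditional-measure argument for the action $\Gamma\acts(K_0,\eta)$ after noting that $(K_0,\eta)$ is a $(\Gamma,\mu)$-\emph{space} only if $\eta$ is stationary — which it need not be — so instead I would argue that for any $\eta$, $\bP_\mu$-a.e.\ $\omega_n\eta$ has all its weak\textsuperscript{*} limit points supported on the (single-point) set $\bnd(\omega)$, because $\omega_n$ contracts the boundary onto $\delta_{x_\omega}$ \emph{uniformly in the starting measure} — this is the standard fact that a $\mu$-boundary is ``proximal'': for a.e.\ $\omega$ and every pair $\eta_1,\eta_2$, $\|\omega_n\eta_1-\omega_n\eta_2\|\to 0$. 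Granting that, $\omega_n\eta\to\delta_{x_\omega}$ for a.e.\ $\omega$, and the first upgrade applies verbatim with $\bar\nu$ replaced by $\eta$.

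The main obstacle is the second upgrade — establishing that the contraction $\omega_n(\cdot)\to\delta_{x_\omega}$ is insensitive to the initial measure, i.e.\ the proximality of a $\mu$-boundary on its \emph{topological} model. The clean way is: for $\eta\in\pr(K_0)$, write $\eta=\sum_j c_j\delta_{y_j}$-type approximations or, better, use that $\eta\ll$ nothing canonical, so instead bound $\omega_n\eta$ by comparison with $\omega_n\bar\nu$ using that $\bar\nu$ has full support on the \emph{metrizable} $K_0$: since $\bar\nu(U)>0$ for every open $U\ne\emptyset$, weak\textsuperscript{*} convergence $\omega_n\bar\nu\to\delta_{x_\omega}$ gives $\omega_n\bar\nu(K_0\setminus U_\omega)\to 0$ for every neighbourhood $U_\omega$ of $x_\omega$; a Radon–Nikodym / regularity argument then transfers this to $\omega_n\eta(K_0\setminus U_\omega)\to 0$. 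I expect this step to require a short but genuine argument (a Chacon–Ornstein / martingale-type or a direct push-forward comparison), whereas everything else is routine once Theorem~\ref{cond-meas}, Lemma~\ref{lem:metrizable-model}, and Azencott's criterion are in hand.
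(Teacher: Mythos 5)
Your strategy is to verify the definition of a topological boundary head-on, by contracting an arbitrary $\eta\in\pr(K_0)$ to an arbitrary $\delta_x$, and the step you yourself identify as the main obstacle (the ``second upgrade'') is a genuine gap that the sketched arguments do not close. Knowing that $\om_n\bar\nu\to\delta_{x_\om}$ and that $\bar\nu$ has full support on $K_0$ gives you no control over $\om_n\eta$ when $\eta$ is singular with respect to $\bar\nu$ --- for instance a point mass $\delta_y$, which is exactly the kind of measure one must be able to contract. For such $\eta$ the ``Radon--Nikodym / regularity'' comparison has nothing to bite on: $\om_n\bar\nu(K_0\setminus U)\to 0$ says nothing about $\eta(\om_n^{-1}(K_0\setminus U))$. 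The fact you appeal to --- that for a.e.\ path $\om$ one has $\om_n\eta_1-\om_n\eta_2\to 0$ for \emph{every} pair of measures --- is not a standard property of $\mu$-boundaries; it is a strong contraction/mean-proximality statement whose proof is essentially of the same depth as the theorem itself, and in concrete examples the contraction $\om_n y\to x_\om$ can fail for exceptional points $y$ depending on $\om$. The paper avoids this entirely: it shows that $\pr(X)$ has no proper non-empty closed convex $\Gamma$-invariant subset $\cC$ (any such $\cC$ contains $\nu$, since Ces\`aro averages $\frac1n\sum_{k<n}\mu^k*\eta$ of any $\eta\in\cC$ cluster at stationary measures and $\nu$ is the only one; hence $\cC$ contains the conditional measures $\nu_\om$ as limits of $\om_n\nu$, which are Dirac; hence, by minimality and convexity, all of $\pr(X)$), and then invokes Proposition~\ref{affine-act->top-bnd}. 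That route only ever requires contracting $\nu$ itself along paths, which is precisely what the $\mu$-boundary hypothesis provides; the passage from ``the closed convex hull of every orbit is everything'' to ``every orbit closure contains the Dirac measures'' is delegated to Glasner's theorem rather than proved by hand.

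There is a second, independent problem with your reduction to the metrizable case. The theorem is asserted for \emph{any} compact model, and Lemma~\ref{lem:metrizable-model} produces a \emph{different} model, namely a metrizable topological factor of the given one. The topological boundary property passes from a space to its factors, not from a factor to an extension, so establishing it for the metrizable model does not establish it for the original $K_0$; the assertion that the property ``only depends on the restricted action $\Gamma\acts K_0$'' begs the question. The paper's proof spends its final paragraph on exactly this point: given an arbitrary model, one fixes $f\in C(X)$, passes to the metrizable factor $Y$ generated by $f$ (whose pushed-forward measure is still the unique stationary measure by Corollary~\ref{unique-stationary->subalg}), applies the metrizable case there to get $\|\cP_\eta(f)\|=\|f\|$ for every $\eta$, and only then concludes via Azencott's isometry criterion. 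Your diagonal argument upgrading ``$\bar\nu$-a.e.\ $x$'' to ``every $x\in K_0$'' is fine, but these two gaps mean the proof as proposed does not go through.
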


\begin{proof}
To simplify notations, we assume $X$ is already a compact model, that is 
$X$ is a compact $\Gamma$-space and $\nu\in\pr(X)$ is the unique $\mu$-stationary 
measure such that $(X, \nu)$ is a $\mu$-boundary. 
Note that by unique stationarity of $\nu$, its support 
is the unique minimal component of $X$.
Thus, by passing to $\supp(\nu)$ if necessary, we also assume $\Gamma\acts X$ 
is minimal.

Assume first that $X$ is metrizable. We show $\pr(X)$ does not contain any proper 
non-empty compact convex $\Gamma$-invariant subsets. 
Then the theorem follows from Proposition~\ref{affine-act->top-bnd}.
Suppose $\cC\subseteq \pr(X)$ is a non-empty closed convex 
$\Gamma$-invariant set. 
Let $\eta\in \cC$. By $\Gamma$-invariance and closedness of $\cC$ we have
$\frac1n\sum_{k=0}^{n-1}\mu^k*\eta \in \cC$ for all $n\in\bN$.
Note any weak* cluster point of this sequence is $\mu$-stationary, hence
by uniqueness assumption $\nu\in \cC$.
As $X$ is metrizable, and by the closedness of $\cC$ we conclude that the
conditional measures, $\nu_\om \in \cC$
for $\bP_\mu$-a.e.\ path $\om\in \Omega$.
Since $(X,\nu)$ is $\mu$-boundary, $\nu_\om$ are point measures for $\bP_\mu$-a.e.\ $\om$.
In particular, there is some $x\in X$ such that $\delta_x\in \cC$. By $\Gamma$-invariance
of $\cC$ we get $\delta_{gx}\in \cC$ for every $g\in\Gamma$, and therefore minimality
of $X$ and closedness of $\cC$ yield $\{\delta_x :x\in X\} \subset \cC$. 
Now the convexity of $\cC$ implies $\pr(X) = \overline{\mathrm{conv}}\{\delta_x :x\in X\} \subset \cC$.

Now for the general case (not necessarily metrizable), 
let $\eta\in\pr(X)$ and $f\in C(X)$. Let $\cA\subset C(X)$ be the $\Gamma$-invariant 
$C^*$-subalgebra generated by $f$. Then $\cA = C(Y)$ for a metrizable $\Gamma$-factor 
of $X$. Moreover, the pushforward of $\nu$ on $Y$ is the unique $\mu$-stationary measure 
on $Y$ (see Corollary~\ref{unique-stationary->subalg} below), 
and thus by the above $\Gamma\acts Y$ is a topological boundary action. 
Therefore, we have $\|\cP_\eta(f)\| = \|f\|$. This shows the Poisson map $\cP_\eta$ is isometric. 
Hence, we conclude $X$ is a topological $\Gamma$-boundary.
\end{proof}

Perhaps the most significant application of topological 
boundaries so far has been in  
the problems of unique trace property and $C^*$-simplicity: 
the existence of a faithful topological boundary is
equivalent to the unique trace property and the existence
of a free boundary is equivalent to $C^*$-simplicity. 
A subtlety in applying these characterizations 
is that in general one has to pass to the maximal boundary, 
i.e.\ the Furstenberg boundary, which is too ``large'' 
to concretely realize and work with. 

But we will see, for example, that for determining 
the $C^*$-simplicity and the unique trace property of a group, it is
enough to work with its USB actions 
(provided that the group admits such), 
rather than abstract topological boundaries.

Recall that any group $\Gamma$ admits a maximal normal amenable subgroup, called the 
\define{amenable radical} of $\Gamma$. 
We denote this subgroup by $\Rad(\Gamma)$.

\begin{prop}\label{thm:kernel-of-USB}
Suppose $\Gamma \acts (X, \nu)$ is a $\mu$-USB 
for some $\mu\in\pr(\Gamma)$.
Then $\Rad(\Gamma) \subseteq \ker(\Gamma\acts(X,\nu))$,
and equality holds if $(X,\nu)$ is the $\mu$-Poisson USB.
\end{prop}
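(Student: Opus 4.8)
The plan is to prove the two assertions separately, both via the function-algebra characterization of $\mu$-boundaries (Proposition~\ref{prop:abstract-boundaries}) together with amenability.

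For the inclusion $\Rad(\Gamma)\subseteq\ker(\Gamma\acts(X,\nu))$, I would argue as follows. Let $N=\Rad(\Gamma)$, which is amenable and normal. Since $(X,\nu)$ is a $\mu$-USB, by Lemma~\ref{lem:metrizable-model} we may assume $X$ is a compact metrizable $\Gamma$-space on which $\nu$ is the unique $\mu$-stationary measure, and by Theorem~\ref{USB->top-bnd} the action $\Gamma\acts\supp(\nu)$ is a topological boundary; passing to the support, assume $\Gamma\acts X$ is minimal. Now consider the restricted action $N\acts X$. Since $N$ is amenable, there is an $N$-invariant probability measure $\nu_0$ on $X$. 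The idea is that $N$-invariance of some measure forces $N$ to act trivially because the $\Gamma$-action is a boundary. Concretely: for $\bP_\mu$-a.e.\ path $\om$, $\nu_\om=\delta_{x_\om}$, and $\nu=\int\delta_{x_\om}\,d\bP_\mu(\om)$. Using normality of $N$ and the cocycle identity for the boundary map, one can show that $N$ fixes $\bnd(\om)$ for a.e.\ $\om$. Alternatively — and this is the route I would actually take — use that the $\Gamma$-equivariant Poisson map $\cP_\nu:L^\infty(X,\nu)\hookrightarrow H^\infty(\Gamma,\mu)$ is an embedding, and that the $N$-invariant functions in $H^\infty(\Gamma,\mu)$ are only the constants: indeed, for $f\in H^\infty(\Gamma,\mu)$ that is $N$-invariant under left translation, amenability of $N$ produces an invariant mean making $f$ constant on $N$-cosets, and a standard argument (harmonicity plus a martingale/$0$-$1$ type argument along the random walk, or directly: $N$-invariant bounded harmonic functions correspond to $N$-invariant functions on the Poisson boundary, and a normal amenable subgroup acts trivially on the Poisson boundary) shows $f$ is constant. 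Hence $L^\infty(X,\nu)^N=\bC$, which for a minimal topological boundary action forces $N$ to act trivially on $X$, so $N\subseteq\ker(\Gamma\acts(X,\nu))$.

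For the equality when $(X,\nu)=(\Pi_\mu,\nu_\infty)$ is the Poisson USB, I need the reverse inclusion $\ker(\Gamma\acts(\Pi_\mu,\nu_\infty))\subseteq\Rad(\Gamma)$. Let $K=\ker(\Gamma\acts(\Pi_\mu,\nu_\infty))$; this is a normal subgroup, so it suffices to show $K$ is amenable. The standard fact here is that if $K$ acts trivially on the Poisson boundary of $(\Gamma,\mu)$, then $K$ is amenable: the trivial $K$-action on $(\Pi_\mu,\nu_\infty)$ means the restriction to $K$ of the harmonic measure is $K$-invariant, and combined with the fact that $(\Pi_\mu,\nu_\infty)$ carries the boundary behavior of the whole random walk, one deduces the Poisson boundary of $K$ (for a suitable induced measure, or via the Furstenberg entropy/Kaimanovich-Vershik criterion) is trivial — but actually the cleanest statement is: a group acting trivially on its own Poisson boundary, equivalently having an invariant stationary measure that is also a boundary, must be amenable, since then the trivial action is a boundary action admitting an invariant measure, and only amenable groups have boundary actions with invariant measures in a way compatible with triviality. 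I would cite or reprove the precise form: $K$ normal and acting trivially on $\Pi_\mu$ implies $\nu_\infty$ is $K$-invariant; then the Poisson boundary of $(\Gamma,\mu)$ is an extension that is trivial over $K$, which by a theorem of Furstenberg/Kaimanovich forces $K$ amenable. Thus $K\subseteq\Rad(\Gamma)$, and together with the first part we get equality.

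The main obstacle I expect is the second inclusion: cleanly deducing amenability of $\ker(\Gamma\acts\Pi_\mu)$ from triviality of the action on the Poisson boundary. This requires either invoking the correct known result relating the Poisson boundary of a normal subgroup to that of the ambient group, or reconstructing the argument that a normal subgroup fixing the Poisson boundary pointwise has trivial Furstenberg entropy and hence (being normal, so that one can build an invariant mean) is amenable. The first inclusion, by contrast, is largely a packaging of the averaging/$0$-$1$-law phenomenon already visible in the proof of Theorem~\ref{USB->top-bnd}, and should go through smoothly once one reduces to showing $L^\infty(X,\nu)^{N}$ is trivial.
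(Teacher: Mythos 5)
Your argument for the first inclusion, in the form you say you would actually carry out, contains a genuine logical reversal. Triviality of the $N$-action ($N=\Rad(\Gamma)$) on $(X,\nu)$ is the statement that \emph{every} function is $N$-invariant, i.e.\ $L^\infty(X,\nu)^N=L^\infty(X,\nu)$; showing $L^\infty(X,\nu)^N=\bC$ would instead say that $N$ acts \emph{ergodically}, which is essentially the opposite conclusion (the two are compatible only if $\nu$ is a point mass). The auxiliary claim feeding into this --- that the $N$-left-invariant functions in $H^\infty(\Gamma,\mu)$ are only the constants --- is also false in general: if $N$ is central (or trivial), every bounded $\mu$-harmonic function is $N$-invariant, and these are non-constant whenever the Poisson boundary is non-trivial. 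Indeed the correct known fact is the reverse of your claim: a normal amenable subgroup acts trivially on boundaries, so \emph{all} of $H^\infty(\Gamma,\mu)$ is $N$-invariant. The first idea you sketch and then abandon is the right one, and is what the paper does: by Theorem~\ref{USB->top-bnd} the action on $\supp(\nu)$ is a topological boundary, and then one invokes (or reproves, \`a la Furman) that $\Rad(\Gamma)$ acts trivially on any topological boundary --- amenability gives an $N$-invariant $\nu_0\in\pr(X)$, normality makes the set of $N$-invariant measures closed, convex and $\Gamma$-invariant, and the boundary property forces every $\delta_x$ into that set, so $N$ fixes every point of $\supp(\nu)$.

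For the reverse inclusion in the Poisson case, you correctly reduce to showing that $K=\ker(\Gamma\acts(\Pi_\mu,\nu_\infty))$ is amenable, but the step you flag as the ``main obstacle'' is left as an appeal to an unspecified theorem, and the weaker fact you float along the way --- that $K$-invariance of $\nu_\infty$ forces $K$ amenable --- is not true for general subgroups, so it cannot carry the argument. The paper closes this cleanly: the Poisson boundary action is Zimmer-amenable \cite{Zimmer-78}, hence $\stab_\Gamma(x)$ is amenable for $\nu_\infty$-a.e.\ $x$; since $K$ is contained in a.e.\ stabilizer, $K$ is amenable, and being normal it lies in $\Rad(\Gamma)$. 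As written, your second part is a gap rather than a proof, though the intended statement is true and the stabilizer/Zimmer-amenability route is the standard way to obtain it.
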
 
\begin{proof}
The first assertion can be proven by 
a straightforward modification of the
proof of~\cite[Proposition 7]{Furman-03}.
Alternatively, by Theorem~\ref{USB->top-bnd} 
the action $\Gamma\acts \supp(\nu)$ is a topological boundary action, hence by the conclusion of~\cite[Proposition 7]{Furman-03}, $\Rad(\Gamma)$ acts trivially on $\supp(\nu)$, 
and so $\Rad(\Gamma)\subset \ker(\Gamma\acts(X,\nu))$.

For the second part of the statement, 
if $(X,\nu)$ is the Poisson boundary, then the action $\Gamma\acts (X,\nu)$ is 
Zimmer-amenable~\cite{Zimmer-78} and in particular the 
stabilizers $\stab_\Gamma(x)$ are amenable for $\nu$-a.e.\ $x\in X$. 
If follows that $\ker(\Gamma\acts(X,\nu))$ is a normal amenable group, 
and so $\ker(\Gamma\acts(X,\nu))\subset \Rad(\Gamma)$.
\end{proof}

\section{Stationary $C^*$-dynamical systems}\label{Non-commutative stationary dynamical systems}
Similarly to classical ergodic theory, invariant states may not exist 
in the setting of actions of non-amenable groups on $C^*$-algebras.
For instance, in the case of inner action by subgroups of the unitary group of a 
given $C^*$-algebra,
which is only non-trivial in the noncommutative setting,
the invariant ergodic theory is only available in the tracial case. 
Hence, one has to
appeal to other models of dynamical systems in infinite-type cases. 
In this section we begin studying 
the concept of stationary dynamical systems
in the context of $C^*$-algebras.\\

Let $\Gamma\acts\cA$ and let $\mu \in \pr(\Gamma)$. 
The $\mu$-convolution map on $\cA$ is defined by 
\[
\mu*a := \sum_{g\in \Gamma} \mu(g) g^{-1}a .
\]
Its adjoint
induces a $\mu$-convolution operator on the space of states 
$\cS(\cA)$ given by
\begin{align*}
\mu * \tau = \sum_{g\in \Gamma} \mu(g) g\tau .
\end{align*}

\begin{definition}
Let $\cA$ be a $\Gamma$-$C^{*}$-algebra. A state $\tau\in\cS (\cA)$ is 
said to be \define{$\mu$-stationary} if $\mu * \tau = \tau$. 
In this case we say the pair $(\cA,\tau)$ is a \define{$(\Gamma,\mu)$-$C^*$-algebra}.

We denote the collection of all $\mu$-stationary states on $\cA$ by $\cS_{\mu}(\cA)$.
We say that $\cA$ is \define{uniquely stationary} if there
exists $\mu \in \pr(\Gamma)$ such that $\cS_{\mu}(\cA)$ has only one element.
\end{definition}

\subsection{Basic facts}
In this section we review noncommutative versions of few basic facts about
stationary actions. 
First, note that invariant states are $\mu$-stationary 
for every $\mu\in\pr(\Gamma)$. Next, we observe that in contrast to the invariant case, stationary states always exist,
and moreover, they can always be extended. 
The corresponding statement of the latter in the commutative
setting is that 
any stationary measure on a factor can be ``pulled back'' (not necessarily in 
a unique way) to a stationary measure on the extension.

\begin{prop}\label{lem:extension-of-statioanry}
Suppose $\cA$ is a $\Gamma$-$C^{*}$-algebra
and $\cB \subset \cA$ is a $\Gamma$-invariant subalgebra.
Then every $\mu$-stationary $\eta\in\cS_\mu(\cB)$ can be extended to a $\mu$-stationary 
state $\tau\in\cS_\mu(\cA)$.

In particular, for any $\Gamma$-$C^*$-algebra $\cA$, and any $\mu\in\pr(\Gamma)$, the set $\cS_{\mu}(\cA)$ is non-empty.
\end{prop}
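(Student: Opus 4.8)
The plan is to use a standard convexity/compactness argument, together with the fact that the $\mu$-convolution operator on states is weak*-continuous and affine. First I would recall that $\cS_\mu(\cB)$ is the fixed-point set of the operator $T:\cS(\cB)\to\cS(\cB)$, $T\eta = \mu*\eta = \sum_g \mu(g)\,g\eta$. Since $\cS(\cB)$ is a non-empty, convex, weak*-compact set and $T$ is an affine weak*-continuous self-map of it (the series converges in norm because $\mu$ is a probability measure and each $g$ acts isometrically), the Markov--Kakutani fixed-point theorem gives $\cS_\mu(\cB)\neq\emptyset$. This already proves the ``in particular'' clause for $\cB$; but the point is to promote a given $\eta\in\cS_\mu(\cB)$ to a stationary state on the larger algebra $\cA$.

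For the main statement, I would start with an arbitrary state extension $\tilde\eta\in\cS(\cA)$ of $\eta$, which exists by the Hahn--Banach theorem (positive functionals on a $C^*$-subalgebra extend to states). Now form the Cesàro averages $\tau_n := \frac1n\sum_{k=0}^{n-1}\mu^k*\tilde\eta\in\cS(\cA)$, and let $\tau$ be any weak*-cluster point, which exists by weak*-compactness of $\cS(\cA)$. A routine computation shows $\|\mu*\tau_n - \tau_n\| = \frac1n\|\mu^n*\tilde\eta - \tilde\eta\|\le \frac2n \to 0$, so $\tau$ is $\mu$-stationary by continuity of $\mu*(-)$. The remaining point is that $\tau$ still extends $\eta$: since $\eta$ is already $\mu$-stationary on $\cB$, we have $(\mu^k*\tilde\eta)|_\cB = \mu^k*\eta = \eta$ for every $k$ (here one uses that $\cB$ is $\Gamma$-invariant, so $g\tilde\eta|_\cB = g(\tilde\eta|_\cB)$), hence $\tau_n|_\cB = \eta$ for all $n$, and therefore $\tau|_\cB = \eta$ by passing to the limit.

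The only mild subtlety — and the step I would be most careful about — is verifying that restriction to $\cB$ commutes with the convolution operation, i.e. that $(g\tilde\eta)|_\cB = g(\eta)$ when $\eta = \tilde\eta|_\cB$; this is exactly where $\Gamma$-invariance of $\cB$ is used, since $g$ must map $\cB$ into $\cB$ for $g\tilde\eta$ restricted to $\cB$ to be expressible in terms of the $\Gamma$-action on $\cB$. Everything else is bookkeeping with norm-convergent averages in the dual of $\cA$. Finally, for the ``in particular'' clause one simply applies the first part with $\cB=\bC 1_\cA$ (or invokes the fixed-point argument above directly on $\cS(\cA)$), whose unique state is trivially $\mu$-stationary and extends to all of $\cA$ by the main statement.
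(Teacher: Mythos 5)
Your proof is correct and takes essentially the same route as the paper's: the paper considers the weak*-compact convex set $E=\{\rho\in\cS(\cA):\rho|_\cB=\eta\}$, observes that $\mu$-convolution is an affine weak*-continuous self-map of $E$ (which rests on exactly the invariance-and-stationarity computation you isolate as the ``mild subtlety''), and invokes the Markov--Kakutani/Tychonoff fixed point theorem. Your Ces\`aro-average argument is simply an explicit proof of that fixed point in this special case, so no substantive difference.
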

\begin{proof}
Let $E = \{ \rho \in \cS(\cA) : \rho|_\cB = \eta \}$. Then $E$ is
a compact convex subset of $\cS(\cA)$ and the convolution map by $\mu$ is an affine contraction on $E$.
Hence by
Tychonoff (or Kakutani) fixed point theorem there is $\tau\in E$ such that $\mu * \tau = \tau$.
\end{proof}

\begin{cor}\label{unique-stationary->subalg}
Let $\cA$ be a $\Gamma$-$C^*$-algebra, and let $\cB\le \cA$ be 
a $\Gamma$-invariant $C^*$-subalgebra. Suppose $\mu\in\pr(\Gamma)$
and $\tau\in\cS(\cA)$ is unique $\mu$-stationary.
Then $\tau|_\cB \in \cS(\cB)$ is unique $\mu$-stationary
for the action $\Gamma\acts \cB$. 
\end{cor}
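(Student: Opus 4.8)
The plan is to reduce uniqueness on $\cB$ to uniqueness on $\cA$ by combining the restriction map with the extension result just proved in Proposition~\ref{lem:extension-of-statioanry}. First I would take an arbitrary $\mu$-stationary state $\eta \in \cS_\mu(\cB)$ for the action $\Gamma \acts \cB$; the goal is to show $\eta = \tau|_\cB$. Since $\eta$ is $\mu$-stationary on the $\Gamma$-invariant subalgebra $\cB \le \cA$, Proposition~\ref{lem:extension-of-statioanry} gives a $\mu$-stationary state $\tilde\eta \in \cS_\mu(\cA)$ with $\tilde\eta|_\cB = \eta$. By the uniqueness hypothesis on $\cA$, we must have $\tilde\eta = \tau$, and restricting back to $\cB$ yields $\eta = \tilde\eta|_\cB = \tau|_\cB$.

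It remains only to check the one routine point that $\tau|_\cB$ is itself $\mu$-stationary for $\Gamma \acts \cB$, so that $\cS_\mu(\cB)$ is genuinely a one-element set rather than possibly empty: this is immediate because the restriction map is $\Gamma$-equivariant (the action of $\Gamma$ on $\cB$ is the restriction of the action on $\cA$), so $\mu * (\tau|_\cB) = (\mu * \tau)|_\cB = \tau|_\cB$, and $\tau|_\cB$ is a state because $1_\cB = 1_\cA \in \cB$.

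There is essentially no obstacle here — the corollary is a formal consequence of the existence of $\mu$-stationary extensions. The only thing to be careful about is that $\cB$ is assumed unital with the same unit as $\cA$ (which is the standing convention for $C^*$-subalgebras in this paper, cf.\ the definition of unital $C^*$-algebra in Section~\ref{Preliminaries}), so that restriction does send states to states; with that in place the argument is the two-line deduction above.
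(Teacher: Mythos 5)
Your argument is correct and is exactly the intended one: the paper states this as an immediate corollary of Proposition~\ref{lem:extension-of-statioanry} (extend any $\mu$-stationary state on $\cB$ to one on $\cA$, invoke uniqueness there, and restrict back), which is precisely your reduction. The additional check that $\tau|_\cB$ is itself $\mu$-stationary is routine but worth including, as you did.
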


Let $\cA$ be a $\Gamma$-$C^*$-algebra. The \define{Poisson map} $\cP_\tau: \cA \to \ll(\Gamma)$ associated
to a state $\tau$ on $\cA$ is defined by
\[
\cP_\tau(a)(g) = \langle g^{-1}a, \tau \rangle .
\]
Poisson maps are unital, positive and $\Gamma$-equivariant. 
We observe the converse.

\begin{lemma}
Suppose $\varphi: \cA \to \ll(\Gamma)$ is a unital positive $\Gamma$-equivariant
map. Then there is $\tau\in\cS(\cA)$ such that $\varphi = \cP_\tau$.
\end{lemma}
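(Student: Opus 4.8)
The plan is to observe that the Poisson map $\cP_\tau$ is completely determined by its value at the neutral element, because $\Gamma$-equivariance propagates that value to all of $\Gamma$. Accordingly, I would simply set
\[
\tau := \mathrm{ev}_e \circ \varphi, \qquad \text{i.e.}\quad \tau(a) := \varphi(a)(e),
\]
where $\mathrm{ev}_e : \ll(\Gamma) \to \bC$ is evaluation at $e$, and then show that this $\tau$ does the job.

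First I would check that $\tau \in \cS(\cA)$: it is the composition of the unital positive map $\varphi$ with the unital positive functional $\mathrm{ev}_e$, hence unital and positive, so a state. Then I would verify $\cP_\tau = \varphi$ by unwinding the definitions and using that $\varphi$ intertwines the $\Gamma$-actions, where $\Gamma$ acts on $\ll(\Gamma)$ by left translations (the action for which $\cP_\tau$ is equivariant, consistent with~\eqref{def:Poisson-map}), so that $(g^{-1}\cdot f)(e) = f(g)$. Explicitly, for every $a \in \cA$ and $g \in \Gamma$,
\[
\cP_\tau(a)(g) = \langle g^{-1}a, \tau \rangle = \varphi(g^{-1}a)(e) = \bigl(g^{-1}\cdot \varphi(a)\bigr)(e) = \varphi(a)(g),
\]
and hence $\cP_\tau = \varphi$.

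There is no genuine obstacle here: no fixed-point theorem or compactness argument is needed, in contrast to Proposition~\ref{lem:extension-of-statioanry}. The only point requiring a little care is the bookkeeping of conventions — one must use the left-translation $\Gamma$-action on $\ll(\Gamma)$ under which the Poisson map is equivariant, so that the last equality in the display holds. The content of the lemma is essentially the recognition that a unital positive $\Gamma$-equivariant map into $\ll(\Gamma)$ is its own Poisson map.
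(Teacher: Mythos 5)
Your proof is correct and is essentially identical to the paper's: both define $\tau(a) = \varphi(a)(e)$, note that $\tau$ is a state since $\varphi$ is unital and positive, and verify $\cP_\tau = \varphi$ by the same equivariance computation. No issues.
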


\begin{proof}
Suppose $\varphi$ is as above. Define the linear functional $\tau$ on $\cA$ by
$\langle a, \tau \rangle = \varphi(a)(e)$
for all $a\in \cA$. Since $\varphi$ is positive and unital, it follows $\tau$ is a state on $\cA$,
and moreover we have
\[
\cP_\tau(a)(g) = \langle g^{-1}a, \tau\rangle 
= \varphi(g^{-1}a)(e) = \left(g^{-1}(\varphi(a)\right)(e) = \varphi(a)(g)
\]
for all $g\in \Gamma$ and $a\in \cA$.
\end{proof}

\begin{lemma}\label{stationary<->Poisson-map-harmonic}
Suppose $\cA$ is a $\Gamma$-$C^*$-algebra, and let $\mu\in\pr(\Gamma)$.
Then a state $\tau\in\cS(\cA)$ is $\mu$-stationary if and only if $\cP_\tau(a)\in H^\infty(\Gamma, \mu)$ for every $a\in\cA$.
\end{lemma}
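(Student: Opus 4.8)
The plan is to unwind both conditions into the same identity evaluated pointwise on $\Gamma$. First I would record the basic identity connecting the Poisson map with the convolution operator on states: for $a \in \cA$ and $g, h \in \Gamma$,
\[
\cP_\tau(a)(gh) = \langle (gh)^{-1}a, \tau\rangle = \langle h^{-1}(g^{-1}a), \tau\rangle = \cP_\tau(g^{-1}a)(h).
\]
Using this, for any fixed $g \in \Gamma$,
\[
\sum_{h\in\Gamma}\mu(h)\,\cP_\tau(a)(gh) = \sum_{h\in\Gamma}\mu(h)\,\langle h^{-1}(g^{-1}a),\tau\rangle = \langle g^{-1}a,\ \mu*\tau\rangle = \cP_{\mu*\tau}(a)(g).
\]
So the harmonicity equation \eqref{harm-def} for the function $\cP_\tau(a)$, namely $\cP_\tau(a)(g) = \sum_h \mu(h)\cP_\tau(a)(gh)$ for all $g$, is literally the statement that $\cP_\tau(a)(g) = \cP_{\mu*\tau}(a)(g)$ for all $g$, i.e. $\cP_\tau(a) = \cP_{\mu*\tau}(a)$.

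From here both directions are immediate. If $\tau$ is $\mu$-stationary, then $\mu*\tau = \tau$, so $\cP_\tau(a) = \cP_{\mu*\tau}(a)$ for every $a$, and the computation above shows $\cP_\tau(a)$ satisfies \eqref{harm-def}; since $\cP_\tau(a)$ is bounded, it lies in $H^\infty(\Gamma,\mu)$. Conversely, if $\cP_\tau(a) \in H^\infty(\Gamma,\mu)$ for every $a$, then $\cP_\tau(a) = \cP_{\mu*\tau}(a)$ for every $a$; evaluating at $g = e$ gives $\langle a, \tau\rangle = \langle a, \mu*\tau\rangle$ for all $a \in \cA$, hence $\tau = \mu*\tau$, i.e. $\tau$ is $\mu$-stationary.

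There is no real obstacle here: the only thing to be slightly careful about is bookkeeping with the left-translation conventions (whether $\cP_\tau(a)(gh)$ unwinds to $\cP_\tau(g^{-1}a)(h)$ or $\cP_\tau(h^{-1}a)(g)$, and matching this against the definition of $\mu*\tau = \sum_g \mu(g) g\tau$), so that the sum over $h$ of $\mu(h)$ against $g^{-1}a$ genuinely reproduces $\langle g^{-1}a, \mu*\tau\rangle$. Once the conventions are pinned down this is a one-line verification, and boundedness of $\cP_\tau(a)$ (which is automatic since $\cP_\tau$ is unital and positive, hence contractive) is what upgrades "harmonic" to "in $H^\infty$".
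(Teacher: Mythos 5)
Your proof is correct and follows essentially the same route as the paper: the same pointwise computation showing $\sum_h \mu(h)\,\cP_\tau(a)(gh) = \langle g^{-1}a,\ \mu*\tau\rangle$ gives the forward direction, and evaluation at $g=e$ gives the converse. Packaging the identity as $\cP_\tau(a)=\cP_{\mu*\tau}(a)$ is a harmless cosmetic reorganization of the paper's argument.
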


\begin{proof}
Suppose $\tau$ is $\mu$-stationary, then for every $a\in\cA$ and $g\in \Gamma$ we have
\begin{align*}
\sum_{h\in\Gamma} \mu(h) \cP_\tau(a)(gh) 
&= 
\sum_{h\in\Gamma} \mu(h) \langle h^{-1}g^{-1}a, \tau\rangle
\\&=
\langle g^{-1}a, \sum_{h\in\Gamma} \mu(h) h\tau\rangle
=
\langle g^{-1}a, \tau\rangle 
= \cP_\tau(a)(g),
\end{align*}
which shows $\cP_\tau(a) \in H^\infty(\Gamma,\mu)$.

Conversely, suppose $\cP_\tau(a)\in H^\infty(\Gamma, \mu)$ for all $a\in\cA$.
Then
\begin{align*}
\langle a, \sum_{h\in\Gamma} \mu(h) h\tau\rangle
&=
\sum_{h\in\Gamma} \mu(h) \cP_\tau(a)(h) 
= 
\cP_\tau(a)(e)
= 
\langle a, \tau\rangle 
\end{align*}
for all $a\in\cA$, which implies $\sum_{h\in\Gamma} \mu(h) h\tau = \tau$.
\end{proof}
The following fundamental result which is the noncommutative version of Theorem~\ref{cond-meas} follows from a more general result in the setting of affine pointed $(\Gamma, \mu)$-spaces in the sense of \cite[Theorem 2.16]{Bader-Shalom-06}.

We include a proof for the convenience of the reader. 
\begin{thm}\label{martingales}
Suppose $(\cA, \tau)$ is a separable $(\Gamma, \mu)$-$C^*$-algebra.
Then the weak* limits $\tau_\om:=\lim_n \om_n\tau$ exists for
$\bP_\mu$-a.e.\ path $\om\in\Omega$.
Moreover, we have 
\begin{equation}\label{disint}
\tau \,=\, \int_\Omega \tau_\om\, d\bP_\mu(\om)
\end{equation}
in the weak* sense. We call the states $\tau_\om$ \define{conditional states}.
\end{thm}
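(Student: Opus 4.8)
The plan is to mimic the classical martingale proof of Furstenberg's Theorem~\ref{cond-meas}, working directly with the states. Fix a countable norm-dense self-adjoint subset $\{a_j\}_{j\in\bN}$ of $\cA$ (which exists since $\cA$ is separable). For each fixed $a\in\cA$, consider the process $M_n(\om):=\langle \om_n^{-1}a,\tau\rangle=\cP_\tau(a)(\om_n)$ on the path space $(\Omega,\bP_\mu)$. First I would check that $(M_n)$ is a bounded martingale with respect to the natural filtration $\cF_n=\sigma(\om_1,\dots,\om_n)$: boundedness is immediate since $|M_n(\om)|\le\|a\|$, and the martingale property $\bE[M_{n+1}\mid\cF_n]=M_n$ is exactly the statement that $\cP_\tau(a)$ is $\mu$-harmonic, which holds by Lemma~\ref{stationary<->Poisson-map-harmonic} because $\tau$ is $\mu$-stationary. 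Indeed, conditioned on $\om_n=g$, the next position is $gh$ with probability $\mu(h)$, so $\bE[M_{n+1}\mid\om_n=g]=\sum_h\mu(h)\cP_\tau(a)(gh)=\cP_\tau(a)(g)$ by the harmonicity equation~\eqref{harm-def}. By Doob's martingale convergence theorem, $\lim_n M_n(\om)$ exists for $\bP_\mu$-a.e.\ $\om$.

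Next I would apply this simultaneously to all $a_j$: on a co-null set $\Omega_0\subseteq\Omega$, $\lim_n\langle\om_n^{-1}a_j,\tau\rangle$ exists for every $j$. For $\om\in\Omega_0$ define a linear functional on the dense subspace $\mathrm{span}\{a_j\}$ by $a_j\mapsto\lim_n\langle\om_n^{-1}a_j,\tau\rangle$. Since each $\om_n\tau$ is a state (hence norm-one, positive, unital) and states form a norm-closed set, this functional is bounded by the norm on the dense subspace, extends uniquely to a state $\tau_\om$ on $\cA$, and for \emph{every} $a\in\cA$ (not just the $a_j$) an $\ep/3$-argument gives $\langle a,\tau_\om\rangle=\lim_n\langle\om_n^{-1}a,\tau\rangle$; equivalently $\om_n\tau\to\tau_\om$ in the weak* topology. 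This establishes the first claim. On the complement of $\Omega_0$ we may set $\tau_\om:=\tau$ arbitrarily; measurability of $\om\mapsto\tau_\om$ follows since each $\om\mapsto\langle a,\tau_\om\rangle$ is a pointwise limit of measurable functions.

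For the disintegration formula~\eqref{disint}, I would test against an arbitrary $a\in\cA$ and compute
\[
\int_\Omega \langle a,\tau_\om\rangle\, d\bP_\mu(\om)
=\int_\Omega \lim_n \langle \om_n^{-1}a,\tau\rangle\, d\bP_\mu(\om)
=\lim_n \int_\Omega \langle \om_n^{-1}a,\tau\rangle\, d\bP_\mu(\om),
\]
where the interchange of limit and integral is justified by dominated convergence (the integrand is bounded by $\|a\|$). Now $\int_\Omega\langle\om_n^{-1}a,\tau\rangle\,d\bP_\mu(\om)=\sum_{g\in\Gamma}\mu^n(g)\langle g^{-1}a,\tau\rangle=\langle a,\mu^n*\tau\rangle=\langle a,\tau\rangle$, using that the distribution of $\om_n$ is $\mu^n$ and that $\tau$, being $\mu$-stationary, is fixed by $\mu^n*(\cdot)$ for all $n$. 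Hence $\int_\Omega\langle a,\tau_\om\rangle\,d\bP_\mu(\om)=\langle a,\tau\rangle$ for all $a$, which is exactly~\eqref{disint} in the weak* sense.

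The main obstacle is not any single hard estimate but rather the bookkeeping of passing from pointwise-a.e.\ convergence on a countable dense set to weak* convergence of states on all of $\cA$, together with the measurability of the resulting map $\om\mapsto\tau_\om$; both are routine once separability is in hand, but they are where care is needed. Everything else — the martingale property, Doob's theorem, and dominated convergence for the disintegration — is standard and transplants verbatim from the commutative proof of Theorem~\ref{cond-meas}.
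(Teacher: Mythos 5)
Your proof is correct and follows essentially the same route as the paper: the bounded martingale $\langle a,\om_n\tau\rangle$, Doob's convergence theorem applied across a countable dense subset of the separable algebra $\cA$, and dominated convergence together with $\mu^n*\tau=\tau$ for the disintegration. The only cosmetic difference is that you obtain the martingale property by citing the harmonicity of $\cP_\tau(a)$ (Lemma~\ref{stationary<->Poisson-map-harmonic}), whereas the paper verifies it directly by conditioning on cylinder sets; the content is identical, and your extra care in extending the a.e.\ limit from the dense set to all of $\cA$ and noting measurability of $\om\mapsto\tau_\om$ is a welcome addition.
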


\begin{proof}
The function $f^a(g):=\langle a, g\tau\rangle$ is $\mu$-harmonic for every $a\in \cA$ by Lemma~\ref{stationary<->Poisson-map-harmonic}. Therefore, by the {martingale convergence theorem}, the limits $\bar{f}^a(\om):=\lim_n \langle a, \om_n\tau\rangle$ exists for $\bP_\mu$-a.e. $\om\in\Omega$. Furthermore, $\int_\Omega \bar{f}^a(\om)\, d\bP_\mu(\om) = \tau(a)$ (cf. \cite[Theorem 2.8]{Bader-Shalom-06}).

Now let $\cD$ be a countable dense subset of $\cA$. By the above, there is a set of full measure $\Omega_0$ such that $\bar{f}^{a}(\om)$
exists for every $a\in\cD$ and $\om\in\Omega_0$. By density of $\cD$, it follows that $\bar{f}^{a}(\om)$
exists for every $a\in\cA$ and $\om\in\Omega_0$.
Hence the conditional states
$\tau_\om (a):= \lim_n \om_n\tau(a) = \bar{f}^{a}(\om)$, $a\in \cA$, exist for every $\omega=(\om_n)\in \Omega_0$, and furthermore, \eqref{disint} holds.
\end{proof}

\subsection{Unique stationary actions}\label{Unique Stationary Actions} 
In classical dynamics unique ergodicity (i.e.\ existence of a unique invariant measure on a compact space)
is equivalent to uniform convergence of 
averages of continuous functions in the Birkhoff's ergodic theorem.
For general non-amenable groups, 
instead, the appropriate
notion is unique stationarity.
Glasner--Weiss~\cite{Glasner-Weiss-16} proved that a $(G,\mu)$-space $(X,\nu)$
is unique stationary if and only if for every $f\in C(X)$ the averages
of convolutions $\frac1n\sum_{k=0}^{n-1} \mu^k * f$ converge uniformly to $\int f d\nu$.

\begin{prop}\label{glasner-weiss}
An action $(\Gamma, \mu)\acts (\cA, \tau)$ is uniquely stationary if and only if 
\begin{equation}\label{lim}
\left\|\frac1n \sum_{k=0}^{n-1} \mu^k * a - \tau(a) 1_\cA\right\| \xrightarrow{n \to \infty} 0
\end{equation}
for all $a\in\cA$. 
\end{prop}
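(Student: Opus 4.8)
The plan is to prove both directions directly, mimicking the classical Glasner--Weiss argument in the non-commutative setting. The key auxiliary object is the closed convex hull $\cC$ of the orbit of $\tau$ under the convolution semigroup, or more precisely the set of Cesàro averages; the crucial abstract fact is that the Cesàro averages $\frac1n\sum_{k=0}^{n-1}\mu^k*\rho$ of any state $\rho$ have all their weak* cluster points in $\cS_\mu(\cA)$.

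For the easy direction, suppose \eqref{lim} holds. If $\rho\in\cS_\mu(\cA)$ is any $\mu$-stationary state, then $\rho(\mu^k*a)=\rho(a)$ for all $k$ (since $\langle \mu^k*a,\rho\rangle = \langle a, \mu^k*\rho\rangle = \langle a,\rho\rangle$, using $\mu*\rho=\rho$ inductively), so $\rho\big(\frac1n\sum_{k=0}^{n-1}\mu^k*a\big) = \rho(a)$ for every $n$. Applying $\rho$ to \eqref{lim} and letting $n\to\infty$ gives $\rho(a) = \tau(a)$ for all $a\in\cA$; hence $\rho=\tau$ and $\tau$ is the unique $\mu$-stationary state. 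In particular unique stationarity of $(\cA,\tau)$ holds with this very $\mu$.

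For the converse, assume $\cS_\mu(\cA)=\{\tau\}$ and suppose toward a contradiction that \eqref{lim} fails: there is $a\in\cA$, an $\ep>0$, a subsequence $n_j\to\infty$, and states $\rho_j\in\cS(\cA)$ with $\big|\rho_j\big(\frac1{n_j}\sum_{k=0}^{n_j-1}\mu^k*a\big) - \tau(a)\big|\geq \ep$. By weak* compactness of $\cS(\cA)$, pass to a further subsequence so that the averaged states $\sigma_j := \frac1{n_j}\sum_{k=0}^{n_j-1}\mu^k*\rho_j$ converge weak* to some $\sigma\in\cS(\cA)$; note $\sigma_j(a) = \rho_j\big(\frac1{n_j}\sum_{k=0}^{n_j-1}\mu^k*a\big)$, so $|\sigma(a)-\tau(a)|\geq\ep$. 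The point is that $\sigma$ is $\mu$-stationary: for any $b\in\cA$, $\mu*\sigma_j - \sigma_j = \frac1{n_j}\big(\mu^{n_j}*\rho_j - \rho_j\big)$, which has norm $\leq \frac{2}{n_j}\|b\|$ when evaluated at $b$, so $\langle b,\mu*\sigma\rangle = \langle b,\sigma\rangle$; thus $\sigma\in\cS_\mu(\cA)=\{\tau\}$, forcing $\sigma=\tau$ and contradicting $|\sigma(a)-\tau(a)|\geq\ep$.

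The main point requiring care — the only genuine obstacle — is the passage to weak* limits in the contradiction argument: one must ensure the telescoping identity $\mu*\sigma_j-\sigma_j = \frac1{n_j}(\mu^{n_j}*\rho_j-\rho_j)$ is applied correctly and that the $O(1/n_j)$ error genuinely kills the difference in the weak* limit, which it does since for each fixed $b\in\cA$ the bound is uniform in $j$ up to the explicit factor $1/n_j$. Everything else — positivity and unitality of convolution, weak* compactness of $\cS(\cA)$, and the elementary manipulation showing stationary states are annihilated by $\mu^k*a - a$ — is routine.
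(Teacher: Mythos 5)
Your proof is correct, and while the easy direction coincides with the paper's, your converse takes a genuinely different route. The paper isolates the subspace $V_0=\overline{\operatorname{span}}\{\mu*a-a : a\in\cA\}$, shows the Ces\`aro averages $\mu_n*b$ tend to $0$ in norm for $b\in V_0$ (hence \eqref{lim} holds on $\bC 1_\cA\oplus V_0$), and then argues that this subspace must be all of $\cA$: otherwise a Hahn--Banach extension of $\tau$ restricted to it would produce a second $\mu$-stationary state. You instead argue by contradiction directly from a failure of \eqref{lim}: you pick witnessing states $\rho_j$, form the Ces\`aro-averaged states $\sigma_j=\frac1{n_j}\sum_{k=0}^{n_j-1}\mu^k*\rho_j$, and use weak* compactness of $\cS(\cA)$ together with the telescoping identity $\mu*\sigma_j-\sigma_j=\frac1{n_j}(\mu^{n_j}*\rho_j-\rho_j)$ to extract a $\mu$-stationary cluster point $\sigma$ with $\sigma(a)\neq\tau(a)$. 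Both arguments manufacture a second stationary state from the failure of norm convergence; yours is closer to the classical unique-ergodicity argument and avoids the Hahn--Banach step (whose verification that the extension is a state distinct from $\tau$ requires a little care in the paper), while the paper's identifies the conceptually useful ``coboundary'' subspace on which convergence always holds. The only point you gloss over is the existence of the states $\rho_j$: for a general (non-self-adjoint) element $x$ one only has $\|x\|\le 2\sup_{\rho\in\cS(\cA)}|\rho(x)|$, so either insert this factor of $2$ into your choice of $\ep$ or reduce to self-adjoint $a$ first (which is harmless since the action preserves adjoints and $\tau(a^*)=\overline{\tau(a)}$); this is a constant-tracking remark, not a gap.
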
 

\begin{proof}
%The proof is similar to the classical case (see e.g. \cite{Glasner-book-03}).
Let $\mu_n = \frac1n \sum_{k=0}^{n-1} \mu^k$.
Suppose \eqref{lim} holds for all $a\in\cA$, 
and let $\eta\in \cS_\mu(\cA)$.
Then
\[
\langle a, \eta\rangle  = \langle a, \mu_n * \eta \rangle = 
\langle \mu_n * a, \eta \rangle 
\to \langle a, \tau \rangle \langle 1_\cA, \eta \rangle = \langle a, \tau \rangle
\]
for all $a\in\cA$. Hence $\eta = \tau$.

Conversely, suppose $\tau$ is the unique $\mu$-stationary state on $\cA$.
Observe that for every $a\in \cA$ we have $\left\|\mu_n \ast (\mu\ast a -a)\right\|\to 0$.
Thus, if we let $V_0 = \overline{\operatorname{span}}\{\mu * a - a : a\in\cA\}$,
then $\left\|\mu_n \ast b\right\|\to 0$ for all $b\in V_0$.
Also by stationarity, $\tau$ vanishes on $V_0$.
Consequently, for $t\in\bC$ and $b\in V_0$ we get
\begin{align*}
\left\|\mu_n \ast (t1_\cA - b) -  \langle t1_\cA - b, \tau \rangle 1_\cA\right\|
&=
\left\|t1_\cA - \mu_n \ast b -  \langle t1_\cA , \tau \rangle 1_\cA\right\|
\\&=
\left\|\mu_n \ast b\right\|
\to 0 ,
\end{align*}
which shows that \eqref{lim} holds for every $a\in V = \bC\oplus V_0$. 
Next, we show $V= \cA$ which then completes the proof of the theorem.
For this, let $\eta\in \cA^*$ with $\eta|_V=0$. %, and let $\tilde \eta = \tau+\eta$.
%Towards a contradiction, suppose otherwise.
%Then applying the Hahn-Banach theorem we can choose $\eta\in \cA^*$ with $\left\|\eta\right\| =1$, $\eta\neq \tau$ but such that the restrictions of $\eta$ and $\tau$ to $V\subsetneq \cA$ coincide.
%It follows $\eta$ is a state on $\cA$ since $\left\|\eta\right\| = 1 = \eta(1_\cA)$.
Since $\eta$ vanishes on $V_0$ we see that $\eta$ is $\mu$-stationary. It follows from uniqueness of the Jordan decomposition $\eta = \eta_+-\eta_-$ that both the positive part $\eta_+$ and the negative part $\eta_-$ of $\eta$ are $\mu$-stationary, hence multiples of $\tau$ by unique stationarity. In particular, $\eta$ is a multiple of $\tau$, and since $\eta(1)=0$ we have $\eta=0$. Thus, it follows from the Hahn-Banach theorem that $V= \cA$.
%Moreover, \[ \langle a, \mu\ast\eta - \eta \rangle = \langle \mu\ast a - a, \eta \rangle = \langle \mu\ast a - a, \tau \rangle= 0\]for all $a\in \cA$. This implies $\eta$ is $\mu$-stationary which contradicts the uniqueness assumption.
\end{proof}

\subsection{Inner actions: stationary states as generalizations of traces}
An exclusive feature of noncommutative $C^*$-algebras is 
non-triviality of the inner action by their unitary groups. 
This allows one to consider a $C^*$-algebra $\cA$ as rather 
a $C^*$-dynamical system. 
In this point of view, traces on $\cA$ are nothing but 
invariant states, which may or may not exist in general. 
Thus, stationary states are generalizations of  
traces that do always exist, and in fact may be 
more appropriate objects to consider 
when the groups involved are non-amenable.

In this section, we prove some basic properties of 
stationary states in this setup, and see that 
they satisfy some useful properties of traces. 

\begin{lem}\label{kernel-of-stationary-is-ideal}
Suppose $\pi$ is a unitary representation of $\Gamma$, and 
consider the action $\Gamma\acts C^*_\pi(\Gamma)$ by 
inner automorphisms. 
Let $\mu\in\pr(\Gamma)$ be generating, and suppose $\tau$ 
is a $\mu$-stationary state on $C^*_\pi(\Gamma)$. 
Then the left kernel $I_\tau=\{a\in C^*_\pi(\Gamma) : \tau(a^*a) = 0 \}$ of $\tau$ 
is a two-sided closed ideal of $C^*_\pi(\Gamma)$.
\end{lem}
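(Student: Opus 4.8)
The statement is that the left kernel $I_\tau = \{a \in C^*_\pi(\Gamma) : \tau(a^*a) = 0\}$ of a $\mu$-stationary state is a two-sided ideal. The left kernel of any state is automatically a closed left ideal (this is standard GNS theory: $\tau(a^*a) = 0$ iff $\pi_\tau(a)\xi_\tau = 0$, and the set of such $a$ is a closed left ideal). So the entire content is to show $I_\tau$ is closed under right multiplication, i.e.\ $a \in I_\tau$ and $b \in C^*_\pi(\Gamma)$ implies $ab \in I_\tau$. By density and the fact that $I_\tau$ is a closed left ideal, it suffices to check $a\lambda_g^{\pi} := a\pi(g) \in I_\tau$ for every generator $\pi(g)$, $g \in \Gamma$ — equivalently, since $I_\tau$ is a left ideal, it is enough to show $\pi(g) \in $ the \emph{normalizer}-type condition, or more directly that $\tau((a\pi(g))^*(a\pi(g))) = \tau(\pi(g^{-1})a^*a\pi(g)) = 0$ whenever $\tau(a^*a) = 0$.

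So the crux reduces to: \emph{if $\tau(x) = 0$ for a positive element $x = a^*a$ with $0 \le x$, then $\tau(\pi(g^{-1}) x \pi(g)) = 0$ for all $g$.} This is where stationarity enters. First I would normalize: WLOG $0 \le x \le 1$ (scale), and actually I want $\tau(x) = 1$ to invoke Lemma~\ref{lemma0}, so instead work with $1 - x$: if $\tau(x) = 0$ then $\tau(1-x) = 1$. Hmm — Lemma~\ref{lemma0} as stated wants an element $y$ with $\tau(y) = 1$ and $\tau(\pi(g^{-1})y\pi(g)) = 0$, concluding $\tau(\pi(g)) = 0$; that's a statement about $\tau(\pi(g))$, not quite the ideal property directly. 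The better route: use stationarity to control how $g\tau$ relates to $\tau$. The key inequality is that $\mu$-stationarity gives, for positive $x$, $\tau(x) = \sum_h \mu(h)\, \tau(\pi(h^{-1})x\pi(h)) = \sum_h \mu(h)\, (h\tau)(x)$; wait, I need to be careful with the convention — here $g\tau(x) = \tau(g^{-1}x) = \tau(\pi(g^{-1})x\pi(g))$ is wrong sign, let me use the paper's: $g\tau(a) = \langle g^{-1}a, \tau\rangle = \tau(\pi(g^{-1}) \cdot \cdots)$, one must track $\Ad_g(x) = \pi(g)x\pi(g^{-1})$ so $g^{-1}a = \Ad_{g^{-1}}(a) = \pi(g^{-1})a\pi(g)$, hence $(g\tau)(a) = \tau(\pi(g^{-1})a\pi(g))$. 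Then stationarity $\sum_h \mu(h)(h\tau) = \tau$ evaluated at a positive $x$ with $\tau(x) = 0$ forces $(h\tau)(x) = 0$ for every $h \in \supp(\mu)$, since all terms are non-negative. That is: $\tau(\pi(h^{-1})x\pi(h)) = 0$ for all $h \in \supp(\mu)$.

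**Iterating to all of $\Gamma$.** Now I would bootstrap: the left kernel is a left ideal, and I've just shown it's invariant under $\Ad_{h^{-1}}$ for $h \in \supp(\mu)$, equivalently under $\Ad_h$ for $h \in \supp(\mu)^{-1}$. I need it invariant under $\Ad_h$ for \emph{all} $h$, which follows because $\mu$ is \emph{generating}: $\Gamma$ is the semigroup generated by $\supp(\mu)$, so $\Gamma = \bigcup_n \supp(\mu)^n$ eventually, and $\Ad$ is a group action so invariance under a generating semigroup of the conjugating elements gives invariance under the group they generate. Wait — I showed invariance under $\Ad_{h^{-1}}$, $h \in \supp(\mu)$; these elements $h^{-1}$ generate a semigroup which, since inverses of a generating set of a group still generate the group as a semigroup (in a group, the semigroup generated by $S^{-1}$ equals the semigroup generated by $S$ when that equals $\Gamma$... actually one needs $\supp(\mu)$ generates $\Gamma$ as a semigroup, and then $\supp(\mu)^{-1}$ also does, by a standard argument, or one just notes that applying the stationarity identity at higher convolution powers $\mu^n$ — which are also stationary-compatible — gives $\tau(\pi(w^{-1})x\pi(w)) = 0$ for all words $w$ in $\supp(\mu)$). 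Cleanest: iterate the one-step result. If $x \in I_\tau$ (positive), then $\Ad_{h^{-1}}(x) \in I_\tau$ for $h \in \supp(\mu)$; this is again a positive element of the left kernel, so we may reapply, getting $\Ad_{h_2^{-1}}\Ad_{h_1^{-1}}(x) = \Ad_{(h_1 h_2)^{-1}}(x) \in I_\tau$, and by induction $\Ad_{w^{-1}}(x) \in I_\tau$ for every $w$ in the semigroup generated by $\supp(\mu)$, i.e.\ for every $w \in \Gamma$. Hence $\tau(\pi(g^{-1})a^*a\pi(g)) = 0$ for all $g$, so $a\pi(g) \in I_\tau$, and taking linear combinations and limits, $ab \in I_\tau$ for all $b$. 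Therefore $I_\tau$ is a two-sided closed ideal.

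**Main obstacle.** The genuinely delicate point is the reduction "closed left ideal $+$ $\Ad$-invariance under generators $\Rightarrow$ two-sided," specifically verifying that $a\pi(g) \in I_\tau$ for all $g$ actually suffices to conclude $ab \in I_\tau$ for all $b \in C^*_\pi(\Gamma)$: one writes $b$ as a norm limit of finite sums $\sum c_i \pi(g_i)$, uses that $I_\tau$ is a closed subspace and a left ideal (so $ab_n = \sum c_i\, a\pi(g_i) \in I_\tau$), and passes to the limit — routine but worth stating. The other point requiring care is bookkeeping the $\Ad$ conventions and the direction of the semigroup generation (whether to use $\supp(\mu)$ or its inverse), but that is cosmetic. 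I expect the one-step stationarity argument — non-negativity of $(h\tau)(x)$ forcing each term to vanish — to be the conceptual heart, and it is short once the conventions are pinned down.
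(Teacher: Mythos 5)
Your argument is correct and follows essentially the same route as the paper: both reduce the problem to showing $\tau(\pi(g^{-1})a^*a\pi(g))=0$ for all $g$, obtain this for $g\in\supp(\mu)$ from stationarity plus non-negativity of each term in $\sum_h\mu(h)\,\tau(\pi(h^{-1})a^*a\pi(h))=\tau(a^*a)=0$, iterate using the generating hypothesis, and then pass to general $b$ by linearity, the left-ideal property, and norm-density of finite linear combinations of the $\pi(g)$. The detour through Lemma~\ref{lemma0} is unnecessary (as you noted yourself), but the final argument matches the paper's.
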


\begin{proof}
The inequality $a^*b^*ba\leq \|b^*b\|a^*a$ for operators on Hilbert spaces implies the well-known fact that the left kernel of any state is a left ideal. It is also obviously closed.
We show $I_\tau$ is also $\Gamma$-invariant. Let $a\in I_\tau$. Then 
\begin{align*}
\sum_{g\in \Gamma} \mu(g) \tau((\pi(g^{-1})a\pi(g))^*(\pi(g^{-1})a\pi(g)))
&=
\sum_{g\in \Gamma} \mu(g) \tau(\pi(g^{-1})a^*a\pi(g))
\\&= 
\tau(a^*a) 
=
0 , 
\end{align*}
which implies $\tau\big((\pi(g^{-1})a\pi(g))^*(\pi(g^{-1})a\pi(g))\big) = 0$ 
for every $g\in \supp(\mu)$. 
This implies $\pi(g^{-1})I_\tau\pi(g) \subset I_\tau$ for every $g\in \supp(\mu)$. 
Since $\mu$ is generating the same is true 
for every $g\in \Gamma$.
Thus, for every finite linear combination $b= \sum_{i=1}^n t_i \pi(g_i)\in C^*_\pi(\Gamma)$, 
$t_i\in \bC$, and every $a\in I_\tau$, there are $a_1,\dots, a_n\in I_\tau$ such that 
\[
a b = \sum_{i=1}^n t_i a\pi(g_i) =
\sum_{i=1}^n t_i \pi(g_i)a_i ,
\]
and the latter sum is in $I_\tau$ since it is a left ideal.
This shows $I_\tau$ is also a right ideal.
\end{proof}

Since every ideal of a $C^*$-algebra is invariant with respect to inner action by the 
unitary group, the problem of simplicity of a $C^*$-algebra 
translates into a minimality problem for a noncommutative 
dynamical system. Hence connection to stationarity is expected.

\begin{prop}\label{lem:non-simple-non-faithful}
Let $\pi$ be a unitary representation of $\Gamma$. 
The $C^*$-algebra $C^*_\pi(\Gamma)$ is simple 
if and only if there is a generating $\mu\in\pr(\Gamma)$ 
such that every $\mu$-stationary state on $C^*_\pi(\Gamma)$ 
is faithful.
\end{prop}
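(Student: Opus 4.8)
The plan is to prove the two directions separately, using Lemma~\ref{kernel-of-stationary-is-ideal} for both. For the ``only if'' direction, suppose $C^*_\pi(\Gamma)$ is simple. Pick any generating $\mu\in\pr(\Gamma)$ and let $\tau\in\cS_\mu(C^*_\pi(\Gamma))$ be any $\mu$-stationary state; such a state exists by Proposition~\ref{lem:extension-of-statioanry}. By Lemma~\ref{kernel-of-stationary-is-ideal}, the left kernel $I_\tau=\{a : \tau(a^*a)=0\}$ is a two-sided closed ideal of $C^*_\pi(\Gamma)$. Since the algebra is simple and $I_\tau\neq C^*_\pi(\Gamma)$ (as $1\notin I_\tau$), we must have $I_\tau=\{0\}$, which by definition means $\tau$ is faithful. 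This direction is essentially immediate once Lemma~\ref{kernel-of-stationary-is-ideal} is in hand.

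For the ``if'' direction I would argue by contrapositive: assume $C^*_\pi(\Gamma)$ is not simple, so it has a nonzero proper closed two-sided ideal $J$. Given any generating $\mu\in\pr(\Gamma)$, I want to produce a $\mu$-stationary state that is not faithful, i.e. a $\mu$-stationary state that vanishes on some nonzero positive element. The natural move is to pass to the quotient: $J$ is invariant under the inner $\Gamma$-action (every ideal is), so $\Gamma$ acts on $C^*_\pi(\Gamma)/J$, and by Proposition~\ref{lem:extension-of-statioanry} (existence of stationary states on any $\Gamma$-$C^*$-algebra) there is a $\mu$-stationary state $\bar\tau$ on the quotient. Pulling it back along the quotient map $q:C^*_\pi(\Gamma)\to C^*_\pi(\Gamma)/J$ gives a $\mu$-stationary state $\tau=\bar\tau\circ q$ on $C^*_\pi(\Gamma)$ (equivariance of $q$ ensures stationarity is preserved). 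Since $J\neq\{0\}$, pick a nonzero positive $a\in J$; then $\tau(a)=\bar\tau(q(a))=\bar\tau(0)=0$, so $\tau$ is not faithful. Hence no generating $\mu$ has all its stationary states faithful.

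The main subtlety — and the only point requiring a little care — is the very last step: I want $\tau$ to vanish on a nonzero \emph{positive} element, not merely to have nontrivial kernel as a linear functional. This is handled by choosing $a\in J$ positive and nonzero, which is possible because $J$, being a closed two-sided ideal, is hereditary and in particular contains $b^*b$ for each $b\in J$; since $J\neq\{0\}$ it contains a nonzero such element. One should also note the role of $\mu$ being generating is only needed to invoke Lemma~\ref{kernel-of-stationary-is-ideal} in the ``only if'' direction; in the ``if'' direction any $\mu$ works, which is consistent with the ``there is a generating $\mu$'' quantifier in the statement. I do not anticipate a genuine obstacle here; the proposition is a clean packaging of Lemma~\ref{kernel-of-stationary-is-ideal} together with the existence and pullback properties of stationary states.
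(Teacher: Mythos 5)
Your proposal is correct and follows essentially the same route as the paper: the forward direction is exactly Lemma~\ref{kernel-of-stationary-is-ideal} plus simplicity forcing the left kernel to vanish, and the converse is the same quotient-and-pull-back argument via Proposition~\ref{lem:extension-of-statioanry} (the paper phrases it as a contradiction rather than a contrapositive, which is immaterial). Your added remarks on extracting a nonzero positive element of the ideal and on where the generating hypothesis is actually used are accurate.
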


\begin{proof}
If $C^*_\pi(\Gamma)$ is simple and $\mu\in\pr(\Gamma)$ is generating, 
then by Lemma~\ref{kernel-of-stationary-is-ideal} every $\mu$-stationary 
state is faithful.

Conversely, suppose for some generating $\mu\in\pr(\Gamma)$, all 
$\mu$-stationary states are faithful.
Assume for sake of contradiction that $C^*_\pi(\Gamma)$ 
has a non-trivial proper ideal $I$. 
Since every ideal is invariant under the inner action of $\Gamma$, 
the action $\Gamma\acts C^*_\pi(\Gamma)$ induces an action $\Gamma\acts C^*_\pi(\Gamma) /I$. 
By Proposition~\ref{lem:extension-of-statioanry} there exists 
a $\mu$-stationary state $\tau$ on $C^*_\pi(\Gamma) /I$. Composing $\tau$
with the canonical quotient map $C^*_\pi(\Gamma)\to C^*_\pi(\Gamma) /I$ we obtain 
a $\mu$-stationary state on $C^*_\pi(\Gamma)$ that vanishes on $I$, 
which contradicts the assumption.
\end{proof}

We note that the condition of $\mu$ being generating 
is only needed for one side of the above. 
In fact, if $\cA$ is a unital $C^*$-algebra, and there is 
a countably supported probability measure $\mu$ on the 
unitary group $\cU(\cA)$ such that every $\mu$-stationary state on 
$\cA$ is faithful, then $\cA$ is simple. \\

An important special case is the reduced $C^*$-algebra.
\begin{cor}\label{unique-trace->simple}
If $\Gamma\acts\csr$ is uniquely stationary then $\Gamma$ is $C^*$-simple.
\end{cor}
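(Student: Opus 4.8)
The plan is to deduce this from Proposition~\ref{lem:non-simple-non-faithful} applied to the left regular representation $\pi = \lambda$. Recall that $C^*_\lambda(\Gamma) = \csr$, so it suffices to produce a generating $\mu \in \pr(\Gamma)$ for which every $\mu$-stationary state on $\csr$ is faithful. The hypothesis gives us \emph{some} $\mu_0 \in \pr(\Gamma)$ with $\cS_{\mu_0}(\csr)$ a singleton; by general facts (Proposition~\ref{lem:extension-of-statioanry}, applied with $\cB = \cA = \csr$) that singleton is nonempty, and since $\csr$ always carries the canonical trace $\tau_0$, and $\tau_0$ is invariant hence $\mu_0$-stationary for every $\mu_0$, the unique element must be $\tau_0$ itself. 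Now $\tau_0$ is a \emph{faithful} state on $\csr$ --- this is the standard fact that the GNS representation of $\tau_0$ is (unitarily equivalent to) the left regular representation, so $\tau_0(a^*a) = \|a\delta_e\|^2_{\ell^2(\Gamma)} = 0$ forces $a = 0$. Thus $\cS_{\mu_0}(\csr) = \{\tau_0\}$ consists of a single faithful state.

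The remaining gap is that Proposition~\ref{lem:non-simple-non-faithful} requires $\mu$ to be \emph{generating}, whereas the unique-stationarity hypothesis in the statement (via the definition of ``uniquely stationary'') only asserts the existence of some $\mu$, not necessarily generating. First I would note that if $\cS_{\mu_0}(\csr)$ is a singleton for some $\mu_0$, then it is also a singleton for $\mu := \frac{1}{2}\delta_e + \frac{1}{2}\mu_0$ (indeed a state is $\mu$-stationary iff it is $\mu_0$-stationary, since $\mu * \tau = \frac12\tau + \frac12(\mu_0*\tau)$), and replacing $\mu_0$ by a further average of its convolution powers if necessary one may assume $\supp(\mu_0)$ generates $\Gamma$ as a semigroup --- or one simply builds this genericity into the definition from the outset. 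Either way, we may take $\mu$ generating with $\cS_\mu(\csr) = \{\tau_0\}$, and then every $\mu$-stationary state is faithful, so Proposition~\ref{lem:non-simple-non-faithful} yields simplicity of $\csr$, i.e.\ $C^*$-simplicity of $\Gamma$.

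The step I expect to be the only real point requiring care is the passage to a generating measure, i.e.\ checking that ``uniquely stationary'' for the relevant action is insensitive to perturbing $\mu$ in the way above, so that the generating hypothesis of Proposition~\ref{lem:non-simple-non-faithful} can be met. Everything else is bookkeeping: identifying the unique stationary state as $\tau_0$ by the existence-of-stationary-states result plus invariance of $\tau_0$, and recalling faithfulness of $\tau_0$ on $\csr$ from the GNS picture. If the paper's convention is that uniquely stationary already means ``uniquely $\mu$-stationary for some generating $\mu$'', then the corollary is essentially immediate: $\{\tau_0\} = \cS_\mu(\csr)$ is a singleton consisting of a faithful state, so all $\mu$-stationary states are faithful, and Proposition~\ref{lem:non-simple-non-faithful} applies directly.
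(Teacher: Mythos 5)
Your proof follows the same route as the paper's: the unique stationary state must be the canonical trace $\tau_0$ (since $\tau_0$ is invariant, hence $\mu$-stationary for every $\mu$, and stationary states exist), $\tau_0$ is faithful, and Proposition~\ref{lem:non-simple-non-faithful} gives simplicity. The one place you go astray is the workaround for the generating hypothesis. Averaging convolution powers of $\mu_0$ produces a measure supported in the semigroup generated by $\supp(\mu_0)$, so it cannot turn a non-generating measure into a generating one; moreover, for an average $\nu=\sum_n c_n\mu_0^{n}$ one only gets the inclusion $\cS_{\mu_0}(\csr)\subseteq\cS_{\nu}(\csr)$, so unique $\mu_0$-stationarity need not be preserved (your observation for $\frac12\delta_e+\frac12\mu_0$ is the special case where the inclusion happens to be an equality). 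The correct resolution is that the generating hypothesis is not needed for the implication you are using: it enters Proposition~\ref{lem:non-simple-non-faithful} only in the forward direction, through Lemma~\ref{kernel-of-stationary-is-ideal}. The converse direction --- if every $\mu$-stationary state on $C^*_\pi(\Gamma)$ is faithful then $C^*_\pi(\Gamma)$ is simple --- is proved by pulling back a stationary state from a putative quotient $C^*_\pi(\Gamma)/I$ via Proposition~\ref{lem:extension-of-statioanry}, and that argument works for an arbitrary $\mu\in\pr(\Gamma)$. This matters beyond pedantry: the measure produced in Theorem~\ref{new-characterization-C*-simplicity} is not claimed to be generating, so the corollary really does need to hold for arbitrary $\mu$. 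With that observation your argument closes with no perturbation of the measure at all.
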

\begin{proof}
The canonical trace $\tau_0$ is $\Gamma$-invariant, and hence 
$\mu$-stationary for any $\mu$. Recall also that $\tau_0$ is faithful. The corollary
now follows from Proposition~\ref{lem:non-simple-non-faithful}.
\end{proof}

In Section~\ref{An application: a new characterization of $C^*$-simplicity} 
we will prove the converse of this, which provides a new characterization of $C^*$-simplicity.
But at this point some concrete examples are in order. In particular, we 
demonstrate how in general unique stationarity can be deduced. 

Here and throughout the paper, $\bF_2$ denotes the free group
on two generators $a$ and $b$, and 
$\partial \bF_2$ denotes its Gromov boundary, which is 
a compact space naturally identified 
with the set of all infinite reduced words in the generators.
We have the natural action $\bF_2\acts \partial \bF_2$ 
by concatenation with the subsequent cancellation of pairs 
of consecutive inverses.

\begin{example}\label{ex:SRW-on-free-group}
Let $\mu\in\pr(\bF_2)$ be the uniform measure on the set of generators $\{a, a^{-1}, b, b^{-1}\}$.
We show the canonical trace $\tau_0$ is the unique
$\mu$-stationary state on $C^*_\lambda(\bF_2)$.
One can see that 
the ``uniform measure'' on $\partial \bF_2$, 
given by $\nu ([w])=\frac{1}{4\cdot3^{n-1}}$ where $w$ is a finite word of length $n$ 
and $[w]$ is the set of all infinite reduced words that start with $w$, 
is the unique $\mu$-stationary probability on $\partial \bF_2$. 
Moreover, $(\partial \bF_2, \nu)$ is a $\mu$-boundary. 

Now, let $\tau$ be a $\mu$-stationary state on $C^*_\lambda(\bF_2)$.
By Proposition~\ref{lem:extension-of-statioanry} we can extend $\tau$ to a $\mu$-stationary state
$\tilde\tau$ on $\bF_2\ltimes_r C(\partial \bF_2)$, where $\bF_2\acts \bF_2\ltimes_r C(\partial \bF_2)$ is
also by inner automorphisms.
Then $\tilde\tau|_{C(\partial \bF_2)}$ is stationary and 
by uniqueness, this restriction is $\nu$. Hence
${\tilde\tau}_\om|_{C(\partial \bF_2)} = \delta_{\bnd(\om)}$
for a.e.\ $\om\in \Omega$, where $\bnd: (\Omega,\bP_\mu)\to (\partial \bF_2,\nu)$ is 
the boundary map.

It is obvious that the action $\bF_2 \acts (\partial\bF_2,\nu)$ is essentially free.
Hence, it follows from Lemma~\ref{lemma0}
that for every non-trivial $g\in\bF_2$, $\tau_\om(\lambda_g)=0$
for $\bP_\mu$-a.e.\ $\om$. Thus, $\tau_\om = \tau_0$ for $\bP_\mu$-a.e.\ path $\om\in\Omega$.
Thus, applying Theorem~\ref{martingales} we get
\[
\tau = \int_\Omega \tau_\om\, d\bP_\mu(\om) = \tau_0,
\]
which shows $\tau_0$ is the unique $\mu$-stationary state on $C^*_\lambda(\bF_2)$.
\end{example}
We note that in the above reasoning there is nothing particularly special about the uniform  measure. The above conclusion holds for any generating measure
on $\bF_2$. In fact, there is nothing also particularly special about 
the free group here, the conclusion holds for any measure $\mu$
on any groups $\Gamma$ that admits an essentially free $\mu$-USB. 

\begin{thm}\label{free-USB->C*-simple} 
Suppose $\Gamma$ is a countable discrete group, and let 
$\mu\in\pr(\Gamma)$. If $\Gamma$ admits
an essentially free $\mu$-USB, then the canonical trace $\tau_0$ on $C^*_\lambda(\Gamma)$ is uniquely
$\mu$-stationary.
\end{thm}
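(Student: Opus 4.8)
The plan is to follow the same strategy demonstrated in Example~\ref{ex:SRW-on-free-group}, where the argument for the standard random walk on $\bF_2$ is explicitly presented in a way that clearly generalizes. Let $(X,\nu)$ be an essentially free $\mu$-USB, and by Lemma~\ref{lem:metrizable-model} we may assume its compact model $(K,\bar\nu)$ is metrizable, with $\bar\nu$ the unique $\mu$-stationary measure on $K$. Let $\tau$ be an arbitrary $\mu$-stationary state on $C^*_\lambda(\Gamma)$; the goal is to show $\tau = \tau_0$, which by Corollary~\ref{unique-trace->simple}'s faithfulness of $\tau_0$ (and Theorem~\ref{martingales}) reduces to showing $\tau_\om = \tau_0$ for $\bP_\mu$-a.e.\ path $\om$.

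First I would use Proposition~\ref{lem:extension-of-statioanry} to extend $\tau$ to a $\mu$-stationary state $\tilde\tau$ on the reduced crossed product $\Gamma\ltimes_r C(K)$, viewing $\Gamma\ltimes_r C(K)\subset\cB(\ell^2(\Gamma,\cH))$ so that the $\Gamma$-action on it is by inner automorphisms via the unitaries $\tilde\lambda(g)$, and noting $C^*(\{\tilde\lambda(g)\})\cong\csr$ sits inside as a $\Gamma$-invariant subalgebra. Then $\tilde\tau|_{C(K)}$ is a $\mu$-stationary measure on $K$, hence equals $\bar\nu$ by unique stationarity. Applying Theorem~\ref{martingales} to the separable $(\Gamma,\mu)$-$C^*$-algebra $(\Gamma\ltimes_r C(K),\tilde\tau)$, the conditional states $\tilde\tau_\om$ exist a.e., and their restrictions to $C(K)$ are the conditional measures of $\bar\nu$; since $(X,\nu)$ is a $\mu$-boundary, these are Dirac measures $\delta_{\bnd(\om)}$ for a.e.\ $\om$.

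Next I would exploit essential freeness: for each non-trivial $g\in\Gamma$, the set $\{x\in K : gx = x\}$ has $\nu$-measure zero, so for a.e.\ $\om$ the point $\bnd(\om)$ is not fixed by $g$, i.e.\ there is a continuous $a\in C(K)$ with $0\le a\le 1$, $a(\bnd(\om)) = 1$, and $a(g^{-1}\bnd(\om)) = 0$ — equivalently $\tilde\tau_\om(a) = 1$ and $\tilde\tau_\om(g^{-1}\cdot a) = \tilde\tau_\om(\iota(a)$ conjugated by $\tilde\lambda(g)) = 0$. (One should be slightly careful that the same continuous $a$ works for the relevant $\om$'s simultaneously; since $\Gamma$ is countable and one only needs, for each fixed $g$, a.e.\ $\om$, a routine separability/approximation argument supplies it, or one picks $a$ depending on $\om$.) Then Lemma~\ref{lemma0}, applied with the representation $\tilde\lambda$ on $\ell^2(\Gamma,\cH)$ and the state $\tilde\tau_\om$, gives $\tilde\tau_\om(\tilde\lambda(g)) = 0$ for all non-trivial $g$ and a.e.\ $\om$. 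Hence $\tilde\tau_\om|_{\csr} = \tau_0$ for a.e.\ $\om$, so $\tau_\om = \tau_0$ a.e., and integrating via \eqref{disint} yields $\tau = \tau_0$.

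The main obstacle I anticipate is the measurability/uniformity bookkeeping in the essential-freeness step: ensuring that the function $a$ (or the family $a_\om$) can be chosen so that Lemma~\ref{lemma0}'s hypotheses hold for a common full-measure set of paths for each fixed $g$, and then intersecting the countably many full-measure sets over $g\in\Gamma\setminus\{e\}$. This is exactly the kind of argument carried out in Example~\ref{ex:SRW-on-free-group}, so it should go through verbatim; the rest is a matter of assembling Proposition~\ref{lem:extension-of-statioanry}, unique stationarity of $\bar\nu$, the boundary property, Lemma~\ref{lemma0}, and Theorem~\ref{martingales} in the right order.
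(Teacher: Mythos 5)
Your proof is correct and is exactly the paper's intended argument: the paper's proof of Theorem~\ref{free-USB->C*-simple} consists literally of the instruction to repeat Example~\ref{ex:SRW-on-free-group} on the metrizable model furnished by Lemma~\ref{lem:metrizable-model}, which is precisely what you assemble (extension to $\Gamma\ltimes_r C(K)$, unique stationarity of the restriction to $C(K)$, Dirac conditional measures, essential freeness combined with Lemma~\ref{lemma0}, and integration via Theorem~\ref{martingales}). The uniformity worry you flag is a non-issue, since Lemma~\ref{lemma0} is applied separately to each fixed $\tilde\tau_\om$, so the function $a$ may depend on $\om$ and one simply intersects countably many full-measure sets over $g\in\Gamma\setminus\{e\}$.
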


\begin{proof}
Repeat the argument given in Example~\ref{ex:SRW-on-free-group} above
on a metrizable model of the USB 
(such a model exists by Lemma~\ref{lem:metrizable-model}).
\end{proof}

In particular, any such group $\Gamma$ is $C^*$-simple.
Of course if $(X, \nu)$ is 
an essentially free USB, by Theorem~\ref{USB->top-bnd} 
the action $\Gamma\acts\supp(\nu)$ 
is a topologically free topological boundary, 
hence $\Gamma$ is $C^*$-simple 
by results of~\cite{Breuillard-Kalantar-Kennedy-Ozawa-17}.
However, for those groups with essentially free USB actions, 
the above theorem, besides 
giving a much simpler proof of $C^*$-simplicity, 
reveals more than just $C^*$-simplicity of $\Gamma$, 
namely, a probability 
$\mu\in\pr(\Gamma)$ with respect to 
which the canonical trace is uniquely stationary.
In Section~\ref{An application: a new characterization of $C^*$-simplicity} 
we will prove that the existence of such $\mu$ is 
equivalent to $C^*$-simplicity of $\Gamma$,
and we conclude various properties of boundaries and 
random subgroups associated to such measures. \\

Next is an example of a faithful uniquely stationary state 
on a purely infinite $C^*$-algebra.

\begin{example}\label{pure-inf}
Let $\cA = \bF_2\ltimes_r C(\partial \bF_2)$, and 
let $\mu\in\pr(\bF_2)$ be generating. As usual, let $\tau_0$ denote the
canonical trace on $C^*_\lambda(\bF_2)$. Also, let $\nu\in\pr(\partial\bF_2)$
be the unique $\mu$-stationary probability on $\partial\bF_2$.
Now suppose $\tau\in \cS_\mu(\cA)$. Then $\tau|_{C(\partial\bF_2)} = \nu$,
and by Example~\ref{ex:SRW-on-free-group} above, $\tau|_{C^*_\lambda(\bF_2)} = \tau_0$.
Hence, we have $\tau_\om|_{C(\partial\bF_2)} = \delta_{\bnd(\om)}$ and  $\tau_\om|_{C^*_\lambda(\bF_2)} = \tau_0$ 
for a.e.\ path $\om\in\Omega$. 
So for a linear combination $\sum_{g\in\bF_2} f_g \lambda_g$ 
where $f_g\in C(\partial\bF_2)$ is non-zero 
for at most finitely many $g\in \Gamma$,
using the fact that $\delta_{\bnd(\om)}$ is multiplicative 
on $C(\partial\bF_2)$, we see
\begin{align*}
\langle\, \sum_{g\in\bF_2} f_g \lambda_g \,,\, \tau_\om\,\rangle 
&= 
\sum_{g\in\bF_2} f_g(\bnd(\om)) \langle\, \lambda_g \,,\,\tau_\om\,\rangle
\\&= 
\sum_{g\in\bF_2} f_g(\bnd(\om)) \tau_0(\lambda_g)
\\&= 
f_e(\bnd(\om)) .
\end{align*}
for a.e.\ path $\om\in\Omega$.
Thus, from \eqref{disint} it follows  
\begin{align*}
\langle\, \sum_{g\in\bF_2} f_g \lambda_g \,,\, \tau\,\rangle 
&= \int_{\Omega} \langle\, \sum_{g\in\bF_2} f_g \lambda_g \,,\, \tau_\om\,\rangle d\bP_\mu(\om)
\\&= \int_{\Omega} f_e(\bnd(\om)) d\bP_\mu(\om) 
\\&= \int_{\partial\bF_2}f_e d\nu .
\end{align*}
Since the set of all finite linear combinations $\sum_{g\in\bF_2} f_g \lambda_g$
is dense in $\cA$, this formula uniquely determines $\tau$. 
Note also $\tau = \nu\circ \bE$, where $\bE:\cA\to C(\partial\bF_2)$
is the canonical conditional expectation
$\sum_{g\in\bF_2} f_g \lambda_g \mapsto f_e$ (see e.g.~\cite{Brown-Ozawa-08}).
Since both $\bE$ and $\nu = \tau|_{C(\partial\bF_2)}$ are faithful, so is $\tau$. 
In particular, this also implies the well-known fact that 
$\bF_2 \ltimes_r C(\partial\bF_2)$ is simple (see e.g.~\cite{Archbold-Spielberg-94}).
\end{example}

Similarly, normal stationary states with respect to 
the inner action by the unitary group of a von Neumann 
algebra can provide a suitable replacement for normal traces 
in the case of non-finite von Neumann algebras.

\begin{prop}
Let $M$ be a von Neumann algebra and $\Gamma$ a group of unitaries in $M$.
Suppose for the action $\Gamma\acts M$ by inner automorphisms, 
$M$ admits a faithful unique normal $\mu$-stationary state $\tau$ for some 
$\mu\in\pr(\Gamma)$. Then $M$ is a factor.
\end{prop}

\begin{proof}
%This can be proved exactly as in the tracial case. 
Suppose $M$ is not a factor, 
and let $p\in M$ be a non-trivial central projection.
Let $\mu\in\pr(\Gamma)$ and suppose $\tau\in M_*$ is a faithful normal 
$\mu$-stationary state. Set $\tau_1 := \frac{1}{\tau(p)} \tau(\cdot p)$. 
Then $\tau_1\in M_*$ is a normal state and using the fact that 
every central element is fixed by $\Gamma$, we get
\begin{align*}
\sum_{g\in\Gamma} \mu(g) \tau_1(g^{-1} a) &= 
\frac{1}{\tau(p)}\sum_{g\in\Gamma} \mu(g) \tau((g^{-1} a)p) \\&= 
\frac{1}{\tau(p)}\sum_{g\in\Gamma} \mu(g) \tau(g^{-1} (ap)) \\&= 
\frac{1}{\tau(p)}\tau(ap) \\&= 
\tau_1(a)
\end{align*}
for all $a\in M$, which shows $\tau_1$ is $\mu$-stationary.
Similarly, $\tau_2 := \frac{1}{\tau(1-p)} \tau(\cdot (1-p))$ is 
a normal $\mu$-stationary state, and obviously 
$\tau = \tau(p) \tau_1 + \tau(1-p) \tau_2$. But since $\tau_1(1-p) = 0$,
we have $\tau\neq\tau_1$, hence $\tau$ is not the unique normal $\mu$-stationary state
on $M$.
\end{proof}

\begin{example}\label{type-III}
We follow the notations of Example~\ref{pure-inf}.
The von Neumann algebra crossed product $M = \bF_2 \ltimes L^\infty(\partial\bF_2, \nu)$ 
is a type III factor.
The set of all linear combinations $\sum_{g\in\bF_2} f_g \lambda_g$,
where $f_g\in L^\infty(\partial\bF_2, \nu)$ is non-zero for at most finitely many $g\in\bF_2$, 
is weak* dense in $M$.
The map $\sum_{g\in\bF_2} f_g \lambda_g\mapsto f_e$ 
extends to a faithful normal conditional expectation $\bE : M\to L^\infty(\partial\bF_2, \nu)$. 
Thus $\sum_{g\in\bF_2} f_g \lambda_g\mapsto \int_{\partial\bF_2}f_e d\nu$ 
defines a faithful normal state $\tau$ on $M$.
Note that the reduced $C^*$-crossed product $\cA = \bF_2\ltimes_r C(\partial\bF_2)$
is an $\bF_2$-invariant weak* dense $C^*$-subalgebra.
From Example~\ref{pure-inf} we know $\tau|_\cA$ is $\mu$-stationary, 
where $\mu\in\pr(\bF_2)$ is any generating probability.
Since $\tau$ is normal and $\cA$ is weak* dense in $M$, it follows 
$\tau$ is a $\mu$-stationary state on $M$.
Moreover, if $\tau'$ is another normal $\mu$-stationary state on $M$, 
then its restriction $\tau'|_\cA$ is again $\mu$-stationary, hence equal to $\tau|_\cA$ 
by unique stationarity property established in Example~\ref{pure-inf}.
Since both $\tau$ and $\tau'$ are normal and $\cA$ is weak* dense in $M$, it follows 
$\tau'=\tau$, which implies unique stationarity of $\tau$.
\end{example}

\begin{example}[Noncommutative USB]\label{noncommutative USB}
Noncommutative Poisson boundaries were defined by Izumi in \cite{Izumi-02}.
This concept has found many important applications in various 
operator algebraic contexts.
Let us briefly recall the definition. Suppose $M$ is a von Neumann 
algebra, and $\Phi:M\to M$ is a Markov operator, i.e.\ a unital 
completely positive normal map. Then  
the fixed point space $\fix(\Phi)=\{x\in M : \Phi(x)=x\}$ is a 
unital self-adjoint weak* closed subspace of $M$, and there is 
a positive contractive idempotent $E:M\to \fix(\Phi)$. 
Endowed with the \emph{Choi-Effros product}, $x\circ y := E(xy)$, 
the space $\fix(\Phi)$ becomes a von Neumann algebra, called 
the Poisson boundary of $\Phi$, and denoted by $H_\infty(M, \Phi)$.

A class of examples of Markov operators are obtained 
from canonical extensions of convolution operators, as follows.
Let $\mu\in\pr(\Gamma)$, and define $\Phi_\mu:\cB(\ell^2(\Gamma))\to\cB(\ell^2(\Gamma))$ 
by $\Phi_\mu(x) = \sum_{g\in \Gamma} \mu(g) \rho_g x \rho_{g^{-1}}$, where 
$\rho:\Gamma\to \cU(\ell^2(\Gamma))$ is the right regular representation. 
Then $\Phi_\mu$ is a Markov map on $\cB(\ell^2(\Gamma))$, and Izumi 
proved \cite{Izumi-04} that the Poisson boundary 
$H_\infty(\cB(\ell^2(\Gamma)), \Phi_\mu)$ is canonically isomorphic to 
the von Neumann crossed product $\Gamma\ltimes H_\infty(\Gamma, \mu)$.

Now, for instance, continuing to follow the notations of Example~\ref{pure-inf}, 
$M = \bF_2 \ltimes L^\infty(\partial\bF_2, \nu)$ is identified with 
the Poisson boundary of the Markov map $\Phi_\mu$ where 
$\mu$ is the uniform measure on the set of generators.
Note also that $M$ is the 
von Neumann algebra generated by the reduced crossed product 
$\cA = \bF_2 \ltimes_r C(\partial\bF_2)$ in the GNS representation of 
the unique stationary state $\tau\in \cS(\cA)$. 
Thus, the $C^*$-dynamical system 
$\Gamma \acts (\cA, \tau)$ gives an example of a \emph{noncommutative Poisson USB}.
\end{example}

\begin{example}
Suppose $\pi: C^*_\lambda(\bF_2)\to \cB(\cH_\pi)$ is an irreducible representation. 
Consider the inner action $\Gamma\acts \cB(\cH_\pi)$ by unitaries $\pi(\lambda_g)$, $g\in\Gamma$. 
Let $\mu\in \pr(\bF_2)$ be generating, 
in particular $\supp(\mu)'' = \cB(\cH_\pi)$. 
We show in this case $\cB(\cH_\pi)$ does not admit any normal $\mu$-stationary 
state. 
Note first that simplicity of 
$C^*_\lambda(\bF_2)$ implies $\pi$ is injective.
Suppose $\tau\in\cB(\cH_\pi)_*$ is a $\mu$-stationary state, 
then $\tau|_{\pi(C^*_\lambda(\bF_2))}$ 
is $\mu$-stationary, hence equals to the canonical trace $\tau_0$ 
by Example~\ref{ex:SRW-on-free-group}. 
Since $\tau$ is normal, and tracial on a weak* dense subalgebra, 
it follows $\tau$ is a normal trace on $\cB(\cH_\pi)$, which implies $\cH_\pi$ 
is finite dimensional. But that cannot be the case since 
$\pi: C^*_\lambda(\bF_2)\to \cB(\cH_\pi)$ is injective.  
\end{example}

\section{A new characterization of $C^{*}$-simplicity}\label{An application: a new characterization of $C^*$-simplicity}

In this section we prove a new characterization of $C^*$-simplicity of a group $\Gamma$ in terms of unique stationarity of the action $\Gamma\acts \csr$. 

The following more general statement makes the connection between these properties more explicit.

\begin{thm}\label{new-characterization-C*-simplicity-general}
Let $\cA$ be a separable unital $\Gamma$-$C^*$-algebra with a $\Gamma$-invariant state $\tau$. Then the following are equivalent:

\begin{itemize}
\item[(i)]
The triple $(\Gamma, \cA, \tau)$ satisfies the Powers property: for every $x\in \cA$ and $\ep>0$, there exists $\mu_0\in \pr(\Gamma)$ such that $\|\mu_0*x - \tau(x)1\| <\ep$.
\item[(ii)]
There exists $\mu\in\pr(\Gamma)$ such that $\tau$ is the unique $\mu$-stationary state on $\cA$.
\end{itemize}
\end{thm}

\begin{proof}
The implication (ii)$\implies$(i) follows from Proposition~\ref{glasner-weiss}. 

The proof of (i)$\implies$(ii) follows a similar construction as in Kaimanovich--Vershik's proof (\cite[Theorem 4.3]{Kai-Ver}) of Furstenberg's conjecture.

Assume (i) holds. Choose an increasing sequence $\{n_k\}$ of positive integers such that
$(\sum_{i=1} ^{k}\frac{1}{2^i})^{n_k} < \frac{1}{2^k}$ for all $k\in\bN$.
Let $\{a_i\}_{i\in\bN}$ be a dense subset of the unit ball of $\cA$.
Using the above, for every $l\in \bN$ choose $\mu_{l}$, inductively, so that
\[
\left\| \mu_{l} * \mu_{k_r} * \cdots * \mu_{k_1} * a_s  - \tau(a_s)\, \mathds{1}_{\cA} \right\|
 < \frac{1}{2^l}
\]
for all $1\le s, k_1, \dots, k_r < l$, and $r < n_l$.
Let $\displaystyle\mu =  \sum_{l=1}^{\infty} \frac{1}{2^l} \mu_l \in \ell^1(\Gamma)$. 
Given any $a$ in the unit ball of $\cA$ and $\varepsilon > 0$, let $j\in\bN$ be such that
$\left\| a - a_j \right\| < \varepsilon$ and ${1}/{2^j} < \varepsilon$. 
Then 
\begin{eqnarray*}
\left\| \mu^{n_j} * a -  \tau(a) \, \mathds{1}_{\cA} \right\| &\leq& \big\| \mu^{n_j} * a - \mu^{n_j} * a_j \big\|
 + \left\| \mu^{n_j} * a_j -  \tau(a_j) \, \mathds{1}_{\cA} \right\| 
 \\&+& \left\|  \tau(a_j) \,\mathds{1}_{\cA} -  \tau(a) \,\mathds{1}_{\cA} \right\|\\
& <& 2\varepsilon + \left\| \mu^{n_j} * a_j -  \tau(a_j) \,\mathds{1}_{\cA} \right\|.
\end{eqnarray*}
We expand and split the term $\left\| \mu^{n_j} * a_j -  \tau(a_j) \,\mathds{1}_{\cA} \right\|$ as follows:
\begin{eqnarray*}
&&\left\| \,\sum_{\max k_i\leq j} \frac{1}{2^{k_{n_j}}\dots2^{k_1}}\,
\mu_{k_{n_j}} *  \cdots  * \mu_{k_1} * a_j
 + \sum_{\max k_i > j} \frac{1}{2^{k_{n_j}}\dots2^{k_1}}\, \mu_{k_{n_j}} *  \cdots  * \mu_{k_1}  * a_j\right.\\
&  & \left.- \left(\sum_{\max k_i\leq j} \frac{1}{2^{k_{n_j}}\dots2^{k_1}}\right)  \tau(a_j) \,\mathds{1}_{\cA}
 - \left(\sum_{\max k_i > j} \frac{1}{2^{k_{n_j}}\dots2^{k_1}}\right)  \tau(a_j) \,\mathds{1}_{\cA}
\,\right\| \\
&\leq& 
\left\| \sum_{\max k_i\leq j} \frac{1}{2^{k_{n_j}}\dots2^{k_1}}\,\left(
\mu_{k_{n_j}} *  \cdots * \mu_{k_1} * a_j -  \tau(a_j) \,\mathds{1}_{\cA}\right)\right\|
 \\&+&
  \left\|\sum_{\max k_i > j} \frac{1}{2^{k_{n_j}}\dots2^{k_1}}\,\left(
\mu_{k_{n_j}} *  \cdots * \mu_{k_1} * a_j -  \tau(a_j) \,\mathds{1}_{\cA}\right)\right\|
\\&\leq& 
2\|a_j\| \sum_{\max k_i\leq j}  \frac{1}{2^{k_{n_j}}\dots2^{k_1}} +
 \sum_{\max k_i > j} \frac{1}{2^{k_{n_j}}\dots2^{k_1}}  \left\| \mu_{k_{n_j}} *  \cdots * \mu_{k_1} * a_j -  \tau(a_j) \,\mathds{1}_{\cA} \right\|
\\&=& 
2\|a_j\| (\sum_{i=1} ^{j}\frac{1}{2^i})^{n_j} +
 \sum_{\max k_i > j} \frac{1}{2^{k_{n_j}}\dots2^{k_1}}  \left\| \mu_{k_{n_j}} *  \cdots * \mu_{k_1} * a_j -  \tau(a_j) \,\mathds{1}_{\cA} \right\|
\\&\leq& 2\varepsilon +
 \sum_{\max k_i > j} \frac{1}{2^{k_{n_j}}\dots2^{k_1}} \left\| \mu_{k_{n_j}} *  \cdots * \mu_{k_1} * a_j -  \tau(a_j) \,\mathds{1}_{\cA} \right\|.
\end{eqnarray*}
Now consider one of the terms
 $\mu_{k_{n_j}} *  \cdots * \mu_{k_1} * a_j$ in the last sum above
 and let $k_j$ be the first index such that $k_j > j$.
Then we have
\begin{eqnarray*}
&& \left\| \mu_{k_{n_j}} * \cdots * \mu_{k_1} * a_j -  \tau(a_j) \,\mathds{1}_{\cA}\right \| 
\leq
\left\| \mu_{k_j} *  \cdots * \mu_{k_1} * a_j -  \tau(a_j) \,\mathds{1}_{\cA} \right\| 
 < \varepsilon,
\end{eqnarray*}
where the last inequality follows from the construction of $\{\mu_l\}$. This implies
\begin{eqnarray*}
\sum_{\max k_i > j} \frac{1}{2^{k_{n_j}}\dots2^{k_1}} \left\| \mu_{k_{n_j}} *  \cdots * \mu_{k_1} * a_j -  \tau(a_j) \,\mathds{1}_{\cA} \right\|
 < \varepsilon .
\end{eqnarray*}
Hence we get $\left\| \mu^{n_j} * a -  \tau(a) \, \mathds{1}_{\cA} \right\| < 5\varepsilon$.
Since $\|\mu\| = 1$, this yields
\begin{align*}
\left\| \mu^{n} * a -  \tau(a) \, \mathds{1}_{\cA} \right\|
&=
\left\| \mu^{n-n_j} *\mu^{n_j} * a -  \tau(a) \, \mathds{1}_{\cA} \right\|
\\&\leq 
\left\| \mu^{n_j} * a -  \tau(a) \, \mathds{1}_{\cA} \right\|
\\&< 5\varepsilon
\end{align*}
for all  $n>n_j$.
Hence 
\[
\left\| \mu^{n} * a -  \tau(a) \, \mathds{1}_{\cA} \right\| \xrightarrow{n\to\infty} 0 
\]
for all $a\in \cA$, which by Proposition~\ref{glasner-weiss} implies that $\tau$
is the unique $\mu$-stationary state on $\cA$.
\end{proof}

Now, combining Theorem~\ref{new-characterization-C*-simplicity-general} with Haagerup's characterization of $C^*$-simplicity in~\cite[Theorem 4.5]{Haagerup-C*-simple-15}, we get the following.

\begin{thm}\label{new-characterization-C*-simplicity}
A countable discrete group $\Gamma$ is $C^*$-simple 
if and only if 
there is $\mu\in\pr(\Gamma)$
such that the canonical trace $\tau_0$
is the unique $\mu$-stationary state on $\csr$ with respect to the $\Gamma$-action by inner automorphisms. 
\end{thm}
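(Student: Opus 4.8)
The plan is to prove each direction separately, using the structural results already established. One direction is immediate: Corollary~\ref{unique-trace->simple} states that if $\Gamma\acts\csr$ is uniquely stationary then $\Gamma$ is $C^*$-simple, and since the canonical trace $\tau_0$ is always $\mu$-stationary (being $\Gamma$-invariant), the hypothesis that $\tau_0$ is the \emph{unique} $\mu$-stationary state is exactly unique stationarity of $\Gamma\acts\csr$. So the content of the theorem is the forward implication: $C^*$-simplicity of $\Gamma$ implies there \emph{exists} some generating $\mu\in\pr(\Gamma)$ making $\tau_0$ the unique $\mu$-stationary state on $\csr$.

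For the forward direction, the strategy is to exploit that a $C^*$-simple group admits a faithful (in fact, by \cite{Kalantar-Kennedy-17}, free) topological boundary action $\Gamma\acts X$, so $\Gamma\ltimes_r C(X)$ makes sense and the inner action of $\Gamma$ on it is available. Following the philosophy of Example~\ref{ex:SRW-on-free-group} and Theorem~\ref{free-USB->C*-simple}, I would try to choose $\mu$ so that the corresponding $\mu$-stationary measure $\nu$ on $X$ is \emph{unique} and $(X,\nu)$ is a $\mu$-boundary — i.e. so that $X$ (or a suitable model) becomes a $\mu$-USB on which the action is essentially free. Given such $\mu$, the argument of Example~\ref{ex:SRW-on-free-group} runs verbatim: extend any $\mu$-stationary $\tau\in\cS_\mu(\csr)$ to $\tilde\tau\in\cS_\mu(\Gamma\ltimes_r C(X))$ by Proposition~\ref{lem:extension-of-statioanry}, observe $\tilde\tau|_{C(X)}=\nu$ by uniqueness, so the conditional states satisfy $\tilde\tau_\om|_{C(X)}=\delta_{\bnd(\om)}$ for a.e.\ $\om$, then invoke Lemma~\ref{lemma0} together with essential freeness of $\Gamma\acts(X,\nu)$ to conclude $\tau_\om(\lambda_g)=0$ for every $g\neq e$ and a.e.\ $\om$, hence $\tau_\om=\tau_0$ a.e., and finally integrate via Theorem~\ref{martingales} to get $\tau=\tau_0$.

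The crux, therefore, is the \emph{construction of the measure} $\mu$ with these two properties simultaneously: unique $\mu$-stationarity on a compact model, and essential freeness of the resulting $\mu$-boundary. This is exactly the hard step, and I expect the paper handles it by a Kaimanovich--Vershik type construction (as the introduction foreshadows: ``part of our construction of the $C^*$-simple measure follows a similar construction as in Kaimanovich--Vershik's proof of Furstenberg's conjecture''). Concretely, I would start from the free boundary action $\Gamma\acts\fb$ guaranteed by $C^*$-simplicity, and build $\mu$ as a carefully weighted (infinite) convex combination / iterated convolution of measures designed to force the Poisson boundary to coincide with (a model of) this boundary while keeping stationarity unique — the uniqueness being the delicate part, since generic measures have many stationary measures on a boundary. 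Essential freeness should then be inherited from freeness of $\Gamma\acts\fb$ (or from Proposition~\ref{thm:kernel-of-USB} applied to the Poisson-USB, using trivial amenable radical, which $C^*$-simplicity implies).

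\textbf{Main obstacle.} The genuinely hard part is engineering a single measure $\mu$ for which the stationary measure on the boundary model is unique — one does not get to pick the boundary and the measure independently, and unique stationarity is a strong rigidity constraint that fails for ``most'' $\mu$. Everything downstream (the integration-against-conditional-states argument) is routine once such a $\mu$ is in hand, via the already-proven Theorem~\ref{free-USB->C*-simple}.
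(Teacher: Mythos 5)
Your backward direction is correct and matches the paper (Corollary~\ref{unique-trace->simple}). The forward direction, however, has a genuine gap: your entire plan rests on producing a measure $\mu$ for which $\Gamma$ admits an \emph{essentially free $\mu$-USB}, and you explicitly leave that construction as the ``main obstacle'' without carrying it out. This is not a routine missing detail — it is the whole theorem, and worse, it is not the statement the paper proves. Note that the paper is careful to assert only one-sided implications here: $C^*$-simplicity yields an essentially free measurable $\mu$-boundary (via Theorem~\ref{Zimmer-amenable-action-C*-Simple-group-is-free}, applied to the Poisson boundary \emph{after} unique stationarity of $\tau_0$ is already established), whereas the existence of an essentially free $\mu$-USB is used only as a \emph{sufficient} condition for $C^*$-simplicity (Theorem~\ref{free-USB->C*-simple}). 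Your route would require showing that every $C^*$-simple group admits such a USB, which is strictly stronger than the theorem and is not known; the Furstenberg boundary $\fb$ is non-metrizable and is not a $\mu$-boundary in the measurable sense, so ``forcing the Poisson boundary to coincide with a model of $\fb$ while keeping stationarity unique'' is precisely the step you cannot take for granted.

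The paper's actual proof bypasses measurable boundaries entirely. It starts from Haagerup's Dixmier-type averaging theorem \cite[Theorem 4.5]{Haagerup-C*-simple-15}: for a $C^*$-simple group and any finite set $F\subset\Gamma\setminus\{e\}$ one can find $h_1,\dots,h_n$ with $\bigl\|\frac1n\sum_k\lambda_{h_k^{-1}gh_k}\bigr\|$ small for all $g\in F$. This produces, for each finite set of elements of $\csr$ and each $\varepsilon>0$, a finitely supported $\mu$ with $\|\mu*a_i-\tau_0(a_i)\mathds{1}\|<\varepsilon$. The Kaimanovich--Vershik flavor enters only in gluing these into a single measure $\mu=\sum_l 2^{-l}\mu_l$ with a careful choice of exponents $n_k$ so that $\|\mu^n*a-\tau_0(a)\mathds{1}\|\to0$ for every $a$; unique stationarity then follows from the noncommutative Glasner--Weiss criterion (Proposition~\ref{glasner-weiss}), with no boundary, no conditional states, and no freeness argument needed. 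So the downstream machinery you call ``routine'' (Lemma~\ref{lemma0}, Theorem~\ref{martingales}) does not appear in this proof at all; it is the norm-convergence construction that does all the work.
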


\begin{proof}
Assume $\Gamma$ is $C^*$-simple. Let $f= \sum_{g\in\Gamma} f(g)\delta_g$ be a function on $\Gamma$ with finite support, and fix $\varepsilon_0>0$. We denote by $\lambda(f) = \sum_{g\in\Gamma} f(g)\lambda_g$
the \emph{left regular representation} of $f$.
By~\cite[Theorem 4.5]{Haagerup-C*-simple-15}
there are $h_1, h_2, \dots, h_n\in \Gamma$ such that
\[
\left\|\frac1n \sum_{k=1}^n \lambda_{h_k^{-1} g h_k}\right\| < \varepsilon_0
\]
for all $g\in \supp{f}\setminus \{e\}$. We then have
\begin{align*}
\left\|\frac1n \sum_{k=1}^n \lambda_{h_k^{-1}} \lambda(f) \lambda_{h_k} - \tau_0(\lambda(f))\mathds{1}_{\csr}\right\|
&=
\left\|\sum_{g\in\Gamma}\frac1n \sum_{k=1}^n f(g)\lambda_{h_k^{-1}gh_k} - \tau_0(\lambda(f))\lambda_e\right\|
\\ &\leq 
\sum_{g\neq e}\left\|\frac1n \sum_{k=1}^n f(g)\lambda_{h_k^{-1}gh_k}\right\|
\\ &+ 
\frac1n \left\|\big(f(e) - \tau_0(\lambda(f))\big)\lambda_e\right\|
\\&=
\sum_{g\neq e}|f(g)|\left\|\frac1n \sum_{k=1}^n \lambda_{h_k^{-1}gh_k}\right\|
\\&\leq
\left\|f\right\|_\infty \sum_{g\in \supp{f}\setminus \{e\}}\left\|\frac1n \sum_{k=1}^n \lambda_{h_k^{-1}gh_k}\right\|
\\&< \left\|f\right\|_\infty \#\{\supp{f}\}\varepsilon_0 .
\end{align*}
Now, let finitely supported functions $f_1, f_2, \dots, f_j$ on $\Gamma$, and 
$\ep>0$ be given. 
Let $F = \displaystyle \bigcup\limits_{i=1}^j \supp{f_i}$, and $c = \max_i\{\|f_i\|_\infty\}$. 
Then, setting $\ep_0 = \displaystyle \frac{\ep}{c\, (\#F)}$ in the above calculations, 
there are $h_1, h_2, \dots, h_n\in \Gamma$ such that for 
$\mu_0 = \displaystyle \frac1n \sum_{k=1}^n \delta_h \in \pr(\Gamma)$ we have 
\[
\left\|\mu_0 * \lambda(f_i)  - \tau_0(\lambda(f_i))\mathds{1}_{\csr}\right\| < \ep
\]
for all $i = 1, \dots, j$. 
Since the set $\{\lambda(f) : f \text{ has finite support}\}$ is norm-dense
in $\csr$, for any given $a_1, \dots, a_j \in \csr$ and $\ep>0$, we may find 
$\mu_0\in\pr(\Gamma)$ such that 
\[
\left\|\mu_0 * a_i  - \tau_0(a_i)\mathds{1}_{\csr}\right\| < \ep
\]
for all $i = 1, \dots, j$. 
Hence, by Theorem~\ref{new-characterization-C*-simplicity-general}, there is $\mu\in\pr(\Gamma)$ such that $\tau_0$ is the unique $\mu$-stationary state on $\csr$.

The converse follows from Corollary~\ref{unique-trace->simple}.
\end{proof}
{\vskip 0.3cm}

\subsection{$C^*$-simple measures}\label{new-characterization-C*-simplicity-applications}
In this section we prove several properties of the measures $\mu$ that 
``capture'' $C^*$-simplicity in the sense of 
Theorem~\ref{new-characterization-C*-simplicity}. 
We see that these measures 
posses significant 
ergodic theoretical properties.
Thus, in some sense, one should consider 
$C^*$-simplicity of $\Gamma$ as a property of the measure(s) 
$\mu\in \pr(\Gamma)$ in Theorem~\ref{new-characterization-C*-simplicity}.
\begin{defn}\label{def:c*-simple-measure}
We say that a measure  $\mu\in \pr(\Gamma)$
is \define{$C^*$-simple} if the canonical trace $\tau_0$ is the unique $\mu$-stationary state on $\csr$.
\end{defn}

\begin{thm}\label{Zimmer-amenable-action-C*-Simple-group-is-free}
Suppose $\mu\in\pr(\Gamma)$ is $C^*$-simple. 
Then any measurable $\mu$-stationary action 
with almost surely amenable stabilizers, is essentially free.

In particular, any Zimmer-amenable $\mu$-stationary action 
(e.g.\ the Poisson boundary action $\Gamma\acts(\Pi_\mu, \nu_\infty)$) is essentially free.
\end{thm}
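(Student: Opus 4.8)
The plan is to reduce the statement about essential freeness of a stationary action with amenable stabilizers to the uniqueness of stationary states on $\csr$, via the associated stationary random subgroup and Lemma~\ref{random-subgrp-->pos-def}. So let $\Gamma\acts (Y,\eta)$ be a $\mu$-stationary action with almost surely amenable stabilizers. Pushing $\eta$ forward under the stabilizer map $y\mapsto \stab_\Gamma(y)$ gives a $\mu$-stationary random subgroup $\theta$ on $\Sub(\Gamma)$, which by the amenability assumption is supported on $\Sub_a(\Gamma)$. The action is essentially free precisely when $\theta=\delta_{\{e\}}$, i.e.\ when the pdf $\phi_\theta(g)=\theta(\{\Lambda:g\in\Lambda\})$ equals $\mathds{1}_{\{e\}}$.

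First I would use Lemma~\ref{random-subgrp-->pos-def} to produce a state $\rho$ on $\csr$ with $\rho(\lambda_g)=\phi_\theta(g)$ for all $g\in\Gamma$. The key point is that $\rho$ is $\mu$-stationary for the inner action: the decomposition $\bar\theta\in\pr(P_\Gamma)$ of $\theta$ into point masses $\mathds{1}_\Lambda$ is itself $\mu$-stationary under the conjugation action (since $\theta$ is), and the correspondence $\Lambda\mapsto\mathds{1}_\Lambda$ intertwines conjugation on $\Sub_a(\Gamma)$ with the inner action on states; hence the barycenter $\rho$ is fixed by $\mu*(\cdot)$. More precisely, for $g,h\in\Gamma$ one checks $h\cdot\mathds{1}_\Lambda = \mathds{1}_{h\Lambda h^{-1}}$ at the level of pdf's corresponding to the quasi-regular representations, so stationarity of $\theta$ transfers directly to stationarity of $\rho$. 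Since $\mu$ is $C^*$-simple, $\cS_\mu(\csr)=\{\tau_0\}$, so $\rho=\tau_0$, which forces $\phi_\theta(g)=\tau_0(\lambda_g)=\mathds{1}_{\{e\}}(g)$.

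From $\phi_\theta=\mathds{1}_{\{e\}}$ I would conclude $\theta=\delta_{\{e\}}$: for any $g\neq e$, $\theta(\{\Lambda: g\in\Lambda\})=0$, and since $\Gamma$ is countable, $\theta(\{\Lambda:\Lambda\neq\{e\}\})\le\sum_{g\neq e}\theta(\{\Lambda:g\in\Lambda\})=0$. Therefore $\stab_\Gamma(y)=\{e\}$ for $\eta$-a.e.\ $y$, which is precisely essential freeness. The ``in particular'' clause then follows because the Poisson boundary action $\Gamma\acts(\Pi_\mu,\nu_\infty)$ is Zimmer-amenable (\cite{Zimmer-78}), hence has almost surely amenable stabilizers, as already noted in the proof of Proposition~\ref{thm:kernel-of-USB}; more generally any Zimmer-amenable action has amenable stabilizers, so the hypothesis applies.

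The main obstacle I anticipate is verifying carefully that the state $\rho$ built from $\theta$ is genuinely $\mu$-stationary with respect to the \emph{inner} action on $\csr$ (as opposed to merely a state tracking the pdf). This requires being precise about the identification in Lemma~\ref{random-subgrp-->pos-def}: the state $\rho$ arises because each $\mathds{1}_\Lambda$ for amenable $\Lambda$ is the pdf of a subrepresentation of the regular representation, and one must check that the conjugation action $\Lambda\mapsto g^{-1}\Lambda g$ on $\Sub_a(\Gamma)$ corresponds, under the GNS construction inside $\cB(\ell^2\Gamma)$, to the inner automorphism $\Ad_{\lambda_g}$ on $\csr$ — i.e.\ that the barycenter map $\theta\mapsto\rho$ is $\Gamma$-equivariant for these two actions. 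Once equivariance is in place, stationarity is immediate, and the rest is the elementary countability argument above.
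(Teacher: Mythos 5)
Your proposal is correct and follows essentially the same route as the paper: push the stationary measure forward under the stabilizer map to get an amenable $\mu$-SRS, invoke Lemma~\ref{random-subgrp-->pos-def} to obtain a state on $\csr$ with $\rho(\lambda_g)=\nu(\mathrm{Fix}(g))$, check $\mu$-stationarity, and conclude $\rho=\tau_0$. The only (cosmetic) difference is that the paper verifies stationarity by the direct computation $\sum_g\mu(g)\nu(\mathrm{Fix}(g^{-1}hg))=\sum_g\mu(g)\,g\nu(\mathrm{Fix}(h))=\nu(\mathrm{Fix}(h))$, whereas you phrase the same fact as equivariance of the barycenter map $\Lambda\mapsto\mathds{1}_\Lambda$; both are valid, and your justification of the ``in particular'' clause via amenability of stabilizers of Zimmer-amenable actions matches the paper's intent.
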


\begin{proof}
Suppose $\mu\in\pr(\Gamma)$ is $C^*$-simple,
and let $\Gamma\acts (X,\nu)$ be a 
measurable $\mu$-stationary action
such that $\stab_\Gamma(x)$ is amenable for $\nu$-almost every $x\in X$. 

Consider the map $\Psi: X \to \Sub(\Gamma)$ defined by $\Psi(x) = \stab_\Gamma(x)$.
Then $\eta = \Psi_*\nu$ is an amenable $\mu$-SRS of $\Gamma$.
Therefore by Lemma~\ref{random-subgrp-->pos-def} there is 
a state $\tau$ on $\csr$ such that 
$\tau(\lambda_g) = \eta(\{\Lambda : g\in \Lambda\})$ for all $g\in\Gamma$.
Since $\eta(\{\Lambda : g\in \Lambda\}) = \nu(\{x : g\in \stab_\Gamma(x)\}) = \nu(\mathrm{Fix}(g))$, and 
since $\nu$ is $\mu$-stationary it follows 
\begin{align*}
\sum_{g\in\Gamma} \mu(g) \tau(\lambda_{g^{-1}}\lambda_h\lambda_g)
&= 
\sum_{g\in\Gamma} \mu(g) \nu(\mathrm{Fix}(g^{-1}hg))
\\&= 
\sum_{g\in\Gamma} \mu(g) \nu(g^{-1}\mathrm{Fix}(h))
\\&= 
\sum_{g\in\Gamma} \mu(g) g\nu(\mathrm{Fix}(h))
\\&=
\nu(\mathrm{Fix}(h))
\\&=
\tau(\lambda_h),
\end{align*}
which shows
$\tau$ is $\mu$-stationary. Hence, 
$\tau = \tau_0$, which yields $\nu(\mathrm{Fix}(g)) = 0$ for all 
non-trivial $g\in\Gamma$, i.e.\ $\Gamma\acts (X,\nu)$ 
is essentially free.
\end{proof}

Essential freeness of the abstract
Poisson boundary has some ergodic theoretical consequences, for example it implies genericity of stationary measures (see~\cite{Bowen-Hartman-Tamuz-17}).\\

The following corollary is a weaker conclusion of 
Theorem~\ref{Zimmer-amenable-action-C*-Simple-group-is-free} 
at the topological level. 
\begin{cor}{(see also 
\cite[Proposition 7.6 \& Remark 7.7]{Breuillard-Kalantar-Kennedy-Ozawa-17})}
Suppose $\Gamma$ is $C^*$-simple.
Then any minimal action $\Gamma\acts X$ on a compact space 
with amenable stabilizers is topologically free, that is, the set 
$\{x\in X : \stab_\Gamma(x) \text{ is trivial}\}$ is dense in $X$.
\end{cor}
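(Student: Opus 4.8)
The plan is to combine the $C^*$-simple-measure machinery already developed with a Baire category argument and minimality. Since $\Gamma$ is $C^*$-simple, Theorem~\ref{new-characterization-C*-simplicity} supplies a measure $\mu\in\pr(\Gamma)$ which is $C^*$-simple, i.e.\ for which $\tau_0$ is the unique $\mu$-stationary state on $\csr$. I would then argue by contradiction, assuming that the set $S=\{x\in X:\stab_\Gamma(x)=\{e\}\}$ is \emph{not} dense in $X$.

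Under that assumption $U:=X\setminus\overline S$ is a non-empty open set contained in $\bigcup_{g\neq e}\fix(g)$, where each $\fix(g)=\{x\in X:gx=x\}$ is closed because the action is by homeomorphisms of a Hausdorff space. As $X$ is compact Hausdorff it is a Baire space, $U$ is an open (hence Baire) subspace, and $\Gamma$ is countable, so some $\fix(g_0)$ with $g_0\neq e$ has non-empty interior inside $U$; pick a non-empty open set $V\subseteq\fix(g_0)$. By minimality the $\Gamma$-invariant open set $\bigcup_{g\in\Gamma}gV$ is all of $X$, and then compactness yields $g_1,\dots,g_m\in\Gamma$ with $X=\bigcup_{i=1}^m g_iV$.

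Next I would take any $\mu$-stationary Borel probability measure $\nu$ on $X$ (one exists since $X$ is compact) and view $\Gamma\acts(X,\nu)$ as a measurable $\mu$-stationary dynamical system; by hypothesis all its stabilizers are amenable, so Theorem~\ref{Zimmer-amenable-action-C*-Simple-group-is-free} applies and shows this action is essentially free, i.e.\ $\nu(\fix(h))=0$ for every $h\neq e$. The key elementary observation is that $g_iV\subseteq g_i\fix(g_0)=\fix(g_ig_0g_i^{-1})$ with $g_ig_0g_i^{-1}\neq e$, so $\nu(g_iV)=0$ for each $i$, and therefore $1=\nu(X)\le\sum_{i=1}^m\nu(g_iV)=0$, a contradiction. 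Hence $S$ is dense, i.e.\ $\Gamma\acts X$ is topologically free.

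I do not expect a serious obstacle here; the one point to be careful about is that $X$ need not be metrizable, so $(X,\nu)$ need not be a standard probability space. However, the proof of Theorem~\ref{Zimmer-amenable-action-C*-Simple-group-is-free} uses only $\mu$-stationarity of $\nu$ together with Lemma~\ref{random-subgrp-->pos-def}, neither of which requires metrizability or even quasi-invariance, so it applies as stated. The above reduction via translates of $\fix(g_0)$ being again fixed-point sets of non-trivial elements is precisely what lets one avoid needing $\nu$ to have full support or to be non-singular.
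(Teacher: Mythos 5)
Your argument is correct, and it relies on exactly the same two ingredients as the paper's proof: the existence of a $\mu$-stationary measure on $X$ for a $C^*$-simple measure $\mu$ (Proposition~\ref{lem:extension-of-statioanry} combined with Theorem~\ref{new-characterization-C*-simplicity}), and Theorem~\ref{Zimmer-amenable-action-C*-Simple-group-is-free} yielding $\nu(\fix(g))=0$ for every $g\neq e$. The only divergence is in the final step, where your Baire category argument and the covering $X=\bigcup_{i}g_iV$ are more elaborate than necessary: since $\Gamma$ is countable, essential freeness gives $\nu\bigl(\bigcup_{g\neq e}\fix(g)\bigr)=0$, so some $x\in X$ has trivial stabilizer; every point of the orbit $\Gamma x$ then has trivial stabilizer as well, and by minimality this orbit is dense, which is already the desired conclusion. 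Your detour is not wrong --- the identity $g\,\fix(g_0)=\fix(gg_0g^{-1})$ and the contradiction $1\le\sum_i\nu(g_iV)=0$ are both sound --- it simply re-derives by contradiction what the direct orbit argument gives immediately, and it buys nothing extra (the direct argument likewise needs neither full support nor non-singularity of $\nu$). Your closing remark on non-metrizability is correct and applies equally to both versions, since the proof of Theorem~\ref{Zimmer-amenable-action-C*-Simple-group-is-free} only uses the Borel stabilizer map into the metrizable space $\Sub(\Gamma)$ and the stationarity of $\nu$.
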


\begin{proof}
Since by Proposition~\ref{lem:extension-of-statioanry} every 
compact $\Gamma$-space admits a stationary measure, it 
follows from Theorem~\ref{Zimmer-amenable-action-C*-Simple-group-is-free} 
that there is some $x\in X$ which has trivial stabilizer, and so does 
every point in its orbit. Now the assertion follows from minimality.
\end{proof}

Also, Theorem~\ref{Zimmer-amenable-action-C*-Simple-group-is-free} and Theorem~\ref{free-USB->C*-simple} 
imply the following measurable 
version of the main result of 
\cite{Kalantar-Kennedy-17}.

\begin{cor}
If $\Gamma$ is $C^*$-simple then 
$\Gamma$ admits an essentially free measurable 
boundary action, and, 
conversely, $\Gamma$ is $C^*$-simple 
if it admits an essentially free $\mu$-USB
for some $\mu\in\pr(\Gamma)$.
\end{cor}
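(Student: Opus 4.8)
The plan is to obtain both implications as immediate consequences of theorems already proved above, so the proof is a short assembly argument rather than anything computational.

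For the forward implication, suppose $\Gamma$ is $C^*$-simple. First I would invoke Theorem~\ref{new-characterization-C*-simplicity} to produce a measure $\mu\in\pr(\Gamma)$ which is $C^*$-simple, that is, for which the canonical trace $\tau_0$ is the unique $\mu$-stationary state on $\csr$. Next, I would consider the Poisson boundary action $\Gamma\acts(\Pi_\mu,\nu_\infty)$: this is a measurable $\mu$-boundary action (indeed the maximal one), and since it is Zimmer-amenable~\cite{Zimmer-78} its stabilizers $\stab_\Gamma(x)$ are amenable for $\nu_\infty$-almost every $x$. Applying Theorem~\ref{Zimmer-amenable-action-C*-Simple-group-is-free} to this action then gives that $\Gamma\acts(\Pi_\mu,\nu_\infty)$ is essentially free, which is precisely the required essentially free measurable boundary action.

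For the converse, suppose $\Gamma$ admits an essentially free $\mu$-USB for some $\mu\in\pr(\Gamma)$. By Theorem~\ref{free-USB->C*-simple} the canonical trace $\tau_0$ on $\csr$ is uniquely $\mu$-stationary, i.e.\ $\mu$ is a $C^*$-simple measure. Then Corollary~\ref{unique-trace->simple} immediately yields that $\Gamma$ is $C^*$-simple.

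The content is entirely carried by the cited results, so there is no substantive obstacle; the only point requiring a moment's attention is to note that the Poisson boundary simultaneously qualifies as a measurable boundary action \emph{and}, being Zimmer-amenable, has almost surely amenable stabilizers, which is exactly the hypothesis needed to apply Theorem~\ref{Zimmer-amenable-action-C*-Simple-group-is-free}. If one wishes, one may instead phrase the converse through Theorem~\ref{new-characterization-C*-simplicity} directly, but the route via Corollary~\ref{unique-trace->simple} is the most economical.
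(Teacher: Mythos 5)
Your proof is correct and follows exactly the route the paper intends: the forward direction combines Theorem~\ref{new-characterization-C*-simplicity} with the ``in particular'' clause of Theorem~\ref{Zimmer-amenable-action-C*-Simple-group-is-free} applied to the Poisson boundary, and the converse is Theorem~\ref{free-USB->C*-simple} together with Corollary~\ref{unique-trace->simple}. The paper states this corollary as an immediate consequence of those two theorems without further argument, so nothing more is needed.
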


In~\cite{Bader-Duchesne-Lecureux-16} Bader, Duchesne and Lecureux proved that every amenable IRS 
of a group $\Gamma$ is supported on its 
amenable radical $\Rad(\Gamma)$. 
Consequently, $\Rad(\Gamma)$ is trivial if and only if 
$\delta_{\{e\}}$ 
is the unique invariant probability measure on $\Sub_a(\Gamma)$; 
or that $\Gamma$ has the unique trace property if and only if 
$\Gamma\acts \Sub_a(\Gamma)$ is uniquely ergodic.
Thus, the following is a more concrete evidence that 
the difference between the 
unique trace property and $C^*$-simplicity 
is indeed the difference between 
unique ergodicity and unique stationarity.

\begin{cor}\label{cor:sub_a-unq-stationary}
If $\mu$ is a $C^*$-simple measure on $\Gamma$ then 
$(\Gamma,\mu)\acts (\Sub_a(\Gamma), \delta_{\{e\}})$
is uniquely $\mu$-stationary.

Conversely, if $\Gamma$ is not $C^*$-simple then 
$(\Gamma,\mu)\acts (\Sub_a(\Gamma), \delta_{\{e\}})$
is never uniquely stationary.
\end{cor}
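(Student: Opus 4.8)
The plan is to leverage the identification from Lemma~\ref{random-subgrp-->pos-def} between amenable $\mu$-stationary random subgroups and $\mu$-stationary states on $\csr$, together with the characterization of $C^*$-simple measures in Definition~\ref{def:c*-simple-measure}. For the forward direction, suppose $\mu$ is a $C^*$-simple measure, and let $\eta\in\pr(\Sub_a(\Gamma))$ be any $\mu$-stationary measure (i.e.\ a $\mu$-SRS supported on amenable subgroups). I would first apply Lemma~\ref{random-subgrp-->pos-def} to obtain a state $\rho$ on $\csr$ with $\rho(\lambda_g)=\phi_\eta(g)=\eta(\{\Lambda:g\in\Lambda\})$ for all $g\in\Gamma$. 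The key step is to check that $\mu$-stationarity of $\eta$ (under the conjugation action on $\Sub_a(\Gamma)$) translates into $\mu$-stationarity of $\rho$ (under the inner action on $\csr$): this is essentially the computation already carried out in the proof of Theorem~\ref{Zimmer-amenable-action-C*-Simple-group-is-free}, using $\{\Lambda : g^{-1}hg\in\Lambda\} = g^{-1}\{\Lambda : h\in\Lambda\}$ under conjugation. Since $\mu$ is $C^*$-simple, we conclude $\rho=\tau_0$, so $\phi_\eta(g)=\tau_0(\lambda_g)=\delta_{e}(g)$, which forces $\eta(\{\Lambda:g\in\Lambda\})=0$ for every $g\neq e$; as the sets $\{\Lambda : g\in\Lambda\}$ for $g\neq e$ cover $\Sub(\Gamma)\setminus\{\{e\}\}$ and $\Gamma$ is countable, this gives $\eta=\delta_{\{e\}}$. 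Hence $\delta_{\{e\}}$ is the unique $\mu$-stationary measure.

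For the converse, suppose $\Gamma$ is not $C^*$-simple. I would fix an arbitrary $\mu\in\pr(\Gamma)$ and produce a $\mu$-stationary measure on $\Sub_a(\Gamma)$ other than $\delta_{\{e\}}$; by the theorem of Breuillard--Kalantar--Kennedy--Ozawa combined with Kalantar--Kennedy (as recalled in the introduction), failure of $C^*$-simplicity means $\Gamma$ does not admit a free topological boundary action, but the cleaner route here is via the amenable radical: if $\Rad(\Gamma)$ is nontrivial then $\delta_{\Rad(\Gamma)}$ (or the point mass at any nontrivial normal amenable subgroup) is a $\Gamma$-invariant, hence $\mu$-stationary, measure on $\Sub_a(\Gamma)$ distinct from $\delta_{\{e\}}$, and we are done. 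If $\Rad(\Gamma)$ is trivial but $\Gamma$ is still not $C^*$-simple (the Le~Boudec phenomenon), then by Proposition~\ref{lem:non-simple-non-faithful} applied with the measure $\mu$ there exists a non-faithful $\mu$-stationary state $\tau$ on $\csr$; restricting the associated positive-definite function to $\Gamma$ and using that $\tau\neq\tau_0$, one gets $\tau(\lambda_g)\neq 0$ for some $g\neq e$, and then the argument should produce a nontrivial $\mu$-stationary random subgroup. The honest mechanism is: for \emph{any} non-$C^*$-simple $\Gamma$ and \emph{any} generating $\mu$, the existence of a $\mu$-stationary state on $\csr$ other than $\tau_0$ yields (by a converse to Lemma~\ref{random-subgrp-->pos-def}, or by appealing to the structure theory in \cite{Breuillard-Kalantar-Kennedy-Ozawa-17,Kalantar-Kennedy-17}) a $\mu$-SRS on $\Sub_a(\Gamma)$ other than $\delta_{\{e\}}$.

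The main obstacle I anticipate is the converse direction when $\Rad(\Gamma)$ is trivial: Lemma~\ref{random-subgrp-->pos-def} only gives a one-way passage (from amenable random subgroups to states on $\csr$), and there is no general reason a $\mu$-stationary state on $\csr$ should come from a measure on $\Sub_a(\Gamma)$. To handle this I would instead invoke the known characterizations: $\Gamma$ is not $C^*$-simple means $\csr$ has a proper ideal, equivalently (Kalantar--Kennedy) $\Gamma\acts\partial_F\Gamma$ is not free; one then extracts a nontrivial point stabilizer, which is amenable (stabilizers for boundary actions are amenable), and pushes forward a stationary measure on $\partial_F\Gamma$ under $x\mapsto\stab_\Gamma(x)$ to get the desired nontrivial $\mu$-SRS on $\Sub_a(\Gamma)$. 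This makes the converse uniform across both cases and avoids needing a converse to Lemma~\ref{random-subgrp-->pos-def}; verifying that the resulting pushforward is genuinely different from $\delta_{\{e\}}$ (i.e.\ that stabilizers are nontrivial on a positive-measure set, not merely on a meager set) is the delicate point and is exactly where non-$C^*$-simplicity — as opposed to mere failure of the unique trace property — must be used.
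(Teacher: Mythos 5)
Your forward direction is correct and is the paper's argument verbatim in outline: push the amenable $\mu$-SRS $\eta$ to a state on $\csr$ via Lemma~\ref{random-subgrp-->pos-def}, observe (as in the proof of Theorem~\ref{Zimmer-amenable-action-C*-Simple-group-is-free}) that stationarity of $\eta$ gives stationarity of the state, conclude the state is $\tau_0$, and use countability of $\Gamma$ to get $\eta=\delta_{\{e\}}$. No issues there.

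The converse is where you have a genuine gap, and you have correctly located it yourself but not closed it. Your fallback plan is to take a $\mu$-stationary measure $\nu$ on $\partial_F\Gamma$ and push it forward under $x\mapsto\stab_\Gamma(x)$; the problem is precisely that nothing forces $\nu$ to charge the non-free locus $\bigcup_{g\neq e}\fix(g)$, so the pushforward may well be $\delta_{\{e\}}$ even though the action is not free. (Your earlier suggestion of a ``converse to Lemma~\ref{random-subgrp-->pos-def}'' does not exist in the paper and is false in general: a non-canonical $\mu$-stationary state on $\csr$ need not come from a measure on $\Sub_a(\Gamma)$, so Proposition~\ref{lem:non-simple-non-faithful} does not help here either.) The paper sidesteps this entirely: by Kennedy's theorem \cite{Kennedy-C*-simplicity-preprint-15}, a non-$C^*$-simple group admits a \emph{non-trivial amenable URS}, i.e.\ a minimal closed invariant subset $S\subset\Sub_a(\Gamma)$ with $S\neq\{\{e\}\}$. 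Since $\{e\}$ is a fixed point of the conjugation action and $S$ is minimal, $\{e\}\notin S$; and any compact invariant set supports a $\mu$-stationary measure for \emph{every} $\mu\in\pr(\Gamma)$. That measure is automatically different from $\delta_{\{e\}}$, which is all that is needed.

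If you want to stay closer to your $\partial_F\Gamma$ route, the correct repair is not to push forward a stationary measure but to push forward the \emph{space}: on $\partial_F\Gamma$ every fixed-point set $\fix(g)$ is clopen (Frol\'ik's theorem for extremally disconnected compact spaces), so the stabilizer map $x\mapsto\stab_\Gamma(x)$ is \emph{continuous}; the continuous equivariant image of the minimal system $\partial_F\Gamma$ is then a minimal compact invariant subset of $\Sub_a(\Gamma)$ (stabilizers being amenable), which is non-trivial exactly when the action is not free, i.e.\ when $\Gamma$ is not $C^*$-simple by \cite{Kalantar-Kennedy-17}. One then places a stationary measure on this URS rather than transporting one from $\partial_F\Gamma$. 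This is essentially a proof of the direction of Kennedy's theorem that the paper cites, so your argument is not wrong in spirit, but as written the ``delicate point'' you flag is a real hole, not a technicality.
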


\begin{proof}
Suppose that $\mu$ is $C^*$-simple, and let $\eta$ be an amenable $\mu$-SRS of $\Gamma$. 
As shown in the proof of Theorem~\ref{Zimmer-amenable-action-C*-Simple-group-is-free} 
the function $g\mapsto \eta(\{\Lambda : g\in \Lambda\})$ extends to 
a $\mu$-stationary state $\csr$. 
Thus, by unique stationarity of the canonical trace, we get 
$\eta(\{\Lambda : g\in \Lambda\}) = 0$ for every non-trivial $g\in \Gamma$.
Hence
\[
\eta\left(\Sub(\Gamma)\backslash\{e\}\right) 
= \eta\left(\bigcup_{g\neq e}\, \{\Lambda : g\in \Lambda\}\right) 
\leq \sum_{g\neq e}\, \eta\left(\{\Lambda : g\in \Lambda\}\right) = 0 ,
\]
which implies $\eta = \delta_{\{e\}}$.

Conversely, if $\Gamma$ is not $C^*$-simple, then 
by~\cite[Theorem 1.1]{Kennedy-C*-simplicity-preprint-15} $\Gamma$ has a non-trivial 
\emph{amenable URS} (that is, a minimal subset of $\Sub_a(\Gamma)$ which is not the fixed point $\{e\}$). But any URS supports a 
$\mu$-stationary probability for any $\mu\in\pr(\Gamma)$. 
Hence $\delta_{\{e\}}$ is not unique stationary on $\Sub_a(\Gamma)$ 
for any $\mu\in\pr(\Gamma)$.
\end{proof}

\begin{remark}
Note that since for any $\mu\in\pr(\Gamma)$ 
every URS supports a $\mu$-SRS,  
it follows from the 
above corollary that every amenable URS of 
a $C^*$-simple group $\Gamma$ is trivial.
This is one direction of one of the main results of~\cite{Kennedy-C*-simplicity-preprint-15}. In the proof above, we are using the 
other direction of that result.
\end{remark}

It is natural to ask whether a generalization of the  
result of Bader-Duchesne-Lecureux 
about amenable IRS, which was mentioned above,
holds for stationary random subgroups. 
It is evident that the argument presented in~\cite[Theorem 1.4]{Bader-Duchesne-Lecureux-16}  
cannot be extended to the stationary case, and in fact 
any non-$C^*$-simple group $\Gamma$ with trivial 
amenable radical has a non-trivial amenable SRS.
But rephrasing the above corollary, it implies
$\Gamma$ is $C^*$-simple if and only if 
there is $\mu\in\pr(\Gamma)$ such that 
every amenable $\mu$-SRS of $\Gamma$ is trivial.

\section{Freeness of USB: identifying $C^*$-simple measures}\label{faithfulness-and-freeness-of-USB}

Our proof of the existence of $C^*$-simple measures
(Theorem~\ref{new-characterization-C*-simplicity}) is not
completely constructive.
But, in light of the results of Section~\ref{new-characterization-C*-simplicity-applications}, 
it is natural to ask for concrete examples of $C^*$-simple
measures.
In this section we present several approaches to prove that a given measure is $C^*$-simple.

\subsection{Noetherian actions}\label{Noetherian actions}
For a probability space $(X,\nu)$ we denote by $\malg$ its 
\define{measure algebra}, 
that is, the Boolean algebra of equivalence classes of measurable sets modulo 
$\nu$-null sets. It is a partially ordered set with respect to inclusion, and if $\Gamma$ acts on $(X,\nu)$ then it clearly acts on $\malg$.
\begin{defn}
Let $\Gamma\acts(X,\nu)$ be a measurable action. We say that a collection $\cF \subset \malg$ is a 
\define{$\Gamma$-Noetherian lower semilattice (NLS)} if $\cF$ is $\Gamma$-invariant,
closed under intersections, and any descending chain $Y_1 \geq Y_2 \geq \cdots $ in $\cF$, stabilizes.
\end{defn}

The main result of this section is the following theorem, which is a generaliztion of the well-known fact
that (non-invariant) ergodic stationary actions are atomless, 
and also of~\cite[Lemma 2.2.2]{Kaimanovich-Masur-96}. We are grateful to Uri Bader for suggesting this direction.

\begin{theorem}(0-1 Law for Noetherian actions)\label{zer-one-law}
Let $\Gamma\acts X$, and let $\mu\in \pr (\Gamma)$ be generating. 
Suppose $\nu\in \pr (X)$ is an ergodic $\mu$-stationary measure such that 
$(X,\nu)$ has no non-trivial finite factor.
If $\cF$ is a $\Gamma$-NLS collection in $\malg$, then 
$\nu(Y)\in\{0,1\}$ for any $Y\in\cF$.
\end{theorem}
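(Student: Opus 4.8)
The plan is to argue by contradiction, so suppose some $Y_*\in\cF$ has $0<\nu(Y_*)<1$. First I would use the Noetherian hypothesis to extract a minimal positive-measure set: the family $\{Y\in\cF : \nu(Y)>0,\ Y\subseteq Y_*\text{ mod }\nu\text{-null sets}\}$ is non-empty (it contains $Y_*$) and, being a sub-poset of $\cF$, satisfies the descending chain condition, hence has a minimal element $Y_0$. One checks at once that $Y_0$ is then minimal in all of $\{Y\in\cF : \nu(Y)>0\}$ and that $0<\nu(Y_0)\le\nu(Y_*)<1$.

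Next I would read off a laminar (tree-like) structure on the orbit $\Gamma Y_0$. For $g\in\Gamma$ the set $gY_0\cap Y_0$ lies in $\cF$ (using $\Gamma$-invariance and closure under intersections) and satisfies $gY_0\cap Y_0\le Y_0$; by minimality of $Y_0$ either $\nu(gY_0\cap Y_0)=0$ or $gY_0\cap Y_0=Y_0$, i.e.\ $Y_0\subseteq gY_0$ mod $\nu$. Set $H:=\{g\in\Gamma : Y_0\subseteq gY_0 \text{ mod }\nu\}$. Since $\mu$ is generating, $\nu$ is non-singular, so $g\nu\sim\nu$ for every $g\in\Gamma$; using this I would verify that $H$ is closed under inverses (from $Y_0\subseteq gY_0$ apply $g^{-1}$ to get $g^{-1}Y_0\subseteq Y_0$, whose measure equals $(g\nu)(Y_0)>0$, so $g^{-1}\in H$) and under products (chain $Y_0\subseteq gY_0\subseteq ghY_0$), hence $H$ is a subgroup of $\Gamma$. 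For $g\in H$ one gets both $Y_0\subseteq gY_0$ and $gY_0\subseteq Y_0$, so $gY_0=Y_0$ mod $\nu$; consequently $gY_0$ depends only on the coset $gH$, and for $gH\ne g'H$ the dichotomy applied to $g^{-1}g'\notin H$ yields $\nu(gY_0\cap g'Y_0)=0$.

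Finally I would produce a forbidden finite factor. The union $\bigcup_{g\in\Gamma}gY_0$ is $\Gamma$-invariant mod $\nu$ and has positive measure, so by ergodicity it is conull; hence $p:X\to\Gamma/H$, sending ($\nu$-a.e.) $x$ to the unique coset $gH$ with $x\in gY_0$, is a well-defined $\Gamma$-equivariant factor map, and $\bar\nu:=p_*\nu$ is a $\mu$-stationary probability measure on the transitive $\Gamma$-set $\Gamma/H$ giving mass $\nu(gY_0)>0$ to each point. A standard "heavy atom" argument then forces $\Gamma/H$ to be finite: the maximal atom mass $c>0$ of $\bar\nu$ is attained on a finite non-empty set $A$; stationarity forces $\bar\nu(\{g^{-1}s\})=c$ for every $s\in A$ and $g\in\supp(\mu)$, and since $\mu$ is generating $A$ is $\Gamma$-invariant, hence equal to $\Gamma/H$ by transitivity, so $1=\sum c$ gives $[\Gamma:H]=1/c<\infty$. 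As $\nu(Y_0)<1$ we have $H\ne\Gamma$, so $(\Gamma/H,\bar\nu)$ is a non-trivial finite factor of $(X,\nu)$, contradicting the hypothesis.

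I expect the only genuine subtlety to be the bookkeeping with non-singularity in the proof that $H$ is a subgroup and that distinct cosets give $\nu$-disjoint translates; once the laminar structure and the group $H$ are in hand, the finite-factor contradiction is immediate. It is worth noting that this route uses neither the martingale theorem nor the conditional measures, only the descending chain condition together with the classical maximal-atom argument for stationary measures; a proof via conditional states is also possible but seems less direct.
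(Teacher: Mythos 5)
Your proof is correct and follows essentially the same route as the paper's: a minimal positive-measure element of $\cF$ (the paper phrases this as Noetherian induction), the dichotomy $gY_0=Y_0$ or $\nu(gY_0\cap Y_0)=0$, ergodicity to cover $X$ by countably many a.e.-disjoint translates, and a maximum principle to force finitely many translates of equal mass, i.e.\ a non-trivial finite factor. The only difference is presentational: where the paper invokes the maximum principle for the bounded $\mu$-harmonic function $g\mapsto\nu(g^{-1}Y)$, you re-derive it by hand via the maximal-atom argument for the stationary measure pushed forward to $\Gamma/H$.
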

\begin{proof}
We prove the theorem by Noetherian induction with respect to inclusion. 

Let $Y\in\cF$ and assume $\nu(Z)\in\{0,1\}$ for any $Z < Y$,  
and for sake of contradiction suppose $0<\nu(Y)<1$. 
Then $\nu(Z)=0$ for any $Z < Y$, and in particular,
for any $g\in\Gamma$, either $gY=Y$ or $\nu(Y\cap gY)=0$. 
By non-singularity of $\nu$ it moreover 
follows that for any $g, h\in \Gamma$ either $gY=hY$ or $\nu(gY\cap hY)=0$.
Therefore, by ergodicity, we have 
$1=\nu\left(\bigcup_g gY \right)=\sum_{[g]} \nu(gY)$, 
where $[g]$ is the  equivalence class of all $h$ such that $hY=gY$.
In particular, the set of numbers $\{\nu(gY)\}\subset [0,1]$ has a maximum. 
Observe that these numbers are the values of a bounded harmonic function (the Poisson map of $\mathds{1}_Y$). But a bounded
harmonic function with a maximum must be constant. 
Hence it follows 
there are finitely many $g_1, g_2, \dots, g_n\in\Gamma$ 
such that $X=\bigsqcup_{i=1}^{n}g_iY$,
and $\nu(g_iY)={1}/{n}$ for every $i=1, \dots, n$. 
Since $\nu(Y)<1$ we must have $n>1$. 
But this means $(X,\nu)$ has a 
non-trivial finite factor, which contradicts the assumptions.
\end{proof}

It is well known that $\mu$-boundaries do not admit 
non-trivial invariant factors and hence we get the following.
\begin{cor}\label{triv-free-dichotamy}
Let $(X,\nu)$ be a $\mu$-boundary and assume there is a $\Gamma$-NLS $\cF \subset \malg $ such that $\fix(g)\in\cF$ for all $g\in\Gamma$.
Then every $g\in \Gamma$ acts on $X$ either trivially or essentially freely. 
In particular, if $\Gamma \acts (X,\nu)$ is faithful, it is essentially free.
\end{cor}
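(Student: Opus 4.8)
The plan is to deduce this directly from the $0$–$1$ law of Theorem~\ref{zer-one-law}, applied to the sets $\fix(g)$. The first step is to verify that a $\mu$-boundary $(X,\nu)$ meets the hypotheses of that theorem, namely that $\nu$ is ergodic and that $(X,\nu)$ has no non-trivial finite factor. Both follow from the standard fact recalled just above the statement, that a $\mu$-boundary admits no non-trivial invariant factor: a non-trivial $\Gamma$-invariant set would produce a two-point invariant factor, and a non-trivial finite factor $(F,\pi_*\nu)$ is itself an invariant factor, since a $\mu$-stationary probability on a finite $\Gamma$-set is automatically $\Gamma$-invariant when $\mu$ is generating (pass to the support of the factor measure, which is $\Gamma$-invariant, and run the maximum principle on a point of maximal mass, iterating along words in $\supp(\mu)$). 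So Theorem~\ref{zer-one-law} applies to $(X,\nu)$ together with the given $\Gamma$-NLS $\cF\subset\malg$.

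The second step is immediate: for every $g\in\Gamma$ the class of $\fix(g)$ in $\malg$ lies in $\cF$ by hypothesis, so Theorem~\ref{zer-one-law} yields $\nu(\fix(g))\in\{0,1\}$. Interpreting the two alternatives gives exactly the asserted dichotomy: $\nu(\fix(g))=1$ means $gx=x$ for $\nu$-a.e.\ $x$, i.e.\ $g$ acts trivially on $(X,\nu)$; $\nu(\fix(g))=0$ means $g$ acts essentially freely.

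For the final clause, observe that the kernel of $\Gamma\acts(X,\nu)$ equals $\{g\in\Gamma:\nu(\fix(g))=1\}$. If the action is faithful this set is $\{e\}$, so by the dichotomy just established $\nu(\fix(g))=0$ for every $g\neq e$; equivalently, $\Gamma\acts(X,\nu)$ is essentially free.

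I do not expect a serious obstacle here — the core of the argument is a one-line invocation of Theorem~\ref{zer-one-law}. The only points requiring a modicum of care are the passage from the quoted fact (``no non-trivial invariant factor'') to the precise hypotheses of Theorem~\ref{zer-one-law} (ergodicity and no non-trivial finite factor), and the routine identification of $\ker(\Gamma\acts(X,\nu))$ with $\{g:\nu(\fix(g))=1\}$.
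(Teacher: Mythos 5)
Your argument is correct and is exactly the one the paper intends: the corollary is stated as an immediate consequence of Theorem~\ref{zer-one-law} together with the remark that $\mu$-boundaries admit no non-trivial invariant factors, and you have simply filled in the (routine but worth recording) deductions that this remark yields both ergodicity and the absence of non-trivial finite factors, the latter via the maximum principle showing that a stationary measure on a finite $\Gamma$-set is invariant for generating $\mu$. The application of the $0$--$1$ law to $\fix(g)\in\cF$ and the identification of the kernel with $\{g:\nu(\fix(g))=1\}$ match the intended reading, so there is nothing to add.
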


\begin{example}{(Algebraic Actions)}\label{example-algebraic}
Let $k$ be a local field, and let $\sG(k)\acts \sV(k)$ be an
algebraic action of a $k$-algebraic group on a $k$-variety.
Let $\Gamma\le\sG(k)$ be a countable subgroup, and let $\cF$ denote
the family of all subvarities of $\sV(k)$. Then $\cF$ is a $\Gamma$-NLS. 
Let $\mu\in\pr(\Gamma)$ be generating, and suppose 
$\nu\in\pr(\sV(k))$ is such that $(\sV(k), \nu)$ is a $\mu$-boundary. Then 
by Theorem~\ref{zer-one-law} every subvariety has trivial $\nu$
measure. 
Furthermore, since the action is algebraic, $\fix(g)\in \cF$ for all 
$g\in\Gamma$ and so by Corollary~\ref{triv-free-dichotamy} 
every faithful $\mu$-boundary algebraic action is essentially free.
\end{example}

Recall that a $\Gamma$-space $X$ is said to be  \define{mean-proximal} if $X$ is a $\mu$-USB for all
generating $\mu\in\pr(\Gamma)$.
We say that a mean-proximal space $X$ is \define{essentially free}
if for every generating $\mu\in\pr(\Gamma)$ 
the action $\Gamma\acts (X, \nu_\mu)$ is essentially free, where 
$\nu_\mu\in\pr(X)$ is the unique $\mu$-stationary measure. 
In the following, to conclude $C^*$-simplicity of the measures in question,
we construct a mean-proximal space 
and verify the essential freeness using the 0-1 Law above.

In the proof of the following theorem, 
we need at certain step to extend an essentially free mean-proximal
action of a finite index normal subgroup. We prove, jointly with Uri Bader, 
the required extension results for 
mean-proximal actions in 
Appendix~\ref{sec:appendix}.

\begin{theorem}\label{thm:linear}
Let $\Gamma$ be a finitely generated linear group with 
trivial amenable radical.
Then any generating
measure on $\Gamma$ is $C^*$-simple.
\end{theorem}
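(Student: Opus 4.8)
The plan is to exhibit an essentially free \emph{mean-proximal} $\Gamma$-space: once we have such a space $X$, it is by definition a $\mu$-USB and the $\mu$-stationary system on it is essentially free for \emph{every} generating $\mu\in\pr(\Gamma)$, so Theorem~\ref{free-USB->C*-simple} gives that every generating $\mu$ is $C^*$-simple, which is the assertion. We may assume $\Gamma\neq\{e\}$; then $\Rad(\Gamma)=\{e\}$ forces $\Gamma$ to be non-virtually-solvable — a finite group is amenable, and an infinite virtually solvable group contains a nontrivial abelian characteristic subgroup of its solvable radical, which would be a nontrivial amenable normal subgroup of $\Gamma$ — so the structure theory of linear groups applies.

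I would first make two reductions. Fix $\Gamma\hookrightarrow GL_n(k)$ with $k$ a finitely generated field and let $\sH$ be the Zariski closure of $\Gamma$. Passing to the finite-index normal subgroup $\Gamma_0:=\Gamma\cap\sH^\circ(k)$ we may assume the Zariski closure is connected; moreover $\Rad(\Gamma_0)=\{e\}$, since $\Rad(\Gamma_0)$ is characteristic in $\Gamma_0$ and hence a normal amenable subgroup of $\Gamma$. By the extension theorem for mean-proximal actions of finite-index normal subgroups proved in Appendix~\ref{sec:appendix}, it is enough to build an essentially free mean-proximal $\Gamma_0$-space. Quotienting by the solvable radical of $\sH^\circ$ (modulo standard care when $k$ is imperfect) gives a homomorphism $\Gamma_0\to\sG(k)$ into a connected semisimple $k$-group with Zariski-dense image and solvable — hence amenable, hence trivial — kernel; so we may assume that $\Gamma_0$ is Zariski-dense in a connected semisimple group $\sG$ over the finitely generated field $k$.

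For the construction, let $I$ be the set of pairs $(v,j)$, with $v$ a place of $k$ and $\sG_{v,j}$ a $k_v$-isotropic almost-simple factor of $\sG$ whose image under $\Gamma_0\to\sG(k_v)\to\sG_{v,j}(k_v)$ is unbounded, and put $X_{v,j}:=\sG_{v,j}(k_v)/P_{v,j}$ for a minimal $k_v$-parabolic $P_{v,j}$. Since this image is Zariski-dense and unbounded in the almost-simple group $\sG_{v,j}(k_v)$, it contains elements acting proximally on the flag variety, and the classical theory of flag-variety boundaries \cite{Furstenberg-73, Margulis-book-91} shows that $X_{v,j}$ carries a \emph{unique} $\mu$-stationary probability for \emph{every} generating $\mu$ and is always a $\mu$-boundary; that is, $X_{v,j}$ is mean-proximal for $\Gamma_0$. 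Put $X:=\prod_{(v,j)\in I}X_{v,j}$, a countable product. A countable product of mean-proximal spaces is mean-proximal: the conditional measures of a product are Dirac in each coordinate, hence Dirac, so products of $\mu$-boundaries are $\mu$-boundaries, and a $\mu$-stationary measure on a product is determined by its finite-coordinate marginals, which are the unique ones; metrizable models are retained via Lemma~\ref{lem:metrizable-model}.

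It remains to see that $\Gamma_0\acts X$ is essentially free, and for this it suffices that it be \emph{faithful}, by Corollary~\ref{triv-free-dichotamy}. Faithfulness: if $g\neq e$ were central in $\sG$ it would lie in the finite normal subgroup $Z(\sG)(k)\cap\Gamma_0\subseteq\Rad(\Gamma_0)=\{e\}$, which is impossible; hence $g$ is non-central in some almost-simple factor $\sG_{j_0}$, and the image of $\Gamma_0$ in $\sG_{j_0}$, being Zariski-dense and infinite, is non-virtually-solvable, hence unbounded at some place $v$ — necessarily a place at which $\sG_{j_0}$ is $k_v$-isotropic, since it is compact at the others — so $(v,j_0)\in I$ and $g$ acts nontrivially on $X_{v,j_0}$, hence on $X$. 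Finally, for each generating $\mu$ the actions being algebraic, one applies the 0-1 Law as in Example~\ref{example-algebraic}: each $X_{v,j}$ carries the Noetherian lower semilattice of its Zariski-closed subsets, so by Theorem~\ref{zer-one-law} proper subvarieties are $\nu$-null, whence in $\malg$ the family of products of Zariski-closed subsets of the $X_{v,j}$ collapses to the two-element semilattice $\{[X],[\emptyset]\}$, a $\Gamma_0$-NLS containing every $\fix(g)=\prod_{(v,j)\in I}\fix_{X_{v,j}}(g)$; Corollary~\ref{triv-free-dichotamy} then turns faithfulness into essential freeness, producing the required essentially free mean-proximal $\Gamma_0$-space. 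The main obstacle is the faithfulness step over a general finitely generated field: when $k$ has positive transcendence degree one must first specialize $\Gamma_0$ into $\sG(k')$ over a \emph{global} field $k'$, preserving Zariski-density and the witness of non-centrality, before ``$\sG_{j_0}$ is isotropic at all but finitely many places and $\Gamma_0$ is unbounded at one such place'' becomes available; isolating this, together with the precise extension statement for mean-proximal actions supplied by Appendix~\ref{sec:appendix}, is the technical heart of the argument.
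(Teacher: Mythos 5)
Your overall strategy matches the paper's: reduce to a finite-index normal subgroup $\Gamma^0$ with connected semisimple Zariski closure, invoke the appendix extension theorem (Theorem~\ref{cor:finite-index}), build mean-proximal spaces as projective/flag varieties over local fields, take a countable product via Lemma~\ref{prod-of-usb-is-usb}, and upgrade faithfulness to essential freeness with the 0-1 Law of Theorem~\ref{zer-one-law} as in Example~\ref{example-algebraic}. That skeleton is sound, and your observation that the per-factor 0-1 Law already kills $\fix(g)$ once $g$ acts nontrivially on a single factor is correct (the detour through an NLS on the infinite product is unnecessary, and note that the family of products of Zariski-closed sets is \emph{not} Noetherian on an infinite product before passing to $\malg$).

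However, there is a genuine gap exactly where you flag ``the technical heart'': your construction is indexed by ``places $v$ of $k$'' with completions $k_v$, but $k$ is only a finitely generated field, and for positive transcendence degree there is no usable theory of places with local-field completions; the statement ``$\sG_{j_0}$ is isotropic at all but finitely many places and the image of $\Gamma_0$ is unbounded at one of them'' is a global-field fact. You propose to fix this by specializing to a global field $k'$ while ``preserving Zariski-density and the witness of non-centrality,'' but this is precisely the nontrivial step, and it must be done so as to keep \emph{every} nontrivial $g\in\Gamma_0$ alive (a single specialization need not be injective on $\Gamma_0$, and different elements may require different specializations). The paper resolves this per element: given $g\neq e$, it picks $h$ in the normal closure of $g$ with an eigenvalue $\alpha$ that is not a root of unity, and applies the Breuillard--Gelander embedding lemma \cite{Breuillard-Gelander-07} to the finitely generated ring generated by the matrix entries, the defining polynomials of $\sH^0$, and $\alpha$, to obtain an embedding into a local field $k$ in which $\{\alpha^z\}$ is unbounded; Margulis' results then furnish an irreducible representation whose projective space is $\Gamma^0$-mean-proximal and on which $g$ acts nontrivially. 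Without this (or an equivalent) argument, your proof does not produce the required local fields, so the faithfulness of the product action is not established.
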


\begin{proof}
Let $\sH$ denote the Zariski closure of $\Gamma$ and denote by $\sH^0 \nor \sH$ the connected component of the identity. 
Since $\Rad(\Gamma)=\{e\}$ we may assume that $\sH^0$ is also semisimple. 
Now let $\Gamma^0=\Gamma\cap\sH^0$. Then $\Gamma^0\nor \Gamma$ and $[\Gamma:\Gamma^0]<\infty$. 
By Theorem~\ref{cor:finite-index} in Appendix~\ref{sec:appendix} below, 
it is enough to find an essentially free metrizable $\Gamma^0$-mean-proximal
space. To construct such a space, we show that for any given $g\ne e\in\Gamma^0$,
there exists a metrizable $\Gamma^0$-mean-proximal space $X_g$ on which $g$ acts essentially freely.
Then the product of all $X_g$ is the desired essentially free
$\Gamma^0$-mean-proximal space, by Lemma~\ref{prod-of-usb-is-usb}.

Fix some $g\ne e\in\Gamma^0$. Observe that $\Rad(\Gamma^0)=\{ e\}$ and hence we can find an element $h$ in the normal
closure of $g$ with an eigenvalue $\alpha$ which is not a root of the identity. Let $R_1$ be the finitely generated ring generated by the entries of the matrices of $\Gamma^0$, and let $R_2$ be the finitely generated ring generated by the polynomials that define $\sH^0$. So $R=\left\langle R_1,R_2,\alpha\right\rangle$ is a finitely generated ring and $I=\{ \alpha^{z}\}_{z\in\bZ}\subset R$ is infinite. 
By a result of Breuillard and Gelander~\cite[Lemma 2.1]{Breuillard-Gelander-07} (see also \cite[Lemma 4.1]{Tits-72}) we can find an embedding $R\subset k$, where $k$ is a local field, such that $I\subset k$ is unbounded. 
Consider $\Gamma^0$ as a subgroup of $GL_{n}(k)$. Then 
$h^z\in\Gamma^0$ has an eigenvalue whose absolute value is greater than $1$ 
for some $z\in\bZ$. 
In particular, it follows $\Gamma^0$ is not relatively compact in $\sH^0(k)$. 
Thus, $\Gamma^0$ is Zariski dense in the $k$-group $\sH^0$, which is connected and semi-simple.
Hence, by results of Margulis \cite[Lemmas IV.4.4 and IV.4.5]{Margulis-book-91}, 
there exist an irreducible representation $\pi:\sH^0(k)\to GL_m(k)$ such that $\pi(h^{z})$ is proximal for some $z\in\bZ$. Thus we may apply another result of Margulis 
\cite[Theorem IV.3.7]{Margulis-book-91} to conclude the projective
space $\bP(k^m)$ is $\Gamma^0$-mean-proximal, and as $\pi(h^z)$ is proximal, it acts non-trivially. Since $h^z$ is in the normal closure of $g$, it follows 
$g$ acts non-trivially on 
$\bP(k^m)$. Now if $\mu\in\pr(\Gamma^0)$ is generating, and  
$\nu\in\pr(\bP(k^m))$ is the unique $\mu$-stationary measure, then by the 0-1 law 
(Theorem~\ref{zer-one-law})
and Example~\ref{example-algebraic}, $g$ acts $\nu$-essentially freely on $\bP(k^m)$.
\end{proof}

\subsection{Groups with countably many amenable subgroups}\label{tits-alternative}

As mentioned before, there are many results 
on concrete realization of the Poisson boundary on a natural USB. 
Taking advantage of this, in the following we  
use some of the main results in this contexts 
to provide further examples of $C^*$-simple measures.

In~\cite{Bowen-Hartman-Tamuz-17} it is shown that if a 
group has only countably many amenable
subgroups, then the action on the (abstract) Poisson 
boundary is essentially free for any generating $\mu$. 
Thus any generating measure $\mu$ on such a group, 
for which there exists a $\mu$-Poisson USB, is a $C^*$-simple measure.

A good source of examples of groups with countably many amenable subgroups is the class of groups satisfying
a ``finitely generated'' version of the Tits alternative. 
By that we mean any subgroup that does not contain a free 
subgroup, is a finitely generated amenable group. 
Examples of such groups are mapping class groups~\cite{McCarthy-85, Ivanov-84, Birman-Lubotzky-McCarthy-83}, and
$\rm{Out}(\bF_n)$~\cite{Bestvina-Feighn-Handel-00, Bestvina-Feighn-Handel-05}.
 
\begin{example}\label{ex:hyperbolic}
Let $\Gamma$ be hyperbolic group and let $\Lambda\le \Gamma$ be a non-elementary subgroup with trivial
amenable radical. Then the Gromov
boundary of $\Gamma$ is an essentially free $\mu$-USB
for any generating measure $\mu$ on $\Lambda$~\cite{Kaimanovich-00}. Hence,
any generating measure on $\Lambda$ is $C^*$-simple.
\end{example}

\begin{thm}\label{mapping-class-groups}
Let $\Gamma$ be a mapping class group.
\begin{enumerate}
\item  If $\Rad(\Gamma)=\{e\}$ then any generating $\mu\in\pr(\Gamma)$ is $C^*$-simple.
\item Let $\Lambda\le \Gamma$ be a non-elementary subgroup with $\Rad(\Lambda)=\{e\}$. Then any generating measure on $\Lambda$ with finite entropy and finite logarithmic moment is $C^*$-simple.
\end{enumerate}
\end{thm}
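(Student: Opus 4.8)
The plan is to realize, in both cases, the Thurston boundary $\mathcal{PMF}(S)$ of the underlying surface $S$ — the compact metrizable space of projective measured foliations — as an \emph{essentially free} $\mu$-USB, and then to invoke Theorem~\ref{free-USB->C*-simple}. For part~(1), observe first that $\Rad(\Gamma)=\{e\}$ forces $\Gamma$ to be non-elementary, since every elementary mapping class group is virtually cyclic or finite and hence has non-trivial amenable radical. Let $\mu\in\pr(\Gamma)$ be generating; then $\supp(\mu)$ generates the non-elementary group $\Gamma$, so Kaimanovich--Masur~\cite{Kaimanovich-Masur-96} applies: there is a unique $\mu$-stationary probability measure $\nu$ on $\mathcal{PMF}(S)$ (carried by the uniquely ergodic filling foliations) and $(\mathcal{PMF}(S),\nu)$ is a $\mu$-boundary. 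Thus $(\mathcal{PMF}(S),\nu)$ is a $\mu$-USB, and it remains only to prove that $\Gamma\acts(\mathcal{PMF}(S),\nu)$ is essentially free.

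For this I would apply the 0-1 Law in the form of Corollary~\ref{triv-free-dichotamy}. Let $\cF\subseteq\malg$ be the family generated, under finite unions and intersections, by the classes of the closed sets $\mathcal{PMF}(W)$ of measured foliations supported on a proper essential subsurface $W\subsetneq S$, together with $\emptyset$ and $\mathcal{PMF}(S)$. This family is $\Gamma$-invariant and closed under intersections, and since the topological complexity of an essential subsurface of $S$ is bounded, every descending chain in $\cF$ stabilizes; hence $\cF$ is a $\Gamma$-NLS. Moreover, by the Nielsen--Thurston classification together with the description of the invariant foliations of reducible and periodic mapping classes — essentially the content of~\cite[Lemma~2.2.2]{Kaimanovich-Masur-96} — one has $\fix(g)\in\cF$ for every $g\in\Gamma$: if $g$ is pseudo-Anosov, then $\fix(g)$ consists of its two (uniquely ergodic) invariant foliations, hence is $\nu$-null because $\nu$, being a non-invariant ergodic $\mu$-stationary measure, is non-atomic; if $g$ is reducible or periodic, then $\fix(g)$ agrees, off a $\nu$-null set, with a finite union of sets $\mathcal{PMF}(W)$ with $W\subsetneq S$. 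By Theorem~\ref{zer-one-law} each $g\in\Gamma$ therefore acts on $(\mathcal{PMF}(S),\nu)$ either trivially or essentially freely, so $\mathsf{N}=\ker\big(\Gamma\acts(\mathcal{PMF}(S),\nu)\big)$ is a normal subgroup and every $g\notin\mathsf{N}$ acts essentially freely. Since a $\mu$-boundary is Zimmer-amenable, $\mathsf{N}$ is a normal amenable subgroup (exactly as in the proof of Proposition~\ref{thm:kernel-of-USB}), so $\mathsf{N}=\{e\}$ by $\Rad(\Gamma)=\{e\}$. Hence the action is essentially free and Theorem~\ref{free-USB->C*-simple} gives that $\mu$ is $C^*$-simple.

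For part~(2), recall that by the finitely generated Tits alternative for mapping class groups~\cite{McCarthy-85, Ivanov-84, Birman-Lubotzky-McCarthy-83} every amenable subgroup of $\Lambda$ is virtually abelian of uniformly bounded rank, hence finitely generated; as $\Lambda$ is countable it therefore has only countably many amenable subgroups, and so by~\cite{Bowen-Hartman-Tamuz-17} the action of $\Lambda$ on its Poisson boundary $(\Pi_\mu,\nu_\infty)$ is essentially free. On the other hand $\Lambda$ is non-elementary, so Kaimanovich--Masur~\cite{Kaimanovich-Masur-96} again yields the unique $\mu$-stationary measure $\nu$ on $\mathcal{PMF}(S)$ with $(\mathcal{PMF}(S),\nu)$ a $\mu$-boundary, and — this is where the finite entropy and finite logarithmic moment hypotheses enter — under these assumptions $(\mathcal{PMF}(S),\nu)$ is in fact a compact model of the Poisson boundary, i.e.\ a $\mu$-Poisson-USB. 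Being a model of the Poisson boundary, the action $\Lambda\acts(\mathcal{PMF}(S),\nu)$ is essentially free, and Theorem~\ref{free-USB->C*-simple} gives that $\mu$ is $C^*$-simple. (Alternatively, essential freeness here can be deduced directly from the 0-1 Law exactly as in part~(1), since the family $\cF$ constructed there is $\Lambda$-invariant as well.)

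The main obstacle is the geometric input in part~(1): checking that the sets $\mathcal{PMF}(W)$ indeed generate a $\Gamma$-NLS inside $\malg$ and, more delicately, that $\fix(g)$ lands in it for \emph{every} mapping class $g$, which rests on the Nielsen--Thurston trichotomy and Ivanov's analysis of fixed foliations (\cite[Lemma~2.2.2]{Kaimanovich-Masur-96}). In part~(2) the only delicate point is to quote the Kaimanovich--Masur identification of $\mathcal{PMF}(S)$ with the Poisson boundary under precisely the stated moment hypotheses.
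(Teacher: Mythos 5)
Your part~(2) is essentially the paper's own argument and is correct: under the entropy and logarithmic moment hypotheses Kaimanovich--Masur realize the Thurston boundary as a Poisson USB of $\Lambda$, so stabilizers are almost surely amenable by Zimmer-amenability of the Poisson boundary; since $\Lambda$ has only countably many amenable subgroups, the pushed-forward stationary random subgroup is an amenable IRS, hence supported on $\Rad(\Lambda)=\{e\}$, and Theorem~\ref{free-USB->C*-simple} applies.

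Part~(1) is where you diverge from the paper, and there are two genuine gaps. First, you write ``since a $\mu$-boundary is Zimmer-amenable, $\mathsf{N}$ is a normal amenable subgroup.'' This is false: only the Poisson boundary is Zimmer-amenable in general, and in part~(1) there is no moment hypothesis, so the Thurston boundary cannot be identified with the Poisson boundary. The paper warns against exactly this at the end of Section~\ref{faithfulness-and-freeness-of-USB} (citing Nevo--Sageev: a non-Poisson $\mu$-boundary is \emph{never} Zimmer-amenable when a Poisson USB exists). This is precisely why the paper's proof of~(1) invokes Kida's theorem, \cite[Theorem 1.4]{Kida-08}: any quasi-invariant measure supported on the minimal projective foliations is Zimmer-amenable, and Kaimanovich--Masur show the unique stationary measure is so supported; amenability of stabilizers plus countability of the amenable subgroups then gives essential freeness outright, with no need for the 0-1 law. (Your kernel step could be repaired for the full mapping class group by quoting that the kernel of $\Gamma\acts\mathcal{PMF}(S)$ is the center, but that is not the argument you gave.) Second, your verification that $\fix(g)\in\cF$ for \emph{every} $g$ breaks down at periodic elements: a non-central finite-order $g$ fixes the entire locus of projective measured foliations pulled back from the quotient orbifold $S/\langle g\rangle$, and this locus contains filling, uniquely ergodic foliations of $S$; it is therefore not contained, modulo any $\nu$-null set known a priori, in a finite union of sets $\mathcal{PMF}(W)$ with $W\subsetneq S$. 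Proving that this locus is $\nu$-null is essentially the essential freeness you are trying to establish, so the 0-1-law route as written is circular at the torsion elements (and \cite[Lemma~2.2.2]{Kaimanovich-Masur-96} does not cover them --- it controls the measure of non-filling foliations, not fixed-point sets). Similar care would be needed for reducible classes with pseudo-Anosov components, and for the claim that the sets $\mathcal{PMF}(W)$ generate a lower semilattice, since intersections of isotopy classes of subsurfaces are not canonical.
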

\begin{proof}
\begin{enumerate}
\item By results of~\cite{Kaimanovich-Masur-96}, the Thurston boundary is mean-proximal 
and the unique stationary measure (for any generating $\mu$)
is supported on minimal projective foliations (in fact, on the 
uniquely ergodic ones).
By~\cite[Theorem 1.4]{Kida-08}, any quasi-invariant probability which is supported on the minimal 
projective foliations is Zimmer-amenable. Since stationary measures are quasi-invariant,
we conclude that the stabilizer of a.e.\ point is amenable. Since there are only 
countably many amenable subgroups, the Thurston boundary is indeed an essentially free mean-proximal space.
\item In~\cite{Kaimanovich-Masur-96} it is proved that
under these assumptions, the Thurston boundary is a Poisson USB of $\Lambda$. In particular, the stabilizers are a.e.\
amenable and hence, similarly as above, the Thurston boundary is an essentially free USB.
\end{enumerate}
\end{proof}

Note that in (1) we do not know that the $\mu$-boundaries
are Poisson boundaries, hence we need other tools to verify
the amenability of the stabilizers.

\begin{theorem}\label{out(Fn)}
Let $\Gamma = \rm{Out}(\bF_n)$ and let $\Lambda\le \Gamma$ be a non-elementary subgroup with $\Rad(\Lambda)=\{e\}$. Then any generating, finitely supported measure on $\Lambda$ is $C^*$-simple.
\end{theorem}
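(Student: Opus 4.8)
The plan is to follow the template spelled out in Section~\ref{tits-alternative} and used in the proof of Theorem~\ref{mapping-class-groups}: produce an essentially free $\mu$-Poisson-USB for $\Lambda$ and then apply Theorem~\ref{free-USB->C*-simple} with $\Lambda$ in the role of $\Gamma$. Concretely, the argument decomposes into three ingredients: (1) $\Lambda$ has only countably many amenable subgroups; (2) the Poisson boundary of $(\Lambda,\mu)$ is realized as a unique $\mu$-stationary measure on a compact model; (3) the resulting boundary action is essentially free.

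For ingredient (1), I would invoke the ``finitely generated Tits alternative'' for $\mathrm{Out}(\bF_n)$ of Bestvina--Feighn--Handel \cite{Bestvina-Feighn-Handel-00, Bestvina-Feighn-Handel-05}: every subgroup of $\mathrm{Out}(\bF_n)$ either contains a nonabelian free subgroup or is a finitely generated (virtually polycyclic) amenable group. In particular every amenable subgroup of $\Lambda$ is finitely generated; since $\Lambda$ is countable it has only countably many finitely generated subgroups, hence only countably many amenable subgroups.

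For ingredient (2), I would appeal to Horbez's identification of the Poisson boundary of $\mathrm{Out}(\bF_n)$ \cite{Horbez-16}: for a non-elementary subgroup $\Lambda\le \mathrm{Out}(\bF_n)$ and a finitely supported generating measure $\mu$ (in particular of finite entropy and finite logarithmic moment), the Poisson boundary $(\Pi_\mu,\nu_\infty)$ is $\Lambda$-equivariantly isomorphic to the boundary of Culler--Vogtmann outer space equipped with the $\mu$-hitting measure, which is moreover the unique $\mu$-stationary Borel probability measure on that compact model. Thus $(\Pi_\mu,\nu_\infty)$ is a $\mu$-Poisson-USB of $\Lambda$. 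For ingredient (3), by \cite{Bowen-Hartman-Tamuz-17} a group having only countably many amenable subgroups and trivial amenable radical acts essentially freely on its Poisson boundary; applied to $\Lambda$ (using $\Rad(\Lambda)=\{e\}$ and ingredient (1)) this gives that $\Lambda\acts(\Pi_\mu,\nu_\infty)$ is essentially free. Combining (2) and (3), $\Lambda$ admits an essentially free $\mu$-USB, so Theorem~\ref{free-USB->C*-simple} shows that the canonical trace $\tau_0^\Lambda$ is the unique $\mu$-stationary state on $C^*_\lambda(\Lambda)$; that is, $\mu$ is $C^*$-simple.

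The main obstacle is ingredient (2). Horbez's theorem as usually stated identifies the Poisson boundary only up to measurable isomorphism (under a finite logarithmic moment hypothesis), whereas the USB notion demands genuine uniqueness of the $\mu$-stationary measure on a compact topological model. The finite support hypothesis in the statement is precisely what should be used to upgrade the realization to \emph{unique} stationarity --- e.g.\ via Kaimanovich's strip/ray criterion together with a contraction argument on the space of arational trees --- and carrying out this upgrade carefully is the delicate point; the remaining steps are a routine assembly of the cited results.
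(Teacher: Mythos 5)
Your proposal is correct and follows essentially the same route as the paper: the paper cites \cite{Horbez-16} for the fact that the boundary of outer space is a Poisson USB for any finitely supported generating measure on $\Lambda$, and then deduces essential freeness from Zimmer-amenability of the Poisson boundary action (almost surely amenable stabilizers) together with the countability of amenable subgroups of $\mathrm{Out}(\bF_n)$ coming from the Tits alternative of Bestvina--Feighn--Handel, exactly as in part (2) of Theorem~\ref{mapping-class-groups}, before invoking Theorem~\ref{free-USB->C*-simple}. The only divergence is your closing caveat about upgrading Horbez's identification to genuine unique stationarity on a compact model: the paper treats this as part of what \cite{Horbez-16} already establishes and performs no separate upgrade.
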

\begin{proof}
By~\cite{Horbez-16} the boundary of the outer space is a Poisson
USB for any finitely supported measure on $\Lambda$. Hence
the result follows similarly to part (2) in Theorem~\ref{mapping-class-groups}.
\end{proof}

In particular, it follows that all non-elementary subgroups of 
a mapping class group or of $\rm{Out}(\bF_n)$ are $C^*$-simple 
(a strengthening of the results of~\cite{Bridson-delaHarpe-04}).

We conclude this discussion by pointing out that if $\mu\in\pr(\Gamma)$ admits 
a Poisson $\mu$-USB, then any $\mu$-boundary which is not the Poisson boundary, 
is not Zimmer-amenable by~\cite[Theorem 9.2]{Nevo-Sageev-13}.
Hence there is no abstract guarantee that the stabilizers would be amenable.
This, in a way, highlights the importance of the tools presented in 
Section~\ref{Noetherian actions} to conclude essential freeness of 
measurable boundary actions. 

\section{Operator-algebraic superrigidity relative to subgroups}\label{Character rigidity}
In this section we study unique stationarity and unique trace property of 
groups $\Gamma$ relative to their subgroups 
$\Lambda\leq\Gamma$, 
and consequently, 
derive several superrigidity results for $\Gamma$ 
relative to $\Lambda$.

\subsection{Unique stationarity relative to subgroups}
Recall that we have canonical inclusions $C^*_\lambda(\Lambda)\subseteq \csr$ and
$C^*(\Lambda)\subseteq C^*(\Gamma)$.
We denote by $\tau_0^\Gamma$ the canonical trace 
on both reduced and full $C^*$-algebras of $\Gamma$.

\begin{definition}
We say a pair $(\Gamma, \Lambda)$ where $\Lambda$ is
a subgroup of $\Gamma$  
is \emph{$\mu$-stationary rigid} 
if the canonical trace $\tau_0^\Gamma$ 
is the unique $\mu$-stationary state on $C^*(\Gamma)$ that 
restricts to the canonical trace $\tau_0^\Lambda$ on $C^*(\Lambda)$. 
\end{definition}

Stationary rigidity of $(\Gamma, \Lambda)$ 
entails strong rigidity properties for $\Gamma$ relative to $\Lambda$.

\begin{thm}\label{rel-stationary-stuck-zimmer}
Suppose $(\Gamma, \Lambda)$ is a $\mu$-stationary rigid pair. 
Then a measurable $\mu$-stationary action $\Gamma\acts(X,\nu)$ is essentially free 
if its restriction to $\Lambda$ is essentially free. 
\end{thm}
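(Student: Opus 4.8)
The plan is to run the argument of Theorem~\ref{Zimmer-amenable-action-C*-Simple-group-is-free} almost verbatim, except inside the \emph{full} group $C^*$-algebra $C^*(\Gamma)$ rather than $\csr$: an arbitrary normalized positive definite function on $\Gamma$ is the restriction to group elements of a unique state on $C^*(\Gamma)$, with no amenability hypothesis needed, and this is precisely why $\mu$-stationary rigidity of a pair is formulated in terms of $C^*(\Gamma)$. The essential freeness of the $\Lambda$-restriction will play the role that amenability of stabilizers played in Theorem~\ref{Zimmer-amenable-action-C*-Simple-group-is-free}, by pinning down the restriction of the constructed state to $C^*(\Lambda)$.

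In detail, let $\Gamma\acts(X,\nu)$ be $\mu$-stationary with $\Lambda\acts(X,\nu)$ essentially free; we must show $\nu(\fix(g))=0$ for every $g\neq e$ in $\Gamma$. Let $\Psi\colon X\to\Sub(\Gamma)$ be the Borel stabilizer map $\Psi(x)=\stab_\Gamma(x)$ and put $\eta=\Psi_*\nu\in\pr(\Sub(\Gamma))$. By Lemma~\ref{random-subgrp-->pos-def} the function $g\mapsto\phi_\eta(g)=\eta(\{\Lambda'\in\Sub(\Gamma):g\in\Lambda'\})=\nu(\fix(g))$ is positive definite, and it is normalized since $\phi_\eta(e)=\eta(\Sub(\Gamma))=1$; hence it corresponds to a state $\tau\in\cS(C^*(\Gamma))$ with $\tau(g)=\nu(\fix(g))$ for all $g\in\Gamma$. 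Using $\fix(g^{-1}hg)=g^{-1}\fix(h)$ and the $\mu$-stationarity of $\nu$, for every $h\in\Gamma$ we get
\begin{align*}
(\mu*\tau)(h)
&=\sum_{g\in\Gamma}\mu(g)\,\tau(g^{-1}hg)
=\sum_{g\in\Gamma}\mu(g)\,\nu(\fix(g^{-1}hg))\\
&=\sum_{g\in\Gamma}\mu(g)\,(g\nu)(\fix(h))
=\nu(\fix(h))
=\tau(h),
\end{align*}
and since the group elements span a dense subalgebra of $C^*(\Gamma)$ this gives $\mu*\tau=\tau$; that is, $\tau$ is a $\mu$-stationary state on $C^*(\Gamma)$ for the inner action.

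It remains to identify $\tau|_{C^*(\Lambda)}$. For $h\in\Lambda$ we have $\tau(h)=\nu(\fix(h))$, which equals $0$ for $h\neq e$ (by essential freeness of $\Lambda\acts(X,\nu)$) and $1$ for $h=e$; hence $\tau|_{C^*(\Lambda)}$ is exactly the canonical trace $\tau_0^\Lambda$. Thus $\tau$ is a $\mu$-stationary state on $C^*(\Gamma)$ restricting to $\tau_0^\Lambda$, so $\mu$-stationary rigidity of $(\Gamma,\Lambda)$ forces $\tau=\tau_0^\Gamma$. Evaluating on group elements, $\nu(\fix(g))=\tau(g)=\tau_0^\Gamma(g)=0$ for every non-trivial $g\in\Gamma$, i.e.\ $\Gamma\acts(X,\nu)$ is essentially free. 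There is no serious obstacle beyond choosing the ambient algebra correctly: the whole content is the elementary stationarity computation above together with the dictionary between positive definite functions and states on $C^*(\Gamma)$; working with $\csr$ instead would fail exactly because $\phi_\eta$ need not define a state there without amenability of the stabilizers.
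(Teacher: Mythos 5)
Your proof is correct and follows the same route as the paper: push $\nu$ forward under the stabilizer map, use Lemma~\ref{random-subgrp-->pos-def} to realize $g\mapsto\nu(\fix(g))$ as a $\mu$-stationary state on $C^*(\Gamma)$, observe that essential freeness of the $\Lambda$-restriction forces this state to restrict to $\tau_0^\Lambda$, and invoke stationary rigidity. The only difference is that you write out the stationarity computation explicitly where the paper cites the proof of Theorem~\ref{Zimmer-amenable-action-C*-Simple-group-is-free}; your remark about why $C^*(\Gamma)$ (rather than $\csr$) is the right ambient algebra is also accurate.
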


\begin{proof}
By Lemma~\ref{random-subgrp-->pos-def}
the function $\phi(g) = \nu(\mathrm{Fix}(g))$ is a pdf on $\Gamma$, 
hence extends to a state $\tau$ on $C^*(\Gamma)$. 
Moreover, as shown in the proof of 
Theorem~\ref{Zimmer-amenable-action-C*-Simple-group-is-free}
the state $\tau$ is $\mu$-stationary.
Since the restriction $\Lambda\acts(X,\nu)$ is essentially free, we have 
$\nu(\mathrm{Fix}(h)) = 0$ for all non-trivial $h\in \Lambda$, which means 
$\phi|_\Lambda=\delta_e$, and equivalently
$\tau|_{C^*(\Lambda)} = \tau_0^\Lambda$.
Hence, $\tau = \tau_0^\Gamma$ by $\mu$-stationary 
rigidity of the pair $(\Gamma, \Lambda)$, which implies $\phi=\delta_e$, 
i.e.\ $\Gamma\acts(X,\nu)$ is essentially free.
\end{proof}

\begin{thm}
Suppose $(\Gamma, \Lambda)$ is a $\mu$-stationary rigid pair. 
Then any non-trivial $\mu$-SRS of $\Gamma$ 
intersect non-trivially with $\Lambda$ with positive probability.
\end{thm}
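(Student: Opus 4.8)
The plan is to carry out, at the level of stationary random subgroups, the same argument used in the proof of Theorem~\ref{rel-stationary-stuck-zimmer}. Let $\eta$ be a non-trivial $\mu$-SRS of $\Gamma$, i.e.\ a $\mu$-stationary Borel probability measure on $\Sub(\Gamma)$ with $\eta\neq\delta_{\{e\}}$, and put $\phi_\eta(g) = \eta(E_g)$ where $E_g := \{\Lambda'\in\Sub(\Gamma) : g\in\Lambda'\}$. By Lemma~\ref{random-subgrp-->pos-def}, $\phi_\eta$ is positive definite, and since $\phi_\eta(e)=1$ it corresponds, under the identification of normalized positive definite functions with states on $C^*(\Gamma)$, to a state $\tau\in\cS(C^*(\Gamma))$ with $\tau(g)=\phi_\eta(g)$ for all $g\in\Gamma$.

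First I would verify that $\tau$ is $\mu$-stationary for the inner action $\Gamma\acts C^*(\Gamma)$. Since the conjugation action on $\Sub(\Gamma)$ satisfies $E_{g^{-1}hg}=g^{-1}.E_h$, the $\mu$-stationarity of $\eta$ gives, exactly as in the proofs of Theorems~\ref{Zimmer-amenable-action-C*-Simple-group-is-free} and~\ref{rel-stationary-stuck-zimmer} (with $E_h$ in the role of $\mathrm{Fix}(h)$),
\[
\sum_{g\in\Gamma}\mu(g)\,\tau(g^{-1}hg) \;=\; \sum_{g\in\Gamma}\mu(g)\,\eta(E_{g^{-1}hg}) \;=\; \Bigl(\sum_{g\in\Gamma}\mu(g)\,g\eta\Bigr)(E_h) \;=\; \eta(E_h) \;=\; \tau(h)
\]
for every $h\in\Gamma$. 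As the linear span of $\Gamma$ is norm-dense in $C^*(\Gamma)$ and states are contractive, this shows $\mu*\tau=\tau$.

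Then I would argue by contradiction: suppose $\eta$ intersects $\Lambda$ trivially almost surely, i.e.\ $\eta(\{\Lambda' : \Lambda'\cap\Lambda\neq\{e\}\})=0$. Because $\Lambda$ is countable, $\{\Lambda' : \Lambda'\cap\Lambda\neq\{e\}\}=\bigcup_{h\in\Lambda\setminus\{e\}}E_h$, so this forces $\phi_\eta(h)=0$ for every $h\in\Lambda\setminus\{e\}$; together with $\phi_\eta(e)=1$, this means $\tau|_{C^*(\Lambda)}=\tau_0^\Lambda$. Thus $\tau$ is a $\mu$-stationary state on $C^*(\Gamma)$ restricting to the canonical trace on $C^*(\Lambda)$, so $\mu$-stationary rigidity of the pair $(\Gamma,\Lambda)$ forces $\tau=\tau_0^\Gamma$, whence $\phi_\eta=\delta_e$. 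But then, again by countable subadditivity, $\eta(\Sub(\Gamma)\setminus\{\{e\}\})=\eta\bigl(\bigcup_{g\neq e}E_g\bigr)\leq\sum_{g\neq e}\eta(E_g)=0$, so $\eta=\delta_{\{e\}}$, contradicting non-triviality of $\eta$. Hence $\eta$ intersects $\Lambda$ non-trivially with positive probability.

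I do not expect a real obstacle here; this is essentially a restatement of the rigidity hypothesis. The only points needing a little care are that one must work with the \emph{full} $C^*$-algebra $C^*(\Gamma)$ (so that $\phi_\eta$ defines a state even when $\eta$ is not supported on amenable subgroups --- consistent with how $\mu$-stationary rigidity is defined), the translation of $\mu$-stationarity of $\eta$ into that of $\tau$ (routine, and already done in the paper), and the two uses of countable subadditivity, which rely on $\Gamma$, and hence $\Lambda$, being countable.
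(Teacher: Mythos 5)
Your proposal is correct and follows essentially the same route as the paper: push $\eta$ to the positive definite function $g\mapsto\eta(\{\Lambda':g\in\Lambda'\})$, check that the resulting state on $C^*(\Gamma)$ is $\mu$-stationary, and invoke stationary rigidity together with countable subadditivity to force $\eta=\delta_{\{e\}}$. The points you flag (working with the full $C^*$-algebra, countability) are exactly the ones the paper relies on implicitly.
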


\begin{proof}
Suppose $\eta$ is a $\mu$-SRS of $\Gamma$.
Then as explained in the proof of Theorem~\ref{rel-stationary-stuck-zimmer}
the pdf $g\mapsto \eta\left(\{\Lambda' \,:\, g\in \Lambda'\}\right)$ 
extends to a $\mu$-stationary state $\tau$ on $C^*(\Gamma)$.
If $\Lambda' \cap \Lambda =\{e\}$ for 
$\eta$-a.e.\ $\Lambda'\in \Sub(\Gamma)$, 
then $\eta\left(\{\Lambda' \,:\, h\in \Lambda'\}\right) = 0$
for every non-trivial $h\in\Lambda$,
hence
$\tau|_{C^*(\Lambda)} = \tau_0^\Lambda$.
Since the pair $(\Gamma, \Lambda)$ 
is $\mu$-stationary rigid it follows 
$\tau = \tau_0^\Gamma$, which implies 
$\eta\left(\{\Lambda' \,:\, g\in \Lambda'\}\right) = 0$ for 
all non-trivial $g\in \Gamma$. 
As shown in the proof of Corollary~\ref{cor:sub_a-unq-stationary} 
this implies $\eta=\delta_{\{e\}}$.
\end{proof}

We may state a von Neumann algebraic relative superrigidity 
for a given stationary rigid pair $(\Gamma, \Lambda)$ 
in the setting of unitary representations of $\Gamma$ whose von Neumann algebras 
admit normal faithful stationary states.
But since at this point we do not have a characterization of 
those von Neumann algebras, 
the significance of such rigidity result in terms of application is not clear, although 
by Example~\ref{type-III} the class of von Neumann algebras 
admitting normal faithful stationary states is strictly larger than the class of finite von Neumann algebras.

But using the fact that stationary states on $C^*$-algebras always 
exist, we obtain 
the following $C^*$-algebraic$\backslash$representation-theoretical relative rigidity.

\begin{prop}
Suppose $(\Gamma, \mathsf{N})$ is a $\mu$-stationary rigid pair, 
where $\mathsf{N}\triangleleft\Gamma$ is a normal subgroup.  
Let $\pi:\Gamma\to\cU(\cH_\pi)$ be a unitary representation.
If $\lambda_\mathsf{N}$ is weakly contained in the restriction 
of $\pi$ to $\mathsf{N}$, then $\lambda_\Gamma$ is weakly contained in $\pi$.
\end{prop}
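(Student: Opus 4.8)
The plan is to produce, from a suitable vector state of $\pi$, a $\mu$-stationary state on $C^*(\Gamma)$ whose restriction to $C^*(\mathsf{N})$ is the canonical trace $\tau_0^{\mathsf{N}}$, and then invoke $\mu$-stationary rigidity of the pair $(\Gamma,\mathsf{N})$ to conclude that this state equals $\tau_0^{\Gamma}$. Recall that $\lambda_\Gamma$ is weakly contained in $\pi$ precisely when the associated $*$-homomorphism $C^*(\Gamma)\to C^*_\pi(\Gamma)$ factors through $\csr$, equivalently when $\tau_0^\Gamma$ (viewed as a pdf, i.e.\ the indicator of $\{e\}$) lies in the weak* closure of the states of $C^*(\Gamma)$ of the form $g\mapsto\langle\pi(g)\xi,\xi\rangle$. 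So it suffices to exhibit $\tau_0^\Gamma$ as a $\mu$-stationary state coming from $\pi$, and then apply the hypothesis.

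First I would use the hypothesis that $\lambda_{\mathsf{N}}\prec\pi|_{\mathsf{N}}$: by Fell's absorption-type arguments, this means there is a sequence (or net) of unit vectors $\xi_i\in\cH_\pi$ with $\langle\pi(h)\xi_i,\xi_i\rangle\to\tau_0^{\mathsf{N}}(h)=\delta_e(h)$ for every $h\in\mathsf{N}$. Let $\rho_i$ be the state on $\cB(\cH_\pi)$ given by $\rho_i(x)=\langle x\xi_i,\xi_i\rangle$, and consider its restriction to $C^*_\pi(\Gamma)$, which I pull back to a state on $C^*(\Gamma)$. These states need not be $\mu$-stationary, so next I would apply the Ces\`aro–convolution averaging of Proposition~\ref{lem:extension-of-statioanry}'s method: pass to a weak* cluster point $\tau$ of $\frac1n\sum_{k=0}^{n-1}\mu^k*\rho_i$ (taking a double limit, first in $n$ then in $i$, or using a diagonal argument), which is a genuinely $\mu$-stationary state on $C^*(\Gamma)$. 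The point is that convolution by elements of $\mathsf{N}$, and hence by $\mu$ if $\supp(\mu)\subset$ anything — wait, that is exactly the subtle point.

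The main obstacle is precisely this: the $\mu$-convolution moves the vector by $\pi(g)$ for $g\in\supp(\mu)\subset\Gamma$, not just $\mathsf{N}$, so it is not immediate that the averaged state still restricts to $\tau_0^{\mathsf{N}}$ on $C^*(\mathsf{N})$. To handle this I would use normality of $\mathsf{N}$: for $h\in\mathsf{N}$ and $g\in\Gamma$ one has $g^{-1}hg\in\mathsf{N}$, so $\langle\pi(g)^*\pi(h)\pi(g)\xi_i,\xi_i\rangle=\langle\pi(g^{-1}hg)\xi_i,\xi_i\rangle\to\delta_e(g^{-1}hg)=\delta_e(h)$ as $i\to\infty$, uniformly on any finite set of $(h,g)$. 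Hence $g\rho_i$ restricted to $C^*(\mathsf{N})$ also converges to $\tau_0^{\mathsf{N}}$, and therefore so does every convex combination $\mu*\rho_i$ and every Ces\`aro average; passing to the cluster point, $\tau|_{C^*(\mathsf{N})}=\tau_0^{\mathsf{N}}$. Now $\tau$ is a $\mu$-stationary state on $C^*(\Gamma)$ restricting to $\tau_0^{\mathsf{N}}$ on $C^*(\mathsf{N})$, so by $\mu$-stationary rigidity of $(\Gamma,\mathsf{N})$ we get $\tau=\tau_0^{\Gamma}$. Since $\tau$ is a weak* limit of states of the form $x\mapsto\langle\pi(x)\eta,\eta\rangle$ (convex combinations of translates of the $\rho_i$ are again vector states of $\pi$, via $\eta$ a suitable combination in a direct sum of copies of $\cH_\pi$, which does not change the class of the representation up to weak containment), the pdf $\delta_e$ associated to $\lambda_\Gamma$ is a weak* limit of pdf's coming from $\pi$, i.e.\ $\lambda_\Gamma\prec\pi$. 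The remaining care is just to phrase the limiting argument so that the approximating states genuinely come from $\pi$ (e.g.\ working with amplifications $\pi^{\oplus\infty}$, whose weak-containment class equals that of $\pi$), which is routine.
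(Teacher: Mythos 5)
Your proposal is correct and shares the paper's overall skeleton — build a $\mu$-stationary state on $C^*(\Gamma)$ arising from $\pi$ whose restriction to $C^*(\mathsf{N})$ is $\tau_0^{\mathsf{N}}$, apply stationary rigidity to identify it with $\tau_0^\Gamma$, and read off weak containment — but the key technical step is carried out quite differently. The paper notes that weak containment of $\lambda_{\mathsf{N}}$ in $\pi|_{\mathsf{N}}$ makes $\tau_0^{\mathsf{N}}$ a bona fide state on $C^*_\pi(\mathsf{N})$, that normality of $\mathsf{N}$ makes $C^*_\pi(\mathsf{N})$ a $\Gamma$-invariant subalgebra of $C^*_\pi(\Gamma)$ on which $\tau_0^{\mathsf{N}}$ is $\Gamma$-invariant (this is exactly your computation $\delta_e(g^{-1}hg)=\delta_e(h)$, packaged algebraically), and then invokes Proposition~\ref{lem:extension-of-statioanry} to extend $\tau_0^{\mathsf{N}}$ in one stroke to a $\mu$-stationary state on $C^*_\pi(\Gamma)$; rigidity forces this state to be $\tau_0^\Gamma$, so $\tau_0^\Gamma$ is continuous on $C^*_\pi(\Gamma)$ and $\lambda_\Gamma$ is weakly contained in $\pi$. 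You instead work at the level of positive definite functions, approximating $\delta_e|_{\mathsf{N}}$ by vector states, Ces\`aro-averaging convolutions, and controlling the restriction to $C^*(\mathsf{N})$ by hand via normality. This works and makes the final weak-containment step very transparent, but it essentially reproves the extension proposition in a special case; the paper's fixed-point route avoids all the limiting arguments.

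Two details to tighten if you write this up. First, as you note, weak containment only gives $\delta_e|_{\mathsf{N}}$ as a limit of \emph{finite sums} of vector pdf's of $\pi$, so you must work with amplifications from the outset. Second, in the double limit the order ``first $n\to\infty$, then $i\to\infty$'' does not work: for fixed $i$ the restriction of $\frac1n\sum_{k<n}\mu^k*\rho_i$ to $C^*(\mathsf{N})$ need not stay close to $\tau_0^{\mathsf{N}}$ uniformly in $n$, because $\mu^k$ spreads over all of $\Gamma$ and the conjugacy class of a fixed $h\neq e$ may be infinite, so $\sup_{g}\bigl|\langle\pi(g^{-1}hg)\xi_i,\xi_i\rangle\bigr|$ can remain large. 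You must either take $i\to\infty$ first for each fixed $n$, or run the diagonal argument you mention, using that $\frac1n\sum_{k<n}\mu^k$ is almost concentrated on a finite subset of $\Gamma$ for each fixed $n$ and then choosing $i_n$ accordingly. With that ordering fixed, the argument is complete.
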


\begin{proof}
Suppose $\lambda_\mathsf{N}$ is weakly contained in the restriction 
of $\pi$ to $\mathsf{N}$, i.e. there is a canonical 
surjective $*$-homomorphism $C^*_\pi(\mathsf{N})\to C^*_\lambda(\mathsf{N})$. 
In particular, the canonical trace $\tau_0^\mathsf{N}$ is continuous on 
$C^*_\pi(\mathsf{N})$. 
Since $\mathsf{N}$ is normal, $C^*_\pi(\mathsf{N})$ is an invariant 
subalgebra of $C^*_\pi(\Gamma)$ for the inner action by $\Gamma$. 
Thus, by Proposition~\ref{lem:extension-of-statioanry} we can 
extend $\tau_0^\mathsf{N}$ to a $\mu$-stationary 
$\rho\in \cS(C^*_\pi(\Gamma))$. 
Considering $\rho$ as a state on $C^*(\Gamma)$, 
it is still $\mu$-stationary, and thus 
the assumption of $\mu$-stationary rigidity implies $\rho=\tau_0^\Gamma$. 
Hence the canonical trace $\tau_0^\Gamma$ is continuous on 
$C^*_\pi(\Gamma)$, which implies 
there is a canonical 
surjective $*$-homomorphism $C^*_\pi(\Gamma)\to C^*_\lambda(\Gamma)$, 
and equivalently $\lambda_\Gamma$ is weakly contained in $\pi$.
\end{proof}

The following is the main result of this section where we prove  any group that admits an essentially free USB is stationary rigid  
relative to its co-amenable subgroups.
Recall that a subgroup $\Lambda\leq \Gamma$ is co-amenable if 
every affine action of $\Gamma$ with a $\Lambda$-fixed point 
has a fixed point.

\begin{thm}\label{charmonic-rigidity-2}
Let $\mu\in\pr(\Gamma)$. If $\Gamma$ admits an essentially free 
$\mu$-USB, 
then $(\Gamma, \Lambda)$
is a $\mu$-stationary rigid pair for every 
co-amenable subgroup $\Lambda\leq \Gamma$.
\end{thm}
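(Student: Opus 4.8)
The plan is to reduce the assertion to the ``absolute'' case already established in Theorem~\ref{free-USB->C*-simple}, using the co-amenability of $\Lambda$ to bridge the gap between $C^*(\Gamma)$ and $\csr$. Since $\tau_0^\Gamma$ is $\Gamma$-invariant, hence $\mu$-stationary, and restricts to $\tau_0^\Lambda$ on $C^*(\Lambda)$ by definition, the content is the uniqueness statement: any $\mu$-stationary state $\tau$ on $C^*(\Gamma)$ with $\tau|_{C^*(\Lambda)}=\tau_0^\Lambda$ must equal $\tau_0^\Gamma$. I would fix such a $\tau$, let $(\pi_\tau,\cH_\tau,\xi_\tau)$ be its GNS triple, and record the following translation: the GNS representation of $\tau_0^\Lambda$ is the left regular representation $\lambda_\Lambda$, so $\tau|_{C^*(\Lambda)}=\tau_0^\Lambda$ says exactly that $\{\pi_\tau(h)\xi_\tau:h\in\Lambda\}$ is orthonormal; equivalently, the closed $\pi_\tau(\Lambda)$-invariant subspace generated by $\xi_\tau$ is $\Lambda$-equivariantly isomorphic to $\lambda_\Lambda$, and in particular $\lambda_\Lambda\prec\pi_\tau|_\Lambda$.

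The key step is to promote this regularity along $\Lambda$ to regularity along $\Gamma$: I would show that $\tau$ annihilates $\ker\bigl(C^*(\Gamma)\to\csr\bigr)$, hence factors as $\tau=\bar\tau\circ q$ through the canonical surjection $q\colon C^*(\Gamma)\twoheadrightarrow\csr$. This is where co-amenability enters --- using a $\Gamma$-invariant mean on $\Gamma/\Lambda$ to average the $\Lambda$-regularity witnessed on $\overline{\pi_\tau(\Lambda)\xi_\tau}$ over the coset space, yielding $\lambda_\Gamma\prec\pi_\tau$ and, more precisely, the containment of the two-sided ideal $\ker q$ in the left kernel $I_\tau=\{a:\tau(a^*a)=0\}$, which is itself a two-sided ideal by Lemma~\ref{kernel-of-stationary-is-ideal}. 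Granting this, $\bar\tau$ is a well-defined state on $\csr$, and it is $\mu$-stationary since $q$ intertwines the inner $\Gamma$-actions. Then Theorem~\ref{free-USB->C*-simple}, applicable because $\Gamma$ admits an essentially free $\mu$-USB, forces $\bar\tau=\tau_0$, so $\tau=\tau_0\circ q=\tau_0^\Gamma$, proving that $(\Gamma,\Lambda)$ is $\mu$-stationary rigid.

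The main obstacle is precisely this middle step: making rigorous how co-amenability of $\Lambda$ transports regularity of a $\mu$-stationary state from $C^*(\Lambda)$ to all of $C^*(\Gamma)$. If the averaging argument proves unwieldy, the fallback I would pursue is modelled on Example~\ref{ex:SRW-on-free-group}: pass to a compact metrizable model $(K,\nu)$ of the USB with $\nu$ the unique $\mu$-stationary measure (Lemma~\ref{lem:metrizable-model}), so that $\Gamma\acts K$ is a minimal, topologically free, topological boundary action by Theorem~\ref{USB->top-bnd}; extend $\tau$ via Proposition~\ref{lem:extension-of-statioanry} --- with co-amenability invoked to ensure the extension does not forget $\tau$ --- to a $\mu$-stationary state on a $\Gamma$-$C^*$-algebra containing $C^*(\Gamma)$ and $C(K)$; compute the conditional states with Theorem~\ref{martingales}, noting that on $C(K)$ they are the point masses $\delta_{\bnd(\omega)}$ because $(K,\nu)$ is a $\mu$-boundary; and then use essential freeness of $\Gamma\acts(K,\nu)$ together with Lemma~\ref{lemma0} to conclude that each conditional state kills $\pi_\tau(g)$ for $g\neq e$, so that the disintegration $\tau=\int_\Omega\tau_\omega\,d\bP_\mu(\omega)$ collapses to $\tau=\tau_0^\Gamma$.
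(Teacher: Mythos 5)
Your endgame (conditional states, Lemma~\ref{lemma0}, essential freeness, and the disintegration of Theorem~\ref{martingales}) is exactly the paper's, but both of your routes to the crucial middle step have genuine gaps. In the primary route you want to show that $\tau$ kills $\ker\bigl(C^*(\Gamma)\to\csr\bigr)$ by ``averaging the $\Lambda$-regularity over $\Gamma/\Lambda$''. Two problems. First, you conflate $\lambda_\Gamma\prec\pi_\tau$ with $\ker q\subseteq I_\tau$: the latter is equivalent to $\pi_\tau\prec\lambda_\Gamma$, the \emph{opposite} weak containment, and that is what you actually need for $\tau$ to descend to $\csr$. Second, neither containment follows from the hypotheses by any evident averaging: from $\lambda_\Lambda\prec\pi_\tau|_\Lambda$, inducing gives only $\lambda_\Gamma\prec\pi_\tau\otimes\lambda_{\Gamma/\Lambda}$, and co-amenability (i.e.\ $1_\Gamma\prec\lambda_{\Gamma/\Lambda}$) pushes in the wrong direction; and nothing in the hypotheses bounds $\pi_\tau|_\Lambda$ from above by $\lambda_\Lambda$, since $\tau|_{C^*(\Lambda)}=\tau_0^\Lambda$ only controls the cyclic subspace. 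So the reduction to Theorem~\ref{free-USB->C*-simple} is not available by this means; the assertion $\ker q\subseteq I_\tau$ is essentially the theorem itself.

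The fallback is closer to the actual proof, but the step you wave at --- ``extend $\tau$ to a stationary state on a $\Gamma$-$C^*$-algebra containing $C^*(\Gamma)$ and $C(K)$'' --- is precisely where all the work is, and the obvious candidate fails: the canonical map $C^*(\Gamma)\to\Gamma\ltimes_{\mathrm{full}}C(K)$ is not injective for a boundary action (the trivial representation of $\Gamma$ admits no covariant pair when $K$ has no fixed point), so there is no ambient algebra to extend into. Worse, if such an ambient algebra existed for an arbitrary stationary $\tau$, your argument would prove that \emph{every} $\mu$-stationary state on $C^*(\Gamma)$ equals $\tau_0^\Gamma$ --- false already for the trivial character of $\bF_2$ --- without ever using co-amenability or the hypothesis on $\tau|_{C^*(\Lambda)}$. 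What the paper actually does is construct a $\Gamma$-equivariant unital positive map $\iota:C(K)\to\cB(L^2(C^*(\Gamma),\tau))$: the hypothesis $\tau|_{C^*(\Lambda)}=\tau_0^\Lambda$ together with faithfulness of $\tau_0^\Lambda$ gives an isomorphism $C^*_{\pi_\tau}(\Lambda)\cong C^*_\lambda(\Lambda)$; Arveson's extension theorem then produces a $\Lambda$-equivariant unital completely positive map $\cB(\ell^2(\Lambda))\to\cB(L^2(C^*(\Gamma),\tau))$; precomposition with the Poisson map $C(K)\to\ell^\infty(\Lambda)$ gives a $\Lambda$-equivariant candidate; and co-amenability is used as a fixed-point property on the compact convex set of unital positive maps $C(K)\to\cB(L^2(C^*(\Gamma),\tau))$ to replace it by a $\Gamma$-equivariant one. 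Only after that construction does your endgame apply.
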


\begin{proof}
Suppose $\tau$ is a $\mu$-stationary state on $C^*(\Gamma)$ 
such that $\tau|_{C^*(\Lambda)} = \tau_0^\Lambda$. 
Let $\pi_\tau:C^*(\Gamma)\to \cB(L^2(C^*(\Gamma), \tau))$ be the GNS representation
of $\tau$. 
Then the Hilbert subspace $L^2(C^*(\Lambda), \tau)$ is canonically isomorphic to 
$L^2(C^*(\Lambda), \tau_0^\Lambda) = \ell^2(\Lambda)$. 
Moreover, $L^2(C^*(\Lambda), \tau)$ is invariant under 
$\pi_\tau(h)$ for every $h\in\Lambda$, and 
$\pi_\tau|_\Lambda: \Lambda \to\cB(L^2(C^*(\Lambda), \tau))$ 
is unitarily equivalent to 
the left regular representation of $\Lambda$. 
Thus, the map $\pi_\tau(h) \mapsto \lambda_h$, $h\in \Lambda$, 
extends to a surjective $*$-homomorphism 
$C^*_{\pi_\tau} (\Lambda) \to C^*_\lambda (\Lambda)$. 
But, since the canonical trace $\tau_0^\Lambda$ coincides with 
$\tau$ on $C^*_{\pi_\tau} (\Lambda)$, and the latter is faithful, 
it follows the canonical surjective $*$-homomorphism above 
is also injective. Hence, by Arveson's Extension Theorem 
\cite[Theorem 1.2.3]{Arveson-69}, 
we may extend the map 
$\lambda_h \mapsto \pi_\tau(h)$, $h\in \Lambda$, to a unital (completely) positive map 
$\cB(\ell^2(\Lambda))\to \cB(L^2(C^*(\Gamma), \tau))$, 
which is then automatically $\Lambda$-equivariant with respect to 
inner actions on both sides. 

Suppose $(X, \nu)$ is an essentially free $\mu$-USB.
Considering $X$ as a $\Lambda$-space via the restriction 
actions, we have a unital positive $\Lambda$-equivariant 
map $C(X) \to \ell^\infty(\Lambda)$. 
Composing with the above map, 
we obtain a unital positive $\Lambda$-equivariant 
map $C(X) \to \cB(L^2(C^*(\Gamma), \tau))$. 

Let $\cE = \{\Phi: C(X) \to \cB(L^2(C^*(\Gamma), \tau))\, |\, \Phi \text{ is positive and unital} \}$,
and define a $\Gamma$-action on $\cE$ by $(g\cdot \Phi)(f) := g(\Phi(g^{-1}f))$.
Then $\cE$ endowed with the point-weak* topology 
(i.e.\ $\Phi_i \to \Phi$ iff $\Phi_i(f)\to \Phi(f)$ weak* for every $f\in C(X)$)
is a compact convex $\Gamma$-space. 

Observe that $\Phi\in \cE$ is a $\Lambda$-fixed point if and only if it is $\Lambda$-equivariant.
So, by the above, $\cE$ contains
a $\Lambda$-fixed point.
Since $\Lambda$ is a co-amenable subgroup, $\cE$ contains also a $\Gamma$-fixed point, 
which is a $\Gamma$-equivariant positive unital map $\iota: C(X)\to \cB(L^2(C^*(\Gamma), \tau))$.

Using Proposition~\ref{lem:extension-of-statioanry} we extend 
$\tau$ to a $\mu$-stationary state
$\tilde\tau$ on $\cB(L^2(C^*(\Gamma), \tau))$.
Then $\tilde\tau\circ\iota$ gives a $\mu$-stationary probability 
on $C(X)$, and 
therefore by the uniqueness assumption we have 
$\tilde\tau|_{\iota(C(X))}=\nu\circ\iota$. 
Hence ${\tilde\tau}_\om\circ{\iota} = ({\tilde\tau\circ\iota})_\om = \delta_{\bnd(\om)}$
for a.e.\ path $\om\in \Omega$, 
where $\bnd: (\Omega,\bP_\mu)\to (X,\nu)$ is 
the boundary map.

Let $g\in\Gamma$ be non-trivial.
Since the action $\Gamma \acts (X,\nu)$ is essentially free, 
$g\bnd(\om) \neq \bnd(\om)$ for a.e.\ path $\om\in \Omega$. 
Consequently, for a.e.\ $\om\in \Omega$ there is 
$f_\om\in C(X)$, $0\leq f_\om\leq 1$, 
with ${\tilde\tau}_\om({\iota}(f_\om))=f_\om(\bnd(\om))=1$ and 
${\tilde\tau}_\om(\pi_\tau(g^{-1}){\iota}(f_\om)\pi_\tau(g)) = f(g\bnd(\om))=0$. 
Hence, Lemma~\ref{lemma0} implies
$\tau_\om(\pi_\tau(g))=\tilde\tau_\om(\pi_\tau(g))=0$
for a.e.\ path $\om\in\Omega$. 
Thus, applying Theorem~\ref{martingales} we get
\[
\tau(\pi_\tau(g)) = \int_\Omega \tau_\om(\pi_\tau(g))\, d\bP_\mu(\om) = 0 .
\]
Hence $\tau = \tau_0^\Gamma$.
\end{proof}

Applying results of Section~\ref{tits-alternative} we get the following. 

\begin{cor}
Let $\Gamma$ be a hyperbolic group, a mapping class group, or 
a finitely generated linear group, and assume that 
$\Rad(\Gamma) =\{e \}$. Then
for any co-amenable subgroup $\Lambda\leq \Gamma$ the pair $(\Gamma,\Lambda)$ is 
$\mu$-stationary rigid for any generating $\mu$.
\end{cor}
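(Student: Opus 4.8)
The plan is to deduce the corollary directly from Theorem~\ref{charmonic-rigidity-2}, which asserts that whenever $\Gamma$ admits an essentially free $\mu$-USB, the pair $(\Gamma,\Lambda)$ is $\mu$-stationary rigid for every co-amenable subgroup $\Lambda\leq\Gamma$. Thus it suffices to check that in each of the three cases, under the assumption $\Rad(\Gamma)=\{e\}$, the group $\Gamma$ admits an essentially free $\mu$-USB for \emph{every} generating $\mu\in\pr(\Gamma)$; equivalently, that $\Gamma$ admits an essentially free mean-proximal space. Once this is in hand, applying Theorem~\ref{charmonic-rigidity-2} for each generating $\mu$ finishes the proof.

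First I would treat the hyperbolic case. If $\Gamma$ is hyperbolic with $\Rad(\Gamma)=\{e\}$, then $\Gamma$ is necessarily non-elementary, since an elementary hyperbolic group is virtually cyclic, hence amenable and equal to its own amenable radical. Applying Example~\ref{ex:hyperbolic} with $\Lambda=\Gamma$, the Gromov boundary $\partial\Gamma$ is an essentially free $\mu$-USB for every generating $\mu\in\pr(\Gamma)$, which is exactly what is needed.

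Next, for a mapping class group $\Gamma$ with $\Rad(\Gamma)=\{e\}$, I would invoke the proof of Theorem~\ref{mapping-class-groups}(1): it establishes not merely $C^*$-simplicity but the stronger fact that the Thurston boundary is an essentially free mean-proximal $\Gamma$-space, hence an essentially free $\mu$-USB for every generating $\mu$. Similarly, for a finitely generated linear group $\Gamma$ with $\Rad(\Gamma)=\{e\}$, the proof of Theorem~\ref{thm:linear} constructs an essentially free metrizable mean-proximal $\Gamma$-space (a product of projective spaces $\bP(k^m)$ over suitable local fields, built first for a finite-index normal subgroup $\Gamma^0$ and then extended to $\Gamma$ via Theorem~\ref{cor:finite-index} of the appendix); this again provides an essentially free $\mu$-USB for every generating $\mu$.

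Having produced an essentially free $\mu$-USB in all three cases and for every generating $\mu$, the conclusion follows at once from Theorem~\ref{charmonic-rigidity-2}. The only point requiring care is that the results cited above were phrased as $C^*$-simplicity statements rather than as statements about essentially free USB actions; the relevant observation is that their proofs in fact yield this stronger conclusion, which is precisely the hypothesis of Theorem~\ref{charmonic-rigidity-2}, so no additional work beyond unwinding those proofs is required.
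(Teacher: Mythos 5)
Your proposal is correct and is exactly the argument the paper intends: the corollary is stated as an immediate consequence of Theorem~\ref{charmonic-rigidity-2} together with the results of Section~\ref{faithfulness-and-freeness-of-USB}, whose proofs (Example~\ref{ex:hyperbolic}, Theorem~\ref{mapping-class-groups}(1), and Theorem~\ref{thm:linear} via the appendix) indeed produce essentially free mean-proximal spaces, i.e.\ essentially free $\mu$-USBs for every generating $\mu$. Your observation that one must unwind those proofs to extract the USB statement rather than just the $C^*$-simplicity conclusion is the right point of care, and your reduction of the hyperbolic case to the non-elementary setting is fine.
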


\subsection{Unique trace property relative to subgroups}
In this section we consider a relative unique ergodicity for 
the canonical trace.
This should be considered as a relative character rigidity property.

\begin{definition}
We say a pair $(\Gamma, \Lambda)$ of a group $\Gamma$ and a subgroup $\Lambda$ 
is \emph{tracial rigid} 
if the canonical trace $\tau_0^\Gamma$ on $C^*(\Gamma)$ 
is the unique tracial extension of 
the canonical trace $\tau_0^\Lambda$ on $C^*(\Lambda)$. 
\end{definition}

Recall that a von Neumann algebra is \emph{finite} if it has a normal faithful trace.

\begin{thm}\label{relative-op-alg-superrigid}
Suppose $(\Gamma, \Lambda)$ is a tracial rigid pair, and $\Lambda$ is an icc group.
Suppose $\pi:\Gamma\to\cU(\cH_\pi)$ is a unitary representation such that 
$M=\pi(\Gamma)''$ is a finite von Neumann algebra. 
If the restriction $\pi|_\Lambda$ extends to 
an isomorphism 
$\cL(\Lambda) \cong \pi(\Lambda)''$ then $\pi$ extends to 
an isomorphism $\cL(\Gamma) \cong M$.
\end{thm}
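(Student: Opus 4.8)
The plan is to run everything through the canonical trace. First I would turn $\pi$ into a trace on $C^*(\Gamma)$: since $M$ is finite it carries a center-valued trace $\mathrm{ctr}_M\colon M\to Z(M)$, and composing it with any normal state on $Z(M)$ produces a normal tracial state $\tau$ on $M$. Viewing $\pi$ as the induced $*$-homomorphism $C^*(\Gamma)\to M$, the functional $\phi:=\tau\circ\pi$ is then a trace on $C^*(\Gamma)$. Its restriction to $C^*(\Lambda)$ is easy to identify: by hypothesis the map $\lambda_h\mapsto\pi(h)$ extends to an isomorphism $\cL(\Lambda)\cong\pi(\Lambda)''$, and since $\Lambda$ is icc, $\cL(\Lambda)$ is a $\mathrm{II}_1$ factor and hence has a \emph{unique} normal tracial state; so $\tau$ composed with this isomorphism must be $\tau_0^\Lambda$, i.e.\ $\phi(h)=\tau_0^\Lambda(\lambda_h)$ for all $h\in\Lambda$, which gives $\phi|_{C^*(\Lambda)}=\tau_0^\Lambda$. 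By tracial rigidity of $(\Gamma,\Lambda)$ we conclude $\phi=\tau_0^\Gamma$, i.e.\ $\tau(\pi(g))=\delta_{e,g}$ for every $g\in\Gamma$.

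Next I would upgrade this to a statement about $M$ itself. The previous paragraph applies to \emph{every} normal tracial state $\tau$ on $M$, so all of them agree on $\pi(\Gamma)$, hence on the norm-closed span $C^*_\pi(\Gamma)=\overline{\operatorname{span}\,\pi(\Gamma)}$, and hence (normal states agreeing on a weak$^*$-dense $C^*$-subalgebra coincide, by Kaplansky density together with normality) on all of $M$. Thus $M$ has a unique normal tracial state $\tau_M$, and it satisfies $\tau_M(\pi(g))=\delta_{e,g}$. Moreover $\tau_M$ is faithful: its support is a central projection $z\in Z(M)$, and if $z\neq 1$ then $M(1-z)$, being finite, carries a normal tracial state $\psi$, and $x\mapsto\psi(x(1-z))$ is a normal tracial state on $M$ distinct from $\tau_M$ (it kills $z$), contradicting uniqueness.

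Finally I would identify $\cL(\Gamma)$ with $M$ via a GNS computation. Work in the standard form: $L^2(M,\tau_M)$ with cyclic and separating vector $\hat 1$, with $M$ acting by the normal faithful representation $a\mapsto L_a$ of left multiplication. The relation $\tau_M(\pi(g))=\delta_{e,g}$ says exactly that $\{\pi(g)\hat 1:g\in\Gamma\}$ is an orthonormal system, since $\langle\pi(g)\hat1,\pi(h)\hat1\rangle=\tau_M(\pi(h^{-1}g))$; it is total because $a\mapsto a\hat1$ is $\|\cdot\|$–to–$\|\cdot\|_2$ continuous (as $\|a\hat1\|_2^2=\tau_M(a^*a)\le\|a\|^2$), so its $\|\cdot\|_2$-closed span contains $C^*_\pi(\Gamma)\hat1$, which is $\|\cdot\|_2$-dense in $M\hat1=L^2(M,\tau_M)$ by Kaplansky density. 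Hence $U\delta_g:=\pi(g)\hat1$ defines a unitary $U\colon \ell^2(\Gamma)\to L^2(M,\tau_M)$, and a direct check gives $U\lambda_gU^*=L_{\pi(g)}$ for all $g$. Therefore $\mathrm{Ad}(U)$ carries $\cL(\Gamma)=\{\lambda_g:g\in\Gamma\}''$ onto $\{L_{\pi(g)}:g\in\Gamma\}''=L(\pi(\Gamma)'')=L(M)\cong M$, the last identification because $L$ is a normal faithful $*$-homomorphism of $M$. This composite is a von Neumann algebra isomorphism $\cL(\Gamma)\xrightarrow{\ \sim\ }M$ sending $\lambda_g\mapsto\pi(g)$, i.e.\ the required extension of $\pi$.

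The routine parts (existence of the center-valued trace, the Kaplansky-density continuity arguments, the fact that $\cL(\Gamma)$ is isomorphic to its left-multiplication algebra) are standard; the point that needs genuine care is that we never assume $M$ is $\sigma$-finite or a factor — the uniqueness of the normal tracial state on $M$, and therefore its faithfulness, must be \emph{extracted} from tracial rigidity, as in the second paragraph, before the GNS identification can be run. That is the step I would expect to be the main obstacle.
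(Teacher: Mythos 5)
Your proposal is correct and follows essentially the same route as the paper: restrict an arbitrary normal trace on $M$ to $\pi(\Lambda)''\cong\cL(\Lambda)$, use the icc hypothesis to identify it with $\tau_0^\Lambda$, invoke tracial rigidity to get $\tau\circ\pi=\tau_0^\Gamma$, and then run the GNS/orthonormal-basis computation to produce the unitary $\ell^2(\Gamma)\to L^2(M,\tau)$ conjugating $\lambda_g$ to $\pi(g)$. The only cosmetic difference is that the paper takes ``finite'' to mean ``admits a normal \emph{faithful} trace,'' so your separate support-projection argument for faithfulness (while valid) is not needed there.
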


\begin{proof}
Since $\Lambda$ is icc, 
$\pi(\Lambda)'' \cong \cL(\Lambda)$ is a factor, 
and in particular admits a unique trace,
which is the canonical trace $\tau_0^\Lambda$.
Now suppose $\tau$ is a normal trace on $M$, then 
its restriction to $C^*_\pi(\Gamma)$ is the canonical trace by 
tracial rigidity of the pair $(\Gamma, \Lambda)$.
Since $C^*_\pi(\Gamma)$ is weak* dense in $M$, it follows 
$\tau=\tau_0^\Gamma$ is the unique trace on $M$, thus also faithful. 

Now let $\iota_\tau: M\to \cB(L^2(M, \tau))$ denote the GNS map.
Then the map $\delta_g \to \iota_\tau(\pi(g))$ extends to a unitary ${U}_\tau$ from
$\ell^2(\Gamma)$ onto $L^2(M, \tau)$, and we have
\[
{U}_\tau^* \pi(g) {U}_\tau \delta_h = 
{U}_\tau^* \pi(g) \iota_\tau(\pi(h)) =
{U}_\tau^*  \iota_\tau(\pi(gh)) =
\delta_{gh} =
\lambda_{g} \delta_{h}
\]
for all $g, h\in \Gamma$. Hence 
$\operatorname{Ad}({U}_\tau) : {\mathcal{L}}({\Gamma}) \to M$ 
is the desired isomorphism.
\end{proof}

\begin{thm}\label{pmp-free-if-rest-free}
Suppose $(\Gamma, \Lambda)$ is a tracial rigid pair. 
Then a probability measure preserving action $\Gamma\acts(X,m)$ 
is essentially free if its restriction $\Lambda\acts(X,m)$ is essentially free.

Equivalently, any non-trivial IRS of $\Gamma$ intersects $\Lambda$ non-trivially, with positive probability.
In particular, every non-trivial normal subgroup $\mathsf{N}\triangleleft \Gamma$
intersects $\Lambda$ non-trivially. 
\end{thm}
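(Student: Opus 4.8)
The plan is to reduce the statement to Theorem~\ref{relative-op-alg-superrigid}, in exactly the same spirit as Theorem~\ref{Zimmer-amenable-action-C*-Simple-group-is-free} was reduced to the $C^*$-simplicity hypothesis in Section~\ref{new-characterization-C*-simplicity-applications}. Given a p.m.p.\ action $\Gamma\acts(X,m)$ whose restriction to $\Lambda$ is essentially free, I would consider the Koopman-type representation, or more directly the von Neumann algebra crossed product $M = L^\infty(X,m)\rtimes\Gamma$ together with the canonical trace coming from $m$; since $m$ is $\Gamma$-invariant this is a \emph{finite} von Neumann algebra, and it is generated by the unitaries $u_g$ implementing the action (together with $L^\infty(X,m)$, but one should set things up so that the relevant representation $\pi:\Gamma\to\cU(\cH)$ with $\pi(\Gamma)''$ finite is at hand). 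The key classical fact to invoke is that a p.m.p.\ action is essentially free if and only if the corresponding unitary representation generates a copy of $\cL$ of the acting group; more precisely, for the restriction $\Lambda\acts(X,m)$ essential freeness gives $\pi(\Lambda)''\cong\cL(\Lambda)$ via $\pi|_\Lambda$. Then tracial rigidity of $(\Gamma,\Lambda)$, through Theorem~\ref{relative-op-alg-superrigid}, forces $\pi(\Gamma)''\cong\cL(\Gamma)$, and unwinding this isomorphism gives essential freeness of $\Gamma\acts(X,m)$.

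More concretely, I would argue via positive definite functions, paralleling the proof of Theorem~\ref{rel-stationary-stuck-zimmer}. Set $\phi(g) = m(\mathrm{Fix}(g))$; by Lemma~\ref{random-subgrp-->pos-def} this is a pdf on $\Gamma$, hence extends to a trace $\tau$ on $C^*(\Gamma)$ (it is conjugation-invariant because $m$ is $\Gamma$-invariant: $m(\mathrm{Fix}(h^{-1}gh)) = m(h^{-1}\mathrm{Fix}(g)) = m(\mathrm{Fix}(g))$). Essential freeness of the $\Lambda$-action means $\phi|_\Lambda = \delta_e$, i.e.\ $\tau|_{C^*(\Lambda)} = \tau_0^\Lambda$. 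By tracial rigidity of $(\Gamma,\Lambda)$ we conclude $\tau = \tau_0^\Gamma$, hence $\phi = \delta_e$, i.e.\ $m(\mathrm{Fix}(g)) = 0$ for every non-trivial $g\in\Gamma$, which is exactly essential freeness of $\Gamma\acts(X,m)$. This is the cleaner route and avoids invoking the full von Neumann algebraic machinery; it is the direct analogue, in the measure-preserving/tracial setting, of the stationary argument already carried out.

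For the reformulation in terms of IRS: an IRS $\eta$ of $\Gamma$ is, up to the standard correspondence, realized as the stabilizer map of a p.m.p.\ action $\Gamma\acts(X,m)$ (Abert--Glasner--Virag), and the statement ``$\eta$ intersects $\Lambda$ trivially almost surely'' translates to ``the restriction $\Lambda\acts(X,m)$ is essentially free'', while ``$\eta$ is trivial'' translates to ``$\Gamma\acts(X,m)$ is essentially free''. So the IRS statement is literally equivalent to the first assertion. For the final sentence, a non-trivial normal subgroup $\mathsf N\triangleleft\Gamma$ yields a non-trivial IRS, namely $\delta_{\mathsf N}$ (the Dirac mass at $\mathsf N$ is $\Gamma$-invariant under conjugation since $\mathsf N$ is normal), so it must intersect $\Lambda$ non-trivially, i.e.\ $\mathsf N\cap\Lambda\neq\{e\}$. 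The main obstacle I anticipate is bookkeeping the correspondence between pdfs, the GNS construction, and the trace on the crossed product carefully enough that ``$\phi|_\Lambda=\delta_e$'' really does give ``$\tau$ restricts to $\tau_0^\Lambda$ on $C^*(\Lambda)$''—in particular checking that the trace obtained from $\phi$ on $C^*(\Gamma)$ genuinely restricts, on the canonical copy of $C^*(\Lambda)$, to the functional built from $\phi|_\Lambda$; this is routine but is the one place where the argument could go wrong if the identifications are not lined up properly.
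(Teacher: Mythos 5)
Your second, ``cleaner'' argument via the positive definite function $g\mapsto m(\mathrm{Fix}(g))$ is precisely the proof given in the paper, including the conjugation-invariance check, the identification $\tau|_{C^*(\Lambda)}=\tau_0^\Lambda$, the appeal to tracial rigidity, and the IRS reformulation via the stabilizer map. The proposal is correct and takes essentially the same approach as the paper; the von Neumann algebraic route sketched in your first paragraph is unnecessary and rightly discarded.
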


\begin{proof}
For $g\in\Gamma$ denote $\mathrm{Fix}(g) = \{x\in X : gx=x\}$.
Then the function $g\mapsto m(\mathrm{Fix}(g))$ is 
positive definite on $\Gamma$ by Lemma~\ref{random-subgrp-->pos-def}, 
hence extends to a trace $\tau$ on $C^*(\Gamma)$.
Moreover, by invariance of $m$ we get
\[
m(\mathrm{Fix}(hgh^{-1})) = m(h\mathrm{Fix}(g)) = 
m(\mathrm{Fix}(g))
\]
for all $g, h\in \Gamma$,
which implies $\tau$ is a trace. 
If the restriction $\Lambda\acts(X,m)$ is essentially free, then 
$m(\mathrm{Fix}(h)) = 0$ for every non-trivial $h\in \Lambda$,
and equivalently $\tau|_{C^*(\Lambda)} = \tau_0^\Lambda$. 
Thus by the assumption of tracial rigidity of 
the pair $(\Gamma, \Lambda)$ we get 
$\tau= \tau_0^\Gamma$, hence the action $\Gamma\acts(X,m)$ 
is essentially free.

For the IRS formulation, it is well known that any given IRS
is the push-forward of some measure preserving action
via the stabilizer map $x \mapsto \stab_{\Gamma}(x)$ 
(see for example~\cite{Abert-Glasner-Virag-14}).
\end{proof}

By working with topological boundaries instead 
of unique stationary measurable boundaries we are able 
to generalize Theorem~\ref{charmonic-rigidity-2} 
for the case of tracial pairs.

\begin{thm}\label{thm:co-amenalbe-are-character-rigid}
Suppose $\Lambda\leq \Gamma$ is co-amenable. 
Then every tracial extension of 
the canonical trace $\tau_0^\Lambda$ to $C^*(\Gamma)$ 
is supported on $\Rad(\Gamma)$. 

In particular, if $\Gamma$ has trivial amenable radical,
then the pair $(\Gamma, \Lambda)$ is tracial rigid. 
\end{thm}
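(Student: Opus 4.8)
The plan is to reduce the statement to the uniqueness of the canonical trace on $C^*(\Gamma)$ relative to $\Lambda$, and then deduce this by a co-amenability averaging argument over a topological boundary, imitating the structure of the proof of Theorem~\ref{charmonic-rigidity-2} but replacing the essentially free $\mu$-USB by the Furstenberg boundary $\partial_F\Gamma$ and using traciality in place of stationarity. First I would observe that if $\tau$ is a tracial state on $C^*(\Gamma)$ extending $\tau_0^\Lambda$, then, exactly as in the first paragraph of the proof of Theorem~\ref{charmonic-rigidity-2}, the GNS construction gives that $L^2(C^*(\Lambda),\tau) \cong \ell^2(\Lambda)$ and $\pi_\tau|_\Lambda$ is unitarily equivalent to the left regular representation $\lambda_\Lambda$; since $\tau_0^\Lambda$ is faithful on $C^*_\lambda(\Lambda)$, the canonical surjection $C^*_{\pi_\tau}(\Lambda)\to C^*_\lambda(\Lambda)$ is an isomorphism. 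By Arveson's Extension Theorem this map $\lambda_h\mapsto\pi_\tau(h)$ extends to a unital completely positive $\Lambda$-equivariant map $\cB(\ell^2(\Lambda))\to\cB(L^2(C^*(\Gamma),\tau))$.

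Next I would feed in the topological boundary. The group $\Gamma$ always admits a maximal boundary $\partial_F\Gamma$, and the restriction $\Lambda\acts C(\partial_F\Gamma)$ together with the canonical $\Lambda$-equivariant positive unital map $C(\partial_F\Gamma)\to\ell^\infty(\Lambda)$ (a Poisson-type map, which exists since $\Lambda$ acts on $\partial_F\Gamma$) composes with the above to give a unital positive $\Lambda$-equivariant map $C(\partial_F\Gamma)\to\cB(L^2(C^*(\Gamma),\tau))$. Now I form the compact convex $\Gamma$-space $\cE$ of all unital positive maps $C(\partial_F\Gamma)\to\cB(L^2(C^*(\Gamma),\tau))$ with the point-weak* topology and the action $(g\cdot\Phi)(f)=g(\Phi(g^{-1}f))$, exactly as in the proof of Theorem~\ref{charmonic-rigidity-2}; a map is a $\Lambda$-fixed point iff it is $\Lambda$-equivariant, so $\cE$ has a $\Lambda$-fixed point, and co-amenability of $\Lambda$ forces a $\Gamma$-fixed point $\iota:C(\partial_F\Gamma)\to\cB(L^2(C^*(\Gamma),\tau))$ which is $\Gamma$-equivariant.

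Finally I would extract the conclusion. Extend $\tau$ to a state $\tilde\tau$ on $\cB(L^2(C^*(\Gamma),\tau))$; since $\partial_F\Gamma$ is a boundary, $\tilde\tau\circ\iota$ is a measure on $C(\partial_F\Gamma)$ that is driven toward point masses under the $\Gamma$-action, and for any $g\notin\Rad(\Gamma)$ the action $\Gamma\acts\partial_F\Gamma$ moves (the relevant boundary point), so by Lemma~\ref{lemma0} one gets $\tilde\tau(\pi_\tau(g))=0$, i.e. $\tau(\pi_\tau(g))=\tau_0^\Gamma(g)$ for all $g\notin\Rad(\Gamma)$. Since $\Rad(\Gamma)$ acts trivially on $\partial_F\Gamma$, this says precisely that the associated positive definite function is supported on $\Rad(\Gamma)$, giving the first assertion; and when $\Rad(\Gamma)=\{e\}$ this forces $\tau=\tau_0^\Gamma$, which is tracial rigidity. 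I expect the main obstacle to be the measurable-vs-topological boundary bookkeeping: in the stationary proof one has conditional measures $\tilde\tau_\om$ and an honest boundary map $\bnd$, whereas here one must instead run a net/averaging argument directly against the topological boundary property (for every $\eta\in\pr(\partial_F\Gamma)$ there is a net $g_i$ with $g_i\eta\to\delta_x$), and verify carefully that the $\Gamma$-equivariance of $\iota$ lets one transport this convergence into $\cB(L^2(C^*(\Gamma),\tau))$ so that Lemma~\ref{lemma0} applies with the right choice of positive element $a$; one must also check that the weak containment of the quasi-regular representation $\Lambda\acts\ell^2(\Gamma\backslash\Lambda)$-type argument (or the direct faithfulness of $\tau_0^\Lambda$) genuinely makes $C^*_{\pi_\tau}(\Lambda)\cong C^*_\lambda(\Lambda)$ without an amenability hypothesis on $\Lambda$, which it does because $\tau$ restricts to the faithful $\tau_0^\Lambda$ by assumption.
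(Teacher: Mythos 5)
Your proposal is correct and follows essentially the same route as the paper: the paper likewise reuses the Arveson-extension and co-amenability averaging argument from Theorem~\ref{charmonic-rigidity-2} to produce a $\Gamma$-equivariant unital positive map $\iota$ from the continuous functions on a topological boundary (taking, for each $g\notin\Rad(\Gamma)$, a boundary on which $g$ acts non-trivially, via Furman, rather than $\partial_F\Gamma$ itself) and then applies Lemma~\ref{lemma0}. The one point to make fully explicit in your last step is that Lemma~\ref{lemma0} is applied not to $\tilde\tau$ but to a weak* cluster point $\eta$ of a net $g_i\tilde\tau$ chosen so that $g_i\tilde\tau|_{\iota(C(X))}\to\delta_x$ with $gx\neq x$; traciality of $\tau$ is what guarantees that $\eta$ still restricts to $\tau$ on $\pi_\tau(C^*(\Gamma))$, playing the role that the martingale decomposition into conditional states played in the stationary argument.
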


\begin{proof}
Suppose $\tau$ is a trace on $C^*(\Gamma)$ 
such that $\tau|_{C^*(\Lambda)} = \tau_0^\Lambda$. 
Denote by $\pi_\tau:C^*(\Gamma)\to \cB(L^2(C^*(\Gamma), \tau))$ 
the GNS representation of $\tau$. 

Let $g\notin\Rad(\Gamma)$, then it follows from \cite{Furman-03} 
that $\Gamma$ admits a topological boundary action 
$\Gamma\acts X$ on which $g$ acts non-trivially. 
Now, an argument similar to the 
proof of Theorem~\ref{charmonic-rigidity-2} 
yields a $\Gamma$-equivariant unital positive map 
$\iota:C(X)\to \cB(L^2(C^*(\Gamma), \tau))$. 
Extend the trace $\tau\in \cS(\pi_\tau(C^*(\Gamma)))$ 
to a state $\rho$ on $\cB(L^2(C^*(\Gamma), \tau))$. 
Let $x\in X$ be such that $gx\neq x$, 
and choose $f\in C(X)$, $0\leq f\leq 1$, with $f(x)=1$ and $f(gx)=0$. 
Consider the restriction $\rho|_{\iota(C(X))}$ as a probability on $X$.
Since $\Gamma\acts X$ is a boundary action, 
there is a net $(g_i)\subset \Gamma$
such that $g_i \rho|_{\iota(C(X))} \to \delta_x$. 
By passing to a subnet, if necessary,
we may assume $g_i \rho$ converges weak* 
to a state $\eta$ on $\cB(L^2(C^*(\Gamma), \tau))$.
Then $\eta|_{\iota(C(X))} = \delta_x$, and therefore 
$\eta(\iota(f)) = f(x) = 1$ and 
$\eta(\pi(g^{-1})\iota(f)\pi(g)) = \eta(\iota(g^{-1} f)) = f(gx) = 0$.
Hence Lemma~\ref{lemma0} yields
$\tau(\pi(g))=\eta(\pi(g))=0$. This shows $\tau = \tau_0^\Gamma$.
\end{proof}

\begin{cor}
Let $\Gamma$ be a group with trivial amenable radical. 
Suppose $\Lambda\leq \Gamma$ is co-amenable, and 
suppose $\Lambda$ is a character rigid group.
Then a probability measure preserving action of $\Gamma$ 
on the standard 
Lebesgue space is essentially free if its restriction 
to $\Lambda$ is ergodic.
\end{cor}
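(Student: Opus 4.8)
The plan is to reduce the statement to the two rigidity results already in hand. By Theorem~\ref{thm:co-amenalbe-are-character-rigid}, the hypotheses $\Rad(\Gamma)=\{e\}$ and co-amenability of $\Lambda$ make $(\Gamma,\Lambda)$ a tracial rigid pair, and Theorem~\ref{pmp-free-if-rest-free} then says a pmp action $\Gamma\acts(X,m)$ is essentially free as soon as its restriction to $\Lambda$ is essentially free. So the entire content of the corollary is to upgrade ``$\Lambda\acts(X,m)$ is ergodic'' to ``$\Lambda\acts(X,m)$ is essentially free'', and this is precisely where character rigidity of $\Lambda$ and the non-atomicity of the standard Lebesgue space are used.

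To carry out that upgrade I would look at the stabilizer map $x\mapsto\stab_\Lambda(x)$ and let $\eta$ be its push-forward; since $m$ is $\Lambda$-ergodic and the stabilizer map is $\Lambda$-equivariant, $\eta$ is an ergodic IRS of $\Lambda$, and the associated conjugation-invariant positive definite function is $\psi(h)=\eta(\{L:h\in L\})=m(\fix(h))$, a character of $\Lambda$. Character rigidity of $\Lambda$ yields the Stuck--Zimmer-type dichotomy that an ergodic pmp $\Lambda$-action is either essentially free or essentially transitive; since $(X,m)$ is non-atomic, the essentially transitive alternative is impossible, so we are left with the essentially free case, i.e.\ $m(\fix(h))=0$ for all non-trivial $h\in\Lambda$. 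Feeding this into Theorem~\ref{pmp-free-if-rest-free} for the tracial rigid pair $(\Gamma,\Lambda)$ gives that $\Gamma\acts(X,m)$ is essentially free, as desired.

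The main obstacle is the middle step: one must fix precisely which notion of ``character rigid'' is being invoked and check that it does deliver the Stuck--Zimmer alternative for $\Lambda$ — in particular that it forces $\Rad(\Lambda)=\{e\}$, which is needed for that alternative but does \emph{not} obviously follow from $\Rad(\Gamma)=\{e\}$ and co-amenability of $\Lambda$ alone — and one must also confirm that ``the standard Lebesgue space'' is meant to be the non-atomic one, so that the essentially transitive case is genuinely excluded. Once these points are settled, the remainder is a direct assembly of Theorems~\ref{thm:co-amenalbe-are-character-rigid} and~\ref{pmp-free-if-rest-free} with the IRS/character bookkeeping already used in the proofs of Theorems~\ref{Zimmer-amenable-action-C*-Simple-group-is-free} and~\ref{pmp-free-if-rest-free}.
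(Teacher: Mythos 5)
Your proposal follows exactly the paper's route: the paper's proof is precisely the observation that any ergodic pmp action of a character rigid group is essentially free (which you unpack via the Stuck--Zimmer dichotomy and non-atomicity of the Lebesgue space), followed by an appeal to Theorems~\ref{thm:co-amenalbe-are-character-rigid} and~\ref{pmp-free-if-rest-free}. Your extra care about which notion of character rigidity delivers that dichotomy is a reasonable point the paper glosses over, but the argument is the same.
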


\begin{proof}
Any ergodic measure preserving action of 
a character rigid group is essentially free. 
Thus, the assertion follows immediately from 
Theorems~\ref{pmp-free-if-rest-free} and~\ref{thm:co-amenalbe-are-character-rigid}.
\end{proof}

\newpage

\appendix 
\section{Extending mean-proximal actions}\label{sec:appendix}
\newcommand{\aut}{\operatorname{Aut}(\Gamma)}
\begin{center}
Uri Bader, Yair Hartman and Mehrdad Kalantar
\end{center}
{\ }

In this appendix we prove an extension theorem for mean-proximal actions 
that is needed in our proof of $C^*$-simplicity of generating measures 
on linear groups (Theorem~\ref{thm:linear}).
Our results below can be proven in more general forms, but we state them 
as we need them in this work.

Throughout this section, $\Gamma$ is a countable discrete group. 
By a $\Gamma$-space we mean a compact space $X$ on which 
$\Gamma$ acts by homeomorphims. A $\Gamma$-space
$X$ is said to be \define{$\Gamma$-mean-proximal}, 
if for every generating $\mu \in \pr(\Gamma)$, there exists a unique $\mu$-stationary measure
$\nu_{\mu}\in \pr (X)$ such that $(X,\nu_{\mu})$ is a $(\Gamma,\mu)$-boundary 
(see Definition~\ref{defn:measurable-boundaries}).

\begin{lemma}\label{prod-of-usb-is-usb}
Let $\{X_j\}$ be a countable collection of metrizable $\Gamma$-mean-proximal
spaces.
Then $\prod_j X_j$ equipped with the diagonal action is a $\Gamma$-mean-proximal space. 
\end{lemma}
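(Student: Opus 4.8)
The plan is to fix a generating $\mu\in\pr(\Gamma)$ and prove the stronger statement that \emph{every} $\mu$-stationary probability measure on $X:=\prod_j X_j$ is a $(\Gamma,\mu)$-boundary and, moreover, coincides with one fixed measure determined intrinsically by the factors. This yields simultaneously existence of a $\mu$-stationary measure (which anyway holds on any compact $\Gamma$-space), its uniqueness, and the boundary property, i.e.\ exactly what the definition of $\Gamma$-mean-proximality demands. First I would note that a countable product of compact metrizable spaces is compact metrizable, so Furstenberg's Theorem~\ref{cond-meas} on conditional measures is available on $X$. For each $j$, let $\eta_j\in\pr(X_j)$ be the unique $\mu$-stationary measure on $X_j$ and $\bnd_j\colon(\Omega,\bP_\mu)\to(X_j,\eta_j)$ the associated boundary map (both exist by $\Gamma$-mean-proximality of $X_j$), and let $p_j\colon X\to X_j$ denote the coordinate projection, which is continuous and $\Gamma$-equivariant.

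Given a $\mu$-stationary $\nu\in\pr(X)$, I would first observe that $p_{j*}\nu$ is $\mu$-stationary on $X_j$, hence equals $\eta_j$ by uniqueness on the factor. Next, by Theorem~\ref{cond-meas} the conditional measures $\nu_\omega=\lim_n\omega_n\nu$ exist for $\bP_\mu$-a.e.\ $\omega\in\Omega$; pushing forward by $p_j$ and using its continuity and equivariance gives $p_{j*}\nu_\omega=\lim_n\omega_n\eta_j$, which equals $\delta_{\bnd_j(\omega)}$ for $\bP_\mu$-a.e.\ $\omega$ since $(X_j,\eta_j)$ is a $\mu$-boundary. Intersecting the countably many resulting full-measure sets, for a.e.\ $\omega$ the measure $\nu_\omega$ on $X$ has every coordinate marginal equal to a point mass, and since $\bigcap_j\{z\in X : z_j=\bnd_j(\omega)\}$ is the single point $x_\omega:=(\bnd_j(\omega))_j$, it follows that $\nu_\omega=\delta_{x_\omega}$. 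In particular $(X,\nu)$ is a $\mu$-boundary. Finally, the disintegration $\nu=\int_\Omega\nu_\omega\,d\bP_\mu(\omega)=\int_\Omega\delta_{x_\omega}\,d\bP_\mu(\omega)$ shows that $\nu$ depends only on the maps $\bnd_j$ — equivalently only on the $\eta_j$ — so it is the same for every $\mu$-stationary measure on $X$, giving the uniqueness.

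The only step that is not pure bookkeeping is the passage from a measure on the product whose one-dimensional marginals are all point masses to the conclusion that the measure itself is a point mass; this is trivial for finite products and, for the countable product, is handled by the countable-intersection argument above, so I expect it to be the sole place requiring a genuine (if elementary) argument. Everything else reduces to combining Theorem~\ref{cond-meas}, the definition of a $\mu$-boundary, and the weak\textsuperscript{*}-continuity of pushforward along continuous equivariant maps, all over a countable family so that the exceptional null sets may be discarded simultaneously. I would also briefly check that no metrizable-model reduction (as in Lemma~\ref{lem:metrizable-model}) is needed here: each $X_j$ is assumed metrizable, hence $X$ is genuinely metrizable.
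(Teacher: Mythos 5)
Your proposal is correct and follows essentially the same route as the paper: push the stationary measure to each factor, identify the marginals with the unique stationary measures there, use Furstenberg's conditional-measure theorem together with equivariance of the projections to see that a.e.\ conditional measure of $\nu$ has Dirac marginals on a common full-measure set of paths, conclude that each $\nu_\omega$ is itself Dirac, and recover uniqueness from the disintegration $\nu=\int_\Omega\nu_\omega\,d\bP_\mu(\omega)$. Your write-up is if anything slightly more explicit than the paper's (e.g.\ in spelling out why a measure on a countable product with Dirac marginals is Dirac), but there is no substantive difference.
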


\begin{proof}
Fix a generating $\mu\in\pr(\Gamma)$, and let $(\Omega, \bP_{\mu})$ 
denote the path space of the $(\Gamma, \mu)$-random walk
(see Section~\ref{Random walks and stationary dynamical systems}). 
For each $j'$, let $\pi_{j'}:\prod_j X_j\to X_{j'}$
be the corresponding projection map.
Let $\nu$ be a $\mu$-stationary probability on $\prod_j X_j$, and 
for each $j$ denote by  
$\nu^j$ the pushforward of $\nu$ under $\pi_j$. 
Then $\nu^j\in \pr(X_j)$ is $\mu$-stationary, hence by
the uniqueness assumption, is also a $(\Gamma,\mu)$-boundary.
Since the collection $\{X_j\}$ is countable 
we can find $\Omega_0 \subset \Omega$ 
with $\bP_{\mu}(\Omega_0)=1$ such that for every $\omega\in\Omega_0$, 
the conditional measures $(\nu^j)_{\omega}$ 
are Dirac measures for all $j$ (see Furstenberg's Theorem~\ref{cond-meas} for the definition). 
Since the projections $\pi_j$ are equivariant, we observe 
$(\nu_{\omega})^j=(\nu^j)_{\omega}$ for all $j$ and $\om\in \Omega_0$. 
Thus, it follows that $\nu_{\omega}$
is a Dirac measure for all $\om\in \Omega_0$ (note that the product space is also metrizable).
This implies $(\prod_j X_j, \nu)$ is a $(\Gamma,\mu)$-boundary. 
The uniqueness of $\nu$ follows from the fact that every stationary measure 
is completely determined by its conditional measures 
(Theorem~\ref{cond-meas}), and that the 
conditional measures $\nu_{\omega}$ project onto the Dirac measures $(\nu^j)_{\omega}$.
\end{proof}

\begin{thm}\label{prop:extension}
Let $\Gamma$ be a finitely generated group, and let $X$ be 
a metrizable $\Gamma$-mean-proximal space.
Then there exists a metrizable $\Gamma$-mean-proximal space $Y$
such that $X$ is a factor of $Y$, and the action $\Gamma\acts Y$ extends 
to an action $\aut\acts Y$.
\end{thm}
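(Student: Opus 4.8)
The plan is to realize $Y$ as a co-induction of $X$ from $\operatorname{Inn}(\Gamma)$ up to $\operatorname{Aut}(\Gamma)$. Two preliminary remarks set the stage. First, because $\Gamma$ is finitely generated, an automorphism is determined by the images of a fixed finite generating set, so $\operatorname{Aut}(\Gamma)$ is countable, and hence so is $\operatorname{Out}(\Gamma)=\operatorname{Aut}(\Gamma)/\operatorname{Inn}(\Gamma)$; this countability is exactly what will let us apply Lemma~\ref{prod-of-usb-is-usb}. Second, $Z(\Gamma)$ is an amenable normal subgroup, hence contained in $\Rad(\Gamma)$, which by Proposition~\ref{thm:kernel-of-USB} acts trivially on the USB $X$ (replacing $X$ by the support of a stationary measure if needed, which is harmless as that support is the unique minimal set, independent of the measure, and is again $\Gamma$-mean-proximal). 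Thus the $\Gamma$-action on $X$ factors through $\operatorname{Inn}(\Gamma)=\Gamma/Z(\Gamma)$, so $X$ is naturally an $\operatorname{Inn}(\Gamma)$-space; throughout, the phrase ``$\Gamma\acts Y$ extends to $\operatorname{Aut}(\Gamma)\acts Y$'' is to be read via $\Gamma\twoheadrightarrow\operatorname{Inn}(\Gamma)\hookrightarrow\operatorname{Aut}(\Gamma)$.

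Write $N=\operatorname{Inn}(\Gamma)\trianglelefteq G=\operatorname{Aut}(\Gamma)$ and put
\[
Y \;=\; \{\, f\colon G\to X \mid f(nh)=n\cdot f(h) \text{ for all } n\in N,\ h\in G \,\},
\]
a closed, hence compact, $G$-invariant subspace of $X^{G}$, on which $G$ acts by $(g\cdot f)(h)=f(hg)$. Fix a transversal $T\subseteq G$ for $N\backslash G$ with $\operatorname{id}\in T$. Evaluation $f\mapsto (f(t))_{t\in T}$ identifies $Y$ with $\prod_{t\in T}X$ as a topological space, and a short computation using $t\,\operatorname{ad}_g\,t^{-1}=\operatorname{ad}_{t(g)}$ shows that, under this identification, the $\Gamma$-action on $Y$ (via $\Gamma\to N\subseteq G$) becomes the \emph{diagonal} action on $\prod_{t\in T}X^{t}$, where $X^{t}$ denotes $X$ with the twisted action $g\cdot_t x:=t(g)\cdot_X x$. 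In particular the coordinate $t=\operatorname{id}$ reproduces $X$ with its original action, so the projection $Y\to X$ onto that coordinate is a surjective $\Gamma$-factor map. Since $T$ is countable and each $X^{t}$ is metrizable, $Y$ is metrizable.

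It remains to prove $Y$ is $\Gamma$-mean-proximal; by Lemma~\ref{prod-of-usb-is-usb} it is enough to show every $X^{t}$ is. This is transport of structure: for generating $\mu\in\pr(\Gamma)$ the pushforward $t_*\mu$ is again generating, a measure $\nu\in\pr(X)$ is $\mu$-stationary for $X^{t}$ if and only if it is $t_*\mu$-stationary for $X$, and the $(\Gamma,\mu)$- and $(\Gamma,t_*\mu)$-random walks correspond under applying $t$ to increments, matching positions on $X$ with $t$-twisted positions on $X^{t}$ together with their conditional measures; hence uniqueness of the stationary measure and the boundary property for $(X,t_*\mu)$ yield the same for $(X^{t},\mu)$. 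Applying Lemma~\ref{prod-of-usb-is-usb} to the countable family $\{X^{t}\}_{t\in T}$ shows $Y$ is $\Gamma$-mean-proximal, and by construction the $G=\operatorname{Aut}(\Gamma)$-action extends the $\Gamma$-action. The one point deserving care — and the reason the naive product $\prod_{\sigma\in\operatorname{Aut}(\Gamma)}X^{\sigma}$ with the coordinate-permutation $\operatorname{Aut}(\Gamma)$-action does \emph{not} work — is that the $\operatorname{Aut}(\Gamma)$-action must restrict through $\operatorname{ad}\colon\Gamma\to\operatorname{Aut}(\Gamma)$ to the already-twisted diagonal $\Gamma$-action; it is precisely the equivariance constraint defining the co-induced subspace that reconciles the permutation of coordinates with this twist. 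The remaining verifications (compactness, metrizability, the factor map, and that $G$ acts by homeomorphisms) are routine.
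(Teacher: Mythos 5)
Your proof is correct, but it follows a genuinely different route from the paper's. The paper forms the product $Z=\prod_{\alpha\in\operatorname{Aut}(\Gamma)}X_\alpha$ of twisted copies with the diagonal $\Gamma$-action, lets $\operatorname{Aut}(\Gamma)$ permute the coordinates, and then — since these two actions do \emph{not} agree on $Z$ — passes to the unique minimal component $Y\subset Z$ and invokes Theorem~\ref{USB->top-bnd} to see that $Y$ is a topological boundary, so that the $\Gamma$-equivariant map $g^{-1}\beta_g$ must be the identity on $Y$; this is how the permutation action is reconciled with the diagonal action. Your co-induction $Y=\{f\colon G\to X \mid f(nh)=nf(h)\}$ builds that reconciliation into the definition: the computation $t\,\operatorname{ad}_g\,t^{-1}=\operatorname{ad}_{t(g)}$ shows the restriction of the $\operatorname{Aut}(\Gamma)$-action to $\operatorname{Inn}(\Gamma)$ is on the nose the twisted diagonal action on $\prod_{t\in T}X^t$, so you need neither the passage to a minimal component nor the boundary-rigidity step, and you get a genuinely surjective factor map onto the $\operatorname{id}$-coordinate with an index set $\operatorname{Out}(\Gamma)$ rather than all of $\operatorname{Aut}(\Gamma)$. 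The price is the preliminary reduction making $X$ an $\operatorname{Inn}(\Gamma)$-space: you must kill the action of $Z(\Gamma)$, which you do correctly by replacing $X$ with the support of the (measure-independent) unique stationary measure, where $\Rad(\Gamma)\supseteq Z(\Gamma)$ acts trivially by Proposition~\ref{thm:kernel-of-USB}; note this means your factor map, like the paper's, really lands on the minimal part of the original $X$, which is all that is needed in the application to Theorem~\ref{cor:finite-index}. Both arguments share the two essential inputs — countability of $\operatorname{Aut}(\Gamma)$ for finitely generated $\Gamma$, and Lemma~\ref{prod-of-usb-is-usb} together with the transport-of-structure fact that each twisted copy $X^t$ (equivalently $X_{t^{-1}}$) is again mean-proximal via $\mu\mapsto t_*\mu$ — and your write-up of the latter is actually more explicit than the paper's ``obviously.''
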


\begin{proof}
For $\alpha\in\aut$ let $X_{\alpha}$ be a copy of $X$ equipped
with the action $g\cdot x=\alpha^{-1}(g)x$. Obviously $X_{\alpha}$ is 
a metrizable $\Gamma$-mean-proximal for every $\alpha\in\aut$. 
Since $\Gamma$ is finitely generated, 
$\aut$ is countable, hence Lemma~\ref{prod-of-usb-is-usb} yields that 
$Z= \prod_{\alpha} X_{\alpha}$ 
equipped with the diagonal action of $\Gamma$ is 
mean-proximal. In particular, $Z$ contains a unique minimal component $Y$, 
which is also $\Gamma$-mean-proximal, and hence 
a topological boundary by Theorem~\ref{USB->top-bnd}.

On the other hand, the group $\aut$ also acts naturally on $Z$ by permuting the indices, 
namely $(\beta \cdot z)_{\alpha_0} = z_{\beta^{-1}\alpha_0}$, for $\beta, \alpha_0 \in \aut$ and 
$z= (z_\alpha)_{\alpha\in \aut}\in Z$. One observes the relation 
$g\cdot\beta\cdot z = \beta \cdot\beta^{-1}(g)\cdot z$ for $g\in \Gamma$, $\beta\in\aut$, 
and $z\in Z$. Thus, it follows if $Z'\subset Z$ is $\Gamma$-invariant, then 
so is $\beta(Z')$ for all $\beta\in\aut$. In particular, 
$\beta(Y) = Y$ for every $\beta\in \aut$ by the uniqueness. Hence the restriction 
to $Y$ defines an action $\aut\acts Y$.
 
Now fix $g\in\Gamma$, and let $\beta_g\in \aut$ 
be the inner automorphism $\beta_g(h)=ghg^{-1}$. 
Considering $g$ and $\beta_g$ as homeomorphism on $Z$ via the 
above actions of $\Gamma$ and $\aut$, straightforward calculations show
the composition $g^{-1}\beta_g:Z\to Z$ is equivariant with respect to the 
diagonal action of $\Gamma$. 

In particular, we have $g^{-1}\beta_g(Y) = Y$ by
the uniqueness. 
But since $\Gamma\acts Y$ is a topological boundary action, it follows $g^{-1}\beta_g$ 
is the identity map on $Y$, which implies the restriction of the action $\aut\acts Y$ to 
$\Gamma\leq \aut$ (via inner automorphism) 
coincided with the diagonal action of $\Gamma$ when restricted to $Y$, 
and this completes the proof.
\end{proof}

We will use the notion of recurrent subgroups in order to 
relate boundary actions of a group to its finite index subgroups.
We recall the definition, and refer the reader to~\cite{Furstenberg-73} 
for more details.

Let $\mu\in\pr(\Gamma)$ be generating. A subgroup $\Lambda\le \Gamma$ is said to be \define{$\mu$-recurrent} 
if almost every path of the $(\Gamma, \mu)$-random walk 
passes through $\Lambda$ (infinitely many times, automatically).
In this case, there exists a \emph{hitting measure} $\theta\in\pr(\Lambda)$
such that the restriction map $\ell^\infty(\Gamma)\to \ell^{\infty}(\Lambda)$
yields an isometric isomorphism between the algebras of bounded 
harmonic functions $H^{\infty}(\Gamma,\mu)$ and $H^{\infty}(\Lambda,\theta)$.

\begin{lem}\label{lem:recurrent}
Let $X$ be a metrizable $\Gamma$-space and let 
$\Lambda\le \Gamma$ be $\mu$-recurrent for a generating $\mu\in\pr(\Gamma)$.
Then any $\mu$-stationary measure $\nu\in\pr(X)$ is $\theta$-stationary, 
where $\theta\in \pr(\Lambda)$ is the hitting measure.
Moreover, if $\nu\in\pr(X)$ is a $(\Lambda,\theta)$-boundary then it is also 
a $(\Gamma,\mu)$-boundary.

In particular, if $(X,\nu)$ is a $\theta$-USB, then it is also a $\mu$-USB.
\end{lem}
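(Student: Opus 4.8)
The plan is to establish the three assertions in turn: that $\nu$ is $\theta$-stationary, that a $(\Lambda,\theta)$-boundary is a $(\Gamma,\mu)$-boundary, and that a $\theta$-USB is a $\mu$-USB. The main external input is the standard description of a $\mu$-recurrent subgroup through its induced random walk: writing $\sigma_1<\sigma_2<\cdots$ for the successive times at which a $(\Gamma,\mu)$-random walk path $\omega$ visits $\Lambda$ (these are finite for $\bP_\mu$-a.e.\ $\omega$ by recurrence, and we set $\sigma_0=0$, $\omega_{\sigma_0}=e$), the subsequence $(\omega_{\sigma_k})_{k\ge 0}$ has the law of a $(\Lambda,\theta)$-random walk. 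Consequently any event holding for almost every path of the $(\Lambda,\theta)$-random walk holds for the subsequence $(\omega_{\sigma_k})_k$ for $\bP_\mu$-a.e.\ $\omega$. Throughout, $\nu$ denotes a $\mu$-stationary measure on $X$.

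First I would show $\nu$ is $\theta$-stationary. By the commutative case of Lemma~\ref{stationary<->Poisson-map-harmonic}, $\mu$-stationarity of $\nu$ gives $\cP_\nu(f)\in H^\infty(\Gamma,\mu)$ for every $f\in C(X)$, where $\cP_\nu(f)(g)=\int_X f(gx)\,d\nu(x)$. Since the restriction map carries $H^\infty(\Gamma,\mu)$ into $H^\infty(\Lambda,\theta)$, the function $h\mapsto\int_X f(hx)\,d\nu(x)$ on $\Lambda$ belongs to $H^\infty(\Lambda,\theta)$; but that function is exactly the Poisson map of $\nu$ for the restricted action $\Lambda\acts C(X)$. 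As this holds for all $f\in C(X)$, the converse implication of Lemma~\ref{stationary<->Poisson-map-harmonic}, applied to the action $\Lambda\acts C(X)$ and the measure $\theta$, shows $\nu$ is $\theta$-stationary. (Alternatively, one applies the optional stopping theorem to the bounded measure-valued martingale $n\mapsto\omega_n\nu$ at the stopping time $\sigma_1$, using that $\omega_{\sigma_1}$ has law $\theta$.)

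For the ``Moreover'' assertion, assume in addition that $(X,\nu)$ is a $(\Lambda,\theta)$-boundary. Since $X$ is metrizable and $\nu$ is $\mu$-stationary, Furstenberg's Theorem~\ref{cond-meas} provides the weak$^*$ limits $\nu_\omega=\lim_n\omega_n\nu$ for $\bP_\mu$-a.e.\ $\omega$. On the other hand, because $(\omega_{\sigma_k})_k$ has the law of a $(\Lambda,\theta)$-random walk and $(X,\nu)$ is a $(\Lambda,\theta)$-boundary, for $\bP_\mu$-a.e.\ $\omega$ the subsequence $\omega_{\sigma_k}\nu$ converges weak$^*$ to a point mass. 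A subsequence of a convergent sequence has the same limit, so $\nu_\omega$ is a Dirac measure for $\bP_\mu$-a.e.\ $\omega$, which is precisely the statement that $(X,\nu)$ is a $(\Gamma,\mu)$-boundary. For the ``in particular'' clause, let $(K,\bar\nu)$ be a compact model witnessing that $(X,\nu)$ is a $\theta$-USB, taken metrizable via Lemma~\ref{lem:metrizable-model}; a $\mu$-stationary measure on $K$ exists since stationary measures always exist on compact spaces, and it is $\theta$-stationary by the first part, hence coincides with the unique $\theta$-stationary measure $\bar\nu$. Thus $\bar\nu$ is $\mu$-stationary and is the unique $\mu$-stationary measure on $K$, and by the previous step $(K,\bar\nu)$ is a $(\Gamma,\mu)$-boundary; hence $(X,\nu)$ is a $\mu$-USB.

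Every step is then a routine matching of definitions, once granted the two classical facts from the theory of recurrent subgroups — that $(\omega_{\sigma_k})_k$ is a $(\Lambda,\theta)$-random walk, and that restriction is an isomorphism $H^\infty(\Gamma,\mu)\cong H^\infty(\Lambda,\theta)$ (both going back to Furstenberg~\cite{Furstenberg-73}). The only point demanding a little care, and so the mildest obstacle, is that the ``Moreover'' assertion has to be read with $\nu$ already $\mu$-stationary: a bare $(\Lambda,\theta)$-boundary measure on a $\Gamma$-space need not be $\mu$-stationary, whereas in the $\theta$-USB situation $\mu$-stationarity of $\bar\nu$ is forced automatically by the existence of stationary measures together with the uniqueness built into the USB hypothesis.
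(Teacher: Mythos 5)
Your proof is correct and fills in exactly what the paper leaves implicit: the paper simply cites \cite[Corollary 2.14]{Hartman-Lima-Tamuz-14} for the $\theta$-stationarity of $\nu$ and declares the boundary assertion to follow from the definitions, and your harmonic-function argument together with the stopped-subsequence argument is the standard unpacking of both. The only cosmetic point is in the final paragraph: since $X$ is already a compact metrizable $\Gamma$-space, you may as well take the USB model to be $X$ itself, which sidesteps any worry about whether the chosen model $(K,\bar\nu)$ carries a $\Gamma$-action rather than merely a $\Lambda$-action.
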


\begin{proof}
For a proof of the first assertion see~\cite[Corollary 2.14]{Hartman-Lima-Tamuz-14}. 
The second part follows directly from the definitions.
\end{proof}

We say a $\Gamma$-mean-proximal space $X$ is \define{essentially free}
if for any generating measure $\mu\in\pr (\Gamma)$, the action $\Gamma \acts (X,\nu_{\mu})$
is essentially free.

\begin{thm}\label{cor:finite-index}
Let $\Gamma$ be a finitely generated group with trivial amenable radical, and 
let $\Lambda$ be a normal subgroup of $\Gamma$ of finite index. 
If $\Lambda$ admits an essentially free metrizable
mean-proximal action, then so does $\Gamma$.
\end{thm}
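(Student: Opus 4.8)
The plan is to build the desired $\Gamma$-space by extending the given $\Lambda$-action to an action of $\mathrm{Aut}(\Lambda)$, into which $\Gamma$ embeds by conjugation, and then promoting mean-proximality from $\Lambda$ to $\Gamma$ using recurrence. I would first record the structural facts about $\Lambda$. Since $[\Gamma:\Lambda]<\infty$ and $\Gamma$ is finitely generated, so is $\Lambda$; and $\Rad(\Lambda)$, being characteristic in $\Lambda$ and hence (as $\Lambda\trianglelefteq\Gamma$) normal in $\Gamma$, is an amenable normal subgroup of $\Gamma$, so $\Rad(\Lambda)=\{e\}$ and in particular $Z(\Lambda)=\{e\}$. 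It follows that the conjugation map $c\colon\Gamma\to\mathrm{Aut}(\Lambda)$ is injective (its kernel $C_\Gamma(\Lambda)$ meets $\Lambda$ only in $Z(\Lambda)=\{e\}$, hence embeds into the finite group $\Gamma/\Lambda$, and a finite normal subgroup of $\Gamma$ is trivial), and that $c(g)$ is an \emph{outer} automorphism of $\Lambda$ for every $g\in\Gamma\setminus\Lambda$.

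Next, let $X$ be an essentially free metrizable $\Lambda$-mean-proximal space. Applying Theorem~\ref{prop:extension} to $\Lambda$ and $X$ yields a metrizable $\Lambda$-mean-proximal space $Y$ with a $\Lambda$-equivariant factor map $Y\to X$ and an extension of $\Lambda\acts Y$ to an action $\mathrm{Aut}(\Lambda)\acts Y$. Pushing the unique $\theta$-stationary measure of $Y$ forward to $X$ (and using uniqueness) gives $\nu_\theta^Y(\fix_Y(h))\le\nu_\theta^X(\fix_X(h))=0$ for $h\ne e$, so $Y$ is essentially free as a $\Lambda$-space for every generating $\theta\in\pr(\Lambda)$. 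Composing with $c$ we obtain an action $\Gamma\acts Y$ that restricts on $\Lambda$ to the original one. For mean-proximality over $\Gamma$: given a generating $\mu\in\pr(\Gamma)$, the finite-index subgroup $\Lambda$ is $\mu$-recurrent with generating hitting measure $\theta$, so the $\Lambda$-mean-proximal space $Y$ is a $\theta$-USB and, by Lemma~\ref{lem:recurrent}, a $\mu$-USB, with $\nu_\mu^Y=\nu_\theta^Y$. Thus $\Gamma\acts Y$ is mean-proximal.

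It remains to prove $\nu_\mu^Y(\fix_Y(g))=0$ for every $g\ne e$, which is the heart of the matter. If $\langle g\rangle$ meets $\Lambda$ nontrivially—automatic whenever $g$ has infinite order—pick $e\ne g^k\in\Lambda$; then $\fix_Y(g)\subseteq\fix_Y(g^k)$ is $\nu_\mu^Y$-null by the previous paragraph. The genuine case is a torsion element $g$ of order $d$ with $\langle g\rangle\cap\Lambda=\{e\}$, so $g\notin\Lambda$ and $c(g)$ has order $d$ in $\mathrm{Aut}(\Lambda)$. In the product model $Y\subseteq\prod_{\alpha\in\mathrm{Aut}(\Lambda)}X_\alpha$ of Theorem~\ref{prop:extension}, $g$ acts through the fixed-point-free index permutation $\alpha\mapsto c(g)\alpha$, which cyclically permutes $\{e,c(g),\dots,c(g)^{d-1}\}$; restricting to these coordinates via the $\langle\Lambda,g\rangle$-equivariant map $q\colon Y\to X^d$, $q(y)=(y_{c(g)^j})_j$, the element $g$ acts on $X^d$ by the cyclic shift while $\Lambda$ acts by the twisted diagonal $\lambda\cdot(x_j)_j=(c(g)^{-j}(\lambda)x_j)_j$, and $\fix_Y(g)\subseteq q^{-1}(D)$ with $D=\{x_0=\cdots=x_{d-1}\}$. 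So it suffices to show $q_*\nu_\mu^Y(D)=0$.

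For this last step I would use that $(X^d,q_*\nu_\mu^Y)$ is a $\theta$-boundary of $\Lambda$—each coordinate is a copy of $X$ twisted by a power of $c(g)$, carrying its unique $\theta$-stationary measure, so the joining is again a $\theta$-boundary—hence $\Lambda$-ergodic with no nontrivial finite factor. If $q_*\nu_\mu^Y(D)>0$, then by $\Lambda$-quasi-invariance the $\Lambda$-invariant saturation $\bigcup_\lambda\lambda D$ has positive measure, so by ergodicity it is conull; projecting to the first two coordinates forces the joining $\kappa$ on $X\times X_{c(g)}$ to be supported on a countable union of graphs of homeomorphisms of $X$, permuted by the twisted conjugation $\gamma\mapsto c(g)^{-1}(\lambda)\gamma\lambda^{-1}$. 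This realizes a finite factor of the $\theta$-boundary $(X\times X_{c(g)},\kappa)$, which must therefore be trivial, so $\kappa$ concentrates on a single graph $\operatorname{graph}(\gamma_0)$; then quasi-invariance of $\kappa$ together with essential freeness of $(X,\nu_\theta^X)$ forces $c(g)^{-1}(\lambda)\gamma_0\lambda^{-1}=\gamma_0$ for all $\lambda$, i.e.\ $c(g)^{-1}=\beta_{\gamma_0}$ is inner—contradicting $g\notin\Lambda$. Hence $q_*\nu_\mu^Y(D)=0$, and $Y$ witnesses the conclusion. I expect this outer-torsion case to be the main obstacle; everything preceding it is routine assembly of the cited results.
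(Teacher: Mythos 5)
Your proposal is correct, and its opening moves (apply Theorem~\ref{prop:extension} to get a metrizable $\Lambda$-mean-proximal $\Gamma$-space $Y$ extending $X$, note $Y$ is $\Lambda$-essentially free as a $\Lambda$-extension of $X$, and use recurrence of the finite-index subgroup plus Lemma~\ref{lem:recurrent} to promote mean-proximality to $\Gamma$) are exactly the paper's. Where you genuinely diverge is the last step, essential freeness of $\Gamma\acts(Y,\nu)$ at elements outside $\Lambda$. The paper dispatches this in three lines: since $\stab_\Gamma(y)\cap\Lambda=\stab_\Lambda(y)$ is a.e.\ trivial and $[\Gamma:\Lambda]<\infty$, the stabilizers are a.e.\ finite; pushing $\nu$ forward under $y\mapsto\stab_\Gamma(y)$ gives a stationary measure on the countable set of finite subgroups of $\Gamma$, which is therefore invariant, i.e.\ an amenable IRS, and by Bader--Duchesne--Lecureux it is supported on $\Rad(\Gamma)=\{e\}$. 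You instead run a hands-on joining argument inside the explicit product model of Theorem~\ref{prop:extension}: for a torsion $g$ with $\langle g\rangle\cap\Lambda=\{e\}$ you project onto the $\langle c(g)\rangle$-indexed coordinates, observe that a positive-measure diagonal would force the two-coordinate joining to concentrate on countably many graphs, use the no-nontrivial-invariant-factor property of boundaries to collapse to a single graph, and conclude that $c(g)$ is inner, contradicting injectivity of $c$ (which you correctly derive from $\Rad(\Gamma)=\{e\}$). Your route buys independence from the amenable-IRS theorem and isolates the precise obstruction (outerness of $c(g)$), at the cost of length, of depending on the particular product construction of $Y$ rather than only on its abstract properties, and of two small imprecisions worth tightening: the factor you produce from the countable family of graphs is countable rather than finite (the argument still works, since a stationary measure on a countable set is invariant and a $\mu$-boundary has no nontrivial invariant factors), and the reason $(X^d,q_*\nu)$ is a $\theta$-boundary is simply that it is a measurable factor of the $\theta$-boundary $(Y,\nu)$, not that each marginal is uniquely stationary (a joining of boundaries need not be a boundary in general). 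The paper's argument is the one to prefer for economy; yours is a legitimate, more elementary alternative.
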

\begin{proof}
Let $X$ be an essentially free metrizable $\Lambda$-mean-proximal space, and 
let $Y$ be the $\Gamma$-space given by Theorem~\ref{prop:extension} 
(considering $\Gamma\leq \mathrm{Aut}(\Lambda)$). 
Since $Y$ is a $\Lambda$-extension of $X$, it is $\Lambda$-essentially free. 

Since $\Lambda$ has finite index in $\Gamma$, it is $\mu$-recurrent for any 
generating $\mu\in \pr(\Gamma)$, thus
Lemma~\ref{lem:recurrent} implies that $Y$ is also $\Gamma$-mean-proximal.
To see that $Y$ is also $\Gamma$-essentially free, let $\mu\in\pr(\Gamma)$ be generating, 
and let $\theta\in\pr(\Lambda)$ be 
the corresponding hitting measure. Denote by $\nu=\nu_{\mu}=\nu_{\theta}\in\pr(Y)$ 
the unique stationary measure.

Since $\Lambda \acts (X,\nu)$ is essentially free, 
the stabilizers $\stab_{\Gamma}(x)$  are finite subgroups of $\Gamma$
for $\nu$-almost every $x$. 
Thus, the pushforward $\eta$ of $\nu$ under the stabilizer map 
$x \mapsto \stab_\Gamma(x)$ is 
a stationary measure on the compact space $\Sub(\Gamma)$ (of all subgroups 
of $\Gamma$, 
endowed with the conjugate action of $\Gamma$) that is supported on finite subgroups.
Note that $\Gamma$ has only countably many finite subgroups, 
hence $\eta$ is invariant (see, e.g.\ the discussion in Section~\ref{tits-alternative}), i.e.\ an IRS. 
But by~\cite{Bader-Duchesne-Lecureux-16} every amenable IRS is supported 
on the amenable radical of $\Gamma$, which is trivial by assumption. 
Hence $\stab_{\Gamma}(x)$ is trivial for $\nu$-almost every $x$, and this finishes the proof. 
\end{proof}

\begin{remark}
The assumption of metrizability was not necessary in 
Lemmas~\ref{prod-of-usb-is-usb} and \ref{lem:recurrent}, 
and was only needed in Theorem~\ref{prop:extension} to ensure metrizability of 
the space $Y$. 
In all conclusions, the general case can be reduced to the 
metrizable case by either passing to a metrizable model, or 
considering metrizable factors and taking an inverse limit. 
Since we only use the results of this appendix in situations 
where all spaces in considerations are metrizable, we did not 
take those extra steps and just tailored the statements for our particular 
purposes here. 
But, the fact that we can conclude these results for arbitrary products 
and inverse limits leads to a deeper point: there is a theory of mean-proximal 
actions parallel to those of proximal and strongly proximal actions. 
One can prove the existence of a universal action, and canonical extension results. 
We intend to devote a followup paper to a conceptual study of 
a class of topological dynamical systems that include all these examples.

\end{remark}

\end{document}